\title{Universal minimal flow of the homeomorphism group of the Lelek fan}
\theoremstyle{plain}
\newtheorem{thm}{Theorem}[section]
\newtheorem{lemma}[thm]{Lemma}
\newtheorem{cor}[thm]{\bf Corollary}
\newtheorem{rem}[thm]{\bf Remark}
\theoremstyle{prop}
\newtheorem{prop}[thm]{\bf Proposition}
\newtheorem{question}[thm]{\bf Question}
\newcommand{\age}{{\rm Age}}
\def\acts{\curvearrowright}
\def\aut{{\rm Aut}}
\def\Exp{{\rm Exp}}
\def\mesh{{\rm mesh}}
\def\spr{{\rm spr}}
\def\Clop{{\rm Clop}}
\def\lel{\mathbb{L}}
\def\f{\mathcal{F}}
\def\g{\mathcal{G}}
\def\G{\mathbb{G}}
\def\c{\mathcal{C}}
\def\ch{\mathcal{C}}
\def\d{\mathcal{D}}
\def\ll{\mathcal{L}}
\def\u{\mathcal{U}}
\def\fin{{\rm{ FIN }}}
\def\supp{{\rm{ supp }}}
\def\rest{\restriction}
\def\n{\mathbb{N}}
\def\Id{{\rm{Id}}}
\newcommand{\mc}[1]{\mathcal{#1}}
\title{The universal minimal flow of the homeomorphism group of the Lelek fan} % Fra\"{i}ss\'{e} }
\author[D. Barto\v{s}ov\'{a}]{Dana Barto\v{s}ov\'{a}}
\address{Institute de Matematica e Estat\'istica, Universidade de S\~ao Paulo, Brazil  {\em AND}
Department of Mathematical Sciences, Carnegie Mellon University, Pennsylvania, USA}
\email{dbartoso@andrew.cmu.edu}
\author[A. Kwiatkowska]{Aleksandra Kwiatkowska}
\address{Institut f\"{u}r Mathematische Logik und Grundlagenforschung, Universit\"{a}t  M\"{u}nster, Einsteinstrasse 62,
48149  M\"{u}nster,  Germany {\em AND}
Instytut Matematyczny, Uniwersytet Wroc{\l}awski,  pl. Grunwaldzki 2/4, 50-384 Wroc{\l}aw, Poland}
\email{kwiatkoa@uni-muenster.de}
\subjclass[2010]{05D10, 37B05, 54F15, 03C98}  
\keywords{Dual Ramsey Theorem, universal minimal flows, homeomorphism group of the Lelek fan, Fra\"{i}ss\'{e} limits}
\begin{document}

\maketitle

\begin{abstract}
We compute the universal minimal flow of the homeomorphism group of the Lelek fan -- a one-dimensional tree-like continuum with many symmetries.
\end{abstract}

\section{Introduction}
Let $G$ be a topological group and let $X$ be a compact  space. A continuous action $G\acts X$ is called 
a {\em  $G$-flow}  (or just a flow, if the group $G$ is understood from  the context). A \emph{$G$-map} between two flows $G\acts X$ and $G\acts Y$ is a map $f:X\to Y$ such that for every $g\in G$ and $x\in G$ we have $f(gx)=gf(x).$
A flow is  {\em minimal} if all of its orbits are
 dense. It is a general result in topological dynamics, due to Ellis, that for any topological group $G$ there is a  
 {\em universal minimal flow} $M(G)$, that is, a minimal flow $G\acts M(G)$ such that for any minimal flow $G\acts X$
 there is  a continuous $G$-map from $M(G)$ onto $X$.  For a compact group $G$, the flow $G\acts M(G)$
 can be identified with the  action of $G$ on itself by left translations. If $G$ is locally compact, but not compact 
 (such as $G$ discrete) $M(G)$ is a very large space, in particular, it is always non-metrizable. For example, when $G$
 is the set of integers $\mathbb{Z}$, $M(G)$ is the Gleason space of $2^{\mathfrak{c}},$ that is the Stone space of the Boolean algebra of regular open sets of $2^{\mathfrak{c}},$ where $\mathfrak{c}$ is the cardinality of real numbers.

 Many groups that have been studied by descriptive set-theorists and model-theorists, as it turned out in the last
 10-20 years,  have metrizable universal minimal flows
 that can be computed explicitly and a surprising number of them have a trivial universal minimal flow, we call such groups {\em extremely amenable}.
Pestov  \cite{P} applied the finite Ramsey theorem  
to show that  $\aut(\mathbb{Q},<)$, the group of order-preserving bijections of rationals with the pointwise convergence topology,
is  extremely amenable. 
Glasner and Weiss \cite{GW}
  used the finite Ramsey theorem to identify the universal minimal flow of $S_\infty$, the  group of all permutations of
  natural numbers $\mathbb{N}$, with its canonical action on the space of all linear orderings on $\mathbb{N}$.
   In 2005, Kechris, Pestov, and Todorcevic \cite{KPT}  developed a general powerful tool to compute  universal minimal flows
  of automorphism groups of countable model-theoretic structures via establishing a strong connection between
  the dynamics of such groups and the structural Ramsey theory.

 The focus of our paper is on homeomorphism groups of compact spaces. 
We compute the universal minimal flow of the homeomorphism group $H(L)$ of the Lelek fan $L$.
We will show that it is equal to the action of $H(L)$ on the space of all maximal chains on $L$ consisting of continua
 containing the top point of $L$, which is induced from the evaluation action of $H(L)$ on $L$.

An important motivation for our work was the following (still open) question due to Uspenskij from 2000.
  \begin{question}[Uspenskij, \cite{U}]
Let $P$ be the pseudo-arc and let $H(P)$ be its homeomorphism group.
What is the the universal minimal flow of $H(P)$? In particular,
 is the action $H(P)\acts P$ given by $(h,x)\to h(x)$ the universal minimal flow of $H(P)$?
\end{question} 

Both the pseudo-arc and the Lelek fan are well known very homogeneous continua that can be constructed as natural quotients of projective  Fra\"{i}ss\'{e}  limits
of finite structures.
 
 Pestov~\cite{P} showed as a consequence of the Ramsey theorem that 
 the group of increasing homeomorphisms of the unit interval is extremely amenable 
 and identified  the universal minimal flow of the orientation preserving homeomorphisms of the circle $S^1$ with its natural action on $S^1.$ 
 Glasner and Weiss \cite{GW2} proved that the universal minimal flow
 of the homeomorphism group of the Cantor set is the
action on the space of maximal chains of closed subsets of the Cantor set, which is induced from the evaluation action.
These seem to be the only examples of homeomorphism groups for which the  universal minimal flow was computed. In  each of these examples, 
the description of the universal minimal flow  follows directly from 
a description of the universal minimal flow of a certain automorphism group of a countable structure (rational numbers in the case of $S^1$ and $[0,1]$,
and the countable atomless Boolean algebra in the case of the Cantor set).
Universal minimal flows of homeomorphism groups of many very simple compact spaces, such as $[0,1]^2$
or the sphere $S^2$, are unknown.

Unlike for the automorphism groups of countable structures, there are no general  techniques  to 
compute universal minimal flows of homeomorphism groups of compact spaces. To obtain the universal minimal flow of 
$H(L)$, we will use our earlier construction, presented in~\cite{BK}, of the Lelek fan $L$ as a  quotient of a projective
 Fra\"{i}ss\'{e}  limit $\lel$. First, we compute the universal minimal flow of the  automorphism group $\aut(\lel)$;
 we will use tools provided by Kechris, Pestov, and Todorcevic \cite{KPT} and a new Ramsey theorem, which we prove using the  Dual Ramsey theorem \cite{GR}. 
Second, by relating  $\lel$ and $L$, we compute the universal minimal flow of $H(L)$.
This second step is novel and nontrivial, and we hope it will find applications to homeomorphism groups of other compact spaces.

%We will not  obtain our result directly from  one for an automorphism group. We  only approximate $H(L)$ by a certain automorphism group of a countable structure, whose universal minimal flow we will
 %compute using: tools provided by Kechris, Pestov, and Todorcevic \cite{KPT}, our earlier construction of the Lelek fan from a projective
 %Fra\"{i}ss\'{e}  limit~\cite{BK}, and a new Ramsey theorem, which we prove using the  Dual Ramsey theorem \cite{GR}. 
%From the description of the universal minimal flow of that  automorphism group
 %we will  obtain a description of the universal minimal flow of $H(L)$. 

\section{Discussion of results} 

%\gre{In some places we write compact, in  some compact Hausdorff.}

A {\em continuum} is a compact connected  metric space. Denoting by $C$ the Cantor set and by $[0,1]$ the unit interval,
the {\em Cantor fan} is the 
quotient of $C\times [0,1]$ 
by the equivalence relation $\sim$ given by $(a,b)\sim (c,d)$ if and only if either $(a,b)=(c,d)$ or $b=d=0.$
For a continuum $X,$ a point $x\in X$ is an {\em endpoint} in $X$ if for every homeomorphic embedding $h:[0,1]\to X$ with $x$ in the image of $h$ either $x=h(0)$ or $x=h(1).$ The {\em Lelek fan} $L$, constructed by Lelek  in \cite{L}, can be characterized as the unique non-degenerate subcontinuum of the Cantor fan whose endpoints are dense in $L$ (see \cite{BO} and \cite{C}).
Denote by $v$ the {\em top} (which we will also sometime call the {\em root}) $(0,0)/\!\!\sim$ of the Lelek fan.

If $K$ is a compact  topological space, a  chain $\ch$ on $K$ is  a family of closed subsets of $K$ such that
for every $C_1, C_2\in\ch$, either $C_1\subset C_2$ or $C_2\subset C_1$. We say that a chain $\ch$ 
is  maximal if for every closed set
$C\subset K$, if $\{C\}\cup\ch$ is a chain then $C\in\ch$.
The set ${\rm Exp}(K)$ of all closed subsets of a compact  topological space $K$  equipped with the Vietoris topology is a compact space, which we introduce in 
Section~\ref{sec:chain}.

Let $Y^*\subset {\rm Exp (Exp}(K))$ be the space of all maximal chains $\ch$ on $L$ such that each $C\in\ch$ is connected and it contains the root of $L$.
This space is compact, which we prove in Proposition~\ref{ccompact}, and 
the natural action of $H(L)$ -- the homeomorphism group of the Lelek fan $L$ on $L$ given by $(g,x)\to g(x)$ induces an action on ${\rm Exp}(L)$ which further induces an action on ${\rm Exp (Exp}(L))$ 
which is invariant on $Y^*$. The main result of this article is the following:
\begin{thm}\label{thumf}
The universal minimal flow of $H(L)$ -- the homeomorphism group of the Lelek fan $L$ -- is 
\[
H(L)\acts Y^*.
\]
\end{thm}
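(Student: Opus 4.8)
**The plan is to proceed in two stages, matching the structure announced in the introduction: first pass from the projective Fra\"{i}ss\'{e} limit $\lel$ to its automorphism group $\aut(\lel)$, then transfer the answer across the quotient map $\lel \to L$ to $H(L)$.**

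First I would recall from \cite{BK} the presentation of $L$ as a quotient of the projective Fra\"{i}ss\'{e} limit $\lel$ of an appropriate class of finite structures (finite fans, or rather finite trees/linear orders built over a branching pattern), together with the quotient map $\pi\colon \lel \to L$ collapsing the equivalence relation that identifies points of $\lel$ mapping to the same point of $L$. The key structural facts I would invoke are: (i) $\aut(\lel)$ acts on $\lel$ and descends to an action on $L$, inducing a continuous homomorphism $\aut(\lel)\to H(L)$ with dense image; and (ii) every homeomorphism of $L$ lifts, so the two groups are tightly related. On the combinatorial side, I would isolate the right ``rigid'' expansion of the Fra\"{i}ss\'{e} class — adding a linear order on the branches, or equivalently a description of a maximal chain of subcontinua through the top — and prove the relevant Ramsey property for this class by reduction to the Dual Ramsey theorem \cite{GR}, exactly as signposted. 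By the Kechris--Pestov--Todorcevic correspondence \cite{KPT} (in its projective-Fra\"{i}ss\'{e} incarnation), this Ramsey statement plus the expansion property identifies $M(\aut(\lel))$ with the space $X_\lel$ of these orderings/chains on $\lel$, and shows this flow is minimal with metrizable (in fact a nice) phase space.

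Second, and this is the genuinely new step, I would transfer the computation along $\pi$. The map $\pi$ induces $\Exp(\pi)\colon \Exp(\lel)\to\Exp(L)$ and then a map on $\Exp(\Exp(\cdot))$, sending the flow $\aut(\lel)\acts X_\lel$ onto $H(L)\acts Y^*$; I would check this map is continuous, surjective, and equivariant with respect to the homomorphism $\aut(\lel)\to H(L)$, so that $Y^*$ is a continuous image of a minimal $\aut(\lel)$-flow and hence, since $\aut(\lel)\to H(L)$ has dense image, a minimal $H(L)$-flow. To conclude $Y^* \cong M(H(L))$ I must show universality: given any minimal $H(L)$-flow $Z$, pull it back to a minimal $\aut(\lel)$-flow, obtain an $\aut(\lel)$-map $X_\lel \to Z$ from universality of $M(\aut(\lel))$, and then argue this map factors through $Y^*$ — i.e. that it is constant on the fibers of $X_\lel\to Y^*$. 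This factorization is where the topology of the Lelek fan (density of endpoints, the structure of subcontinua through the top, and the precise description of the kernel of $\aut(\lel)\to H(L)$) has to be used: I would show the fiber relation of $X_\lel\to Y^*$ is contained in the ``same $H(L)$-orbit closure'' relation, or more robustly that any two points of $X_\lel$ with the same image in $Y^*$ can be connected by elements of $\aut(\lel)$ that act trivially (or with arbitrarily small effect) on $L$.

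\textbf{The main obstacle} I anticipate is precisely this last factorization: controlling the kernel and the fibers well enough to push the $\aut(\lel)$-map down to $Y^*$, because the quotient $\lel\to L$ is far from injective and a priori the space of chains upstairs could carry strictly more information than $Y^*$. Establishing that it does not — that maximal chains of connected subsets through the top of $L$ capture \emph{exactly} the data that survives the quotient — will require a careful analysis of how subcontinua of $L$ through $v$ correspond to the ordered structures in the Fra\"{i}ss\'{e} class, together with a compactness argument (using that $Y^*$ is compact, Proposition~\ref{ccompact}) to promote ``approximate'' equivariance to genuine factorization. A secondary technical point is verifying the Ramsey property for the expanded class cleanly from \cite{GR}; I expect this to be laborious but not conceptually hard, whereas the transfer step is where the novelty and the real difficulty lie.
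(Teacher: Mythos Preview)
Your two-stage plan matches the paper's architecture, and your treatment of the first stage (Ramsey property for the expanded class via the Dual Ramsey theorem, then KPT to identify $M(\aut(\lel))$ with the space $X^*$ of downwards closed maximal chains on $\lel$) is essentially what the paper does in Section~4. The divergence is entirely in the transfer step.

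You propose to push $X^*$ forward to $Y^*$ along $\pi$, pull an arbitrary minimal $H(L)$-flow $Z$ back to an $\aut(\lel)$-flow, obtain an $\aut(\lel)$-map $X^*\to Z$, and then \emph{factor this map through the fibers of $X^*\to Y^*$}. You correctly flag this factorization as the main obstacle, but your suggested mechanism --- connecting two chains in the same fiber by elements of $\aut(\lel)$ acting trivially (or almost trivially) on $L$ --- is not viable as stated. The kernel of $\pi^*\colon\aut(\lel)\to H(L)$ is far too small to act transitively on fibers of $X^*\to Y^*$ (the fibers of $\pi$ itself have at most two points, and automorphisms of $\lel$ preserve the asymmetric relation $R^\lel$, so there is essentially no kernel to exploit), and you give no argument for why an ``arbitrarily small effect'' approximation would force agreement of the map on an entire fiber. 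Also, you assert that every homeomorphism of $L$ lifts to $\aut(\lel)$; the paper never claims or uses this --- only density of the image of $\pi^*$ is available.

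The paper avoids the factorization problem completely by working with the quotient-uniformity description rather than with maps out of $X^*$. It first proves a general transfer theorem (Theorem~\ref{image}): if $M(G)=\widehat{G/H}$ with $H$ extremely amenable and $G/H$ precompact, and $\phi\colon G\to G_1$ has dense image, then $M(G_1)=\widehat{G_1/\overline{\phi(H)}}$. Applied to $\pi^*$, this gives $M(H(L))=\widehat{H(L)/H_1}$ with $H_1=\overline{\pi^*(\aut(\lel_c))}$. The remaining work is then (a) identifying $H_1$ with the concrete stabilizer $H(L_c)$ of the chain $\c^L$ (Proposition~\ref{hl}, which uses a covering argument and the projective ultrahomogeneity of $\lel_c$), and (b) showing that the orbit map $gH(L_c)\mapsto g\cdot\c^L$ is a \emph{uniform} $G$-isomorphism onto its image in $Y^*$ (Theorem~\ref{equival}), so that its completion gives $\widehat{H(L)/H(L_c)}\cong Y^*$. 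The hard analysis you anticipated does occur, but it lands in (a) and (b) --- approximating homeomorphisms of $L$ by images of automorphisms of $\lel_c$, and controlling chains via special covers and Lemma~\ref{nakr} --- rather than in a fiberwise factorization of maps out of $X^*$. This route never needs to compare two chains on $\lel$ lying over the same chain on $L$; it compares two chains on $L$ that are uniformly close and shows they lie in nearby $H(L_c)$-cosets.
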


To prove Theorem \ref{thumf}, we will first find a ``quotient'' description of the universal minimal flow of $H(L)$.
Let $H$ be the closed subgroup of $H(L)$  consisting of homeomorphisms that preserve the ``generic'' 
maximal chain in $Y^*$. 
Such a chain is constructed explicitly, we expand the projective Fra\"{i}ss\'e family $\f$ of finite fans, whose limit gives the Lelek
fan, to the projective Fra\"{i}ss\'e family $\f_c$ of finite  fans expanded by  a maximal chain of  connected sets containing the root. The limit of this new family gives the Lelek fan equipped
with the required ``generic'' chain. The details and necessary definitions are contained in the next  sections.
The quotient space $H(L)/H$ is precompact in the quotient of the right uniformity on $H(L)$ 
and consequently its completion
$\widehat{H(L)/H}$ is compact. The group $H(L)$ acts on itself by left translations. 
This actions induces an action on the quotient, which extends to the completion.
Theorem \ref{thumf} will follow from Theorem \ref{thumf2}.

\begin{thm}\label{thumf2}
The universal minimal flow of $H(L)$--the homeomorphism group of the Lelek fan $L$ is 
\[ 
H(L)\acts\widehat{H(L)/H}.
\]
\end{thm}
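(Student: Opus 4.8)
The plan is to verify that $H(L) \acts \widehat{H(L)/H}$ satisfies the defining properties of the universal minimal flow: it must be a minimal flow, and it must admit a $G$-map onto every minimal $H(L)$-flow. Both are obtained from a single structural fact, namely that $H$ is an \emph{extremely amenable} closed subgroup of $H(L)$ and that $Y^*$ is a minimal flow on which $H$ has a fixed point. I would organize the argument around the general criterion (essentially due to Pestov, and used in the KPT framework): if $H \le G$ is a closed subgroup, then $G \acts \widehat{G/H}$ is the universal minimal flow of $G$ provided (i) $\widehat{G/H}$ is compact, i.e.\ $G/H$ is precompact in the right uniformity (stated already in the excerpt), (ii) $H$ is extremely amenable, and (iii) $H$ is a \emph{co-precompact} subgroup that is maximal with this property in the appropriate sense — concretely, that the flow $G \acts \widehat{G/H}$ is minimal, which, given extreme amenability of $H$, reduces to checking that $H$ is not properly contained in a larger closed subgroup $H'$ with $G/H'$ still precompact.

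The key steps, in order, would be as follows. First, I would recall from the construction of $\f_c$ and its projective Fra\"iss\'e limit that the automorphism group $\aut(\lel_c)$ of the expanded limit is extremely amenable; this is where the new Ramsey theorem proved via the Dual Ramsey Theorem enters, through the KPT correspondence between the Ramsey property of $\f_c$ and extreme amenability of $\aut(\lel_c)$. Second, I would transfer this to the topological setting: the quotient map $\lel_c \to L$ equipped with the generic chain induces a continuous homomorphism from (a subgroup of) $\aut(\lel_c)$ to $H$, with dense image, so that $H$ — being the closure of a continuous image of an extremely amenable group, or more carefully, being itself identified up to the relevant topological data with $\aut(\lel_c)$ — is extremely amenable. (This transfer step mirrors how $H(L)$ is obtained from $\aut(\lel)$ in the unexpanded case in \cite{BK}.) Third, granting compactness of $\widehat{H(L)/H}$ from the cited precompactness, I would invoke the Pestov/KPT machinery to conclude that $H(L) \acts \widehat{H(L)/H}$ is \emph{some} $H(L)$-flow with a dense orbit, and that the universal minimal flow is its unique minimal subflow; the remaining content is to show the whole flow is already minimal, equivalently that $H$ is a maximal closed subgroup with precompact quotient.

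The main obstacle I expect is precisely this last point: proving minimality of $H(L) \acts \widehat{H(L)/H}$, i.e.\ that $H$ cannot be enlarged. Unlike the classical KPT setting — where one works with an ordering expansion and minimality follows from the "expansion property" of the Fra\"iss\'e class — here the expansion is by a maximal chain of connected sets through the root, and one must establish the analogue of the expansion property for $\f_c$ over $\f$: every finite fan in $\f$ embeds into a large enough finite fan in a way that forces a prescribed chain-expansion to be induced. I would prove this combinatorially at the level of the finite structures in $\f_c$ and then lift it, via the projective Fra\"iss\'e limit and the quotient to $L$, to the statement that for any nonempty open $U \subseteq \widehat{H(L)/H}$ the $H(L)$-translates of $U$ cover the space. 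A secondary technical point, needed to even make the Pestov criterion applicable, is checking that $H$ is genuinely closed in $H(L)$ and that the quotient uniformity behaves well under the completion — this is largely routine but must be stated carefully because $H(L)$ is Polish but not locally compact. Once minimality is in hand, universality is automatic from extreme amenability of $H$ together with the universal property of $\widehat{G/H}$ among flows with an $H$-fixed point, completing the proof; Theorem~\ref{thumf} then follows by identifying $\widehat{H(L)/H}$ equivariantly with $Y^*$, which is the subject of the subsequent sections.
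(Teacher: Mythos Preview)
Your overall strategy is in the right spirit---extreme amenability of $H$ plus precompactness of $H(L)/H$ plus minimality---but the organization differs from the paper's and you underestimate two of the steps.

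The paper does \emph{not} transfer extreme amenability and minimality to $H(L)$ separately. Instead it first proves the full statement $M(\aut(\lel))=\widehat{\aut(\lel)/\aut(\lel_c)}$ at the zero-dimensional level (via KPT: Ramsey for $\f_{cc}$, hence for $\f_c$, plus the expansion property of $\f_c$ over $\f$), and then proves a single general transfer theorem (Theorem~\ref{image}): if $M(G)=\widehat{G/H}$ with $H$ extremely amenable and $G/H$ precompact, and $\phi:G\to G_1$ is a continuous homomorphism with dense image, then $M(G_1)=\widehat{G_1/H_1}$ with $H_1=\overline{\phi(H)}$. Minimality of $G_1\curvearrowright\widehat{G_1/H_1}$ falls out of this argument automatically: one constructs a $G$-equivariant surjection $\widehat{G/H}\to\widehat{G_1/H_1}$, so minimality of the source gives minimality of the $G$-action on the target, hence of the $G_1$-action. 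This is exactly the ``lift'' you say you would perform, but your description (``lift the expansion property via the projective Fra\"iss\'e limit and the quotient to $L$'') is too vague---there is no KPT machinery available at the level of $H(L)$ itself, since $H(L)$ is not an automorphism group of a countable structure. The clean route is the one the paper takes: stay at the $\aut(\lel)$ level until you have the full UMF there, then transfer once.

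The second point you gloss over is more serious. You write that $H$ is ``the closure of a continuous image of an extremely amenable group,'' taking for granted that $H(L_c)=\overline{\pi^*(\aut(\lel_c))}$. The inclusion $\overline{\pi^*(\aut(\lel_c))}\subseteq H(L_c)$ is easy, but the reverse inclusion---that every chain-preserving homeomorphism of $L$ can be approximated by images of chain-preserving automorphisms of $\lel$---is a genuine result (Proposition~\ref{hl} in the paper) with a substantial proof: it requires producing, for a given $h\in H(L_c)$ and $\epsilon>0$, suitable refining epimorphisms $\lel\to A_c$ so that the projective ultrahomogeneity of $\lel_c$ can be invoked. Without this identification, Theorem~\ref{image} only yields $M(H(L))=\widehat{H(L)/\overline{\pi^*(\aut(\lel_c))}}$, not the statement about $H=H(L_c)$ that you are asked to prove. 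You should flag this as the second main obstacle alongside minimality, rather than treating it as routine.
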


Let $\lel$ and $\lel_c$ be the projective Fra\"{i}ss\'{e} limits of $\f$ and $\f_c$ respectively and let
$ {\rm Aut}(\lel)$ and $ {\rm Aut}(\lel_c)$ be their automorphism groups.
In Section \ref{extamen}, we show that  ${\rm Aut}(\lel_c) $ is extremely amenable and in
 Section \ref{umfa}, we provide two equivalent descriptions of the universal minimal flow of $ {\rm Aut}(\lel)$.
We prove our main result in Section \ref{umfh}.

\section{Preliminaries}\label{prelim}

%\gre{Here one should add a summary of preliminaries.}
We first review the  Fra\"{i}ss\'{e} and the projective  Fra\"{i}ss\'{e} constructions, as well as the construction of the Lelek fan in the 
projective  Fra\"{i}ss\'{e} framework the authors introduced in \cite{BK} (Sections \ref{sec:fra}, \ref{pff}, and \ref{sec:lelek}). 
We then discuss topics specifically relevant to studying the universal minimal flow
of the homeomorphism group of the Lelek fan: maximal chains on compact spaces (Section \ref{sec:chain}),
 uniform spaces (Section \ref{pus}), and the Kechris-Pestov-Todorcevic correspondence for  Fra\"{i}ss\'{e}-HP families (Section \ref{sec:KPT}).

\subsection{Fra\"{i}ss\'{e} families}\label{sec:fra} 

Given a first-order language $\mathcal{L}$ that consists of relation symbols $r_i$,  with arity $m_i$,  $i\in I$, and function symbols $f_j$, 
  with arity $n_j$, $j\in J$, and two structures $A$ and $B$ in $\ll$, say that $i: A\to B$ is an {\em embedding} if it is an injection such that
 for a function symbol $f$ in $\mathcal{L}$ of arity $n$ and $x_1,\ldots,x_n\in A$ we have
$i( f^A(x_1,\ldots,x_n))=f^B(i(x_1),\ldots,i(x_n))$;
and for a relation symbol $r$ in $\mathcal{L}$ of arity $m$ and $x_1,\ldots,x_m\in A$  we require
$ r^A(x_1,\ldots,x_m)$ iff $r^B(i(x_1),\ldots,i(x_m))$. 
 For a relation symbol $r\in\mathcal{L}$ with arity $k$ and a function $f: A\to B$, say that $f$ is {\em $R$-preserving}
 if for every $x_1,\ldots,x_k\in A$  we have
$ r^A(x_1,\ldots,x_k)$ iff $r^B(f(x_1),\ldots,f(x_k))$.

  A countable first order  structure $M$  {in $\ll$} is  {\em locally finite} if every finite subset of $M$ generates a finite substructure. It is
  {\em ultrahomogeneous} if every isomorphism between finite
 substructures of $M$ can be extended to an automorphism of $M$.  In that case, $\f=\age(M)$, the family of all
 finite substructures of $M$,
 has the  following three properties: the {\em hereditary property} {\em (HP)},
 that is, if $A\in\f$ and $B$ is a substructure of $A$, then $B\in\f$; the {\em  joint embedding property} {\em (JEP)}, that is, for any $A,B\in\f$ there is $C\in\f$
 such that $A$ embeds both into $B$ and into $C$;
 and the {\em  amalgamation property} {\em (AP)}, that is, for any $A,B_1,B_2\in\f$,
 any embeddings $\phi_1: A\to B_1$ and $\phi_2: A\to B_2$, there exist $C\in\f$ and embeddings
 $\psi_1: B_1\to C$ and $\psi_2: B_2\to C$ such that $\psi_1\circ\phi_1=\psi_2\circ\phi_2$.
 %A structure $M$ is {\em locally finite} if every finitely generated substructure of it is finite.
 Conversely, by a classical theorem due to Fra\"{i}ss\'{e}, if a countable family of finite structures $\f$ in some language $\ll$
 has the HP, the JEP and the  AP,  
 then there is a unique countable locally finite ultrahomogeneous structure $M$
 such that $\f=\age(M)$. 
 
 In this paper, we will call a countable family of finite structures that satisfies the JEP and the AP a {\em Fra\"{i}ss\'{e}-HP family}
(read as Fra\"{i}ss\'{e} minus HP family), 
  and   a countable family of finite structures that satisfies the HP, the JEP, and the AP we will call a {\em Fra\"{i}ss\'{e} family}.
 
 A {\em Fra\"{i}ss\'{e} limit} of a Fra\"{i}ss\'{e} family $\f$   is a countable locally finite ultrahomogeneous structure $M$ such that  $\f=\age(M)$,
 and 
a {\em Fra\"{i}ss\'{e} limit} of a Fra\"{i}ss\'{e}-HP family $\f$ is a countable structure $M$ such that every structure in $\f$ embeds into $M$,
for every finite subset $X$ of $M$ there is $A\in\f$ and an embedding $i:A\to M$ such that $X\subset i(A)$, and $M$ is ultrahomogeneous with respect to $\f$, that is, 
every isomorphism between finite substructures of $M$ which are isomorphic to a structure in $\f$ can be extended to an automorphism of $M$.
Clearly every Fra\"{i}ss\'{e}  family is also a Fra\"{i}ss\'{e}-HP family.
If $\f$ is a Fra\"{i}ss\'{e} family or it is a Fra\"{i}ss\'{e}-HP family,    the Fra\"{i}ss\'{e} limit of $\f$ always exists and it is unique up to an isomorphism.

For example, the rationals with the usual ordering is the Fra\"{i}ss\'{e} limit of the family of finite linear orders,
the Rado graph is  the Fra\"{i}ss\'{e} limit of the family of finite graphs, and the countable atomless Boolean algebra is the
 Fra\"{i}ss\'{e} limit of the family of finite Boolean algebras.

We say that a  family $\g_1$ is {\em cofinal } in a  family $\g_2$ if for every $A\in\g_2$ there  are $B\in\g_1$ and an embedding $\phi: A\to B$.
\begin{rem}\label{cofinall}
{\rm Suppose that a family $\g_1$ is contained in and cofinal in a Fra\"{i}ss\'{e}-HP  family $\g_2$. Then
$\g_1$ is also a Fra\"{i}ss\'{e}-HP  family and moreover  Fra\"{i}ss\'{e}  limits of $\g_1$ and of $\g_2$ are isomorphic. }
\end{rem}

\subsection{Projective Fra\"{i}ss\'{e} families}\label{pff}

Given a first-order language $\mathcal{L}$ that consists of relation symbols $r_i$,  with arity $m_i$,  $i\in I$, and function symbols $f_j$, 
  with arity $n_j$, $j\in J$,
a \emph{topological $\mathcal{L}$-structure} is a compact zero-dimensional second-countable space $A$ equipped with
closed (in the product topology) relations $r_i^A\subset A^{m_i}$ and continuous functions $f_j^A: A^{n_j}\to A$, $i\in I, j\in J$.
A~continuous surjection $\phi: B\to A$ between two topological $\mathcal{L}$-structures is an
 \emph{epimorphism} if it preserves the structure, that is, for a function symbol $f$ in $\mathcal{L}$ of arity $n$ and $x_1,\ldots,x_n\in B$ we require:
\[
 f^A(\phi(x_1),\ldots,\phi(x_n))=\phi(f^B(x_1,\ldots,x_n));
\]
and for a relation symbol $r$ in $\mathcal{L}$ of arity $m$ and $x_1,\ldots,x_m\in A$  we require:
\begin{equation*}
\begin{split}
&  r^A(x_1,\ldots,x_m) \\ 
&\iff \exists y_1,\ldots,y_m\in B\left(\phi(y_1)=x_1,\ldots,\phi(y_m)=x_m, \mbox{ and } r^B(y_1,\ldots,y_m)\right).
\end{split}
\end{equation*}
By an \emph{isomorphism}  we mean a bijective epimorphism.

Let $\mathcal{G}$ be a countable family of finite topological $\mathcal{L}$-structures. We say that $\mathcal{G}$ is a \emph{ projective Fra\"{i}ss\'{e} family}
if the following two conditions hold:

(JPP) (the joint projection property) for any $A,B\in\mathcal{G}$ there are $C\in \mathcal{G}$ and epimorphisms from $C$ onto $A$ and from $C$ onto $B$;

(AP) (the amalgamation property) for $A,B_1,B_2\in\mathcal{G}$ and any epimorphisms $\phi_1: B_1\to A$ and $\phi_2: B_2\to A$, there exists $C\in\mathcal{G}$ with epimorphisms
 $\psi_1: C\to B_1$ and $\psi_2: C\to B_2$ such that $\phi_1\circ \psi_1=\phi_2\circ \psi_2$.

A topological $\mathcal{L}$-structure $\mathbb{G}$ is a \emph{ projective Fra\"{i}ss\'{e} limit } of  a projective Fra\"{i}ss\'{e} family $\mathcal{G}$ if the following three conditions hold:

(L1) (the projective universality) for any $A\in\mathcal{G}$ there is an epimorphism from $\mathbb{G}$ onto~$A$;

(L2) for any finite discrete topological space $X$ and any continuous function
 $f: \mathbb{G} \to X$ there are $A\in\mathcal{G}$, an epimorphism $\phi: \mathbb{G}\to A$, and a function
$f_0: A\to X$ such that $f = f_0\circ \phi$;

(L3) (the projective ultrahomogeneity) for any $A\in \mathcal{G}$ and any epimorphisms $\phi_1: \mathbb{G}\to A$ 
and $\phi_2: \mathbb{G}\to A$
there exists an isomorphism $\psi: \mathbb{G}\to \mathbb{G}$ such that $\phi_2=\phi_1\circ \psi$.

\begin{rem}\label{coveri}
{\rm It follows from (L2) above that if  $\mathbb{G}$ is the projective Fra\"{i}ss\'{e} limit of $\mathcal{G}$, then every finite open cover can be {\em refined by an epimorphism, i.e. for every open cover
 $\mathcal{U}$}
of $\mathbb{G}$  
there is an epimorphism  $\phi:\mathbb{G}\to A$,  for some  
  $A\in\mathcal{G}$, such that for every $a\in A$, $\phi^{-1}(a)$ is contained in an open set in $\mathcal{U}$. }
\end{rem}

\begin{thm}[Irwin-Solecki, \cite{IS}]\label{is}
 Let $\mathcal{G}$ be a projective Fra\"{i}ss\'{e} family of finite topological $\mathcal{L}$-structures. Then:
\begin{enumerate}
 \item there exists a projective Fra\"{i}ss\'{e} limit of $\mathcal{G}$;
\item any two projective Fra\"{i}ss\'{e} limits of $\mathcal{G}$ are isomorphic.
\end{enumerate}
\end{thm}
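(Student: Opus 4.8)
The plan is to obtain existence by a generic inverse-limit construction and uniqueness by a back-and-forth argument driven by (L1)--(L3).

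\textbf{Existence.} Since $\mathcal{G}$ is countable, fix a bookkeeping device enumerating, with infinitely many repetitions, all ``amalgamation requests'': an epimorphism from a structure already appearing in the sequence under construction onto some $A\in\mathcal{G}$, together with a second epimorphism onto $A$ from some $B\in\mathcal{G}$. Recursively build an inverse sequence
\[
A_0 \xleftarrow{p_0} A_1 \xleftarrow{p_1} A_2 \xleftarrow{p_2} \cdots
\]
of members of $\mathcal{G}$ with epimorphic bonding maps, so that at cofinally many stages every member of $\mathcal{G}$ receives an epimorphism from some $A_m$ (using (JPP)), and at stage $n$ the request issued by the bookkeeping is resolved over some later $A_m$ via (AP). Put $\mathbb{G}=\varprojlim(A_n,p_n)$, a compact, zero-dimensional, second-countable space, and equip it with the $\mathcal{L}$-structure in which $f^{\mathbb{G}}$ is the inverse limit of the $f^{A_n}$ and, writing $\pi_n:\mathbb{G}\to A_n$ for the projection and $m_r$ for the arity of $r$, $r^{\mathbb{G}}=\bigcap_n(\pi_n^{\,m_r})^{-1}(r^{A_n})$; since each $r^{A_n}$ is finite hence closed, $r^{\mathbb{G}}$ is closed and $f^{\mathbb{G}}$ is continuous.

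\textbf{Verifying (L1)--(L3).} One first checks each $\pi_n$ is an epimorphism: surjectivity and preservation of functions are immediate, and the existential lifting for relations --- given $r^{A_n}(a_1,\dots,a_{m_r})$, find $y_i\in\mathbb{G}$ with $\pi_n(y_i)=a_i$ and $r^{\mathbb{G}}(y_1,\dots,y_{m_r})$ --- follows by compactness: for each $k>n$ the bonding maps, being epimorphisms, produce a lift of the tuple into $r^{A_k}$, and the inverse limit of these nonempty nested compact sets of lifts is nonempty. Property (L1) is built into the construction. Property (L2) is the standard fact that a continuous map from an inverse limit of finite spaces into a finite discrete space factors through some $\pi_n$ (clopen sets are pulled back from a finite stage). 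Property (L3) is where the bookkeeping pays off: given epimorphisms $\phi_1,\phi_2:\mathbb{G}\to A$ with $A\in\mathcal{G}$, by (L2) each factors through some $\pi_n$ up to composition with a bonding map, so running a back-and-forth between the two presentations of $\mathbb{G}$ as $\varprojlim A_n$ --- alternately invoking the resolved amalgamation requests to extend a partial commuting ladder of epimorphisms and taking the limit --- yields an automorphism $\psi$ of $\mathbb{G}$ with $\phi_2=\phi_1\circ\psi$.

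\textbf{Uniqueness.} If $\mathbb{G}$ and $\mathbb{G}'$ are both projective Fra\"{i}ss\'{e} limits of $\mathcal{G}$, build by back-and-forth an inverse sequence of finite structures $C_0\leftarrow C_1\leftarrow\cdots$ in $\mathcal{G}$ with epimorphisms $\alpha_n:\mathbb{G}\to C_n$ and $\beta_n:\mathbb{G}'\to C_n$ compatible with the bonding maps, alternating which side is ``grown'' so that $(\alpha_n)$ separates the points of $\mathbb{G}$ and $(\beta_n)$ the points of $\mathbb{G}'$ (each extension step uses (L1) and (L2) to descend to a finite stage and (L3) to make the new map compatible with those already chosen). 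Then $x\mapsto$ the unique $x'$ with $\beta_n(x')=\alpha_n(x)$ for all $n$ is the desired isomorphism $\mathbb{G}\to\mathbb{G}'$.

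\textbf{Main obstacle.} The delicate point is the bookkeeping in the existence proof: one must arrange that \emph{every} pair of epimorphisms from $\mathbb{G}$ onto a finite $A$ can be fed into the back-and-forth for (L3), which relies on observing (via (L2) and the cofinal use of (AP)) that such epimorphisms are already, up to bonding maps, among those handled during the construction. The compactness argument that each $\pi_n$ is an epimorphism --- i.e., the relational lifting --- is the other place requiring care.
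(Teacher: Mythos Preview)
The paper does not give its own proof of this theorem: it is simply quoted from Irwin--Solecki \cite{IS} as background, with no argument supplied. So there is nothing in the paper to compare your proposal against directly.

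That said, your outline is the standard Irwin--Solecki argument and is correct in shape. A couple of small remarks. First, in the existence step you should be a little more explicit that the bookkeeping enumerates not just triples $(A,B,\phi_2)$ abstractly but, at stage $n$, all epimorphisms $\phi_1$ from some $A_m$ (with $m\le n$) onto some $A\in\mathcal{G}$ together with all $\phi_2:B\to A$; since each $A_m$ is finite and $\mathcal{G}$ is countable, there are only countably many such requests, and one must arrange that each is eventually resolved. Second, in verifying (L3), the back-and-forth you describe is really the same argument as in your uniqueness proof applied to $\mathbb{G}$ versus itself with the two given epimorphisms as the starting data; it may be cleaner to state a single ``extension lemma'' (given $A\in\mathcal{G}$, epimorphisms $\mathbb{G}\to A$ and $B\to A$, there is an epimorphism $\mathbb{G}\to B$ making the triangle commute) and derive both (L3) and uniqueness from it. Your identification of the two delicate points---the compactness argument for relational lifting so that the $\pi_n$ are genuine epimorphisms, and the bookkeeping ensuring every finite epimorphism from $\mathbb{G}$ factors through the sequence---is exactly right.
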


The theorem below is a folklore, nevertheless it has not been published. 
 It says that the projective Fra\"{i}ss\'{e} theory is a special case of an (injective) Fra\"{i}ss\'{e} theory via a generalization of Stone duality.
 \begin{thm}\label{dual}
For a  projective Fra\"{i}ss\'{e} family $\f$ in a relational language with a projective Fra\"{i}ss\'{e} limit $\mathbb{F}$
 there is an equivalent   via a contravariant functor  
 (defined on $\f\cup\{\mathbb{F}\}$ and on all epimorphisms between structures in $\f\cup\{\mathbb{F}\}$)
 Fra\"{i}ss\'{e}-HP family $\g$ with a  Fra\"{i}ss\'{e} limit $\mathbb{G}$. 
\end{thm}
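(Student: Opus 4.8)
The plan is to realize the asserted duality explicitly, as a variant of Stone duality that sends a finite discrete topological structure to its power-set Boolean algebra enriched with data encoding the relations. Fix the relational language $\mathcal{L}$ with relation symbols $r_i$ of arity $m_i$, $i\in I$. For $A\in\f\cup\{\mathbb{F}\}$ let $\Clop(A)$ be the Boolean algebra of clopen subsets of $A$ (so $\Clop(A)=\mathcal{P}(A)$ when $A$ is finite discrete, and $\Clop(\mathbb{F})$ is countable because $\mathbb{F}$ is second countable), and let $F(A)$ be the structure with underlying set $\Clop(A)$, in the language of Boolean algebras expanded by $m_i$-ary relation symbols $R_i$, that keeps the Boolean operations and sets
\[
R_i^{F(A)}(S_1,\dots,S_{m_i})\iff r_i^A\cap(S_1\times\dots\times S_{m_i})\neq\emptyset .
\]
On morphisms $F$ is contravariant: an epimorphism $\phi\colon B\to A$ between structures in $\f\cup\{\mathbb{F}\}$ is sent to $F(\phi):=\phi^{-1}\colon\Clop(A)\to\Clop(B)$, $S\mapsto\phi^{-1}(S)$. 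First I would check that $\phi^{-1}$ is a well-defined injective Boolean-algebra homomorphism (this is Stone duality applied to the continuous surjection $\phi$) and that it preserves the relations $R_i$: unwinding the defining clause of an epimorphism for $r_i$ gives the chain
\[
R_i^{F(B)}\bigl(\phi^{-1}S_1,\dots,\phi^{-1}S_{m_i}\bigr)\iff\exists\,\bar y\in B^{m_i}\bigl(r_i^B(\bar y)\wedge\textstyle\bigwedge_j\phi(y_j)\in S_j\bigr)\iff\exists\,\bar x\in\textstyle\prod_jS_j\ r_i^A(\bar x)\iff R_i^{F(A)}(S_1,\dots,S_{m_i}),
\]
so $F(\phi)$ is an embedding of the expanded structures. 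Hence $F$ is a contravariant functor from the category with objects $\f\cup\{\mathbb{F}\}$ and epimorphisms as arrows into the category of Boolean algebras expanded by the $R_i$, with embeddings as arrows.

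Next I would show that $F$ is full and faithful, hence a contravariant equivalence onto its essential image. Faithfulness is immediate since $\phi$ is recovered from $\phi^{-1}$. For fullness, let $j\colon F(A)\to F(A')$ be an embedding of expanded structures; then $j$ is in particular an injective Boolean-algebra homomorphism $\Clop(A)\to\Clop(A')$, so Stone duality (for finite discrete spaces, and for compact zero-dimensional second countable ones) yields a unique continuous surjection $\phi\colon A'\to A$ with $\phi^{-1}=j$ on clopen sets; reading the first two equivalences of the chain above backwards, the preservation of $R_i$ by $j$ says exactly that the closed sets $\phi^{\times m_i}(r_i^{A'})$ and $r_i^A$ meet the same clopen boxes in $A^{m_i}$. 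When $A$ is finite this forces their equality (take singleton boxes), which is the epimorphism clause for $r_i$; when $A=A'=\mathbb{F}$ one upgrades ``meets the same clopen boxes'' to genuine equality using that both sets are closed and that a closed subset of $\mathbb{F}^{m_i}$ is the set of points all of whose basic clopen-box neighbourhoods meet it. This last point is where the hypothesis that the relations $r_i^{\mathbb{F}}$ are closed genuinely enters; with it, $\phi$ is an epimorphism and $F(\phi)=j$. Now set $\g:=\{F(A):A\in\f\}$ (closed under isomorphism of expanded structures; each member is finite and there are countably many) and $\mathbb{G}:=F(\mathbb{F})$; then $F$ is a contravariant equivalence between $\f\cup\{\mathbb{F}\}$ with epimorphisms and $\g\cup\{\mathbb{G}\}$ with embeddings.

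It then remains to transport the combinatorial content through $F$. The joint projection property of $\f$ (a common $C\in\f$ projecting onto given $A,B\in\f$) dualizes, using fullness to convert the projections into embeddings, to the joint embedding property of $\g$; a commuting amalgam of epimorphisms in $\f$ dualizes likewise to a commuting amalgam of embeddings in $\g$, so $\g$ has the amalgamation property and is therefore a Fra\"{i}ss\'{e}-HP family. Note that $\g$ need not have the hereditary property: a Boolean subalgebra of $\Clop(A)$ corresponds to a finite quotient of $A$, and $\f$ need not contain that quotient, which is exactly why only a Fra\"{i}ss\'{e}-HP family, and not a Fra\"{i}ss\'{e} family, is obtained. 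Finally, dualizing the conditions (L1), (L2), (L3) for $\mathbb{F}$ produces precisely the three conditions defining the Fra\"{i}ss\'{e} limit of $\g$: (L1) becomes ``every member of $\g$ embeds into $\mathbb{G}$''; (L2), which says every finite clopen partition of $\mathbb{F}$ is refined by an epimorphism onto some $A\in\f$, becomes ``every finite subset of $\mathbb{G}$ lies in $j(F(A))$ for some $A\in\f$ and embedding $j$''; and (L3) becomes $\g$-ultrahomogeneity, because an isomorphism between finite substructures of $\mathbb{G}$ each isomorphic to some $F(A)$ dualizes to a pair of epimorphisms $\mathbb{F}\to A$ to which (L3) assigns an automorphism of $\mathbb{F}$, i.e.\ of $\mathbb{G}$. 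By the uniqueness of the Fra\"{i}ss\'{e} limit, $\mathbb{G}$ is the Fra\"{i}ss\'{e} limit of the Fra\"{i}ss\'{e}-HP family $\g$.

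The part I expect to require real care --- everything else being formal bookkeeping --- is twofold. First, the choice of dual language and the verification that ``embedding of the expanded structures'' coincides with ``dual of an epimorphism'': the dual language must retain the Boolean operations (or a relation coding joins), not merely the inclusion order, since an order- and complement-preserving injection between finite Boolean algebras need not be a Boolean-algebra homomorphism, and one must match preservation of the $R_i$ with the epimorphism clause for the $r_i$ via the displayed chain. Second, the use of the closedness of the relations $r_i^{\mathbb{F}}$ to pass from their traces on clopen boxes back to the relations themselves; this is the sole genuinely topological step, and it is the reason the statement is naturally about topological $\mathcal{L}$-structures rather than abstract sets.
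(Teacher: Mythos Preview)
Your proposal is correct and follows essentially the same approach as the paper: both use Stone duality to send a topological $\mathcal{L}$-structure $A$ to $\Clop(A)$ equipped with the Boolean operations and relations $R_i$ defined by ``$r_i^A$ meets the clopen box,'' and both verify that $\phi\mapsto\phi^{-1}$ matches epimorphisms with embeddings, using closedness of the $r_i$ for the harder direction. You in fact spell out more than the paper does---the paper states the duality as Proposition~\ref{stone1} and leaves the transfer of JPP/AP and of (L1)--(L3) to the Fra\"{i}ss\'{e}-HP limit conditions implicit, whereas you write these out explicitly.
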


 Theorem \ref{dual} %for $\f$ in a relational language 
 will follow from Proposition \ref{stone1},  a generalization of 
 the classical Stone duality between Boolean algebras with embeddings and compact totally disconnected  spaces
 with continuous surjections, which we recall here. In this paper, we will not consider $\f$ in a language that contains function symbols.
  \begin{prop}\label{stone0}
  The family of compact totally disconnected  spaces $\f_0$  with continuous surjections  is equivalent via a contravariant functor to
the family $\g_0$ of Boolean algebras with embeddings.  
  \end{prop}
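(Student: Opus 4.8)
The plan is to spell out the two classical functors and indicate the verifications, all of which are standard Stone duality. Going from $\g_0$ to $\f_0$, send a Boolean algebra $B$ to its Stone space $S(B)$, the set of ultrafilters on $B$ equipped with the topology generated by the sets $\widehat b=\{u\in S(B):b\in u\}$ for $b\in B$; then $S(B)$ is compact (by the compactness/ultrafilter theorem), Hausdorff, and totally disconnected, since distinct ultrafilters are separated by some $\widehat b$, which is clopen. An embedding $f\colon A\to B$ is sent to $S(f)\colon S(B)\to S(A)$, $u\mapsto f^{-1}[u]$, which is continuous because $S(f)^{-1}(\widehat a)=\widehat{f(a)}$, and is surjective exactly because $f$ is injective: every ultrafilter on the subalgebra $f[A]\cong A$ extends to an ultrafilter on $B$. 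Going from $\f_0$ to $\g_0$, send a space $X$ to the Boolean algebra $\Clop(X)$ of its clopen subsets, and a continuous surjection $g\colon X\to Y$ to $\Clop(g)\colon\Clop(Y)\to\Clop(X)$, $U\mapsto g^{-1}(U)$; this is a Boolean algebra homomorphism, and it is injective because $g$ is onto. In both cases contravariance, e.g.\ $S(g\circ f)=S(f)\circ S(g)$, is immediate from the definitions.

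It remains to exhibit natural isomorphisms making $S$ and $\Clop$ mutually quasi-inverse. For $B\in\g_0$ the map $\eta_B\colon B\to\Clop(S(B))$, $b\mapsto\widehat b$, is a homomorphism; it is injective because, by the Stone representation theorem, every nonzero element of $B$ lies in some ultrafilter, and it is surjective because compactness of $S(B)$ forces every clopen subset of $S(B)$ to be a finite union of basic clopens $\widehat b$, hence itself of the form $\widehat{b'}$. For $X\in\f_0$ the map $\varepsilon_X\colon X\to S(\Clop(X))$ sending $x$ to its ultrafilter of clopen neighborhoods $\{U\in\Clop(X):x\in U\}$ is injective since points of $X$ are separated by clopen sets, is continuous since $\varepsilon_X^{-1}(\widehat U)=U$, and has dense image; being a continuous injection from a compact space into a Hausdorff space, it is a homeomorphism onto its (closed) image, which being dense is everything, so $\varepsilon_X$ is a homeomorphism. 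Naturality of $\eta$ and $\varepsilon$ in $B$ and $X$ is a routine diagram chase.

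The only steps with genuine content, as opposed to bookkeeping, are the two uses of compactness together with total disconnectedness: the surjectivity of $\eta_B$ (every clopen subset of a Stone space is named by an element of $B$) and the surjectivity of $\varepsilon_X$ (the clopen algebra of a compact totally disconnected space recovers all of its points). These are precisely the two halves of the classical Stone theorem. Once they are in place, the claimed restriction of the equivalence — embeddings on the algebra side matched with continuous surjections on the space side — follows automatically from the equivalences ``$S(f)$ surjective $\iff$ $f$ injective'' and ``$\Clop(g)$ injective $\iff$ $g$ surjective'' recorded above.
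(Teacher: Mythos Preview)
Your proof is correct and follows the standard Stone duality argument. The paper does not actually prove this proposition: it is stated as the classical Stone duality and the paper only recalls, immediately after the statement, the $\Clop$ functor (sending $K$ to $\Clop(K)$ and a continuous surjection $f\colon L\to K$ to the embedding $F\colon\Clop(K)\to\Clop(L)$, $F(X)=f^{-1}(X)$) in order to fix notation for the proof of Proposition~\ref{stone1}. Your write-up is thus considerably more detailed than the paper's treatment, but entirely in the same spirit; the only functor the paper explicitly names is your $\Clop$, and that matches your description exactly.
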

 In the Stone duality, to $K\in\f_0$ we associate the Boolean algebra $\Clop(K)$ of clopen sets of $K$ 
with the usual operations of the union $\cup^{\Clop(K)}$, 0 is the empty set and 1 is identified with $K$, the intersection  $\cap^{\Clop(K)}$ and the complement $^{-\Clop(K)}$, 
 and to a continuous surjection $f:L\to K$ we associate 
 an embedding $F:\Clop(K)\to \Clop(L)$ given by $F(X)=f^{-1}(X)$.

 \begin{prop}\label{stone1}
 Let $\ll$ be a relational language and let $\f_1$ be a family of topological $\ll$-structures, maps between structures are epimorphisms.
 Then there is a family $\g_1$ of countable structures in the language equal to the union of the language of Boolean algebras and of $\ll$,
maps between structures are embeddings,
 such that $\f_1$ is equivalent to $\g_1$ via a contravariant functor.
 \end{prop}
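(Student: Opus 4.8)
The plan is to extend the classical Stone duality (Proposition~\ref{stone0}) to handle the extra relational symbols of $\ll$. First I would define the functor on objects: to a topological $\ll$-structure $A\in\f_1$, with underlying compact zero-dimensional second-countable space $A$ and closed relations $r_i^A\subset A^{m_i}$, I associate the structure $\g(A)$ whose Boolean-algebra reduct is $\Clop(A)$ (as in Stone duality), and whose interpretation of each symbol $r_i$ is a relation on $\Clop(A)$ encoding $r_i^A$. The natural choice, dual to the existential-image definition of epimorphism, is: for clopen sets $U_1,\ldots,U_{m_i}\in\Clop(A)$, declare $r_i^{\g(A)}(U_1,\ldots,U_{m_i})$ to hold iff there exist $x_1\in U_1,\ldots,x_{m_i}\in U_{m_i}$ with $r_i^A(x_1,\ldots,x_{m_i})$ (equivalently, $r_i^A\cap(U_1\times\cdots\times U_{m_i})\neq\emptyset$). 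Because $A$ is compact, $r_i^A$ is closed, and $\Clop(A)$ is a basis, the relation $r_i^A$ is recoverable from $r_i^{\g(A)}$: it equals $\bigcap\{\,\overline{U_1\times\cdots\times U_{m_i}} : r_i^{\g(A)}(U_1,\ldots,U_{m_i})\,\}$, so the functor loses no information.

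Second I would define the functor on morphisms. Given an epimorphism $\phi:B\to A$, Stone duality already gives an embedding $F=\phi^{-1}:\Clop(A)\to\Clop(B)$ of Boolean algebras; I must check this same map is an embedding for the full language, i.e.\ that it is $R$-preserving for each $r_i$. Unwinding the definitions, $r_i^{\g(A)}(U_1,\ldots,U_{m_i})$ says $r_i^A$ meets $U_1\times\cdots\times U_{m_i}$, while $r_i^{\g(B)}(\phi^{-1}U_1,\ldots,\phi^{-1}U_{m_i})$ says $r_i^B$ meets $\phi^{-1}U_1\times\cdots\times\phi^{-1}U_{m_i}=(\phi^{m_i})^{-1}(U_1\times\cdots\times U_{m_i})$. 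The forward implication is immediate from the epimorphism condition on $\phi$ (lift a witness of $r_i^A$ through $\phi$); the reverse implication is immediate from $\phi$ preserving the structure (push a witness of $r_i^B$ forward through $\phi$). Functoriality (identities to identities, composites to composites) is inherited from Stone duality since the underlying map of sets is unchanged.

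Third, I would check that $\g(A)$ is actually countable when $A$ is finite or equal to $\mathbb{F}$: for $A$ finite, $\Clop(A)$ is finite; for $A=\mathbb{F}$, second-countability and zero-dimensionality make $\Clop(\mathbb{F})$ countable. Setting $\g_1=\{\g(A):A\in\f_1\}$ with all embeddings between them (not merely those in the image of the functor on $\f_1$'s epimorphisms) gives the required family, and the contravariant functor $\g$ witnesses the claimed equivalence of categories in the sense of a full, faithful, essentially surjective contravariant functor onto $\g_1$. Fullness — that \emph{every} Boolean-algebra-with-$\ll$ embedding $F:\g(A)\to\g(B)$ comes from an epimorphism $\phi:B\to A$ — reduces, by Stone duality, to showing that the continuous surjection $\phi$ dual to the Boolean part of $F$ automatically preserves and reflects the relations; this follows from the two implications established above, run in reverse.

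I expect the main obstacle to be verifying that the relational part of the duality is genuinely functorial and full in tandem with the Boolean part — in particular, pinning down the exact interpretation of $r_i$ on $\Clop$-structures so that both directions of the epimorphism condition translate cleanly into $R$-preservation, and so that no information about the closed relation $r_i^A$ is lost in passing to $\g(A)$. Everything else is a routine transcription of Stone duality, after which Theorem~\ref{dual} follows: apply Proposition~\ref{stone1} to $\f=\f\cup\{\mathbb{F}\}$ with its epimorphisms, obtaining $\g$ with limit $\mathbb{G}=\g(\mathbb{F})$; the projective Fra\"iss\'e axioms (JPP), (AP), (L1)--(L3) for $\f$ translate under the contravariant equivalence into exactly (JEP), (AP), and the defining properties of a Fra\"iss\'e-HP limit for $\g$, so $\g$ is a Fra\"iss\'e-HP family and $\mathbb{G}$ its Fra\"iss\'e limit.
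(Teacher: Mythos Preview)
Your proposal is correct and follows essentially the same approach as the paper: define $r_i$ on $\Clop(A)$ by $r_i^{\g(A)}(U_1,\ldots,U_{m_i})\Leftrightarrow r_i^A\cap(U_1\times\cdots\times U_{m_i})\neq\emptyset$, then verify that for the Stone-dual map $F=\phi^{-1}$ one has $F$ is $r_i$-preserving iff $\phi$ is $R_i$-preserving, the nontrivial direction using closedness of the relations exactly as you note under ``recoverability.'' The paper organizes this as two separate claims (one direction each), while you package the second direction as ``fullness''; the content is the same, though your phrase ``run in reverse'' is a bit loose---the actual argument is the compactness/closedness computation you flagged earlier, and the paper writes it out explicitly.
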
 
\begin{proof}
Let $\ll= \{R_1,\ldots, R_n\}$, where $R_i$ is a relation symbol of the arity $m_i,$ be the language of $\f_1.$
Let $\ll'= \{S_1,\ldots, S_n,\cup,\cap,^-,0,1\}$ be the language where $S_i$  is a relation symbol of the arity $m_i$
and $\{\cup,\cap,^-, 0,1\}$ is the language of Boolean algebras.
For $K=(K,R_1^K,\ldots, R_n^K)\in\f_1$, let $M=(M, S_1^M, \ldots, S^M_n, \cup^M, \cap^M, ^{-M}, 0^M,1^M\}$ be the structure
such that $M=\Clop(K)$ is  the family of all clopen sets of $K$,  $\cup^M$ is the union, $\cap^M$ is the intersection,   $^{-M}$ is the complement,
$0^M$ is the empty set and $1^M=M$.
Moreover, we require that for every~$i$, $S_i^M(X_1,\ldots, X_{m_i})$ 
  iff
  for some $c_1\in X_1,\ldots, c_{m_1}\in X_{m_1}$,
  we have $R_i^K(c_1,\ldots, c_{m_i})$.

Let $\g_1$ be the family of all $M$'s obtained in this way from a $K\in\f_1$ with 
the 
embeddings.

Let $f:L\to K$, where $K,L\in\f_1$, be a continuous surjection and let $F:\Clop(K)\to \Clop(L)$  be the  map given by $F(X)=f^{-1}(X)$.
In view of Proposition \ref{stone0}, all we have to check is that $f$ is $R_i$-preserving if and only if 
$F$  is $S_i$-preserving, and that will follow from the two claims below.

\smallskip

\noindent {\bf{Claim.}}  If  $f$ is $R_i$-preserving then $F$ is $S_i$-preserving.
\begin{proof}
If  $S_i^{\Clop(K)}(X_1,\ldots, X_{m_i})$ 
then  $R_i^K(a_1,\ldots, a_{m_i})$ for some $a_i\in X_i$,  
which implies that  for some $c_i\in f^{-1}(a_i)\subset f^{-1}(X_i)$, $R_i^L(c_1,\ldots, c_{m_i})$, 
hence  $S_i^{\Clop(L)}(f^{-1}(X_1),\ldots, f^{-1}(X_{m_i}))$, i.e. $S_i^{\Clop(L)}(F(X_1),\ldots, F(X_{m_i})).$
 
 Conversely,
if $S_i^{\Clop(L)}(F(X_1),\ldots, F(X_{m_i}))$, that is $S_i^{\Clop(L)}(f^{-1}(X_1),\ldots, f^{-1}(X_{m_i}))$
then for some $c_i\in f^{-1}(X_i)$, $R_i^L(c_1,\ldots, c_{m_i})$, therefore 
 $R_i^K(f(c_1),\ldots, f(c_{m_i}))$, which gives
 $S_i^{\Clop(K)}(X_1,\ldots, X_{m_i})$. 
 
\end{proof}

 \smallskip
 
 \noindent {\bf{Claim.}}
 If  $F$ is $S_i$-preserving then $f$ is $R_i$-preserving.
 \begin{proof}
We have $R_i^{K}(a_1,\ldots, a_{m_i})$  iff 
for every $X_i\in\Clop(K)$ such that $a_i\in X_i$ we have $S_i^{\Clop(K)}(X_1, \ldots,X_{m_i} )$   iff 
for every $X_i\in\Clop(K)$ such that $a_i\in X_i$ we have $S_i^{\Clop(L)}(f^{-1}(X_1), \ldots, f^{-1}(X_{m_i}))$ 
iff 
for every $X_i\in\Clop(K)$ such that $a_i\in X_i$ there exist  $c_i\in f^{-1}(X_i)$
for which we have $R_i^L(c_1,\ldots, c_{m_i})$
iff  there exist  $c_i\in f^{-1}(a_i)$
for which we have $R_i^L(c_1,\ldots, c_{m_i})$.
 In the first and last equivalences we used that the relations $R_i^K$ and $R_i^L$ are closed 
 in $K^{m_i}$ and $L^{m_i}$, respectively.

\end{proof} \end{proof}

Now one may ask why we study  projective Fra\"{i}ss\'{e} families at all.
The reason is that it is more natural to use  projective Fra\"{i}ss\'{e} families to construct and study compact spaces, like the Lelek fan or the pseudo-arc,
rather than to study them via families of finite Boolean algebras equipped with  relations.

In further sections, we will introduce families $\f_c$ and $\f_{cc}$ of finite  fans expanded by  an additional structure, which will be neither a function nor a relation,
for which we will have to prove an analog of Theorem \ref{dual}. 
These families will not exactly fall into the framework of the projective Fra\"{i}ss\'{e} theory discussed in this section.
Nevertheless, we will still call them projective Fra\"{i}ss\'{e} families, and their limits we will call projective Fra\"{i}ss\'{e} limits.

\subsection{Construction of the Lelek fan}\label{sec:lelek}

For completeness, we repeat here more or less Section 3.1 from \cite{BK2}, where we review the construction of the Lelek fan from  \cite{BK}.
Unlike in \cite{BK} and \cite{BK2}, 
we will not assume that all branches in a finite fan are of the same length.

By a {\em fan} we mean an undirected connected simple graph with all loops, with no cycles of the length greater than one, and
with a distinguished point $r$, called the {\em root}, such that all elements other than $r$ have degree at most 2. On a  fan $T,$ there is a natural partial tree order
$\preceq_T$: for $t,s\in T$ we let $s\preceq_T t$ if and only if $s$ belongs to the path connecting $t$ and the root.
We say that $t$ is a {\em successor} of $s$ if $s\preceq_T t$ and $s\neq t$.
It is an {\em immediate successor} if additionally there is no $p\in T$, $p\neq s,t$,  with $s\preceq_T p\preceq_T t$.
For a fan $T$ and $x,y\in T$ which are on the same branch and $x\preceq_T y$, by $[x,y]_{\preceq_T}$ we denote the interval
$\{z\in T: x\preceq_T z \preceq_T y\}$.

A {\em chain} in a fan $T$ is a subset of $T$ on which the order $\preceq_T$ is linear.
A {\em branch } of a fan $T$ is a maximal chain in  $(T,\preceq_T)$. 
If $b$ is a branch in $T$ with $n+1$ elements, we will sometimes  enumerate  
$b$ as $(b^0,\ldots,b^n)$, where $b^0$ is the root of $T$, and
$b^i$ is an immediate successor of $b^{i-1}$, for every $i=1, 2, \ldots, n$. In that case, $n $ will be called the {\em height} of the branch $b$.
Define the {\em height} of the fan to be the maximum of the heights of all of its branches
and define the {\em width} of the fan to be the number of its branches.

 Let $\mathcal{L}=\{R\}$ be the language with $R$ a binary relation symbol. 
For  a fan $T$ and $s,t\in T$, we let $R^T(s,t)$ if and only if $s=t$ or
 $t$ is an immediate successor of $s$.
 Let $\f$ be the family of all finite  fans, viewed as topological $\mathcal{L}$-structures, equipped with the discrete topology.

\begin{rem}
{\rm{ For two fans $(S,R^S)$ and $(T,R^T)$ in $\f$, a function $\phi: (S,R^S)\to (T,R^T)$ is an epimorphisms if and only if it is a surjective homomorphism, i.e., for every $s_1,s_2\in S$, $R^S(s_1,s_2)$ implies $R^T(\phi(s_1),\phi(s_2))$.
 }}
\end{rem}

 We say that a projective Fra\"{i}ss\'{e} family $\g_1$ is {\em coinitial } in a projective Fra\"{i}ss\'{e} family $\g_2$ if for every $A\in\g_2$ there are $B\in\g_1$ and an epimorphism $\phi: B\to A$.

\begin{prop}\label{Fraissef}
The family $\f$ is a projective Fra\"{i}ss\'{e} family.
 \end{prop}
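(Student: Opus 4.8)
The plan is to verify the two defining conditions of a projective Fra\"iss\'e family for $\f$: the joint projection property (JPP) and the amalgamation property (AP). Since epimorphisms between finite fans are exactly surjective homomorphisms (by the Remark preceding the statement), the whole argument is combinatorial and amounts to building finite fans together with root-preserving, $R$-preserving surjections.

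First I would dispose of JPP, which is the easy half. Given $A,B\in\f$, I would take a fan $C$ that is ``rich enough'' to project onto both: for instance a fan whose branches are long enough and numerous enough, or more concretely the fan whose set of branches is indexed by pairs $(b,b')$ with $b$ a branch of $A$ and $b'$ a branch of $B$, each such branch having height $\max(\mathrm{ht}(A),\mathrm{ht}(B))$. Collapsing the appropriate initial segments and identifying the extra branches gives epimorphisms $C\to A$ and $C\to B$; one only needs to check that these maps send the root to the root and preserve immediate successors, which is routine since collapsing a terminal or initial interval of a branch, and identifying branches, both preserve the relation $R$.

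The substance of the proof is AP. Given epimorphisms $\phi_1\colon B_1\to A$ and $\phi_2\colon B_2\to A$, I would construct $C$ as a suitable ``fan fibre product'' of $B_1$ and $B_2$ over $A$. The naive fibre product $\{(x,y)\in B_1\times B_2 : \phi_1(x)=\phi_2(y)\}$ need not be a fan (it may fail to be connected, or a point may acquire more than two neighbours, or branches may have mismatched lengths), so the real work is to repair it: one processes $A$ branch by branch, and over each branch $b$ of $A$ one glues together copies of the branches of $B_1$ and of $B_2$ lying above $b$, subdividing the shorter ones (or rather choosing, for each pair of a $B_1$-branch and a $B_2$-branch over $b$, a long enough branch of $C$ that maps onto both compatibly over the shared image in $A$). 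One then takes the root of $C$ to be the common preimage of the root of $A$, and defines $\psi_1,\psi_2\colon C\to B_1,B_2$ on each branch by the chosen branch maps. The commutativity $\phi_1\circ\psi_1=\phi_2\circ\psi_2$ holds by construction on each branch, hence globally; surjectivity of $\psi_1$ and $\psi_2$ is arranged by making sure every branch of $B_1$ and every branch of $B_2$ is used at least once; and $R$-preservation of $\psi_1,\psi_2$ follows because each branch map is a composition of interval-collapses, which preserve immediate successors.

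The main obstacle is precisely this last construction: keeping the amalgam a genuine \emph{finite fan}, i.e. connected, with a well-defined root, with no vertex of degree exceeding two off the root, and with all the branch maps simultaneously $R$-preserving and commuting over $A$. The bookkeeping is entirely finite and elementary, but it must be done with care about how branches of $B_1$ and $B_2$ over a common branch of $A$ are paired up and how their differing heights are reconciled; once the branch-by-branch gluing is set up correctly, every required property is checked one branch at a time.
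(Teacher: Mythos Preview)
The paper does not actually prove this proposition: immediately after the statement it remarks that the argument is ``essentially the same'' as the proof of \cite[Proposition~2.3]{BK} for the family $\f_1$ of finite fans with all branches of the same height, and leaves it at that. So there is no proof in the paper to compare against directly.

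That said, your outline is sound and matches the strategy the paper deploys for the closely related Lemma~\ref{fcapp} (AP for $\f_c$): reduce to the one-branch case and then assemble the general amalgam branch by branch from enough long branches. One point to tighten: when you say you take, for each pair of a $B_1$-branch $b_1$ and a $B_2$-branch $b_2$ over the same $A$-branch, a branch of $C$ that ``maps onto both compatibly,'' note that $\phi_1(b_1)$ and $\phi_2(b_2)$ may be \emph{different} initial segments of that $A$-branch; in that case no single branch of $C$ can surject onto both $b_1$ and $b_2$ while keeping $\phi_1\psi_1=\phi_2\psi_2$. The fix is exactly what your parenthetical hints at: map the $C$-branch onto the one with the longer $A$-image and onto only an initial segment of the other. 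Surjectivity of $\psi_1,\psi_2$ is then recovered because for every $A$-branch $b$ there exist $B_1$- and $B_2$-branches whose image is all of $b$, so every branch of $B_1$ (resp.\ $B_2$) gets fully covered by some pair. With that adjustment your bookkeeping goes through, and the approach is essentially the one the paper has in mind.
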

 
In \cite[Proposition 2.3]{BK}, we proved that the family, which we call now  $\f_1$, of finite  fans with all branches  of the same length,
 is a projective Fra\"{i}ss\'{e} family. The proof of Proposition \ref{Fraissef} is essentially the same as the proof that $\f_1$ is a projective Fra\"{i}ss\'{e} family. By Theorem \ref{is}, there exists a unique projective  Fra\"{i}ss\'{e} limit of $\f$, which we denote by $\lel=(\lel, R^{\lel})$. The underlying set $\lel$ is homeomorphic to the Cantor set. The family $\f_1$ is coinitial in $\f$,  and this implies 
(by Remark \ref{cofinall} and Theorem \ref{dual}) that
the projective Fra\"{i}ss\'{e} limits of $\f$ and $\f_1$ are isomorphic. 
 Let $ R_S^{\lel}$ be the symmetrization of $ R^{\lel}$, that is, 
$ R_S^{\lel}(s,t)$ if and only if $ R^{\lel}(s,t)$ or $  R^{\lel}(t,s)$, for  $s,t\in\lel$.

\begin{thm}[Theorem 2.5, \cite{BK} ]
The relation $ R_S^{\lel}$ is an equivalence relation which has only one and two element equivalence classes.
\end{thm}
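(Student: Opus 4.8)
The plan is to analyze the relation $R^{\lel}_S$ by pulling back information from the finite fans via epimorphisms and using the projective ultrahomogeneity (L3) together with the refinement property (Remark~\ref{coveri}). First I would observe that $R^{\lel}_S$ is reflexive (since $R^T$ contains all loops, so $R^{\lel}$ is reflexive) and symmetric by construction, so the substance is transitivity and the bound on equivalence class sizes. For transitivity, suppose $R^{\lel}_S(s,t)$ and $R^{\lel}_S(t,u)$ with $s,t,u$ pairwise distinct. Since $R^{\lel}$ is a closed relation and $\lel$ is zero-dimensional, for any epimorphism $\phi:\lel\to T$ onto a finite fan, $R^{\lel}_S(x,y)$ implies $R^T_S(\phi(x),\phi(y))$; conversely, the definition of epimorphism gives that $R^T(a,b)$ is witnessed by a pair in the corresponding fibers. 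The key point is that in a finite fan, the symmetrized edge relation $R^T_S$ is \emph{not} transitive (three vertices in a row $a \prec b \prec c$ give $R^T_S(a,b)$, $R^T_S(b,c)$ but not $R^T_S(a,c)$), so I cannot argue purely at the finite level; instead I must choose $\phi$ finely enough to separate $s,t,u$ into distinct fibers and then exploit homogeneity.

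The main technical step will be the size bound: showing no equivalence class has three or more elements. Suppose toward a contradiction that $s,t,u$ are three distinct points with $R^{\lel}_S(s,t)$, $R^{\lel}_S(s,u)$, $R^{\lel}_S(t,u)$. Using Remark~\ref{coveri}, pick an epimorphism $\phi:\lel\to T$ refining a cover that separates $s,t,u$, so $\phi(s),\phi(t),\phi(u)$ are three distinct vertices of $T$ with $R^T_S$ holding between each pair --- but a fan has no cycles (in particular no triangle), so this is already impossible once $\phi$ separates the three points. This handles triples. The analogous but more delicate issue is that $R^{\lel}$ itself might fail to have the right ``local'' structure: I must also rule out, e.g., a point $t$ with two distinct $R^{\lel}$-predecessors $s_1, s_2$ both $R^{\lel}_S$-equivalent to $t$, since in a finite fan a non-root vertex can have $R^T_S$-degree $2$ via one predecessor and one successor, or via two successors, and I need to show these collapse. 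Here I would use that the limit is obtained from \emph{all} finite fans and that amalgamation lets me realize configurations; combined with (L3), if a point had $R^{\lel}_S$-degree $\geq 3$ I could find two epimorphisms onto the same finite fan disagreeing on a forced relation, contradicting ultrahomogeneity --- or more directly, I refine by an epimorphism onto a fan in which the witnesses land on distinct vertices and get a forbidden cycle.

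The cleanest route, which I would actually write up, is: (i) prove $R^{\lel}_S(s,t)$ iff for \emph{every} epimorphism $\phi:\lel\to T$ with $T\in\f$ we have $R^T_S(\phi(s),\phi(t))$ --- the forward direction is immediate from epimorphism-preservation of relations plus symmetrization, and the backward direction uses closedness of $R^{\lel}$ and that epimorphisms refine covers (if $s,t$ are not $R^{\lel}_S$-related, closedness gives a basic clopen box around $(s,t)$ missing $R^{\lel}\cup R^{\lel,-1}$, and refining finely enough produces a fan where $\phi(s),\phi(t)$ are non-adjacent); (ii) deduce transitivity and the size bound simultaneously by noting that if a set $D\subseteq\lel$ of $\geq 3$ distinct points were pairwise $R^{\lel}_S$-related, then refining to separate all points of $D$ would embed a $K_3$ (or a path of length $2$ with its endpoints adjacent) into some finite fan, contradicting acyclicity; a two-element potential class $\{s,t\}$ survives precisely because $R^T_S$ between two vertices \emph{is} consistent across refinements. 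I expect step~(i), specifically the backward implication requiring a careful choice of refining epimorphism that keeps $\phi(s)\ne\phi(t)$ while killing adjacency, to be the main obstacle, since it is exactly where the infinitary structure of $\lel$ must be leveraged rather than a purely finite argument; the acyclicity argument for the size bound is then short.
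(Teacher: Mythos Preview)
The paper does not prove this theorem; it is quoted from \cite{BK} without argument, so there is no ``paper's own proof'' to compare against. I will therefore assess your proposal on its merits.

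There is a genuine gap. Your argument correctly rules out three \emph{pairwise} $R^{\lel}_S$-related distinct points (a $K_3$), but that is not what is needed. To get transitivity together with the size bound, you must exclude the weaker configuration: three distinct points $s,t,u$ with $R^{\lel}_S(s,t)$ and $R^{\lel}_S(t,u)$ only. In that situation, after refining by an epimorphism $\phi$ that separates $s,t,u$, the images form a \emph{path of length two} in the finite fan --- a configuration that is completely legal (any interior vertex of a branch has degree~$2$). There is no cycle, so your acyclicity argument does not apply. You acknowledge this difficulty in your second paragraph but then, in the ``cleanest route'' of the third paragraph, you silently revert to the pairwise-related hypothesis, which begs the question.

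What actually drives the result is a refinement (``splitting'') argument that you gestured at but did not carry out. Given $\phi:\lel\to A$ separating $s,t,u$ with $\phi(s),\phi(t),\phi(u)$ consecutive on a branch, build $B\in\f$ by subdividing the edge at $\phi(t)$ (replace $\phi(t)$ by two adjacent vertices $t_1,t_2$) and let $\psi:B\to A$ collapse $\{t_1,t_2\}$ back to $\phi(t)$. The projective extension property of the Fra\"iss\'e limit yields $\chi:\lel\to B$ with $\psi\chi=\phi$. Now $\chi(t)\in\{t_1,t_2\}$, while $\chi(s)=\phi(s)$ and $\chi(u)=\phi(u)$; whichever value $\chi(t)$ takes, one of $R^B(\chi(s),\chi(t))$, $R^B(\chi(t),\chi(u))$ fails, contradicting $R^{\lel}(s,t)$ or $R^{\lel}(t,u)$. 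Similar splittings handle the remaining orientations (two predecessors, two successors, including the root case). This is the missing idea; your cycle argument alone cannot close the gap.
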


\begin{thm}[Theorem 2.6, \cite{BK} ]
The quotient space $\lel/R^{\lel}_S$ is homeomorphic to the Lelek fan $L.$
\end{thm}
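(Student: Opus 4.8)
The plan is to verify that $Q:=\lel/R^{\lel}_S$ satisfies the characterization of the Lelek fan recalled in Section~\ref{prelim}: a non-degenerate subcontinuum of the Cantor fan whose endpoints are dense (\cite{BO},\cite{C}). I would split the argument into three parts: (i) $Q$ is a metrizable continuum; (ii) $Q$ is (homeomorphic to) a non-degenerate smooth fan, hence embeds in the Cantor fan; (iii) the endpoints of $Q$ are dense in $Q$. For (i), note first that since $R^{\lel}$ is a closed relation on the compact metrizable space $\lel$, its symmetrization $R^{\lel}_S$ is closed, and by the preceding theorem it is an equivalence relation with all classes of size at most two; hence $Q$ is compact, Hausdorff and second countable, so a metrizable compactum, and non-degenerate because $\lel$ is uncountable while the classes are finite. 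For connectedness, suppose $Q=A\sqcup B$ with $A,B$ nonempty and clopen; their preimages form an $R^{\lel}_S$-saturated clopen partition $\{\widetilde A,\widetilde B\}$ of $\lel$. By Remark~\ref{coveri} refine this cover by an epimorphism $\phi:\lel\to T$ onto some $T\in\f$, so that each fiber of $\phi$ lies entirely in $\widetilde A$ or in $\widetilde B$; this induces a $2$-colouring of the vertex set of $T$. If $R^T(s,t)$ with $s\neq t$, the epimorphism property supplies $x\in\phi^{-1}(s)$ and $y\in\phi^{-1}(t)$ with $R^{\lel}(x,y)$, so $R^{\lel}_S(x,y)$ and $x,y$ lie in the same saturated piece, whence $s$ and $t$ receive the same colour. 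Since $T$ is a connected graph the colouring is constant, so $A$ or $B$ is empty, a contradiction, and $Q$ is a continuum.

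For (ii), let $v:=[r]\in Q$ be the class of the point $r\in\lel$ corresponding, under the inverse presentation $\lel=\varprojlim(T_n,\pi_n)$ with $T_n\in\f$, to the roots of the $T_n$ (one checks, as in \cite{BK}, that the roots are respected by the connecting epimorphisms, so that $r$ is well defined). Because every $R^{\lel}_S$-class has at most two elements, every point of $Q$ other than $v$ has order at most $2$; and because finite fans contain no nontrivial cycles, $Q$ is hereditarily unicoherent and arcwise connected. Together with the fact that $v$ is the only ramification point, this identifies $Q$ as a fan with top $v$. One then checks that $Q$ is \emph{smooth}, i.e. $x_n\to x$ in $Q$ implies that the arcs $[v,x_n]$ converge to $[v,x]$ in the Hausdorff metric; here the coherence of the approximating fans along the inverse system is used, the arc $[v,x]$ being the limit of the unique branches of the $T_n$ through the fibers containing $x$. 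By the standard characterization of smooth fans, $Q$ then embeds in the Cantor fan; equivalently one builds the embedding directly, sending branches of $T_n$ to vertical segments of $C\times[0,1]$, passing to the induced map into $C\times[0,1]/\!\!\sim$, and verifying that its fibers are exactly the $R^{\lel}_S$-classes.

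For (iii), given a nonempty open $U\subseteq Q$, lift to a nonempty open $\widetilde U\subseteq\lel$ and, by Remark~\ref{coveri}, choose an epimorphism $\phi:\lel\to T$, $T\in\f$, and a fiber $\phi^{-1}(t)\subseteq\widetilde U$. Using that in $\f$ every finite fan receives an epimorphism from a fan in which the coordinate $t$ has been prolonged along its branch to a leaf of arbitrarily large height, and using the projective universality and ultrahomogeneity of $\lel$ to realize such prolongations coherently inside $\phi^{-1}(t)$, one obtains a point $e\in\widetilde U$ whose image $[e]\in Q$ is an endpoint of the fan $Q$. Hence the endpoints of $Q$ are dense, and by \cite{BO} and \cite{C} we conclude $Q\cong L$.

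I expect (ii) to be the crux. The abstract properties (L1)--(L3) yield connectedness and the one/two-element classes yield the local order, but showing that $Q$ is smooth — and that the natural map onto a subcontinuum of the Cantor fan has precisely the $R^{\lel}_S$-classes as fibers — requires working with the inverse system of finite fans itself rather than with the limit abstractly, and this is where the care has to go.
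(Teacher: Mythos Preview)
The paper does not prove this theorem: it is quoted from \cite{BK} (Theorem 2.6 there) and no argument is given here. So there is nothing in the present paper to compare your proposal against.

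That said, your strategy is the right one and is essentially the route taken in \cite{BK}: verify that the quotient is a smooth fan with a dense set of endpoints and invoke the Bula--Oversteegen/Charatonik characterization. Your part (i) is fine. In part (iii), the idea is correct but the phrasing ``prolonged along its branch to a leaf of arbitrarily large height'' is not quite the mechanism; what one actually uses is that for any $T\in\f$ and any $t\in T$ there is an epimorphism $\psi\colon T'\to T$ from some $T'\in\f$ with a branch whose \emph{endpoint} maps to $t$, and then (L1) and (L3) let you realize this inside $\lel$ so that the corresponding point of the quotient is an endpoint lying in $U$. In part (ii), as you yourself flag, the real work is not the order/ramification analysis but the smoothness (equivalently, the explicit embedding into the Cantor fan). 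This is where \cite{BK} spends its effort, constructing the embedding from the inverse system; your sketch gestures at this correctly but would need the concrete construction to be a proof. None of your steps is wrong in spirit, but (ii) as written is a plan rather than an argument.
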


Let $\pi:\lel\to L$ denote the quotient map given by $R^{\lel}_S$.
We denote by $\aut(\lel)$  the group of all automorphisms of $\lel$, that is, the group of all homeomorphisms of $\lel$ that preserve the relation $R$. 
This is a topological  group when equipped with the compact-open topology inherited from $H(\lel)$,
 the group of all homeomorphisms of the Cantor set underlying the structure $\lel$.
Since $R^\lel$ is closed in $\lel\times \lel$, the  group $\aut(\lel)$ is closed in $H(\lel)$.

%Note that every $h\in \aut(\lel)$ induces a homeomorphism  $h^*\in H(L)$
%satisfying $h^*\circ\pi(x)=\pi\circ h(x)$ for $x\in\lel$.
 Let $\pi^*$ be the map that takes $h\in \aut(\lel)$ to $h^*\in H(L)$ and  $h^*\pi(x)= \pi h(x)$
for every $h\in \aut(\lel)$ and $x\in\lel$.
We will frequently identify $\aut(\lel)$ with the corresponding subgroup $\{h^*: h\in \aut(\lel)\}$ of $H(L)$. 
Observe  that the compact-open topology  on $\aut(\lel)$ is finer than the topology on $\aut(\lel)$ that is inherited from the compact-open 
topology on $H(L)$.

\subsection{Spaces of maximal chains}\label{sec:chain}

We will assume throughout  the paper that every compact space is Hausdorff.
Let $K$ be a compact  topological space. A {\em chain} $\c$ on $K$ is  a family of closed subsets of $K$ such that
for every $C_1, C_2\in\c$, either $C_1\subset C_2$ or $C_2\subset C_1$. 
Sometimes we will call  the sets in a chain {\em links}.
We say that a chain $\c$ 
is {\em maximal} if for every closed set
$C\subset K$, if $\{C\}\cup\c$ is a chain then $C\in\c$. 
Note that if $\c$ is a maximal chain and $A\subset \c$, then $\bigcap A\in\c$ and $\overline{\bigcup A}\in\c$.

The set ${\rm Exp}(K)$ of all closed subsets of  
$K$ is equipped with the Vietoris topology
generated by the sets
\[ [U_1,\ldots, U_n]=\{ F\in {\rm Exp}(K): F\subset U_1\cup\ldots\cup U_n { \rm\ and\ for\ every\ } 
i=1,\ldots, n, \ F\cap U_i\neq\emptyset\}, \] 
where $n\in\n$ and $U_1,\ldots, U_n$ are open in $K$.
Without loss of generality, $U_1,\ldots, U_n$ are only taken from some fixed basis of $K$.
If $K$ is metrizable by a metric $d_0$ then
the space ${\rm Exp}(K)$  is metrizable by the {\em Hausdorff metric} given by
\[d(X,Y)= \max \{ \sup_{x\in X} \inf_{y\in Y} d_0(x,y), \sup_{y\in Y} \inf_{x\in X} d_0(x,y) \}.
\]
It is not hard to show (see \cite[Lemma 6.4.7]{P2}) that every maximal chain is closed in the Vietoris topology.
It is well known that 
${\rm Exp}(K)$ is compact.
Uspenskij \cite{U} showed that 
the set of maximal chains on $K$ is closed 
in ${\rm Exp (Exp}(K))$, and therefore it is compact.

Let $\leq$ be a  partial order on $K$. We say that a  set $C\subset K$ is {\em downwards closed} if it is closed and for 
every $x, y\in K$, if $y\in C$ and $x\leq y$ 
then $x\in C$, and a chain $\c$ on $K$ be {\em downwards closed} if every $C\in\c$ is downwards closed.
A downwards closed chain $\c$ is {\em downwards closed maximal} if for every downwards closed set
$C\subset K$, if $\{C\}\cup\c$ is a 
chain then $C\in\c$.

For a map $f: Y\to X$ and a chain $\c$ on $Y$,  by $f(\c)$ we will denote the chain  $\{f(C): C\in\c\}$.
We start with the following observations.
 \begin{lemma}\label{image3}
Let $K,M$ be compact sets and let $f: M\to K$ be a continuous surjection. If $\c$ is a maximal chain in $M$,
then the chain $f(\c)$ is also maximal. 
\end{lemma}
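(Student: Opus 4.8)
The plan is to verify that $f(\c)$ satisfies the definition of a maximal chain: first that it is a chain of closed sets, and then that it admits no proper extension. That $f(\c)$ is a chain is immediate, since $f$ is monotone with respect to inclusion: if $C_1\subseteq C_2$ then $f(C_1)\subseteq f(C_2)$, so the images of two comparable sets are comparable. That each $f(C)$ is closed follows from compactness of $M$ (hence of each $C\in\c$) together with continuity of $f$, which makes $f(C)$ a compact, hence closed, subset of $K$.

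The substantive point is maximality. Suppose $D\subseteq K$ is closed and $\{D\}\cup f(\c)$ is a chain; I must show $D\in f(\c)$. The natural move is to pull back: consider $f^{-1}(D)$, which is closed in $M$ since $f$ is continuous. I would like to say $\{f^{-1}(D)\}\cup\c$ is a chain and then invoke maximality of $\c$ to get $f^{-1}(D)\in\c$, whence $D=f(f^{-1}(D))\in f(\c)$ because $f$ is surjective. To see that $\{f^{-1}(D)\}\cup\c$ is a chain, take $C\in\c$. Then $f(C)$ is comparable with $D$ by hypothesis. If $f(C)\subseteq D$, then $C\subseteq f^{-1}(f(C))\subseteq f^{-1}(D)$, so $C\subseteq f^{-1}(D)$. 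If instead $D\subseteq f(C)$, I need $f^{-1}(D)\subseteq C$; here I would use that $\c$ is \emph{maximal}, not merely that $f(C)\supseteq D$. Indeed, maximality of $\c$ gives, for the closed set $f^{-1}(D)$, that either $f^{-1}(D)$ is comparable with every element of $\c$ (in which case we are done after the preceding argument), or else there is some $C'\in\c$ with $f^{-1}(D)$ and $C'$ incomparable; I would derive a contradiction by pushing this forward through $f$ and using that $D$ is comparable with $f(C')$.

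I expect the main obstacle to be precisely this last implication: from $D\subseteq f(C)$ concluding $f^{-1}(D)\subseteq C$ can fail for a general continuous surjection, because $f^{-1}(D)$ may meet fibers over points of $f(C)\setminus D$ only if those fibers are not entirely inside $C$ — but $C$ need not be saturated (a union of fibers). The resolution is to argue by maximality rather than pointwise: assume toward a contradiction that $f^{-1}(D)\notin\c$; by maximality of $\c$ there is $C_0\in\c$ with $f^{-1}(D)\not\subseteq C_0$ and $C_0\not\subseteq f^{-1}(D)$. Pick $x\in f^{-1}(D)\setminus C_0$ and $y\in C_0\setminus f^{-1}(D)$. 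Then $f(x)\in D$ and $f(y)\in f(C_0)$ with $f(y)\notin D$, so $D\not\supseteq f(C_0)$, forcing $f(C_0)\subsetneq D$ by comparability of $D$ and $f(C_0)$; but $f(C_0)\subseteq D$ means $C_0\subseteq f^{-1}(D)$, contradicting the choice of $y$. Hence $f^{-1}(D)\in\c$, and surjectivity of $f$ yields $D=f(f^{-1}(D))\in f(\c)$, completing the proof.
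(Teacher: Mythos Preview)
Your argument contains a genuine gap. In the final paragraph you write: ``$f(y)\notin D$, so $D\not\supseteq f(C_0)$, forcing $f(C_0)\subsetneq D$ by comparability.'' This inference is backwards. From $f(C_0)\not\subseteq D$ and comparability you obtain $D\subseteq f(C_0)$, not $f(C_0)\subseteq D$. And from $D\subseteq f(C_0)$ you cannot conclude $f^{-1}(D)\subseteq C_0$, so no contradiction follows.

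This is not merely a slip of the pen: the strategy of proving $f^{-1}(D)\in\c$ is doomed, because that statement is false in general. Take $M=\{0,1,2\}$ with the discrete topology, $K=\{a,b\}$, and $f(0)=f(2)=a$, $f(1)=b$. The chain $\c=\{\emptyset,\{0\},\{0,1\},\{0,1,2\}\}$ is maximal and $f(\c)=\{\emptyset,\{a\},\{a,b\}\}$ is maximal as well. For $D=\{a\}$ we have $f^{-1}(D)=\{0,2\}$, which is incomparable with $\{0,1\}\in\c$; hence $f^{-1}(D)\notin\c$ even though $D\in f(\c)$.

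The paper's fix is to intersect $f^{-1}(D)$ with a well-chosen element of $\c$. Let $M_0=\bigcap\{E\in\c: f(E)\supseteq D\}$; by maximality $M_0\in\c$, and a compactness argument gives $D\subseteq f(M_0)$. Then $J=f^{-1}(D)\cap M_0$ satisfies $f(J)=D$, and now the comparability check goes through: for $E\in\c$ with $E\supseteq M_0$ one has $E\supseteq J$, while for $E\subsetneq M_0$ one has $f(E)\subsetneq D$ (by the minimality of $M_0$) and hence $E\subseteq f^{-1}(D)\cap M_0=J$. Maximality of $\c$ then yields $J\in\c$.
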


\begin{proof}
Suppose a closed set $D\subset K$ is such that $\{D\}\cup f(\c)$ is a chain.
We will show that there is $J\in\c$ satisfying $f(J)=D$.
 Let  $K_1=\{ C\in f(\c): C\supset D\}$ and 
let $M_1=\{E\in\c: f(E)\in K_1\}.$
   As $\c$ is maximal, $M=\bigcap M_1\in \c$.
  Since $D\subset f(M)$, 
  we have that $J=f^{-1}(D)\cap M$ satisfies $f(J)=D$ and has the property that $\{J\}\cup\c$ is a chain,
  and hence by the maximality of $\c$, $J\in\c$   as required.
\end{proof}

Using Zorn's Lemma, we get the following.
\begin{lemma}
Let $K$ be a compact set and let $\mathcal{D}$ be a chain on $K$. Then there is a maximal chain on $K$ that extends $\mathcal{D}$.
\end{lemma}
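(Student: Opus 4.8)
The plan is to apply Zorn's Lemma to the partially ordered set $\mathcal{P}$ consisting of all chains on $K$ that contain $\mathcal{D}$, ordered by inclusion. This is a legitimate partially ordered set, being a subfamily of the power set of $\mathrm{Exp}(K)$, and it is nonempty since $\mathcal{D}\in\mathcal{P}$.

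The key step is to check that every linearly ordered subset $\mathcal{T}\subseteq\mathcal{P}$ has an upper bound in $\mathcal{P}$. If $\mathcal{T}=\emptyset$, then $\mathcal{D}$ is such an upper bound; otherwise I would take $\mathcal{C}^*=\bigcup\mathcal{T}$. This is a family of closed subsets of $K$, and it contains $\mathcal{D}$ because each member of $\mathcal{T}$ does. It is a chain on $K$: given $C_1,C_2\in\mathcal{C}^*$, choose $\mathcal{C}_1,\mathcal{C}_2\in\mathcal{T}$ with $C_i\in\mathcal{C}_i$; since $\mathcal{T}$ is linearly ordered by inclusion we may assume $\mathcal{C}_1\subseteq\mathcal{C}_2$, so $C_1,C_2\in\mathcal{C}_2$, and as $\mathcal{C}_2$ is a chain we get $C_1\subseteq C_2$ or $C_2\subseteq C_1$. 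Hence $\mathcal{C}^*\in\mathcal{P}$, and it visibly contains every member of $\mathcal{T}$.

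By Zorn's Lemma, $\mathcal{P}$ has a maximal element $\mathcal{C}$. To finish, I would verify that $\mathcal{C}$ is a maximal chain on $K$ in the sense of the definition given in Section~\ref{sec:chain}. Suppose $C\subseteq K$ is closed and $\{C\}\cup\mathcal{C}$ is a chain. Then $\{C\}\cup\mathcal{C}$ is a chain on $K$ containing $\mathcal{D}$, so it lies in $\mathcal{P}$ and contains $\mathcal{C}$; by maximality of $\mathcal{C}$ in $\mathcal{P}$ we get $\{C\}\cup\mathcal{C}=\mathcal{C}$, that is, $C\in\mathcal{C}$. This shows $\mathcal{C}$ is maximal, and since $\mathcal{C}\supseteq\mathcal{D}$ it extends $\mathcal{D}$.

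There is no real obstacle here; the only point requiring a line of care is the verification above that a union of a linearly ordered family of chains is again a chain, which is the linearity argument in the second paragraph. Everything else is a routine instance of the Zorn's Lemma scheme.
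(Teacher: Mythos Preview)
Your proof is correct and follows exactly the approach indicated in the paper, which simply states that the lemma follows from Zorn's Lemma without giving details. You have filled in the routine verification that the paper omits.
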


Let $\f^*$ be the family of all topological $\ll$-structures that are countable inverse limits of finite  fans in $\f$. 
If $P$ is the  inverse limit of $(A_n, f_m^n)$, 
the relation $R^P$ on $P$ is defined as follows
\[ R^P(x,y) \text{ iff  for every } n, \  R^{A_n}(f^\infty_n(x), f^\infty_n(y)).
\]
Clearly we can identify $\f$ with a subset of $\f^*$ by assigning to $A$ the inverse limit of $(A, \Id_m^n)$.
Recall that $A\in\f$ is equipped with the tree partial order $\preceq_A$; we let $x\preceq_A y$ iff $x$ belongs to the segment
joining the root of $A$, $v_A$, with $y$. 
We let 
\[x\preceq_P y  \text{ iff  for every } n, \  f^\infty_n(x)\preceq_{A_n} f^\infty_n(y).\]

In particular, we have just defined a partial order on $\lel$, the projective Fra\"{i}ss\'{e} limit of the family $\f$ of finite  fans. 
 This in turn defines a partial order on $L$ by $x\preceq_L y$ if and only if for some (equivalently, for any) $v,w\in\lel$ such that $\pi(v)=x$ and $\pi(w)=y$, 
we have $v\preceq_{\lel} w$, where  $\pi: \lel \to L$ is the quotient map.
Whenever we talk about downwards closed sets on $P\in\f^*$ or on $L$, we will  understand that they are downwards closed with respect to $\preceq_P$ or
$\preceq_L$, respectively.

\begin{rem}
{\rm A closed set in $L$ is downwards closed if and only if it is connected and it contains the root of $L$.}
\end{rem}

\begin{lemma}\label{cm}
Every downwards closed maximal chain on $P\in\f^*$  
is maximal.
\end{lemma}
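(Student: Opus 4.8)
The plan is to show that if $\c$ is a downwards closed maximal chain on $P$, then no closed set $C\subseteq P$ can be added to $\c$ while keeping it a chain. So suppose towards a contradiction that $C$ is a closed set, not necessarily downwards closed, such that $\{C\}\cup\c$ is a chain. The idea is to replace $C$ by its downward closure $\widehat C=\{x\in P: \exists y\in C\ x\preceq_P y\}$ and argue two things: first, that $\widehat C$ is closed, and second, that $\{\widehat C\}\cup\c$ is still a chain. Since $\c$ is downwards closed maximal, this forces $\widehat C\in\c$. Then I would show that $C$ itself must already coincide with some link of $\c$, or rather that $C\in\c$, which contradicts the assumption that $C\notin\c$ (if $C\in\c$ there is nothing to prove).

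First I would verify that the downward closure of a closed set in $P$ is closed. This uses the structure of $P\in\f^*$: the order $\preceq_P$ is defined coordinatewise via the finite fans $A_n$, and in each finite fan the downward closure of a point is just the finite segment $[v_{A_n}, f^\infty_n(x)]_{\preceq_{A_n}}$. Concretely, $\widehat C = \bigcap_n (f^\infty_n)^{-1}\big(\widehat{f^\infty_n(C)}\big)$, where $\widehat{\,\cdot\,}$ on the right denotes downward closure in the finite fan $A_n$; each set on the right is clopen in $P$ (preimage of a subset of a finite discrete space), so $\widehat C$ is closed. Actually I should double-check this identity: $x\in\widehat C$ iff $x\preceq_P y$ for some $y\in C$; the inclusion $\subseteq$ into the right-hand side is immediate. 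For $\supseteq$, if $f^\infty_n(x)\preceq_{A_n} f^\infty_n(y_n)$ for some $y_n\in C$ for every $n$, then by compactness of $C$ a subnet of $(y_n)$ converges to some $y\in C$, and one needs $x\preceq_P y$; this should follow because for each fixed $m$, $f^\infty_m(x)\preceq_{A_m}f^\infty_m(y_n)$ holds for all $n\ge m$ (since $f^\infty_m = f^m_n\circ f^\infty_n$ and epimorphisms of fans preserve the tree order restricted to the relevant segment — this is the routine point to check), hence $f^\infty_m(x)\preceq_{A_m}f^\infty_m(y)$ in the limit by closedness of $\preceq_{A_m}$.

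Next I would show $\{\widehat C\}\cup\c$ is a chain, i.e. $\widehat C$ is comparable with every $D\in\c$. Fix $D\in\c$, which is downwards closed. If $D\subseteq C$ then $D\subseteq\widehat C$ and we are done. Otherwise $C\subseteq D$, and then $\widehat C\subseteq \widehat D = D$ because $D$ is downwards closed. So comparability is automatic, and by downwards closed maximality of $\c$ we get $\widehat C\in\c$. Now return to $C$: we know $C\subseteq\widehat C$ and $\widehat C\in\c$, and for every $D\in\c$ with $D\subsetneq\widehat C$ we must (by the chain condition on $\{C\}\cup\c$) have either $D\subseteq C$ or $C\subseteq D$; since $C\subseteq\widehat C$ this is consistent, but I want to conclude $C=\widehat C$. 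Set $D_0 = \overline{\bigcup\{D\in\c: D\subsetneq \widehat C\}}\in\c$. The plan's final step is: $C$ lies between $D_0$ and $\widehat C$, and by maximality of $\c$ these two links cannot have a closed set properly between them unless $\widehat C = D_0$, forcing $C = \widehat C\in\c$ or $C=D_0\in\c$; either way $C\in\c$, contradiction. (If $\widehat C = D_0$ already, then $C\subseteq\widehat C = D_0$ and $C\supseteq$ or $\subseteq$ each smaller link, and one argues directly that $C$ must be $D_0$ or coincide with a link, again using that an ordinary maximal chain has no "gaps" — which is exactly \cite[Lemma 6.4.7]{P2} type reasoning on closedness of maximal chains.)

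The main obstacle I anticipate is the very last step — deducing $C\in\c$ from the facts that $C$ is comparable with every link of $\c$ and that $\widehat C\in\c$. This is essentially the statement that a maximal chain (ordinary, not downwards closed) is "Dedekind complete with no gaps" in the relevant sense, and one has to be careful to extract from downwards closed maximality enough to pin $C$ down exactly, since $\c$ is only assumed maximal among downwards closed chains, not among all chains. The cleanest route is probably: since $C\subseteq\widehat C$, consider $\widehat C\setminus$ the union of all links strictly below $\widehat C$; if this "jump" is a single point or empty, standard arguments force $C$ to equal one of the two links; and downwards closed maximality of $\c$ should guarantee that every such jump in $\c$ behaves this way. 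I would isolate this as a small auxiliary claim about the order-structure of downwards closed maximal chains on $P$ and prove it first.
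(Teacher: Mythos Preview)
Your approach diverges from the paper's at the crucial step. The paper never forms the downward closure $\widehat C$. Instead it shows directly that any closed $D$ with $\{D\}\cup\c$ a chain must itself be downwards closed, after which $D\in\c$ is immediate from the hypothesis. The mechanism is projection: for each $n$ the image chain $\c^{A_n}=\{f^\infty_n(E):E\in\c\}$ is a downwards closed maximal chain on the finite fan $A_n$ (by the argument of Lemma~\ref{image3}), and in a finite fan such a chain is already maximal among \emph{all} subsets, since both kinds of maximal chain have exactly $|A_n|+1$ links. As $f^\infty_n(D)$ is comparable with every link of $\c^{A_n}$, it lies in $\c^{A_n}$ and is therefore downwards closed; this holds for every $n$, so $D$ is downwards closed in $P$. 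That is the entire argument.

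Your route can be completed, but the final step you flag is a genuine gap, not a routine check. What you need is: if $D_0\subsetneq E$ are consecutive links of $\c$ (no link of $\c$ strictly between), then $|E\setminus D_0|=1$. This holds because for any $x\in E\setminus D_0$ the set $D_0\cup\{z:z\preceq_P x\}$ is closed, downwards closed, contained in $E$, and strictly contains $D_0$, hence equals $E$ by downwards closed maximality; applying this to two points of $E\setminus D_0$ forces them to be $\preceq_P$-equivalent and hence equal. With that lemma your sandwich $D_0\subseteq C\subseteq\widehat C$ does force $C\in\{D_0,\widehat C\}\subset\c$. Note, though, that the detour through $\widehat C$ buys nothing: the same sandwich works with $D^-=\overline{\bigcup\{D\in\c:D\subseteq C\}}$ and $D^+=\bigcap\{D\in\c:C\subseteq D\}$ directly. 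The paper's projection argument is shorter and sidesteps this gap analysis entirely by reducing to the finite case, where maximality among downwards closed chains and maximality among all chains visibly coincide.
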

\begin{proof}
It is not hard to see that the conclusion is true for a structure  in $\f$.
Let $\c$  be a downwards closed maximal chain on $P=\varprojlim(A_n, f^n_m)\in\f^*$. Then for each $n$, $\c^{A_n}=\{ f^{\infty}_n(C): C\in\c\}$ is a downwards closed chain
which is  maximal, by the same argument as in Lemma~\ref{image3}.
If a closed set $D\subset P$ is such that $\c\cup\{D\}$ is a chain, then for each $n$, $f^{\infty}_n(D)\in\c^{A_n}$
by the maximality of $\c^{A_n}$ and therefore $f^{\infty}_n(D)$ is downwards closed, and consequently so is $D$,
which implies $D\in\c$.
\end{proof}

\begin{prop}\label{ccompact}
For every $P\in\f^*$  
the set of all downwards closed maximal chains on $P$ is compact.
In particular, the set of all downwards closed maximal chains on $\lel$ is compact.
\end{prop}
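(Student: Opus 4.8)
The plan is to show that the set of downwards closed maximal chains on $P$ is a closed subset of $\Exp(\Exp(P))$, which is compact; compactness of the set in question then follows, and the "in particular" is the special case $P = \lel$ (note $\lel$ is itself a countable inverse limit of finite fans, hence in $\f^*$). We already know, by the result of Uspenskij quoted in Section~\ref{sec:chain}, that the set of \emph{all} maximal chains on $P$ is closed in $\Exp(\Exp(P))$; and by Lemma~\ref{cm} every downwards closed maximal chain on $P$ is in fact maximal. So it suffices to prove that, within the space of all maximal chains on $P$, the ones that are downwards closed form a closed set — equivalently, that "being downwards closed" is a closed condition on chains. Combining "closed in the maximal chains" with "the maximal chains are closed in $\Exp(\Exp(P))$" gives the claim.

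The key step is therefore: the set $\{C \in \Exp(P) : C \text{ is downwards closed}\}$ is closed in $\Exp(P)$, and consequently the set of chains all of whose links are downwards closed is closed in $\Exp(\Exp(P))$. For the first part, recall that $\preceq_P$ is defined via the closed relations $\preceq_{A_n}$ on the finite factors $A_n$ (indeed $x \preceq_P y$ iff for every $n$, $f^\infty_n(x) \preceq_{A_n} f^\infty_n(y)$), so $\preceq_P$ is a \emph{closed} partial order on the compact zero-dimensional space $P$. A standard fact about the Vietoris topology (or a direct check using basic open sets $[U_1,\ldots,U_n]$) is that for a closed partial order $\leq$ on a compact space $K$, the set of closed $\leq$-downwards closed subsets is closed in $\Exp(K)$: if $C_\alpha \to C$ with each $C_\alpha$ downwards closed, and $y \in C$, $x \preceq_P y$, one approximates $y$ by points $y_\alpha \in C_\alpha$, uses closedness of $\preceq_P$ together with zero-dimensionality to produce $x_\alpha \in C_\alpha$ with $x_\alpha \preceq_P y_\alpha$ and $x_\alpha \to x$, and concludes $x \in C$ since $C$ is closed. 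For the second part, that the set of chains with all links downwards closed is closed in $\Exp(\Exp(P))$, one again argues with Vietoris basic opens: a chain $\c$ failing to have all links downwards closed contains some link $C$ that is not downwards closed, and since $\{$downwards closed sets$\}$ is closed (hence its complement open) in $\Exp(P)$, a whole Vietoris neighborhood of $\c$ in $\Exp(\Exp(P))$ consists of chains containing a non-downwards-closed link.

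Putting it together: the set of downwards closed maximal chains on $P$ equals the intersection of (i) the set of maximal chains on $P$, which is closed in $\Exp(\Exp(P))$ by Uspenskij's theorem, with (ii) the set of chains all of whose links are downwards closed, which is closed by the previous paragraph — using Lemma~\ref{cm} to see that this intersection is exactly what we want (a maximal chain with all links downwards closed is, a fortiori, a downwards closed maximal chain, and conversely). As a closed subset of the compact space $\Exp(\Exp(P))$, it is compact. Taking $P = \lel$ gives the final sentence.

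I expect the main obstacle to be the bookkeeping in the claim that downwards-closedness is a closed condition in the Vietoris topology. One has to be careful that the approximating net $x_\alpha$ lies in $C_\alpha$ and not merely in $P$; this is where closedness of the relation $\preceq_P$ (inherited from the finite factors) and compactness are essential. An alternative, perhaps cleaner, route avoiding nets: observe $C$ is downwards closed iff $C = {\downarrow} C := \{x : \exists y \in C,\ x \preceq_P y\}$, that ${\downarrow}(\cdot)$ is a continuous self-map of $\Exp(P)$ when $\preceq_P$ is closed, and that the equalizer of a continuous map with the identity is closed — but verifying continuity of ${\downarrow}(\cdot)$ is essentially the same computation. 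Either way, once "downwards closed" is known to be a Vietoris-closed property of subsets, everything else is formal.
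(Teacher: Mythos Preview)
Your overall strategy matches the paper's exactly: show that the set ${\rm CExp}(P)$ of downwards closed closed subsets of $P$ is closed in $\Exp(P)$, then combine this with Uspenskij's theorem, Lemma~\ref{cm}, and the general observation that $\{E\in\Exp(K): E\subset D\}$ is closed whenever $D$ is. Your second paragraph and the paper's final paragraph are essentially identical.

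The gap is in your argument that ${\rm CExp}(P)$ is closed. The claimed ``standard fact'' --- that for a closed partial order on a compact zero-dimensional space the downwards closed subsets form a closed family in $\Exp$ --- is false. Take $K=2^\omega$ with the partial order $x\le y$ iff $x=y$ or $(x,y)=(\bar 0,\bar 1)$; its graph is the diagonal together with one isolated point, hence closed. With $y_n=1^n0^\omega$ each singleton $\{y_n\}$ is downwards closed and $\{y_n\}\to\{\bar 1\}$ in the Vietoris topology, yet $\{\bar 1\}$ is not downwards closed. Your net argument breaks exactly here: you need $x_\alpha\preceq_P y_\alpha$ with $x_\alpha\to x$, but in this example $\downarrow\{y_n\}=\{y_n\}$ and no such choice exists. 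The alternative route via continuity of $C\mapsto{\downarrow}C$ fails for the same reason.

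What the paper does instead is exploit the explicit inverse-limit-of-fans structure of $P$. Given a closed $K$ that is not downwards closed, witnessed by $x\notin K$, $y\in K$ with $x\preceq_P y$, one picks $n$ large enough that the clopen set $A:=(f^\infty_n)^{-1}(a)$, where $a=f^\infty_n(x)$, misses $K$, and sets $B:=(f^\infty_n)^{-1}(\{b\in A_n: a\prec_{A_n} b\})$. Then $[B,P]\cap[P\setminus A]$ is a Vietoris neighbourhood of $K$ consisting entirely of non-downwards-closed sets. The verification of this last assertion uses a lifting property specific to inverse limits of fans under epimorphisms: for every $z\in P$ and every $a\preceq_{A_n} f^\infty_n(z)$ there exists $w\preceq_P z$ with $f^\infty_n(w)=a$. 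It is this structural fact, not a general principle about closed orders, that does the work.
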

\begin{proof}
We first show that the set  ${\rm CExp}(P)$ of all downwards closed closed subsets of $P=\varprojlim(A_n, f^n_m)$ 
is closed in ${\rm Exp}(P)$. 
Let $K\subset P$ be closed but not downwards closed, witnessed by $x\notin K$ and $y\in K$ be such that $x\preceq_P y$.
Pick $n$ and $a\in A_n$ such that $a=f^\infty_n(x)\neq f^\infty_n(y)$ and $A=(f^\infty_n)^{-1}(a)\cap K=\emptyset$.
Let $B=(f^\infty_n)^{-1}(\{b\in A_n: a\prec_{A_n} b\})$. Clearly $B$ is open and $y\in B$. Then
\[  V := [B,P]\cap [P\setminus A]=\{L\in {\rm Exp}(P): L\cap B\neq\emptyset {\rm \ and \ } L\subset P\setminus A\} \]
is such that $K\in V$ and all sets in $V$ are not downwards closed, which finishes the proof that  ${\rm CExp}(P)$ is closed.

Since, Uspenskij proved in \cite{U} that the set of maximal  chains is closed in ${\rm Exp (Exp}(P))$, by Lemma \ref{cm},
it is enough to show that the set of points in  ${\rm Exp (Exp}(P))$ consisting of sets contained  ${\rm CExp}(P)$ is again a closed sets.
This last thing follows from the following simple general observation: If $K$ is a compact  space and $D\in {\rm Exp}(K)$,
then $\{E\in {\rm Exp}(K): E\subset D\}$ is closed in ${\rm Exp}(K)$. Finally, take $K={\rm Exp} (P)$ and $D={\rm CExp}(P)$.
\end{proof}

\subsection{Precompact uniform spaces}\label{pus} 

A good introduction to   uniform spaces can be found in Engelking \cite{E}, Chapter 8 (precompact spaces  are called totally bounded there),
Below we  briefly review the very minimum that is needed for the paper, all undefined concepts are in Engelking \cite{E}.

A {\em uniformity} is a set $X$ together with a family $\u$ of subsets of $X\times X$ having the following properties:
\begin{enumerate}
\item each $U\in\u$ contains the diagonal $\{(x,x): x\in X\}$;
\item if $U\in \u$ and $U\subset V$, then $V\in\u$;
\item if $U,V\in\u$, then $U\cap V\in\u$;
\item if $U\in\u$, then $U^{-1}=\{(y,x): (x,y)\in U\}\in\u$;
\item if $U\in\u$ then there is $V\in \u$ such that $V\circ V=\{(x,z): { \rm\  there\  exists\  } y\in X { \rm\  such\  that\  } (x,y)\in V { \rm\ and\  } (y,z)\in V\}\subset U$.
\end{enumerate} 
Every uniform space $(X,\u)$ becomes a topological space if we declare $U\subset X$
to be open if and only if for every $x \in U$ there exists an $ V\in\u$ such that $V[x]=\{y\in X: (x,y)\in V\}\subset~U$.
A function $f: (X,\u)\to (Y,\mathcal{V})$ between uniform spaces is called {\em uniformly continuous} if for every
$V\in\mathcal{V}$ there exists $U\in\u$ such that  $f(U)\subset V$.
We say that a uniform space $(X,\mathcal{U})$ is {\em precompact} if for every $U\in\mathcal{U}$
 there are finitely many $x_1,\ldots, x_n\in X$ such that $X=\{x\in X: \exists_i (x,x_i)\in U\}$.
 Equivalently, a uniform space $(X,\mathcal{U})$ is precompact if its completion is compact.
If $(X,\mathcal{U})$ is metrizable by a metric $d$ (i.e. the topology induced by $(X,\mathcal{U})$ is equal to the topology induced by $d$)
then the completion of $(X,\mathcal{U})$ is
equal to the completion of $(X,d)$ (see Lemma 8.3.7 and  Proposition 8.3.5 in Engelking \cite{E}).
This implies, 
$(X,\mathcal{U})$ is precompact if and only if the metric space  $(X,d)$ is precompact. 

Recall  that any compact   space $X$ has the unique uniformity compatible with the topology. This uniformity 
consists of all symmetric neighbourhoods of the diagonal in $X\times X$.

A topological group $G$ 
admits a few natural uniform structures compatible with its topology. We will be working with the right uniformity, which
is generated by the sets
\[ O_V=\{(x,y): xy^{-1}\in V\},\]
where $V$ is an open symmetric neighbourhood of the identity in $G$. 
For a closed subgroup $H$ of $G$ we consider the quotient space $G/H$ with the  quotient uniformity generated by the sets
\[ U_V=\{(xH,yH): xy^{-1}\in V\}=\{(gH,vgH): g\in H, v\in V\}  ,\]
where $V$ is an open symmetric neighbourhood of the identity in $G$. This uniformity is compatible with the quotient topology of $G/H$ and $G/H$ is precompact if and only if
 for every open symmetric neighbourhood $V$ of the identity in $G$, there exist finitely many $x_1,\ldots, x_n\in G$
 such that $G=\bigcup_{i=1}^n (Vx_iH)$. 

If $G$ is a Polish group and $d_R$ is a right-invariant metric on $G$, then the uniformity on $G/H$ is metrizable by the metric
\[ d(g_1H, g_2H)=\inf_{h\in H} d_R(g_1 h, g_2).\]

The following is a folklore, but we could not find a proof, therefore we  include it here.
\begin{prop}\label{extension}
Suppose that $G/H$ is precompact, where $G,H$ are Polish groups, $H$ is a closed subgroup of $G$.
 Then the  continuous action of $G$ on $G/H$ by left translations, $g_1\cdot (g_2H)=(g_1g_2)H$, extends 
to a continuous action of $G$ on the completion $\widehat{G/H}$.
\end{prop}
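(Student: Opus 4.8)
The plan is to show that the left translation action of $G$ on $G/H$ is uniformly continuous with respect to the quotient uniformity, and then invoke the standard fact that a uniformly continuous map between uniform spaces extends uniquely to a uniformly continuous map between their completions. Since $\widehat{G/H}$ is compact (this is the precompactness hypothesis), continuity of the extended action in each variable separately, plus a short argument, will give joint continuity.

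First I would fix $g\in G$ and show that the map $x\mapsto g\cdot x$ on $G/H$ is uniformly continuous. A basic entourage of the quotient uniformity has the form $U_V=\{(xH,yH): xy^{-1}\in V\}$ for $V$ an open symmetric neighbourhood of the identity. Given such a $V$, choose an open symmetric neighbourhood $W$ of the identity with $g^{-1}Wg\subseteq V$ (possible by continuity of conjugation); then $(xH,yH)\in U_W$ implies $xy^{-1}\in W$, hence $(gx)(gy)^{-1}=g(xy^{-1})g^{-1}\in gWg^{-1}$, and so, after adjusting, one gets $(g\cdot xH, g\cdot yH)\in U_V$. Thus for each fixed $g$, left translation $\lambda_g$ is uniformly continuous, hence extends uniquely to a continuous map $\widehat{\lambda_g}:\widehat{G/H}\to\widehat{G/H}$. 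Uniqueness of the extension immediately yields $\widehat{\lambda_{g_1g_2}}=\widehat{\lambda_{g_1}}\circ\widehat{\lambda_{g_2}}$ and $\widehat{\lambda_e}=\mathrm{Id}$, so we obtain an action of $G$ on $\widehat{G/H}$ by homeomorphisms.

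It remains to check that the resulting action map $G\times\widehat{G/H}\to\widehat{G/H}$ is jointly continuous. The subtle point — and the step I expect to be the main obstacle — is passing from separate continuity (continuity in $g$ for fixed point, continuity in the point for fixed $g$) to joint continuity, because a priori the extended map is only defined pointwise. The cleanest route is to establish directly that the map $G\times G/H\to G/H$ is uniformly continuous when $G$ carries its right uniformity and $G/H$ its quotient uniformity, restricted to $g$ ranging over a fixed compact (equivalently, any totally bounded) neighbourhood of a given $g_0$: given an entourage $U_V$, pick $W$ symmetric with $W^3\subseteq V$ and shrink further using continuity of conjugation uniformly over a compact set of $g$'s, so that whenever $g_1g_2^{-1}\in W$ (close group elements) and $(xH,yH)\in U_W$ (close points), one gets $(g_1\cdot xH, g_2\cdot yH)\in U_V$ by writing $g_1 x (g_2 y)^{-1} = (g_1 g_2^{-1})\,(g_2 x y^{-1} g_2^{-1})$ and bounding each factor. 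A uniformly continuous map on $G\times G/H$ (locally in $g$) then extends to a continuous map on $G\times\widehat{G/H}$, and patching over a cover of $G$ by such neighbourhoods gives the desired continuous action on the completion. This is exactly where the right-invariance of the metric $d_R$ on $G$ (equivalently, working with the right uniformity) is essential: it is what makes the quotient uniformity on $G/H$ behave well under left translations.
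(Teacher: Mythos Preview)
Your first step---extending each left translation $\lambda_g$ to $\widehat{G/H}$ by uniform continuity---is exactly what the paper does. The divergence is in the joint-continuity argument.

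There is a genuine gap in your proposal: you invoke a ``compact (equivalently, totally bounded) neighbourhood of $g_0$'' in order to control the conjugation term $g_2(xy^{-1})g_2^{-1}$ uniformly in $g_2$. But a Polish group need not be locally compact (and $H(L)$ certainly is not), so no such neighbourhood exists in general. The fix is easy---continuity of the map $(g,w)\mapsto gwg^{-1}$ at the single point $(g_0,e)$ already gives a neighbourhood $N$ of $g_0$ and a $W$ with $gWg^{-1}\subset V'$ for all $g\in N$, no compactness needed---but as written the argument does not go through.

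The paper takes a different and somewhat slicker route that sidesteps conjugation entirely. It observes that the action of $G$ on $G/H$ is \emph{motion equicontinuous}: for any basic entourage $U_V$ and any $g\in V$, one has $(hH,ghH)\in U_V$ for \emph{every} $h\in G$, simply because $(gh)(h)^{-1}=g\in V$. This property passes to the completion (by density and continuity of each $\widehat{\lambda_g}$) and immediately yields continuity of $g\mapsto g\cdot\xi$ at the identity, hence everywhere, uniformly in $\xi$. The paper then cites the Becker--Kechris result that a separately continuous action of a Polish group on a compact space is jointly continuous. Your direct-estimate approach, once repaired, has the virtue of being self-contained (no appeal to Becker--Kechris), but the paper's use of motion equicontinuity is the cleaner observation: it shows that the $g$-dependence is already uniform in the point, which is stronger than what you get from conjugation bounds and makes the whole thing nearly trivial.
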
 

Suppose that $G$ acts  on a uniform space $X=(X,\u_X)$ by uniform space isomorphisms. The action is called {\em bounded or motion equicontinuous} 
(see Pestov \cite{P2}, page 70,
and references therein)  if for every $U\in\u_X$, the set
$\{g\in G: \forall x\in X \  (x, g\cdot x)\in U\}$ is a neighbourhood of 1 in $G$.
%there is a neighhourhood $1\in V$  in $G$ such that for every $x\in X$ and $g\in G$
%we have $(x, g\cdot x)\in \u_X$.

Assuming additionally that  $X$ is a Polish space and $G$ is a Polish group, we immediately see the following.
\begin{enumerate}
\item If $X$ is a completion of an invariant space $X_0$ then if the action of $G$ on $X_0$ is motion equicontinuous then so is 
the action of $G$ on $X$. 
\item If the action of $G$ on $X$ is motion equicontinuous then for a fixed $x\in X$ the function $g\to g\cdot x$ is continuous 
with respect to the compatible topologies.
(The definition immediately implies that  $g\to g\cdot x$ is continuous at the identity,
which implies that this function is in fact continuous.)
\end{enumerate}

\begin{proof}[Proof of Proposition \ref{extension}]
First observe that for a fixed $g\in G$, the bijection  $f_g: G/H\to G/H$ given by $f_g(hH)=ghH$  is uniformly continuous.
Indeed, for any open symmetric neighbourhood $1\in V$ in $G$, we have $(f_g\times f_g)(U_{g^{-1}Vg})\subset U_V$.
Similarly, $f^{-1}_g$ is uniformly continuous.
Therefore $f_g$ extends to a uniform isomorphism of $\widehat{G/H}$. This gives an action of $G$ on $\widehat{G/H}$, which is continuous if we fix $g\in G$.
Since separately continuous functions are continuous
(see \cite{BKe}, Proposition 2.2.1), it suffices to show that it is continuous if we fix $x\in\widehat{G/H}$. 

By the remarks before the proof, it suffices to check that
for a fixed $hH$
 the function from $G$ to $G/H$, $g\to ghH$ is motion equicontinuous.
However, this is clear, as for any  open symmetric neighhourhood $1\in V$ in $G$, $h\in G$,  and $g\in V$ we have $(hH, ghH)\in \u_V$.
\end{proof}

\subsection{Kechris-Pestov-Todorcevic correspondence}\label{sec:KPT}

In this section, we review  the Kechris-Pestov-Todorcevic correspondence between the structural Ramsey theory of a Fra\"{i}ss\'{e}-HP family and the dynamics (extreme amenability, the universal minimal flow)
of the automorphism group of its Fra\"{i}ss\'{e} limit.

 A topological group $G$ is {\em extremely amenable} if every $G$-flow has a fixed point.
 A~{\em colouring} of a set $X$ is any function $c: X\to \{1,2,\ldots,r\}$, for some $r\geq 2$;
 we say that $Y\subset X$ is  {\em $c$-monochromatic} (or just {\em monochromatic}) if $r\rest Y$ is constant.
 
Let $\g$ be a family of finite structures in a language $\ll$.
For $A,B$ in $\g$,  let ${B \choose A}$~denotes the set of all embeddings of $A$ into $B$.
We say that $\g$ is a {\em Ramsey class} if for every  integer $r\geq 2$ and for
 $A,B\in \g$ there exists $C\in \g$ such that for every colouring $c: {C \choose A} \to\{1,2,\ldots,r\}$ there exists $h\in {C \choose B}$ such that 
$\{ h\circ f: f\in {B \choose A} \}$ is monochromatic. 
We say that $A\in\g$ is {\em rigid} if it has a  trivial automorphism groups. 
 
Kechris-Pestov-Todorcevic \cite{KPT} worked with
 Fra\"{i}ss\'{e} families and their ordered Fra\"{i}ss\'{e} expansions, which was generalized by then Nguyen Van Th\'e \cite{NVT} to 
  Fra\"{i}ss\'{e} families and to arbitrary relational Fra\"{i}ss\'{e} expansions. The Kechris-Pestov-Todorcevic correspondence remains true for Fra\"{i}ss\'{e}-HP families,
  which was checked by several people, and it appears in \cite{Z}.

   \begin{thm}[Kechris-Pestov-Todorcevic \cite{KPT}, see Theorem 5.1 in \cite{Z}] \label{kpt1}
   Let $\g$ be a Fra\"{i}ss\'{e}-HP,  let $\mathbb{G}$ be its Fra\"{i}ss\'{e} limit, and let $G=\aut(\mathbb{G})$.
Then the following are equivalent: 
 \begin{enumerate}
 \item The group $G$ is extremely  amenable.
 \item The family $\g$ is a Ramsey class and it consists of rigid structures.
 \end{enumerate}
  \end{thm}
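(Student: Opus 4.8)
The plan is to reduce everything to the standard combinatorial reformulation of extreme amenability for non-archimedean Polish groups (Pestov~\cite{P2}, Kechris--Pestov--Todorcevic~\cite{KPT}). The group $G=\aut(\mathbb{G})$ has a neighbourhood basis at $1$ consisting of the open subgroups $G_\phi=\{h\in G:h\circ\phi=\phi\}$, where $\phi$ ranges over embeddings of structures $A\in\g$ into $\mathbb{G}$ (every finite subset of $\mathbb{G}$ lies in some $\phi(A)$). For fixed $\phi$, the assignment $G_\phi h\mapsto h^{-1}\circ\phi$ is a bijection of $G_\phi\backslash G$ onto the set $\binom{\mathbb{G}}{A}$ of all embeddings of $A$ into $\mathbb{G}$; injectivity is immediate and surjectivity uses that $\mathbb{G}$ is ultrahomogeneous with respect to $\g$. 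Transporting Pestov's characterisation through these bijections yields: $G$ is extremely amenable if and only if for every $A\in\g$, every $r\ge 2$, every colouring $\chi\colon\binom{\mathbb{G}}{A}\to\{1,\dots,r\}$, and every finite $E_0\subseteq\binom{\mathbb{G}}{A}$, there is $g\in G$ with $\chi(g\circ e)$ the same for all $e\in E_0$. I would record and justify this reformulation first, and then prove the two implications.

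For $(2)\Rightarrow(1)$, take $A,r,\chi,E_0$ as above. Using the defining property of the Fra\"iss\'e limit, choose $B\in\g$ and an embedding $\psi\colon B\to\mathbb{G}$ with $\bigcup_{e\in E_0}e(A)\subseteq\psi(B)$, so every $e\in E_0$ factors as $e=\psi\circ\iota_e$ with $\iota_e\colon A\to B$ an embedding. Apply the Ramsey property of $\g$ to $A,B,r$, obtaining $C\in\g$ with $C\to(B)^A_r$, fix an embedding $\lambda\colon C\to\mathbb{G}$, and colour $\binom{C}{A}$ by $\mu\mapsto\chi(\lambda\circ\mu)$. A monochromatic $\rho\in\binom{C}{B}$ gives an embedding $\lambda\circ\rho\colon B\to\mathbb{G}$ on which $\chi$ is constant along $\{(\lambda\circ\rho)\circ f:f\in\binom{B}{A}\}$; as $\psi$ and $\lambda\circ\rho$ are two embeddings of $B$ into $\mathbb{G}$, ultrahomogeneity of $\mathbb{G}$ provides $g\in G$ with $g\circ\psi=\lambda\circ\rho$, whence $g\circ e=(\lambda\circ\rho)\circ\iota_e$ for each $e\in E_0$ and $\chi$ is constant on $\{g\circ e:e\in E_0\}$, as required. (Rigidity is not needed for this direction; it is in fact a consequence of the Ramsey property, since if some $A\in\g$ had $\alpha\in\aut(A)\setminus\{\mathrm{id}\}$ one could take $B=A$, $r=|\aut(A)|\ge 2$, and colour each size-$r$ orbit of the free right $\aut(A)$-action on $\binom{C}{A}$ with all $r$ colours, making $C\to(A)^A_r$ fail for every $C$.)

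For $(1)\Rightarrow(2)$, I would prove rigidity and the Ramsey property separately, each time showing that failure of the combinatorial property produces a $G$-flow with no fixed point. Rigidity: if $A\in\g$ has $\alpha\in\aut(A)\setminus\{\mathrm{id}\}$, let $E$ be the countable discrete set of embeddings of $A$ into $\mathbb{G}$ and let $X\subseteq 2^{E}$ be the set of all selectors for the orbits of the free right $\aut(A)$-action on $E$; $X$ is closed, non-empty, and $G$-invariant, hence a $G$-flow. A $G$-fixed point of $X$ is a $G$-invariant selector $S$; picking $e\in S$ and, by ultrahomogeneity, $g\in G$ extending the automorphism $e\circ\alpha\circ e^{-1}$ of $e(A)$ gives $g\circ e=e\circ\alpha\in S$, a second point of $S$ in the orbit of $e$ -- impossible, so $G$ is not extremely amenable. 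Ramsey: if $\g$ fails the Ramsey property for some $A,B\in\g$ and some $r$, fix an exhaustion of $\mathbb{G}$ by embedded finite substructures $\lambda_n\colon C_n\to\mathbb{G}$ with increasing images; a K\"onig's-lemma/compactness argument over the bad colourings of the $C_n$'s yields a colouring $\gamma^*\colon\binom{\mathbb{G}}{A}\to\{1,\dots,r\}$ with no monochromatic copy of $B$. The set $X$ of all such colourings is closed, non-empty and $G$-invariant in $\{1,\dots,r\}^{\binom{\mathbb{G}}{A}}$; by extreme amenability a minimal subflow of $X$ contains a $G$-fixed point, i.e.\ a $G$-invariant colouring, which by transitivity of $G$ on $\binom{\mathbb{G}}{A}$ is constant and so does have a monochromatic copy of $B$ -- a contradiction.

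The chief difficulty is not a single hard step but the bookkeeping: getting the non-archimedean reformulation of extreme amenability exactly right, and making sure that it -- and every appeal to ``ultrahomogeneity'' -- goes through when $\g$ is merely a Fra\"iss\'e-HP family rather than a genuine Fra\"iss\'e class, so ``ultrahomogeneous'' must always mean ``with respect to $\g$'' and one uses only the extension and universality properties of the Fra\"iss\'e limit listed in Section~\ref{sec:fra}. The compactness step in $(1)\Rightarrow(2)$ also needs a little care: the exhaustion must have increasing images, so that the restriction of a ``bad'' colouring of a larger $C_n$ to a smaller one is still bad, which is exactly what yields the finite intersection property.
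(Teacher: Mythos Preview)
The paper does not give its own proof of this theorem: it is stated as a background result and attributed to \cite{KPT} (with the Fra\"{i}ss\'{e}-HP extension attributed to \cite{Z}), so there is no in-paper argument to compare against. Your proposal is the standard KPT argument and is correct in outline; it is essentially the proof one finds in \cite{KPT} and \cite{Z}, so in that sense you are aligned with what the paper invokes.

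A couple of minor points worth tightening. In your rigidity-from-Ramsey parenthetical, the colouring you describe needs the observation that $\aut(A)$ acts freely on $\binom{C}{A}$ for \emph{every} $C$, which is immediate but should be said. In the $(1)\Rightarrow(2)$ rigidity argument, your selector flow works, but note that you should verify $X$ is non-empty (trivial, by the axiom of choice on a countable set) and that the $G$-action on $2^E$ given by $(g\cdot S)(e)=S(g^{-1}\circ e)$ is continuous; both are routine. Finally, in the compactness step you want the $C_n$'s to be in $\g$ (not arbitrary finite substructures), since in the Fra\"{i}ss\'{e}-HP setting finite subsets of $\mathbb{G}$ need only be \emph{contained in} images of embeddings of members of $\g$; you say this, but make sure the restriction maps between the bad-colouring sets are defined via the factorisation $\lambda_n=\lambda_{n+1}\circ\iota_n$ for some embedding $\iota_n\colon C_n\to C_{n+1}$, which exists precisely because the images are increasing.
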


    Let $\g$ be a Fra\"{i}ss\'{e}-HP family in a language $\ll$,  let $\mathbb{G}$ be its Fra\"{i}ss\'{e} limit, and let $G=\aut(\mathbb{G})$.
    Let $\g^*$ be a  Fra\"{i}ss\'{e}-HP family, in a language $\ll^*\supset \ll$, $\ll^*\setminus \ll$  relational,
     such that for every $A^*\in\g^*$, $A^*\rest\ll\in\g$, that is, every $A^*\in\g^*$ is an {\em expansion} of some $A\in\g$, or $\g^*$ is an {\em expansion} of $\g$.
  Let $\mathbb{G}^*$ be the  Fra\"{i}ss\'{e} limit of $\g$, and let $G^*=\aut(\mathbb{G}^*)$. 
  
We say that the   expansion $\g^*$ of $\g$ is {\em reasonable}, that is, 
  for any $A,B\in\g$, an embedding $\alpha: A\to B$ and an expansion $A^*\in\g^*$ of $A$,
there is an expansion $B^*\in\g^*$ of $B$ such that $\alpha: A^*\to B^*$ is an embedding. It is
%     $\g^*$ is a {\em precompact expansion} of $\g$, that is, 
   {\em precompact} if  
 for every $A\in\g$ there are only finitely many $A^*\in\g^*$ such that $A^*\restriction \ll= A$. 
  We say that $\g^*$ has the {\em expansion property}  relative to $\g$ if for any $A^*\in\g^*$ there is 
  $B\in\g$ such that for any expansion  $B^*\in\g^*$, there is an embedding $\alpha\colon A^*\to B^*$.

\begin{prop}[\cite{KPT}, \cite{NVT}, see Proposition 5.3 in \cite{Z}]\label{kpt_reas}
The expansion $\g^*$ of $\g$ is  reasonable if and only if $\mathbb{G}^*\rest \ll= \mathbb{G}$.
\end{prop}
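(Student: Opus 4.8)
The plan is to prove the two implications separately, using the defining properties (L1)--(L3) of the Fra\"{i}ss\'{e} limit $\mathbb{G}^*$ of $\g^*$ together with the uniqueness of the Fra\"{i}ss\'{e} limit of $\g$.

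First I would show that if $\g^*$ is reasonable, then $\mathbb{G}^*\rest\ll=\mathbb{G}$. Since every $A^*\in\g^*$ has $A^*\rest\ll\in\g$ by hypothesis, the reduct $\mathbb{G}^*\rest\ll$ is a countable locally finite $\ll$-structure all of whose finitely generated substructures lie in $\g$. By the uniqueness of the Fra\"{i}ss\'{e} limit of $\g$, it suffices to verify that $\mathbb{G}^*\rest\ll$ is a Fra\"{i}ss\'{e} limit of $\g$, i.e.\ (a) every $A\in\g$ embeds into $\mathbb{G}^*\rest\ll$, (b) every finite subset of $\mathbb{G}^*\rest\ll$ is contained in a copy of some $A\in\g$, and (c) $\mathbb{G}^*\rest\ll$ is ultrahomogeneous with respect to $\g$. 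Property (b) is immediate since finitely generated substructures of $\mathbb{G}^*$ reduce to structures in $\g$. For (a): given $A\in\g$, pick any expansion $A^*\in\g^*$ (one exists because $\g^*$ is an expansion of $\g$ and reasonableness lets us go from the expansion of a substructure, e.g.\ the empty one, to an expansion of $A$); then $A^*$ embeds in $\mathbb{G}^*$ by universality, and the reduct of that embedding embeds $A$ into $\mathbb{G}^*\rest\ll$. The main work is (c): given an isomorphism $f\colon A\to B$ between finite substructures of $\mathbb{G}^*\rest\ll$ with $A,B\in\g$, transport the $\g^*$-expansion $A^*$ induced on $A$ from $\mathbb{G}^*$ across $f$ to get an expansion $B^{**}$ of $B$; reasonableness is not quite what is needed here directly, but since $B$ already carries the expansion $B^*$ induced from $\mathbb{G}^*$, one uses reasonableness and the amalgamation in $\g^*$ to find a common $\g^*$-expansion into which both $A^*$ and $B^{**}$ embed compatibly, and then ultrahomogeneity of $\mathbb{G}^*$ (property (L3)/ultrahomogeneity of the limit) yields an automorphism of $\mathbb{G}^*$ whose reduct extends $f$.

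For the converse, suppose $\mathbb{G}^*\rest\ll=\mathbb{G}$; I must show $\g^*$ is reasonable. Fix $A,B\in\g$, an embedding $\alpha\colon A\to B$, and an expansion $A^*\in\g^*$ of $A$. Since $\mathbb{G}=\mathbb{G}^*\rest\ll$ is universal and ultrahomogeneous for $\g$, embed $B$ into $\mathbb{G}$; restricting along $\alpha$ gives a copy of $A$ inside $\mathbb{G}$, which inherits some $\g^*$-expansion $\tilde A^*$ from $\mathbb{G}^*$. Now $\tilde A^*$ and $A^*$ are two expansions of (copies of) $A$ in $\g^*$; using ultrahomogeneity of $\mathbb{G}^*$ one can find an automorphism of $\mathbb{G}^*$ carrying $\tilde A^*$ onto a copy of $A^*$, and the image of $B$ under (the reduct of) this automorphism is then a substructure of $\mathbb{G}$ carrying a $\g^*$-expansion $B^*\in\g^*$ into which $A^*$ embeds via $\alpha$. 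Hence $\alpha\colon A^*\to B^*$ is an embedding in $\g^*$, which is exactly reasonableness.

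The step I expect to be the main obstacle is part (c) of the first implication --- verifying ultrahomogeneity of the reduct $\mathbb{G}^*\rest\ll$ with respect to $\g$. The subtlety is that an $\ll$-isomorphism between finite substructures of $\mathbb{G}$ need not respect the expansion structure they carry inside $\mathbb{G}^*$, so one cannot lift it directly to an automorphism of $\mathbb{G}^*$; one must first \emph{correct} the expansions using reasonableness (to realize the transported expansion inside $\g^*$) together with the amalgamation property of $\g^*$ (to make the two relevant expansions compatible), and only then invoke the ultrahomogeneity of $\mathbb{G}^*$. Getting this correction argument right --- in particular being careful that all structures involved genuinely lie in $\g^*$ and that the amalgam can be embedded back into $\mathbb{G}^*$ --- is where the real content of the proposition sits; the rest is bookkeeping with (L1)--(L3) and the uniqueness of Fra\"{i}ss\'{e} limits.
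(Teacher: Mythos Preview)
The paper does not give its own proof of this proposition --- it is quoted from \cite{KPT}, \cite{NVT}, \cite{Z} --- so there is nothing to compare your approach against directly. On its merits, however, your argument for the converse direction contains a genuine error.

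You embed $B$ into $\mathbb{G}=\mathbb{G}^*\rest\ll$ first, let $\tilde A^*$ be the expansion that the copy of $A$ inherits from $\mathbb{G}^*$, and then write that ``using ultrahomogeneity of $\mathbb{G}^*$ one can find an automorphism of $\mathbb{G}^*$ carrying $\tilde A^*$ onto a copy of $A^*$.'' This step fails: ultrahomogeneity of $\mathbb{G}^*$ only extends $\ll^*$-isomorphisms between substructures, and there is no reason whatsoever for the two expansions $\tilde A^*$ and $A^*$ of $A$ to be $\ll^*$-isomorphic --- they are simply two expansions of the same $\ll$-structure. Even when they happen to be abstractly isomorphic, any automorphism of $\mathbb{G}^*$ you obtain will satisfy $\sigma\circ j\circ\alpha:\tilde A^*\to\mathbb{G}^*$ is an $\ll^*$-embedding, which tells you that $\alpha:\tilde A^*\to B^*$ is an embedding, not $\alpha:A^*\to B^*$. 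The order of operations is backwards. The correct move is to embed $A^*$ into $\mathbb{G}^*$ first (universality of $\mathbb{G}^*$ for $\g^*$), view this as an $\ll$-embedding $i:A\to\mathbb{G}$, and then use the extension property of $\mathbb{G}$ for $\g$ to find $j:B\to\mathbb{G}$ with $j\circ\alpha=i$; the pullback of the $\ll^*$-structure along $j$ is the desired $B^*$, and now $\alpha:A^*\to B^*$ is an $\ll^*$-embedding by construction.

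Your forward direction identifies the right difficulty in part (c), but the proposed fix via ``amalgamation in $\g^*$'' is not clearly formulated and runs into the same obstacle: any automorphism of $\mathbb{G}^*$ you produce will send the copy of $A$ to some \emph{other} copy of $A^*$ in $\mathbb{G}^*$, not to $B$ itself. The clean route is not to prove ultrahomogeneity of the reduct in one shot but to verify the \emph{extension property} of $\mathbb{G}^*\rest\ll$ with respect to $\g$: given $A\subseteq\mathbb{G}^*\rest\ll$ with $A\in\g$ and $\alpha:A\to B$ in $\g$, first enlarge $A$ to a substructure $C^*\in\g^*$ of $\mathbb{G}^*$ (using the defining property of the Fra\"{i}ss\'{e}-HP limit), amalgamate $A\to B$ and $A\to C$ in $\g$ to get $D$, apply reasonableness to $C\to D$ to obtain $D^*\in\g^*$, and finally use the extension property of $\mathbb{G}^*$ for $\g^*$ to embed $D^*$ over $C^*$. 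This yields the extension property for $\mathbb{G}^*\rest\ll$, and ultrahomogeneity then follows by the usual back-and-forth. Note in particular that the amalgamation is carried out in $\g$, not in $\g^*$; reasonableness is what lets you lift the result to $\g^*$.
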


{\bf{From now on till the end of this section,}} we will assume that the expansion $\g^*$ of $\g$ is  reasonable, precompact, and satisfies the property $(*)$ below.
\begin{equation*}
\begin{split}
(*) & \text{ For any $A\in\g$ and an embedding $i:A\to \mathbb{G}$ there is an expansion }\\ 
 & \text{$A^*\in\g^*$ of $A$ such that $i:A^*\to \mathbb{G}^*$ is an embedding.}
\end{split}
\end{equation*}

   Below  $(\mathbb{G}, \vec{R})$, $(\mathbb{G}, \vec{S})$, etc.  denote an expansion of  $\mathbb{G}$ to a structure in $\ll^*$.
 Instead of $(\mathbb{G}, \vec{R})$ we will often  just write $\vec{R}$.   
   
%Let $\ll^*\setminus \ll$ consist only of relation symbols, 
Define 
\begin{equation*}
\begin{split}
X_{\mathbb{G}^*}=&\{ \vec{R}:
{\rm\ for\ every\ } A\in\g, 
{\rm\ and\ an\ embedding\ }
i: A\to \mathbb{G} {\rm\ there \ exists\ } \\ 
& A^*\in\g^*, {\rm such \ that\ } i: A^*\to(\mathbb{G},\vec{R})
{\rm \ is\ an\ embedding }\}.
\end{split}
\end{equation*}

We make $X_{\mathbb{G}^*}$ a topological space by declaring sets
\[ V_{i, A^*}=\{\vec{R}\in X_{\mathbb{G}^*} : i: A^*\to (\mathbb{G},\vec{R})  \text{  is an embedding} \}, \]
where $i: A\to \mathbb{G}$ is an embedding,  $A^*\in\g^*$, and $A^*\rest \ll= A$, to be open.
The group $\aut(\G^*)$ acts continuously on $X_{\mathbb{G}^*}$ via
\[ g\cdot \vec{R}(\bar{a})=\vec{R}(g^{-1}(\bar{a})).\]
%\begin{rem}
%{\rm If   $\mathcal{G}^*$ is a precompact expansion of $\mathcal{G}$  , then  $X_{\mathbb{G}^*}$ is compact.}
%\end{rem}
Reasonability and  precompactness  of the expansion $\g^*$ of $\g$ imply that the space $X_{\mathbb{G}^*}$ is compact,
zero-dimensional, and it is  nonempty as $\mathbb{G}^*\in X_{\mathbb{G}^*}$.

\begin{thm}[\cite{KPT}, \cite{NVT}, see Proposition 5.5 in \cite{Z}]\label{kpt_minim}
% Let $\g$, $\g^*$, $\mathbb{G}$, $\mathbb{G}^*$, $G$, and $G^*$ be as above and let the property $(*)$ holds.
 %Suppose  that the expansion $\g^*$ of $\g$ is reasonable and precompact.  
 The following are equivalent:
 \begin{enumerate}
 \item The flow $G\acts X_{\mathbb{G}^*} $ is minimal.
 \item  The  family $\g^*$ has  the expansion property relative to $\g$.
 \end{enumerate}
\end{thm}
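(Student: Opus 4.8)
The plan is to prove the two implications of Theorem~\ref{kpt_minim} separately, following the standard Kechris--Pestov--Todorcevic argument adapted to the Fra\"{i}ss\'{e}-HP setting. Recall that the topology on $X_{\mathbb{G}^*}$ has basic clopen sets $V_{i,A^*}$, and that $G=\aut(\mathbb{G})$ acts continuously; since the expansion is reasonable and precompact, $X_{\mathbb{G}^*}$ is compact and nonempty with $\mathbb{G}^*\in X_{\mathbb{G}^*}$.

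\textbf{From the expansion property to minimality.} Assume $\g^*$ has the expansion property relative to $\g$. To show $G\acts X_{\mathbb{G}^*}$ is minimal, it suffices to show that the orbit closure of any $\vec{R}\in X_{\mathbb{G}^*}$ contains every basic clopen set $V_{j,B^*}$ that is nonempty, i.e.\ that meets $X_{\mathbb{G}^*}$; equivalently, for every embedding $j:B\to\mathbb{G}$ and every expansion $B^*\in\g^*$ of $B$ that can appear, there is $g\in G$ with $g\cdot\vec{R}\in V_{j,B^*}$. First I would apply the expansion property to $B^*$ to get $A\in\g$ such that every expansion $A^*\in\g^*$ of $A$ admits an embedding $\alpha:B^*\to A^*$. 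Next, fix an embedding $i:A\to\mathbb{G}$; since $\vec{R}\in X_{\mathbb{G}^*}$, there is an expansion $A^*\in\g^*$ of $A$ with $i:A^*\to(\mathbb{G},\vec{R})$ an embedding. Pull $B^*$ inside: let $\alpha:B^*\to A^*$ be the embedding furnished by the expansion property, so $i\circ\alpha:B^*\to(\mathbb{G},\vec{R})$ is an embedding of $B$ onto a copy sitting inside the image of $i$. Now both $j:B\to\mathbb{G}$ and $i\circ\alpha:B\to\mathbb{G}$ are embeddings of the \emph{unexpanded} $B$ into $\mathbb{G}$, so by ultrahomogeneity of $\mathbb{G}$ (with respect to $\g$) there is $g\in G$ with $g\circ(i\circ\alpha)=j$. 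A direct check against the definition of the action $g\cdot\vec{R}(\bar a)=\vec{R}(g^{-1}(\bar a))$ shows that $j:B^*\to(\mathbb{G},g\cdot\vec{R})$ is an embedding, i.e.\ $g\cdot\vec{R}\in V_{j,B^*}$. Hence every nonempty basic open set meets every orbit, so the flow is minimal.

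\textbf{From minimality to the expansion property.} For the converse I would argue contrapositively: suppose the expansion property fails. Then there is $A^*\in\g^*$ such that for every $B\in\g$ there is an expansion $B^*_B\in\g^*$ of $B$ admitting no embedding of $A^*$ into $B^*_B$. The goal is to build a point $\vec{S}\in X_{\mathbb{G}^*}$ whose orbit closure misses $V_{i_0,A^*}$ for some fixed embedding $i_0:A\to\mathbb{G}$, contradicting minimality (note $V_{i_0,A^*}$ is nonempty by $(*)$). The construction is the usual amalgamation/compactness argument: enumerate $\mathbb{G}$ and build $\vec{S}$ as a coherent ``generic'' expansion by repeatedly amalgamating the chosen bad expansions $B^*_B$ over larger and larger finite pieces of $\mathbb{G}$, using reasonability and (JPP)/(AP) in $\g^*$ to keep the amalgamation inside $\g^*$; one must check the limiting object lies in $X_{\mathbb{G}^*}$ (every finite subconfiguration embeds an element of $\g^*$, which follows because the pieces were chosen compatibly), and that no translate $g\cdot\vec{S}$ can contain a copy of $A^*$ at $i_0$, since that would force some finite $B^*_B$-piece to embed $A^*$. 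Then $\vec{S}$ lies in the closed invariant proper subset $X_{\mathbb{G}^*}\setminus G\cdot\{$points containing a copy of $A^*\}$, so the flow is not minimal.

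\textbf{Main obstacle.} The first implication is essentially formal once one unwinds the definitions and invokes ultrahomogeneity of $\mathbb{G}$. The genuine work is in the second implication: organizing the inductive amalgamation so that the limit expansion $\vec{S}$ (a) is a bona fide element of $X_{\mathbb{G}^*}$ — here reasonability and precompactness of $\g^*$ over $\g$, together with the hypothesis $(*)$, are exactly what make this go through — and (b) genuinely avoids $A^*$ on the fixed copy $i_0$ in every translate, which requires the bad expansions $B^*_B$ to have been chosen and stitched together coherently over a cofinal tower of finite substructures of $\mathbb{G}$. Since this is precisely Proposition~5.5 of \cite{Z} (itself the Fra\"{i}ss\'{e}-HP version of the argument in \cite{KPT} and \cite{NVT}), I would cite that source for the details rather than reproduce the full bookkeeping here.
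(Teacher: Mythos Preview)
The paper does not supply its own proof of this theorem: it is quoted as a known result from \cite{KPT}, \cite{NVT}, and \cite{Z}, and is used as a black box in the proof of Theorem~\ref{umfauto}. So there is no in-paper argument to compare your proposal against; your sketch is essentially the standard KPT/NVT proof that the citation points to, and your concluding remark (cite \cite{Z} for details) matches exactly what the paper does.

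One comment on the second implication: your contrapositive construction of a bad point $\vec{S}$ by amalgamating the $B^*_B$'s along a tower is correct in spirit but heavier than necessary. The cleaner route (and the one in the cited sources) is direct: assuming minimality, fix an embedding $i_0:A^*\to\mathbb{G}^*$, note $V_{i_0,A^*}$ is nonempty open, and use minimality plus compactness of $X_{\mathbb{G}^*}$ to get finitely many $g_1,\dots,g_n\in G$ with $X_{\mathbb{G}^*}=\bigcup_j g_j^{-1}V_{i_0,A^*}$. Then choose $B\in\g$ and an embedding $k:B\to\mathbb{G}$ whose image contains all the finite sets $g_j^{-1}\circ i_0(A)$; for any expansion $B^*\in\g^*$ of $B$, reasonability lets you extend $k:B^*\to(\mathbb{G},\vec{R})$ to a point $\vec{R}\in X_{\mathbb{G}^*}$, and then $\vec{R}\in g_j^{-1}V_{i_0,A^*}$ for some $j$ yields the required embedding $A^*\to B^*$. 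This avoids the inductive bookkeeping entirely.
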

   
  \begin{thm}[Kechris-Pestov-Todorcevic \cite{KPT}, Nguyen Van Th\'e \cite{NVT}, see Theorem 5.7 in \cite{Z}]\label{kpt2}
% Let $\g$, $\g^*$, $\mathbb{G}$, $\mathbb{G}^*$, $G$, and $G^*$ be as above and let the property $(*)$ holds.
 %Suppose  that the expansion $\g^*$ of $\g$ is reasonable and precompact. 
The following are equivalent:
 \begin{enumerate}
 \item The flow $G\acts X_{\mathbb{G}^*} $ is the universal minimal flow of $G$.
 \item The  family $\g^*$ is a rigid Ramsey class and  has the expansion property relative to $\g$.
 \end{enumerate}
  \end{thm}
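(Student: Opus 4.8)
The plan is to obtain Theorem~\ref{kpt2} by assembling two facts already available in this section with one general bridge in topological dynamics. Write $G=\aut(\mathbb{G})$ and $G^{*}=\aut(\mathbb{G}^{*})$. Applying Theorem~\ref{kpt1} to the Fra\"{i}ss\'{e}-HP family $\g^{*}$ with limit $\mathbb{G}^{*}$ gives that ``$\g^{*}$ is a rigid Ramsey class'' is equivalent to ``$G^{*}$ is extremely amenable'', while Theorem~\ref{kpt_minim} gives that ``$\g^{*}$ has the expansion property relative to $\g$'' is equivalent to ``$G\acts X_{\mathbb{G}^{*}}$ is minimal''. The bridge I would isolate is: \emph{if $H$ is a closed subgroup of a non-archimedean Polish group $G$ with $G/H$ precompact, then $G\acts\widehat{G/H}$ is the universal minimal flow of $G$ if and only if it is minimal and $H$ is extremely amenable.} To connect this with $X_{\mathbb{G}^{*}}$, I would record the standard observations that, by reasonability and Proposition~\ref{kpt_reas}, $\mathbb{G}^{*}\rest\ll=\mathbb{G}$, so $G^{*}\le G$ is closed; that $\mathbb{G}^{*}\in X_{\mathbb{G}^{*}}$ has $G$-stabilizer exactly $G^{*}$; and that, using precompactness of the expansion $\g^{*}$, the orbit map $gG^{*}\mapsto g\cdot\mathbb{G}^{*}$ is an isomorphism of uniform spaces from $G/G^{*}$ (with the quotient of the right uniformity) onto a dense subset of $\overline{G\cdot\mathbb{G}^{*}}$, so that $\overline{G\cdot\mathbb{G}^{*}}$ is $G$-isomorphic to $\widehat{G/G^{*}}$ with the action supplied by Proposition~\ref{extension}.

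With these in hand both implications are short. For $(2)\Rightarrow(1)$: if $\g^{*}$ is a rigid Ramsey class with the expansion property relative to $\g$, then $G^{*}$ is extremely amenable and $G\acts X_{\mathbb{G}^{*}}$ is minimal, so $X_{\mathbb{G}^{*}}=\overline{G\cdot\mathbb{G}^{*}}=\widehat{G/G^{*}}$, which by the bridge is the universal minimal flow. For $(1)\Rightarrow(2)$: if $G\acts X_{\mathbb{G}^{*}}$ is the universal minimal flow it is in particular minimal, so $\g^{*}$ has the expansion property (Theorem~\ref{kpt_minim}) and $X_{\mathbb{G}^{*}}=\widehat{G/G^{*}}$; the bridge then forces $G^{*}$ to be extremely amenable, whence $\g^{*}$ is a rigid Ramsey class by Theorem~\ref{kpt1}.

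So the work is in the bridge. The ``easy'' half --- $H$ extremely amenable and $\widehat{G/H}$ minimal imply universal --- goes as follows: given a minimal flow $G\acts Y$, restrict to $H$, use extreme amenability to pick $y_{0}\in Y$ fixed by $H$, and note that $gH\mapsto g\cdot y_{0}$ is a well-defined $G$-equivariant map on $G/H$; since $G$ is non-archimedean its action on $Y$ is motion equicontinuous (in the sense of the discussion preceding Proposition~\ref{extension}), so this map is uniformly continuous into the compact space $Y$ and extends to a continuous $G$-map $\widehat{G/H}\to Y$ with nonempty closed invariant --- hence full --- image; thus $\widehat{G/H}$ maps onto every minimal flow and, being itself minimal, is the universal minimal flow by the uniqueness of the latter. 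For the ``hard'' half --- $\widehat{G/H}=M(G)$ implies $H$ extremely amenable --- I would argue by co-induction: given any minimal $H$-flow $Y$, form $Z=\{f\colon G\to Y:\ f\text{ continuous and }f(gh)=h^{-1}f(g)\text{ for all }h\in H\}$ with $G$ acting by $(g\cdot f)(x)=f(g^{-1}x)$, and verify --- this is the delicate point --- that precompactness of $G/H$ together with non-archimedeanness of $G$ (so that $\widehat{G/H}$ is zero-dimensional and the associated bundle over it is trivial) make $Z$ a nonempty compact $G$-flow with continuous action. By universality there is a $G$-map $\Phi\colon\widehat{G/H}\to Z$; the coset $eH$ is fixed by $H$, so $f_{0}=\Phi(eH)$ is an $H$-fixed point of $Z$, and evaluating its two defining equivariances at the identity yields $f_{0}(e)=h^{-1}f_{0}(e)$ for every $h\in H$, i.e.\ $f_{0}(e)\in Y$ is $H$-fixed. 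Hence every minimal $H$-flow is trivial.

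I expect the main obstacle to be exactly this last implication, $(1)\Rightarrow(2)$, that is, recovering the combinatorial Ramsey property --- equivalently, extreme amenability of $G^{*}$ --- from the purely dynamical hypothesis that $X_{\mathbb{G}^{*}}$ is the universal minimal flow. One must avoid the tempting but false shortcut that the stabilizer of every dense-orbit point of $M(G)$ is extremely amenable (for $M(S_{\infty})$, the space of all linear orderings of $\mathbb{N}$, a discrete ordering has stabilizer topologically isomorphic to $\mathbb{Z}$). Hence it is essential both that $\mathbb{G}^{*}$ is a Fra\"{i}ss\'{e} limit and that $G$ is non-archimedean, the latter being precisely what makes the co-induced object a genuine compact flow --- over a Lie group the analogous co-induced flow over a co-precompact subgroup can fail to exist. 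An alternative, entirely combinatorial route would instead convert a failure of the Ramsey property for some $A^{*},B^{*}\in\g^{*}$ and some number of colours --- via a compactness argument over the finite structures in $\g^{*}$ --- into a colouring of the copies of $A^{*}$ in $\mathbb{G}^{*}$ with no monochromatic copy of $B^{*}$, and then into an obstruction to minimality together with universality of $X_{\mathbb{G}^{*}}$; making that final reduction precise is, once more, the crux.
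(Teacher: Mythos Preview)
The paper does not prove Theorem~\ref{kpt2}; it is quoted without argument from \cite{KPT}, \cite{NVT}, and specifically Theorem~5.7 of \cite{Z}. The surrounding material in Section~\ref{sec:KPT} (the precompactness lemma, Theorem~\ref{iden}, Corollary~\ref{ident}) serves to identify $X_{\mathbb{G}^*}$ with $\widehat{\aut(\mathbb{G})/\aut(\mathbb{G}^*)}$ for use downstream, not to establish Theorem~\ref{kpt2} itself. So there is no in-paper proof to compare your proposal against.

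On the merits of your sketch: the direction $(2)\Rightarrow(1)$ is the standard argument, and your ``easy half'' of the bridge is essentially the content of Theorem~\ref{unii}. The direction $(1)\Rightarrow(2)$ is where the substance lies, and your co-induction argument has a genuine gap. The space $Z=\{f\in C(G,Y):f(gh)=h^{-1}f(g)\}$ lives inside $Y^{G}$ with the product topology; the equivariance condition is closed there, but continuity of $f$ is \emph{not} a closed condition in $Y^G$, and since $G$ is noncompact there is no evident reason $Z$ should be compact. Your appeal to triviality of ``the associated bundle over $\widehat{G/H}$'' presupposes a continuous section $G/H\to G$, which need not exist even for non-archimedean $G$ and closed co-precompact $H$; one gets only Borel sections in general, and the resulting co-induced object is then a Borel $G$-space --- upgrading that to a genuine flow is exactly the nontrivial step you have not supplied. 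The route actually taken in \cite{Z} goes instead through the observation that $G/G^{*}$, being Polish, sits as a dense $G_\delta$ (hence comeager) orbit in $\widehat{G/G^{*}}$, and then invokes the theorem that the stabilizer of a point with comeager orbit in $M(G)$ is extremely amenable. Your final paragraph correctly identifies that \emph{some} special feature of $\mathbb{G}^{*}$ beyond ``dense orbit'' is required; that feature is genericity of its orbit, not triviality of a bundle.
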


 We finish this section with  several general observations,  which are adaptations of those in \cite{NVT} (pages 6-8) to the framework of the Fra\"{i}ss\'{e}-HP theory.
 %, and give an alternative description of the universal minimal flow (Corollary \ref{ident}).

For an embedding $\alpha: A\to \G$, $A\in\g$, let $V_\alpha$ denote the pointwise stabilizer of $\alpha$, that is,
\[
 V_\alpha=\{g\in \aut(\G): \text{ for every } a\in A, \ g(\alpha(a))=\alpha(a) \}.
 \]
Then $V_{\alpha}$ is a symmetric clopen neighbourhood of the identity in $\aut(\G)$, in fact it is also a subgroup of $\aut(\G)$,  and
sets of this form constitute a neighbourhood basis of the identity in $\aut(\G)$.

\begin{lemma}
%Suppose that  $\mathcal{G}^*$ is a precompact reasonable expansion of $\mathcal{G}$
 %and that the property $(*)$ holds. Then
The right uniform space $\aut(\G)/\aut(\G^*)$ is precompact.
\end{lemma}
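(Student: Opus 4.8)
The plan is to show directly that for every basic open symmetric neighbourhood $V_\alpha$ of the identity in $\aut(\G)$, there are finitely many $g_1,\dots,g_k\in\aut(\G)$ such that $\aut(\G)=\bigcup_{i=1}^k V_\alpha g_i\aut(\G^*)$; by the criterion recalled in Section~\ref{pus}, this is exactly precompactness of $\aut(\G)/\aut(\G^*)$. So fix an embedding $\alpha:A\to\G$ with $A\in\g$, and let $V=V_\alpha$.

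\smallskip

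The key observation is that the coset $VgH$ (where $H=\aut(\G^*)$) depends only on the $\ll^*$-expansion of the finite structure $\alpha(A)$ induced by $g^{-1}\cdot(\G^*$-structure$)$. Concretely: given $g\in\aut(\G)$, consider the expansion $\vec R=g^{-1}\cdot\mathbb{G}^*\in X_{\mathbb{G}^*}$ and restrict it to $\alpha(A)$; by property $(*)$ (or directly by membership in $X_{\mathbb{G}^*}$) this restriction is an expansion $A^*\in\g^*$ of $A$, so $\alpha:A^*\to(\G,\vec R)$ is an embedding. By precompactness of the expansion $\g^*$ of $\g$, there are only finitely many possible isomorphism types of such $A^*$; choose representatives and, for each one that is actually realized, fix one witnessing $g_i\in\aut(\G)$. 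I claim that if $g,g_i\in\aut(\G)$ induce the \emph{same} expansion $A^*$ on $\alpha(A)$ via $\alpha$ (i.e. $\alpha:A^*\to g^{-1}\cdot\mathbb{G}^*$ and $\alpha:A^*\to g_i^{-1}\cdot\mathbb{G}^*$ are both embeddings), then $Vg H=Vg_iH$. This reduces the problem to producing, for each realized type, the element $g_i$ — hence finitely many cosets.

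\smallskip

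To prove the claim, I would unwind it as follows. Saying $g$ and $g_i$ induce the same expansion on $\alpha(A)$ means that the map $\alpha(g_i^{-1}a)\mapsto\alpha(g^{-1}a)$ — or rather, more carefully, the relevant finite partial isomorphism between two copies of $A^*$ inside $\G^*$ — is an isomorphism of $\ll^*$-substructures. Since $\G^*$ is the Fra\"iss\'e limit of $\g^*$, it is ultrahomogeneous with respect to $\g^*$, so this finite partial isomorphism extends to an automorphism $h\in\aut(\G^*)=H$. Tracking the definitions, one gets that $g_i h$ and $g$ agree on $\alpha(A)$ — i.e. $g^{-1}g_i h\in V_\alpha$, equivalently $g\in V g_i h\subset V g_i H$ (using that $V=V_\alpha$ is a group and $g_ihH=g_iH$). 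Care is needed with the direction of the action $g\cdot\vec R(\bar a)=\vec R(g^{-1}\bar a)$ and with the fact that $\aut(\G^*)\subset\aut(\G)$ (which uses reasonability, Proposition~\ref{kpt_reas}), but these are bookkeeping points.

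\smallskip

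The main obstacle, and the step I would spend the most care on, is the claim in the previous paragraph: correctly identifying \emph{which} finite partial isomorphism of $\G^*$ one extends, and checking it genuinely preserves the $\ll^*$-structure (not merely the $\ll$-reduct). The subtlety is that $g$ and $g_i$ are automorphisms of $\G$, not of $\G^*$, so one is comparing the $\ll^*$-structures $g^{-1}\cdot\G^*$ and $g_i^{-1}\cdot\G^*$ only on the finite set $\alpha(A)$; the hypothesis "same expansion $A^*$" must be translated into the statement that $g_i\circ g^{-1}$ (restricted appropriately) is a partial $\ll^*$-isomorphism of $\G^*$. Once that translation is pinned down, ultrahomogeneity of $\G^*$ and the group structure of $V_\alpha$ finish the argument. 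Everything else — finiteness of the number of expansions, the coset-counting criterion for precompactness — is immediate from precompactness of $\g^*/\g$ and Section~\ref{pus}.
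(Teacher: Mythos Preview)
Your approach is correct and essentially the same as the paper's: both count the finitely many $\ll^*$-expansions of $A$ (precompactness of the expansion), pick a representative for each, and use ultrahomogeneity of $\G^*$ to place any $g$ in the corresponding double coset. The paper phrases it directly in terms of which $A^*_i$ makes $g\circ\alpha:A^*_i\to\G^*$ an embedding (rather than via the action on $X_{\G^*}$), and its bookkeeping lands on $g^{-1}\in V x_i^{-1}\aut(\G^*)$---exactly the directional wrinkle you anticipated.
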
 
\begin{proof}
Let $V=V_{\alpha}$ for some embedding $\alpha: A\to\G$.  Enumerate all expansions of $A$  in $\mathbb{G}^*$ as
$A^*_1, A^*_2,\ldots, A^*_N$. For each $i=1,2,\ldots , N,$ using the  universality of $\G^*$,
 pick an embedding $y_i: A^*_i\to\G^*$.
The ultrahomogeneity of $\G$ with respect to $\g$ implies that
there  are $x_i\in \aut(\G)$ such that $y_i=x_i\circ\alpha:   A^*_i\to\G^*.$ 
Pick any $g\in \aut(\G)$ and we will show that $g^{-1}\in Vx_i^{-1}\aut(\G^*)$, for some $i=1,2,\ldots , N$, which will finish the proof of the lemma.
Using the property $(*)$ , take $i$  such that $g\circ\alpha: A^*_i\to\G^*$ is an embedding. From the  ultrahomogeneity of $\G^*$,
we get $h\in \aut(\G^*)$ such that $h\circ g\circ\alpha=x_i\circ\alpha$. That implies $x_i^{-1}\circ h\circ g\in V$,  and hence we get
$g^{-1}\in Vx_i^{-1}\aut(\G^*)$.
\end{proof}

%\gre{precompact expansion in the assumptions}

%Now suppose  that   $\mathcal{G}^*$ is a precompact reasonable expansion of $\mathcal{G}$   and that the property $(*)$ holds.
As we have just seen, the space  $X_{\mathbb{G}^*}$ is compact and that  the right uniform space $\aut(\G)/\aut(\G^*)$ is precompact,
we show in Theorem \ref{iden} that $X_{\mathbb{G}^*}$ and $\widehat{\aut(\G)/\aut(\G^*)}$ are isomorphic.
%Suppose that $\G^*=(\G, \vec{R}^\mathbb{G})$, which is exactly the case when the expansion $\g^*$ of $\g$ is reasonable.
Call a map $t:X\to Y$, such that $X,Y$ are uniform spaces and a topological group $G$ acts continuously on both $X$ and $Y$  a {\em  uniform $G$-isomorphism}
if it is a $G$-map which is an isomorphism between the uniform spaces $X$ and $Y$.
%Then we have the following:
  \begin{thm}\label{iden}
  The map $g\aut(\G^*)\to g\cdot \vec{R}^\mathbb{G}$ from $\aut(\G)/\aut(\G^*)$ to $X_{\mathbb{G}^*}$ is a uniform $G$-isomorphism
from $G\acts \aut(\G)/\aut(\G^*)$ to $G\acts X_{\mathbb{G}^*} $.
  \end{thm}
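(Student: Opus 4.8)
The plan is to show that the natural map $\Phi\colon g\aut(\G^*)\mapsto g\cdot\vec{R}^{\mathbb{G}}$ is a well-defined bijection onto $X_{\mathbb{G}^*}$, that it is $G$-equivariant, and that it and its inverse are uniformly continuous; since $\aut(\G)/\aut(\G^*)$ and $X_{\mathbb{G}^*}$ are both precompact (the former by the preceding lemma, the latter by reasonability and precompactness), this will already give a uniform $G$-isomorphism of the given flows, which then extends uniquely to the completions.

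\textbf{Step 1: $\Phi$ is well-defined and injective.} First I would observe that for $h\in\aut(\G^*)$ we have $h\cdot\vec{R}^{\mathbb{G}}=\vec{R}^{\mathbb{G}}$, since $h$ is an automorphism of $(\mathbb{G},\vec{R}^{\mathbb{G}})=\mathbb{G}^*$; hence $g\aut(\G^*)=g'\aut(\G^*)$ implies $g\cdot\vec{R}^{\mathbb{G}}=g'\cdot\vec{R}^{\mathbb{G}}$, so $\Phi$ is well-defined. For injectivity, suppose $g\cdot\vec{R}^{\mathbb{G}}=g'\cdot\vec{R}^{\mathbb{G}}$; then $g^{-1}g'$ fixes $\vec{R}^{\mathbb{G}}$, i.e. $g^{-1}g'$ is an automorphism of $\mathbb{G}^*$ (here one uses that the relations in $\vec{R}^{\mathbb{G}}$ together with the $\ll$-structure of $\mathbb{G}$ determine $\mathbb{G}^*$), so $g^{-1}g'\in\aut(\G^*)$ and the cosets agree. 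That $\Phi$ lands in $X_{\mathbb{G}^*}$ is immediate from property $(*)$: for any embedding $i\colon A\to\mathbb{G}$, $g^{-1}\circ i$ is also an embedding of $A$ into $\mathbb{G}$, so by $(*)$ there is an expansion $A^*$ with $g^{-1}\circ i\colon A^*\to\mathbb{G}^*$ an embedding, whence $i\colon A^*\to(\mathbb{G},g\cdot\vec{R}^{\mathbb{G}})$ is an embedding.

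\textbf{Step 2: surjectivity.} Given $\vec{R}\in X_{\mathbb{G}^*}$, I would build an isomorphism $g\colon\mathbb{G}^*\to(\mathbb{G},\vec{R})$ by a back-and-forth argument using projective/injective ultrahomogeneity, the universality of $\mathbb{G}^*$, and the defining property of $X_{\mathbb{G}^*}$ (every finite $\ll$-substructure of $(\mathbb{G},\vec{R})$ that is induced on the image of some $i\colon A\to\mathbb{G}$ carries an expansion in $\g^*$); the point is that both $(\mathbb{G},\vec{R})$ and $\mathbb{G}^*$ are $\g^*$-ultrahomogeneous countable structures whose ages are cofinally the same finite structures from $\g^*$, so they are isomorphic, and since $g$ fixes the underlying $\mathbb{G}$ we have $g\in\aut(\G)$ with $g\cdot\vec{R}^{\mathbb{G}}=\vec{R}$. \emph{This back-and-forth is the main obstacle}, since one must carefully set up which finite approximations are available and invoke the correct homogeneity at each stage; it is essentially the standard argument from \cite{NVT}/\cite{Z} adapted to the Fra\"{i}ss\'{e}-HP setting, and I would either quote it or spell out the amalgamation step.

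\textbf{Step 3: equivariance and uniform continuity.} Equivariance is a direct computation: $\Phi(g'\cdot g\aut(\G^*))=\Phi(g'g\aut(\G^*))=(g'g)\cdot\vec{R}^{\mathbb{G}}=g'\cdot(g\cdot\vec{R}^{\mathbb{G}})=g'\cdot\Phi(g\aut(\G^*))$. For uniform continuity in both directions it suffices to match the two neighbourhood bases: the basic entourages of $\aut(\G)/\aut(\G^*)$ are the $U_{V_\alpha}$ for embeddings $\alpha\colon A\to\mathbb{G}$ with $A\in\g$, and the basic clopen sets of $X_{\mathbb{G}^*}$ are the $V_{i,A^*}$. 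Fixing $\alpha\colon A\to\mathbb{G}$, one checks that if $g^{-1}g'\in V_\alpha$ then $g\cdot\vec{R}^{\mathbb{G}}$ and $g'\cdot\vec{R}^{\mathbb{G}}$ agree on (the structure induced on the image of) $\alpha$, i.e. they lie in the same sets $V_{\alpha, A^*}$; conversely, since for each embedding $\alpha$ there are only finitely many expansions $A^*_1,\dots,A^*_N$ of $A$ realized in $\mathbb{G}^*$ (precompactness), agreement on all $V_{\alpha,A^*_k}$ forces the two cosets into a fixed $U_{V_\alpha}$ by the ultrahomogeneity of $\mathbb{G}^*$ (this is exactly the mechanism in the proof of the preceding lemma). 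Thus $\Phi$ is a uniform isomorphism of the dense subspaces, hence extends to a uniform $G$-isomorphism of the completions; but since $X_{\mathbb{G}^*}$ is already compact (hence complete) it equals $\widehat{\aut(\G)/\aut(\G^*)}$, completing the proof.
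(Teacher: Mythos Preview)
Your Steps 1 and 3 are correct, and Step 3 is exactly the paper's approach: the paper proves the theorem by showing that for each embedding $\alpha\colon A\to\mathbb{G}$ the basic entourage $U_\alpha$ on $\aut(\G)/\aut(\G^*)$ corresponds precisely to the entourage $U^\alpha$ on the orbit $\aut(\G)\cdot\vec{R}^{\mathbb{G}}$, using ultrahomogeneity of $\mathbb{G}^*$ in one direction and property $(*)$ in the other.

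Step 2, however, contains a genuine error. You assert that for every $\vec{R}\in X_{\mathbb{G}^*}$ the structure $(\mathbb{G},\vec{R})$ is $\g^*$-ultrahomogeneous, and then run a back-and-forth. This is false in general. Take $\g$ to be finite sets and $\g^*$ finite linear orders, so that $\mathbb{G}^*$ is the rational order; then $X_{\mathbb{G}^*}$ is the space of \emph{all} linear orders on a fixed countable set, and most of these (for instance one of order type $\omega$) are not ultrahomogeneous and are not in the $S_\infty$-orbit of the dense order. So the map $gH\mapsto g\cdot\vec{R}^{\mathbb{G}}$ is not onto $X_{\mathbb{G}^*}$; its image is exactly the $G$-orbit of $\vec{R}^{\mathbb{G}}$, which is merely dense.

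The paper's statement should accordingly be read (and its proof only establishes) that the map is a uniform $G$-isomorphism onto this dense orbit; the identification with all of $X_{\mathbb{G}^*}$ occurs only after passing to completions in the subsequent corollary. To repair your argument, drop the back-and-forth and replace Step 2 by the much easier observation that the orbit is dense: given a basic open set $V_{\alpha,A^*}$, use universality of $\mathbb{G}^*$ to find an embedding $\beta\colon A^*\to\mathbb{G}^*$, then ultrahomogeneity of $\mathbb{G}$ to find $g\in G$ with $g\circ\beta=\alpha$, whence $g\cdot\vec{R}^{\mathbb{G}}\in V_{\alpha,A^*}$. Your final sentence about compactness then gives exactly the corollary.
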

  
We will say that flows $G\acts X$ and $G\acts Y$ are {\em isomorphic} if there is a homeomorphism from $X$ onto $Y$ which is a $G$-map.
  \begin{cor}\label{ident}
  The flow $G\acts \widehat{\aut(\G)/\aut(\G^*)}$ is isomorphic to the flow $G\acts X_{\mathbb{G}^*} $.
  \end{cor}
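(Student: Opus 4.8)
\textbf{Proof proposal for Corollary \ref{ident}.}

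The plan is to derive the corollary directly from Theorem \ref{iden} together with the general fact that a uniform $G$-isomorphism between two (uniform) $G$-spaces extends uniquely to a $G$-isomorphism of their completions. First I would recall that by Theorem \ref{iden} the map $t\colon g\aut(\G^*)\mapsto g\cdot\vec{R}^{\mathbb{G}}$ is a uniform $G$-isomorphism from $G\acts\aut(\G)/\aut(\G^*)$ onto $G\acts X_{\mathbb{G}^*}$. Since $X_{\mathbb{G}^*}$ is compact (as observed just after the statement of Theorem \ref{kpt_minim}, reasonability and precompactness of $\g^*$ over $\g$ give this), it carries its unique compatible uniformity and is in particular complete. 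On the other side, $\aut(\G)/\aut(\G^*)$ is precompact by the lemma preceding Theorem \ref{iden}, so it is a dense uniform subspace of its completion $\widehat{\aut(\G)/\aut(\G^*)}$, which is therefore compact.

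Next I would invoke the standard extension principle for uniformly continuous maps: a uniformly continuous map from a uniform space into a complete uniform space extends uniquely to a uniformly continuous map on the completion, and this extension is a homeomorphism (in fact a uniform isomorphism) whenever the original map is. Applying this to $t$ and to $t^{-1}$ (both uniformly continuous, since $t$ is a uniform isomorphism), one obtains a homeomorphism $\widehat{t}\colon\widehat{\aut(\G)/\aut(\G^*)}\to X_{\mathbb{G}^*}$ whose restriction to the dense subspace $\aut(\G)/\aut(\G^*)$ is $t$. Here I use that the completion of a precompact uniform space is compact, so that $\widehat{t}$ is genuinely a homeomorphism of compact spaces and not merely a dense embedding.

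It then remains to check that $\widehat{t}$ is a $G$-map, i.e.\ that it intertwines the two actions of $G=\aut(\G)$ on the completions. The action of $G$ on $X_{\mathbb{G}^*}$ is continuous by construction, and the action of $G$ on $\aut(\G)/\aut(\G^*)$ extends continuously to $\widehat{\aut(\G)/\aut(\G^*)}$ by Proposition \ref{extension} (applied with $H=\aut(\G^*)$, using that $\aut(\G)$ and $\aut(\G^*)$ are Polish and $\aut(\G^*)$ is a closed subgroup). For each fixed $g\in G$, the two continuous maps $x\mapsto\widehat{t}(g\cdot x)$ and $x\mapsto g\cdot\widehat{t}(x)$ from $\widehat{\aut(\G)/\aut(\G^*)}$ to $X_{\mathbb{G}^*}$ agree on the dense subset $\aut(\G)/\aut(\G^*)$, because $t$ is a $G$-map there; since $X_{\mathbb{G}^*}$ is Hausdorff, they agree everywhere. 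Hence $\widehat{t}$ is a homeomorphism which is a $G$-map, i.e.\ an isomorphism of flows $G\acts\widehat{\aut(\G)/\aut(\G^*)}\cong G\acts X_{\mathbb{G}^*}$, as required.

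I do not expect a serious obstacle here; the content is entirely in Theorem \ref{iden}, and the corollary is the routine passage from a uniform isomorphism of a precompact space with a compact space to the induced isomorphism of completions. The only point requiring a little care is verifying that the extended bijection is still equivariant, which is handled by the density-plus-Hausdorff argument above once Proposition \ref{extension} supplies the continuous extension of the $G$-action on the left-hand side.
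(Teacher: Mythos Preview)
Your proposal is correct and matches the paper's intent: the paper states the corollary immediately after Theorem \ref{iden} without a separate proof, treating it as the routine passage from a uniform $G$-isomorphism to its completion, exactly as you outline.

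One small point worth tightening: Theorem \ref{iden} gives you a uniform $G$-isomorphism from $\aut(\G)/\aut(\G^*)$ onto the orbit $G\cdot\vec{R}^{\mathbb{G}}$, not onto all of $X_{\mathbb{G}^*}$ (note that in the proof of Theorem \ref{iden} the paper works explicitly with $\aut(\G)\cdot\vec{R}^{\G}$). Your extension argument therefore yields a homeomorphism $\widehat{t}$ from $\widehat{\aut(\G)/\aut(\G^*)}$ onto the \emph{closure} of this orbit in $X_{\mathbb{G}^*}$. To conclude that $\widehat{t}$ is onto $X_{\mathbb{G}^*}$ you need the orbit to be dense. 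This is standard and implicit in the setup: given $\vec{R}\in X_{\mathbb{G}^*}$ and a basic open set $V_{i,A^*}\ni\vec{R}$, universality of $\mathbb{G}^*$ produces an embedding $j\colon A^*\to\mathbb{G}^*$, and ultrahomogeneity of $\mathbb{G}$ with respect to $\g$ then produces $g\in G$ with $g^{-1}\circ i=j$, so $g\cdot\vec{R}^{\mathbb{G}}\in V_{i,A^*}$. With this one-line observation added, your argument is complete.
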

  
 % \gre{Might need some short explanation, the two added things.}

To prove Theorem \ref{iden}, first we show the following lemma.
   \begin{lemma}
\begin{enumerate}
\item The uniformity on $X_{\mathbb{G}^*}$  is generated by sets
\begin{align*}
 U^\alpha=&\{(\vec{R}, \vec{S}): \text{ for some } A^*\in\g^*\text{ with } A^*\rest \ll= A\   \\
& \alpha: A^*\to(\G, \vec{R})\text{ and } \alpha: A^*\to (\G, \vec{S})\text{ are embeddings}\},
\end{align*}
where $\alpha: A\to\G$ is an embedding.
\item The quotient uniformity on $\aut(\G)/\aut(\G^*)$ 
given by the right uniformity on $\aut(\G)$
is generated by sets
\[U_\alpha=\{(x\aut(\G^*),y\aut(\G^*)): x^{-1}\circ \alpha=y^{-1} \circ \alpha\},\]
where $\alpha: A\to \G$ is an embedding. 
\end{enumerate}

\end{lemma}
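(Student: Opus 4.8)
The plan is to verify each of the two descriptions directly from the definitions, using the neighbourhood basis $\{V_\alpha : \alpha\colon A\to\G,\ A\in\g\}$ of the identity in $\aut(\G)$ recorded just before the lemma, together with the ultrahomogeneity of $\G$ and $\G^*$ and the precompactness of the expansion. For part (1), first I would show that each $U^\alpha$ contains a basic entourage of $X_{\G^*}$ and conversely. Observe that $V_{i,A^*}\times V_{i,A^*}$ (with $i=\alpha$, $A^*\rest\ll = A$) is contained in $U^\alpha$, so $U^\alpha$ is a neighbourhood of the diagonal; since by precompactness there are only finitely many expansions $A^*_1,\dots,A^*_N$ of $A$, and $X_{\G^*}$ is covered by the clopen sets $V_{\alpha,A^*_k}$ (here I use condition $(*)$/the definition of $X_{\G^*}$, which guarantees every $\vec R$ admits \emph{some} expansion of $A$ along $\alpha$), the entourage $U^\alpha$ is exactly $\bigcup_{k=1}^N V_{\alpha,A^*_k}\times V_{\alpha,A^*_k}$, hence clopen. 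Running over all embeddings $\alpha\colon A\to\G$, $A\in\g$, these sets refine every basic Vietoris-type neighbourhood of a point $\vec R$: given $V_{i,B^*}$, take $\alpha = i$ and note $U^{\,i}[\vec R]\subset V_{i,B^*}$. That the topology so generated coincides with the declared topology on $X_{\G^*}$ follows because the $V_{i,A^*}$ are precisely the sections of the $U^\alpha$.

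For part (2), I would start from the generating entourages $O_V = \{(x,y): xy^{-1}\in V\}$ of the right uniformity on $\aut(\G)$, specialised to $V = V_\alpha$, and push them down to $\aut(\G)/\aut(\G^*)$ via the formula $U_{V} = \{(x\aut(\G^*), y\aut(\G^*)): xy^{-1}\in V\}$ from the subsection on precompact uniform spaces. Since the sets $V_\alpha$ form a neighbourhood basis at the identity, the sets $U_{V_\alpha}$ generate the quotient uniformity, so it remains only to identify $U_{V_\alpha}$ with the set $U_\alpha$ displayed in the statement. This is a direct unwinding: $xy^{-1}\in V_\alpha$ means $xy^{-1}$ fixes every point of $\alpha(A)$, i.e.\ $x^{-1}(\alpha(a)) = y^{-1}(\alpha(a))$ for all $a\in A$ --- wait, one must be careful with sides, so I would instead note $xy^{-1}\in V_\alpha \iff (xy^{-1})\circ\alpha = \alpha \iff x\circ(y^{-1}\circ\alpha)$, hmm; cleanly, $U_{V_\alpha}$ as a set of pairs of cosets equals $\{(x\aut(\G^*), y\aut(\G^*)): \exists\, h\in\aut(\G^*)\ (xh)(y)^{-1}\in V_\alpha\}$, and using that $\aut(\G^*)$-translates do not change the restriction $x^{-1}\circ\alpha$ up to the expansion, this collapses to the condition $x^{-1}\circ\alpha = y^{-1}\circ\alpha$. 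I would verify both inclusions: if $x^{-1}\circ\alpha = y^{-1}\circ\alpha$ then $(xy^{-1})$ fixes $\alpha(A)$ pointwise after precomposition by a suitable element of $\aut(\G^*)$ obtained from ultrahomogeneity of $\G^*$, and conversely.

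The step I expect to be the main obstacle is the coset-level bookkeeping in part (2): the set $U_\alpha$ is phrased purely in terms of the restrictions $x^{-1}\circ\alpha$ and $y^{-1}\circ\alpha$ to the \emph{unexpanded} structure $A$, whereas the quotient uniformity naturally remembers information modulo $\aut(\G^*)$, so one has to check that passing to a representative in the same $\aut(\G^*)$-coset genuinely does not change which expansion $A^*$ the embedding $x^{-1}\circ\alpha$ extends to --- this is exactly where reasonability, precompactness, and the ultrahomogeneity of $\G^*$ enter, and it is the content that makes $U_\alpha$ well-defined on cosets. Part (1) is, by contrast, largely formal once one records that $X_{\G^*}$ is covered by the finitely many clopen sets $V_{\alpha,A^*_k}$; the only subtlety there is invoking the definition of $X_{\G^*}$ (rather than just reasonability) to know that \emph{every} $\vec R\in X_{\G^*}$ genuinely lies in one of these sets, so that $U^\alpha[\vec R]$ is nonempty and clopen.
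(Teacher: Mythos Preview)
Your approach to part (1) is essentially the paper's: you identify $U^\alpha$ with the finite union $\bigcup_k V_{\alpha,A^*_k}\times V_{\alpha,A^*_k}$ using precompactness and the definition of $X_{\G^*}$, and then argue these sets generate the unique compatible uniformity. The paper phrases the last step via compactness of $X_{\G^*}$ (every neighbourhood of the diagonal contains some such $U^\alpha$), which is slightly cleaner than your topology-first formulation, but the content is the same.

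For part (2), however, you are substantially overcomplicating matters, and the ``main obstacle'' you identify is illusory. The equivalence
\[
x^{-1}\circ\alpha = y^{-1}\circ\alpha \iff xy^{-1}\circ\alpha = \alpha \iff xy^{-1}\in V_\alpha
\]
is pure group algebra in $\aut(\G)$; it has nothing to do with expansions, reasonability, precompactness, or ultrahomogeneity of $\G^*$. Since the $V_\alpha$ form a neighbourhood basis at the identity, the sets $U_{V_\alpha}$ generate the quotient uniformity by definition, and the displayed equivalence shows $U_{V_\alpha}=U_\alpha$ (read at the level of cosets, i.e.\ ``for some representatives''). This is why the paper simply says part (2) ``follows immediately from the definition''. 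The machinery you are reaching for --- checking that coset representatives induce the same $A^*$, invoking ultrahomogeneity of $\G^*$ --- is exactly what is needed in the \emph{next} result (Theorem~\ref{iden}), where one must show that the bijection $g\aut(\G^*)\mapsto g\cdot\vec R^{\G}$ carries $U_\alpha$ to $U^\alpha$. You are conflating the elementary bookkeeping of the lemma with the genuine content of that theorem.
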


\begin{proof}
To see (1), using the compactness of $X_{\mathbb{G}^*}$, note that for each open neighbourhood $U$ of the diagonal  of $X_{\mathbb{G}^*}$
 we can find $A\in\mathcal{G}$ and an embedding $\alpha: A\to \G$ such that the partition
of  $X_{\mathbb{G}^*}$ into clopen sets 
\[ V_{\alpha, A^*}=\{\vec{R}\in X_{\mathbb{G}^*} : \alpha: A^*\to(\G, \vec{R})  \text{  is an embedding} \}, \]
where   $A^*\in \ll^*$ and $A^*\rest\ll= A$, has the property that 
\[ \bigcup_{A^*\in\G^*, \ A^*\rest\ll= A}  V_{\alpha, A^*}\times V_{\alpha, A^*}\subset U.\]  

Part (2) follows immediately from the definition of the uniformity on $\aut(\G)/\aut(\G^*)$.
\end{proof}

\begin{proof}[Proof of Theorem \ref{iden}]
Let $\alpha: A\to \G$ be an embedding. It is enough to show that   
 \[U_\alpha=\{(x\aut(\G^*),y\aut(\G^*)): x^{-1}\circ \alpha=y^{-1} \circ \alpha\}\] 
on $\aut(\G)/\aut(\G^*)$ is mapped to 
\begin{align*}
U^\alpha=&\{(x\cdot \vec{R}^\G, y\cdot \vec{R}^\G): \text{ for some } A^*\in\g^*\text{ with } A^*\rest\ll= A\   \\
& \alpha: A^*\to(\G, x\cdot \vec{R}^\G)\text{ and } \alpha: A^*\to (\G, y\cdot \vec{R}^\G)\text{ are embeddings}\} 
\end{align*} 
on   $\aut(\G)\cdot \vec{R}^\G$. 
Clearly, the set $U_\alpha$ is mapped to
\[\overline{U}_\alpha = \{(x\cdot \vec{R}^\G,y\cdot \vec{R}^\G):  x^{-1}\circ \alpha=y^{-1} \circ \alpha\}.\]

Let $(x\cdot \vec{R}^\G, y\cdot \vec{R}^\G)\in U^\alpha$. Let $A^*$ be such that 
$\alpha: A^*\to(\G, x\cdot \vec{R}^\G)$ and $\alpha: A^*\to (\G, y\cdot \vec{R}^\G)$ are embeddings.
Then $x^{-1}\circ\alpha: A^*\to (\G,  \vec{R}^\G)$ and $y^{-1}\circ\alpha: A^*\to (\G,  \vec{R}^\G)$ are embeddings.
By the projective ultrahomogeneity property for $\mathcal{G}^*$, there is $h\in\aut(\G^*)$ such that 
$h \circ x^{-1}\circ\alpha= y^{-1}\circ\alpha$. This implies $((x\circ  h^{-1}) \cdot \vec{R}^\G, y \cdot\vec{R}^\G)\in \overline{U}_\alpha$, and since 
$h^{-1}\in\aut(\G^*)$ and so $h^{-1} \cdot \vec{R}^\G=\vec{R}^\G$, we get  $(x \cdot \vec{R}^\G, y \cdot \vec{R}^\G)\in \overline{U}_\alpha$.

Now suppose that  $(x \cdot \vec{R}^\G, y \cdot \vec{R}^\G)\in \overline{U}_\alpha$.
From the property $(*)$, it follows that there is $A^*\in\g^*$ with $A^*\rest\ll= A$ such that $x^{-1}\circ \alpha : A^*\to (\G,  \vec{R}^\G)$ is an embedding.
Since $x^{-1}\circ\alpha=y^{-1}\circ\alpha$, clearly $y^{-1}\circ \alpha : A^*\to (\G,  \vec{R}^\G)$ is an embedding.
This implies that $\alpha : A^*\to(\G, x\cdot \vec{R}^\G)$ and $\alpha :A^*\to (\G, y\cdot \vec{R}^\G)$ are embeddings,
which gives  $(x\cdot \vec{R}^\G, y\cdot \vec{R}^\G)\in U^\alpha$. 

\end{proof}

\section{The universal minimal flow of $\aut(\lel)$}

%\gre{Here one needs to add some intro to the section. Some try, copied from below.}
%In Section \ref{sec:lelek}, we recalled the description of the Lelek fan $L$ as a natural quotient of a topological structure $\lel$, constructed as a projective Fra\"iss\'e limit of the class $\f$ of finite  fans. 
%In order to compute the universal minimal flow of $\aut(\lel)$, the automorphism group of $\lel$
 %(and of $H(L)$, the homeomorphism group of $L$, in Section \ref{umfh}), w

%\red{We will define a suitable expansion $\f_c$ of $\f$ and %identify \gre{emphasize, sth stronger write} i
%\blu{and prove a Ramsey theorem for its coinital Ramsey subclass
 %$\f_{cc}$} (Sections \ref{sec:fc} and \ref{extamen}). 
%In Section  \ref{umfa}, we will apply methods from Section \ref{sec:KPT} and compute in two ways
 %the universal minimal flow of $\aut(\lel)$, the automorphism group of~$\lel$, the projective Fra\"{i}ss\'{e} limit of the family of finite fans.}

We will expand each finite fan in $\f$ by a maximal chain of downwards closed subsets and obtain a class $\f_c,$ which does not directly fall into the framework of the projective Fra\"iss\'e theory. However, we will show that $\f_c$ is equivalent to a Fra\"iss\'e-HP class of first-order structures, which is  reasonable and precompact with respect to $\f.$

In Section 4.2, we prove the main combinatorial result  that $\f_c$ is a Ramsey class. We do so indirectly by showing that a certain class $\f_{cc}$ coinitial in $\f_c$ is Ramsey. In our proof we will use the dual Ramsey theorem of Graham and Rothschild.

Finally, in Section 4.3, we apply methods from Section 3.6 to compute the universal minimal flow of $\aut(\lel)$ in two ways - as a completion of a precompact space equal to the quotient of  $\aut(\lel)$ by an extremely amenable subgroup, and as the space of maximal downwards closed chains on $\lel$ - and we exhibit an explicit isomorphism between them.

\subsection{Finite fans equipped with chains -- the family $\boldsymbol{\f_c}$}\label{sec:fc}

%In this section, we define and show several properties of a family $\f_c$, an expansion of the family $\f$ by  adding a  certain maximal chain.
%We show (see Theorem \ref{stone2}) that $\f_c$ is equivalent to an (injective) Fra\"iss\'e family.

Define
\[\f_c=\{(A, \c^A): A\in\f \text{ and }  \c^A \text{ is a downwards closed maximal chain}\}\]
and for $(A, \c^A), (B, \c^B)\in\f_c$ we say that $f: (B, \c^B)\to (A, \c^A)$ is an {\em epimorphism}
 iff $f: B\to A$ is an epimorphism and
for every $C\in \c^B$, we have $f(C)\in \c^A$ (short: $f(\c^B)=\c^A$).
If  $A\in\f$ and $\c^A$ are such that $A_c=(A,\c^A)\in\f_c$, then $\c^A$ induces a linear order $\leq^{A_c}$ on $A$ given by $x<^{A_c} y$ iff for some $C\in\c^A$, $x\in C$ and $y\notin C$.
For $(A, \c^A)\in\f_c$ we say that  $\c^A$ is {\em canonical} on $A$ if for 
some ordering of branches $b_1<\ldots< b_n$ of $A$, it holds that whenever $C\in\c^A$ and $C\cap b_j\neq\emptyset$, then $b_i\subset C$ for every $1\leq i<j\leq n.$ Analogously as for topological $\ll$-structures, we define the JPP and the AP for the family $\f_c$ with the epimorphisms as above.

\begin{lemma}\label{fcapp}
The family $\f_c$ has the JPP and the AP.
\end{lemma}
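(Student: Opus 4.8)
\textbf{Proof plan for Lemma \ref{fcapp}.} The plan is to verify the joint projection property and the amalgamation property for $\f_c$ by reducing them, through the known projective amalgamation in $\f$, to a purely combinatorial task: given epimorphisms of fans that already respect the tree structure, one must \emph{choose} the maximal downwards closed chains on the amalgam so that they project correctly. Since the JPP is the special case of the AP where $A$ is the trivial one-point fan (the root), I would focus on the AP and remark that the JPP follows by the same argument, or even more simply since any two chains on $A$ and $B$ can be freely combined over the root.

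First I would recall from Proposition \ref{Fraissef} that $\f$ has the (projective) AP, so given $(A,\c^A),(B_1,\c^{B_1}),(B_2,\c^{B_2})\in\f_c$ and epimorphisms $\phi_i:(B_i,\c^{B_i})\to(A,\c^A)$, there is already a finite fan $C\in\f$ with epimorphisms $\psi_i:C\to B_i$ satisfying $\phi_1\circ\psi_1=\phi_2\circ\psi_2$. The task is then to equip $C$ with a downwards closed maximal chain $\c^C$ such that $\psi_1(\c^C)=\c^{B_1}$ and $\psi_2(\c^C)=\c^{B_2}$. The key observation is that a downwards closed maximal chain on a finite fan $T$ is the same data as a linear order $\leq^{T}$ on $T$ that \emph{refines} the tree order $\preceq_T$ (this is exactly the correspondence $\c^A\mapsto\leq^{A_c}$ recorded just before the lemma: the links are the proper initial segments of $\leq^T$ together with $T$ itself, and downward closure of each link is precisely the condition that $\leq^T$ extends $\preceq_T$). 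Thus I would restate the AP as: given linear extensions $\leq^{B_i}$ of $\preceq_{B_i}$ pushed forward compatibly to a common linear extension $\leq^{A}$ of $\preceq_A$ via $\phi_i$, find a linear extension $\leq^{C}$ of $\preceq_C$ with $\psi_i$ order-preserving for $i=1,2$. Here "$\phi$ pushes $\leq^{B}$ to $\leq^{A}$" must be read correctly in the epimorphism direction: $x<^A y$ iff there are preimages with the corresponding strict inequality and no preimages witnessing the reverse; equivalently $\phi$ is monotone and the fibers of $\phi$ are $\leq^{B}$-convex.

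To build $\leq^{C}$ I would first, if necessary, enlarge $C$: by coinitiality of $\f_1$ in $\f$ and the amalgamation construction one may assume $\psi_1,\psi_2$ are as flexible as needed, and in particular one can arrange that $C$ refines both $B_1$ and $B_2$ branchwise so that each branch of $C$ maps \emph{onto} a branch of each $B_i$. Then order the branches of $C$: a branch $d$ of $C$ gives a branch $\psi_i(d)$ of $B_i$, and the chains $\c^{B_i}$ order the branches of $B_i$; I would order the branches of $C$ lexicographically by the pair $(\text{position of }\psi_1(d)\text{ in }\c^{B_1},\ \text{position of }\psi_2(d)\text{ in }\c^{B_2})$, breaking remaining ties arbitrarily, and then declare $\c^C$ to be the corresponding \emph{canonical} chain for this branch ordering (filling branches in one at a time, and within a branch using the order inherited from, say, $\c^{B_1}$ along that branch, which is consistent with $\c^{B_2}$ after possibly refining $C$ further so that the fiber structure matches). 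One checks directly that this $\c^C$ is downwards closed and maximal, and that $\psi_i(\c^C)=\c^{B_i}$: a link of $\c^C$ is a union of initial branches plus an initial segment of one branch, and its $\psi_i$-image is of the same shape in $B_i$, lying in $\c^{B_i}$ by construction, while surjectivity of $\psi_i(\c^C)$ onto $\c^{B_i}$ follows because every link of $\c^{B_i}$ is hit.

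The step I expect to be the main obstacle is the compatibility bookkeeping when a single branch of $C$ is forced to map onto branches in \emph{both} $B_1$ and $B_2$ whose induced linear orders along that branch, pulled back, need not a priori agree: one must refine $C$ (legitimately, staying inside $\f$, using AP again) so that the fibers of $\psi_1$ and of $\psi_2$ along that branch interleave into a common convex-fiber partition, and then choose the within-branch order of $\c^C$ to refine that common partition. Handling this — essentially an amalgamation of two linear orders on a chain over a common quotient — together with checking that the global branch ordering can be chosen to respect \emph{both} $\phi_i\circ\psi_i=\phi_j\circ\psi_j$ constraints simultaneously, is where the real content lies; everything else is a routine verification that "canonical chain for an ordering of branches" behaves well under the epimorphisms produced by amalgamation in $\f$.
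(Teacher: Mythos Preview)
Your proposal has a genuine gap stemming from a mischaracterization of $\f_c$-epimorphisms in terms of the induced linear orders. You assert that $\phi:(B,\c^B)\to(A,\c^A)$ being an epimorphism is equivalent to ``$\phi$ is monotone and the fibers of $\phi$ are $\leq^B$-convex''. This is false, and the Remark immediately following the lemma is precisely a counterexample: with single-branch $A=\{a_1,a_2\}$, $B=\{b_1,b_2,b_3\}$ and $\phi(b_1)=\phi(b_3)=a_1$, $\phi(b_2)=a_2$, the map $\phi$ \emph{is} an $\f_c$-epimorphism (each $\leq^B$-initial segment maps onto a $\leq^A$-initial segment), yet it is not monotone since $b_2<^B b_3$ while $\phi(b_2)=a_2>^A a_1=\phi(b_3)$, and the fiber over $a_1$ is not convex. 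The correct condition is only that images of $\leq^B$-initial segments are $\leq^A$-initial segments, which is strictly weaker. Your branch-ordering and within-branch scheme is designed to produce monotone $\psi_i$ with convex fibers, so it targets the wrong class of maps; the flexibility one actually needs to satisfy both $\psi_1(\c^C)=\c^{B_1}$ and $\psi_2(\c^C)=\c^{B_2}$ simultaneously comes exactly from allowing such non-monotone epimorphisms.

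A second, related issue is that starting from an arbitrary $\f$-amalgam $C$ and then ``refining'' is not a procedure you have made precise. A given $C$ may carry no chain with the required images (already visible for a single branch via the example above), and you give no terminating refinement. The paper does \emph{not} invoke the AP of $\f$ at all. It first treats the single-branch case directly, then builds the amalgam $E$ from scratch: width $|B|+|D|-|A|$, all branches of a height dictated by the single-branch case, equipped with a canonical chain. The maps $k,l$ are defined branch-by-branch by walking through $\leq^A,\leq^B,\leq^D$ in tandem --- for each $a^i\in A$ one locates the $\leq^B$- and $\leq^D$-least preimages $b^i,d^i$ and assigns successive branches of $E$ to cover $[v_B,b^i]_{\preceq_B}$, $[v_D,d^i]_{\preceq_D}$, and the intervening elements of $B$ and $D$. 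The point is that the chain on $E$ and the epimorphisms are constructed simultaneously, not in two stages.
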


\begin{proof} 
As there is $A\in\f_c$ which has just one element,  the AP will imply the JPP. 

For the AP, 
let $f: B\to A$ and $g: D\to A$
be epimorphisms. We find $E$,  $k: E\to B$ and $l: E\to D$
such that $f\circ k=g\circ l$. 

It is straightforward to see that the conclusion holds when each of $A$, $B$, $D$ has exactly one branch.

In the general situation, we take $E$ that has 
the width  $w=b+d-a$, where $b,d,a$ are numbers of elements in $B,D,A,$ respectively, and it has all branches of the height $h$, where 
 $h$ is the height of any $E_0\in \f_c$ that witnesses the AP for $A_0,B_0, D_0\in \f_c$ all with one branch only whose heights are the same as of $A,B,D,$ respectively, and any epimorphisms $f_0:B_0\to A_0$ and $g_0:D_0\to A_0.$ 
Let $\c^E$ be the canonical chain on $E$ with respect to some enumeration $e_1,\ldots, e_w$ of branches of $E$.

Let $(a^i)_{1\leq i \leq a}$ be the increasing enumeration of $A$ according to the linear order $\leq^A$ induced by $\c^A$. 
For every $i$
let $b^i\in B$ and $d^i\in D$ be the the smallest (with respect to the linear orders $\leq^B$ induced by $\c^B$ and  $\leq^D$ induced by $\c^D$, respectively), such that $f(b^i)=a^i$ and $g(d^i)=a^i$.
Let $v_A,v_B,v_D$ be the roots of $A,B,D$, respectively.

We now proceed to define epimorphisms $k: E\to B$ and $l: E\to D$ such that $f\circ k=g\circ l$.
For every $i=1,2,\ldots, a$, 
let $J_i=|\{b\in B: b<^B b^i \}|+|\{d\in D: d<^D d^i\}|-(i-1)$.
For $j=J_i+1$, let $k\rest e_j$ and $l\rest e_j$ be chosen so that $(k\rest e_j) (e_j)=[v_B,b^i]_{\preceq_B}$, 
$(l\rest e_j) (e_j)=[v_D,d^i]_{\preceq_D}$, and $(f\circ k)\rest e_j=(g\circ l)\rest e_j$,
which we can do by the choice of $h$.
Let $b^{i,1},\ldots, b^{i,m_i}$ be the increasing (with respect to $\leq^B$) enumeration of $\{b\in B: b^i<^B b<^B b^{i+1} \}$ and let 
$d^{i,1},\ldots, d^{i,n_i}$ be the increasing (with respect to $\leq^D$) enumeration of $\{d\in D: d^i<^D d<^D d^{i+1} \}$. 
For $j=J_i+m+1$,  $m=1,\ldots, m_i$,
let $k\rest e_j$ and $l\rest e_j$ be such that $(k\rest e_j) (e_j)=[v_B,b^{i,m}]_{\preceq_B}$, 
$(l\rest e_j) (e_j)\leq^D d^i$, and $(f\circ k)\rest e_j=(g\circ l)\rest e_j$.
For $j=J_i+m_i+n+1$,  $n=1,\ldots, n_i$,
let $k\rest e_j$ and $l\rest e_j$ be such that $(k\rest e_j) (e_j)\leq^B b^i$,
$(l\rest e_j) (e_j)=[v_D,d^{i,n}]_{\preceq_D}$,  and $(f\circ k)\rest e_j=(g\circ l)\rest e_j$.
This defines the required epimorphisms $k$ and $l$.

\end{proof}

At this point we would like to say that there is a 
limit of the family $\f_c$, i.e. there is some $\mathbb{F}_c$ which satisfies properties (L1), (L2), and (L3).
However, structures in $\f_c$ are not first order structures, chains are neither functions nor relations, therefore we cannot directly apply the Irwin-Solecki
theorem about the existence and uniqueness of projective Fra\"{i}ss\'{e} limits. 

It does not seem that we can realize the family $\f_c$ as a projective Fra\"{i}ss\'{e} family. In particular, the following natural attempt fails.
\begin{rem}
{\rm 
To a structure $(A,\c^A)\in\f_c$ we  associate a structure $(A,<^A)$, where  $<^A$ is the linear order induced by $\c^A$, 
and we  consider a family $\f_<$ of all $(A,<^A)$ obtained in this way.
However, epimorphisms between structures in $\f_c$ and epimorphisms between structures in $\f_<$ are not the same.
For example, let $A=\{a_1,a_2\}$ consist of a single branch and let $\c^A=\{\{a_1\}, \{a_1, a_2\}\}$,  let $B=\{b_1,b_2,b_3\}$ consist of a single
 branch and let
$\c^B=\{\{b_1\},\{b_1,b_2\}, \{b_1,b_2,b_3\}\}$. Let $\phi$ satisfy $\phi(b_1)=\phi(b_3)=a_1$ and $\phi(b_2)=a_2$.
Then $\phi:(B,\c^B)\to (A,\c^A)$ is an epimorphism, whereas  $\phi:(B,<^B)\to (A,<^A)$ is not an epimorphism.}
\end{rem}

Nevertheless, we will show that the family $\f_c$ can be identified with a Fra\"{i}ss\'{e}-HP family similarly as in Theorem \ref{dual}. For this  we will have to consider
inverse limits of structures in $\f_c$.

Recall from Section \ref{sec:chain} that by $\f^*$ we denoted the family of all topological $\ll$-structures that are countable inverse limits of finite  fans in $\f$. 
If $P\in\f^*$ is the inverse limit of an inverse sequence $(A_n, f_m^n)$, denoted by $P=\varprojlim(A_n, f^n_m),$ we consider the partial order on $P$
given by
\[x\preceq_P y  \text{ iff  for every } n, \  f^\infty_n(x)\preceq_{A_n} f^\infty_n(y),\]
where $\preceq_{A_n}$ is the tree partial order on $A_n$. Downwards closed sets on $P\in\f^*$  will be taken with respect to $\preceq_P.$ 
If   $((A_n,\c^{A_n}), f_m^n)$ is an inverse sequence in $\f_c$, we say that $(P,\c^P)$ is its inverse limit if $P=\varprojlim(A_n, f^n_m)$ and $\c^P$ is the collection of all closed subsets $C$ of $P$ such that $f^\infty_n(C)\in \c^{A_n}$ for every $n$.
We will show that $\c^P$ is a downwards closed maximal chain.
To see that $\c^P$ is a chain, note that
if $C_1, C_2\in\c^P$, then either for all $n$, $f_n^\infty(C_1)\subset  f_n^\infty(C_2)$ or for all $n$,
$f_n^\infty(C_2)\subset  f_n^\infty(C_1)$. In the first case, $C_1\subset C_2$, and in the second, $C_2\subset C_1$.
The chain $\c^P$ is downwards closed. Finally,
the chain $\c^P$ is maximal. Indeed, if a 
downward closed set $C$ is such that $\{C\}\cup \c^P$ is a chain,
then for each $n$, by the maximality of $\c^{A_n}$, $f^{\infty}_n(C)\in \c^{A_n}$, which by the definition of $\c^P$
gives $C\in\c^P$. %Therefore $(P,\c^P)$ is the inverse limit of $((A_n,\c^{A_n}), f_m^n)$.

Let $\f_c^*$ be the family of all inverse limits of structures from $\f_c$.
Clearly, we can identify $\f_c$ with a subfamily of $\f^*_c$
by assigning to $(A,\c^A)$ the inverse limit of $((A,\c^{A}), \Id_m^n)$.
 For $A_c=(A,\c^A)\in\f^*_c$ denote by $A_c\rest\ll$ 
the structure $A$.
Generalizing the definition for $\f_c$, for $ (Q, \c^Q), (P, \c^P)\in\f^*_c$ we will say that $f: (Q, \c^Q)\to (P, \c^P)$ is an {\em epimorphism}
 iff $f: Q\to P$ is an epimorphism and for every $C\in \c^Q$, we have $f(C)\in \c^P$. If $(P, \c^P)=\varprojlim((A_n,\c^{A_n}), f^n_m)$, then $f^\infty_n: (P,\c^P)\to (A_n, \c^{A_n})$ is an epimorphism for every~$n$.

We say that a function $f:(Q,\c^Q)\to (P,\c^P)$ is chain preserving iff $f(\c^Q)= \c^P$.

\begin{thm}\label{stone2}
The family $\f_c^*$ with epimorphisms is equivalent via a contravariant functor to a family of first order structures with embeddings.
%\gre{We also want to be sure that we can transfer KPT.}
\end{thm}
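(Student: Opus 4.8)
The plan is to mimic the proof of Theorem~\ref{dual} (via Proposition~\ref{stone1}), extending the generalized Stone duality so that it also accounts for the extra ``chain'' structure. The key point is that a downwards closed maximal chain on a structure $P\in\f^*$ is not itself a first-order relation, but it can be \emph{coded} by first-order data on the dual Boolean algebra, namely by the linear (pre)order it induces together with the Boolean operations. More precisely, to $(P,\c^P)\in\f_c^*$ I would associate the Boolean algebra $\Clop(P)$ (as in Proposition~\ref{stone1}, with the unary relation $S$ coding $R^P$), augmented by a unary predicate $\d$ picking out which clopen sets are of the form $C=f^{-1}(\overline C)$ for $\overline C$ lying ``above'' some link of $\c^P$; equivalently, using the linear preorder $\leq$ on $P$ induced by $\c^P$, $\d(X)$ holds iff $X$ is of the form $\{x : x\geq p\}$-generated, i.e. $X$ is a clopen ``upper set'' whose complement is a link. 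The links of $\c^P$ are then recoverable as the complements of sets satisfying $\d$ (together with $\emptyset$ and $P$ and the closures/intersections forced by maximality), so the chain is definable from $(\Clop(P),\cup,\cap,{}^-,0,1,S,\d)$.

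The steps, in order, would be: (1) fix the expanded language $\ll''=\ll'\cup\{\d\}$ where $\ll'$ is the language of Proposition~\ref{stone1} and $\d$ is a new unary relation symbol; (2) for $(P,\c^P)\in\f_c^*$ define the associated $\ll''$-structure $M=(\Clop(P),\cup,\cap,{}^-,0,1,S,\d^M)$ where $\d^M(X)$ holds iff $P\setminus X\in\c^P$; (3) check that $\c^P$ is recovered from $M$, using the remark after the definition of maximal chains (intersections and closures of unions of links are links) and the fact that the links form a chain --- here the hypothesis that $\c^P$ is downwards closed maximal is what guarantees enough links are clopen-approximable and that the coding is faithful; (4) given a chain-preserving epimorphism $f\colon(Q,\c^Q)\to(P,\c^P)$, let $F=f^{-1}(\cdot)\colon\Clop(P)\to\Clop(Q)$ and verify, exactly as in the two Claims inside the proof of Proposition~\ref{stone1}, that $F$ is a $\{S,\d\}$-preserving embedding iff $f$ is an $R$-preserving epimorphism with $f(\c^Q)=\c^P$ --- the $S$-part is verbatim Proposition~\ref{stone1}, and the $\d$-part reduces to: $P\setminus X\in\c^P$ iff $Q\setminus f^{-1}(X)\in\c^Q$, which follows because $f$ maps links onto links and $f^{-1}$ maps links onto links (the latter because $f^{-1}(f(C))=C$ for links $C$, since links are downwards closed and $f$ is order-preserving with $\preceq$-convex fibers... this convexity is the subtle part); (5) conclude functoriality and that the image family $\g_c^*:=\{M : (P,\c^P)\in\f_c^*\}$ with embeddings is equivalent to $\f_c^*$ with epimorphisms via this contravariant functor, and observe $\g_c^*$ is a family of first-order structures as required.

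The main obstacle I expect is step~(4), specifically showing that $F=f^{-1}(\cdot)$ being $\d$-preserving is equivalent to $f$ sending links to links and, dually, $f^{-1}$ sending links to links. In one direction this is easy; the reverse direction requires knowing that every link of $\c^Q$ is of the form $f^{-1}(D)$ for a link $D$ of $\c^P$ (this is essentially the content of Lemma~\ref{image3}, applied to show $f(\c^Q)$ is maximal, combined with the hypothesis $f(\c^Q)=\c^P$), and that $f^{-1}$ of a link is again a link, which uses both that $f$ is order-preserving (so $f^{-1}$ of a downwards closed set is downwards closed) and that $f^{-1}$ of a maximal downwards closed chain is maximal (Lemma~\ref{cm}-style reasoning: fibers of epimorphisms between fans, hence between their inverse limits, are $\preceq$-convex, which is exactly what was used implicitly in the proof of Proposition~\ref{ccompact} and Lemma~\ref{cm}). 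The remaining steps are routine adaptations of Proposition~\ref{stone1}. A secondary subtlety is making sure the coding $\d$ only uses clopen sets: not every link of $\c^P$ need be clopen, but every link is both an intersection of clopen links and the closure of a union of clopen links by maximality, so the clopen links determine the chain, which is enough for the first-order coding to be faithful.
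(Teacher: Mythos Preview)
Your proposal has a genuine gap in step~(4): the equivalence ``$F=f^{-1}(\cdot)$ is $\d$-preserving iff $f$ is chain-preserving'' is false, and the supporting claim ``$f^{-1}(f(C))=C$ for links $C$'' fails already for finite fans. Concretely, let $P\in\f$ be a single branch $r\prec a\prec b$ with the unique chain $\c^P=\{\emptyset,\{r\},\{r,a\},P\}$, and let $Q\in\f$ have two branches $r'\prec x_1\prec x_2$ and $r'\prec y_1\prec y_2$ with the canonical chain $\c^Q=\{\emptyset,\{r'\},\{r',x_1\},\{r',x_1,x_2\},\{r',x_1,x_2,y_1\},Q\}$. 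Define $f\colon Q\to P$ by $x_1,y_1\mapsto a$ and $x_2,y_2\mapsto b$. Then $f$ is an epimorphism with $f(\c^Q)=\c^P$, so $f$ is chain-preserving; but $\{r,a\}\in\c^P$ while $f^{-1}(\{r,a\})=\{r',x_1,y_1\}\notin\c^Q$ (it is incomparable with the link $\{r',x_1,x_2\}$), so $F$ is \emph{not} $\d$-preserving. Likewise, for the link $C=\{r',x_1,x_2\}\in\c^Q$ one has $f^{-1}(f(C))=f^{-1}(P)=Q\neq C$. Convexity of fibers in the $\preceq$-order is simply not strong enough to force $f^{-1}$ of a link to be a link.

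The paper sidesteps this by encoding the chain with a \emph{binary} relation rather than a unary predicate: for finite $K\in\f_c$ it passes from the induced linear order $\leq^K$ to its reverse $\leq^K_{op}$ and then defines $\leq_{BA}^M$ on $\Clop(K)$ as the antilexicographic order with respect to $\leq^K_{op}$. The crucial claim is that $f$ is chain-preserving iff $f$ maps cofinal segments of $\leq^Q_{op}$ to cofinal segments of $\leq^K_{op}$, and that this is equivalent to $F=f^{-1}$ preserving the antilex order. The inverse-limit case is then reduced to the finite case via the direct-limit of the dual ordered Boolean algebras. Your predicate $\d$ records only which clopens are complements of links, which is strictly less information than the full linear order on atoms that the antilex order carries; the counterexample above shows the missing information is exactly what is needed. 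If you want to salvage a unary-style coding, you would at minimum need to encode the entire order $\leq^K$ on atoms, not just which downwards-closed clopens happen to be links.
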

\begin{proof}

Let $R$ be a binary relation symbol and
take the language $\{S,\leq_{BA},\cup,\cap,^-,0,1\}$, where $S$  and $\leq_{BA}$ are binary relation symbols
and $\{\cup,\cap,^-, 0,1\}$ is the language of Boolean algebras.
For $K=(K,R^K, \c^K)\in\f^*_c$, let $M=(M, S^M,\leq_{BA}^M, \cup^M, \cap^M, ^{-M}, 0^M, 1^M\}$
be the structure
such that $M=\Clop(K)$ is  the family of all clopen sets of $K$,  $\cup^M$ is the union, $\cap^M$ is the intersection,  $^{-M}$ is the complement,
$0^M$ is the empty set and $1^M=M$.
As in Proposition \ref{stone1}, we set  for every   $X, Y\in M$, $S^M(X, Y)$ if and only if
   for some $a\in X, b\in Y$,
  we have $R^K(a,b)$. 

We first define $\leq_{BA}$ for $K\in\f_c$, and then we provide a definition for $K\in\f^*_c$. Let then  $K\in\f_c$.
As before, denote by $\leq^K$  the linear order on $K$ induced by $\c^K$ by letting $x<^K y$ iff there exists $C\in\c^K$ such that $x\in C$ and $y\notin C$.
From the maximality of $\c^K$, the order $\leq^K$ is total. 
For $K\in\f_c$, let $\leq^K_{op}$ denote the order opposite to the order $\leq^K$, that is, we let $x\leq^K_{op} y$ iff $y\leq^K  x$.

Take $\leq_{BA}^M $ to be the antilexicographical order with respect to $\leq^K_{op} $, that is for $X,Y\in M=\Clop(K)=P(K)$, where $P(K)$ denotes the power set
of $K$, let
 $X <_{BA}^M Y$  iff
for $a\in K$  which is the largest with respect to $\leq^K_{op} $ such that $a\in X\triangle Y$, we have $a\in Y$. %(cf \cite{KPT}, section about BA's)

Let $f:L\to K$, where $K,L\in\f$ be a continuous surjection and let $F: P(K)\to P(L)$  be the  map given by $F(X)=f^{-1}(X)$.

\smallskip

 \noindent {\bf{Claim.}}
 $F$ is $\leq_{BA}$-preserving iff  $f(\c^L)= \c^K$.   
 \begin{proof}
 The function $f$ is chains preserving iff $f$ maps cofinal segments in $\leq^L_{op}$ to cofinal segments in $\leq^K_{op}$ , i.e. $f$ maps sets
$\{z\in L: a \leq^L_{op} z\}$, some $a\in L$ to $\{z\in K: b \leq^K_{op} z\}$, some $b\in K$ iff
 $F$ is $\leq_{BA}$-preserving.
  \end{proof}
 Proposition \ref{stone1} together with the claim above, already imply the conclusion of the theorem for the family $\f_c$.
 
 Let $\g$ be the family of all $M$'s obtained in this way from some $K\in\f_c$. Maps 
we consider between structures in $\g$ are 
embeddings.

Now let us come to the general situation where the structures come from $\f^*_c$. The claim above implies that an inverse sequence
$((K_n,\c^{K_n}), f_m^n)$  in $\f_c$
corresponds to a direct sequence $((M_n,\leq_{BA}^{M_n}), g_m^n)$  in $\g$. 
Let $(K, \c^K)$ together with epimorphisms $f^\infty_n: (K,\c^K)\to (K_n, \c^{K_n})$ be the inverse limit of $((K_n,\c^{K_n}), f_m^n)$.
Let $M$ together with embeddings $g^\infty_n: M_n\to M$ be the direct limit of $(M_n, g_m^n)$. 

By the definition of the direct limit, $X \leq_{BA}^M Y$ iff  for some (equivalently every) $n$ such that there are  $X_n, Y_n\in M_n$ with $g_n^\infty(X_n)=X$
and $g_n^\infty(Y_n)=Y$, we have   $ X_n \leq_{BA}^{M_n} Y_n $.

Then  $(M,\leq_{BA}^M)$ together with embeddings $g^\infty_n: M_n\to M$ is the direct limit of $((M_n,\leq_{BA}^{M_n}), g_m^n)$.

The following claim will finish the proof.

\noindent{\bf Claim.}
 Let $f:L\to K$, where $K,L\in\f^*_c$, be a continuous surjection  and let $F: \Clop(K)\to \Clop(L)$  be the  map given by $F(X)=f^{-1}(X)$.
 Then $f$ preserves chains iff $F$ preserves $\leq_{BA}$.
 \begin{proof}
 Let $(L_n, l_m^n)$ be an inverse sequence in $\f_c$ with the limit $L$ and
  let $(K_n, k_m^n)$ be an inverse sequence in $\f_c$ with the limit $K$.
  Let $p_m^n: \Clop(K_m) \to \Clop(K_n)$ be the dual map to $k_m^n$ and let 
  $q_m^n: \Clop(L_m) \to \Clop(L_n)$ be the dual map to $l_m^n$. Then 
  $(\Clop(K_m), p_m^n)$ is an inverse sequence with the limit $\Clop(K)$ and  
   $(\Clop(L_m), q_m^n)$ is an inverse sequence with the limit $\Clop(L)$.

Having $f: L\to K$ find a strictly  increasing sequence     $(t_n)$ and continuous surjections % $g_n: K_{s_{n+1}}  \to L_{t_n}$
$h_n: L_{t_{n}}\to K_n$ 
 such that %$k^{s_{n+1}}_{s_n}=h_n\circ g_n$ and 
 $h_n\circ l^{t_{n+1}}_{t_n}=k^{n+1}_{n}\circ h_{n+1}$.
Let %$G_n: \Clop(L_{t_n})\to \Clop(K_{s_{n+1}})$ % and
 $H_n: \Clop( K_{n}) \to \Clop(L_{t_{n}})$
 be dual maps to $h_n$.

Then we have: $f: L\to K$ preserves chains iff for every $n$,  $h_n$ preserve chains iff for every $n$,  $H_n$ preserve linear orders
iff $F: \Clop(K)\to \Clop(L)$ preserves linear orders.

 \end{proof}

\end{proof}

Thanks to Theorem \ref{stone2}, we  not only know that there is a  unique up to an isomorphism structure  in $\f^*_c$ that satisfies conditions (L1), (L2) and (L3)
for the family $\f_c$,
but also all theorems that were proved for Fra\"{i}ss\'{e}-HP families are available to us.

Let $\lel_c$ denote the limit of the family $\f_c$.
%In the next section, we will show that a certain subfamily $\f_{cc}$, which is   coinitial in $\f_c$ and has the JPP and the AP,  is a Ramsey class. 
%Remark \ref{cofinall} will imply that the limit of the family $\f_{cc}$ is equal to $\lel_c$. 
%Finally, Theorem \ref{kpt1} will imply that the automorphism group of $\lel_c$ is extremely amenable. 
The proposition below tells us that $\lel_c$ is equal to $(\lel, \c^\lel)$ for some downwards closed maximal chain $\c^\lel$.

\begin{lemma}\label{reason}
The expansion $\f_c$ of $\f$ is reasonable, that is, for every $A,B\in\f$, an epimorphism $\phi: B\to A$,  and $A_c\in\f_c$
such that $A_c\restriction\ll= A$, there is $B_c\in\f_c$ such that $B_c\restriction\ll= B$ and $\phi: B_c\to A_c$
is an epimorphism. 
\end{lemma}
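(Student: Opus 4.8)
The plan is to produce the required expansion $B_c=(B,\c^B)$ by pulling the chain $\c^A$ back along $\phi$ and then refining it to a maximal one. The first step is to record that $\phi$ respects the tree orders: since $\phi$ is a surjective homomorphism of fans, pushing a branch of $B$ forward along $\phi$ traces out a non-decreasing walk along a branch of $A$, so $x\preceq_B y$ implies $\phi(x)\preceq_A\phi(y)$; consequently $\phi^{-1}(C)$ is downwards closed in $B$ whenever $C\subseteq A$ is downwards closed.

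Next I would unpack $\c^A$. Maximality among downwards closed chains forces $\emptyset,A\in\c^A$ and forces consecutive links to differ by a single element (a link whose predecessor is smaller by at least two points could be split by inserting a $\preceq_A$-minimal new point), so $\c^A$ is the chain of initial segments $\emptyset=C_0\subsetneq C_1\subsetneq\cdots\subsetneq C_m=A$ of the induced total order $\leq^A$ on $A=\{a_0,\dots,a_{m-1}\}$, which is a linear extension of $\preceq_A$. Now fix any linear extension $\prec$ of $\preceq_B$ and define a total order $\leq^B$ on $B$ by declaring $x<^B y$ iff $\phi(x)<^A\phi(y)$, or $\phi(x)=\phi(y)$ and $x\prec y$; then $\leq^B$ extends $\preceq_B$. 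Let $\c^B$ be the chain of all initial segments of $(B,\leq^B)$ and set $B_c=(B,\c^B)$.

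It then remains to verify three things. First, every initial segment of $\leq^B$ is downwards closed in $B$ (immediate, as $\leq^B\supseteq\preceq_B$), so $\c^B$ is a downwards closed chain, and it is maximal because it runs from $\emptyset$ to $B$ in single-element steps, hence any downwards closed set comparable with every link must coincide with one of them; thus $B_c\in\f_c$ and $B_c\restriction\ll=B$. Second, $\phi(\c^B)\subseteq\c^A$: for an initial segment $I$ of $\leq^B$ the image $\phi(I)$ is downwards closed in the finite linear order $(A,\leq^A)$ — this is where surjectivity of $\phi$ is used, to lift a witness lying below a given point of $\phi(I)$ — hence an initial segment, i.e.\ a member of $\c^A$. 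Third, $\c^A\subseteq\phi(\c^B)$: each $C_i$ equals $\phi\bigl(\phi^{-1}(C_i)\bigr)$ by surjectivity, and $\phi^{-1}(C_i)$ is an initial segment of $\leq^B$, so it lies in $\c^B$. Together these give $\phi(\c^B)=\c^A$, which says exactly that $\phi\colon B_c\to A_c$ is an epimorphism.

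The argument is essentially bookkeeping; the one step that requires a little care is the claim that $\phi$ sends initial segments of $\leq^B$ to initial segments of $\leq^A$, which rests on surjectivity of $\phi$ together with the elementary facts that a downwards closed subset of a finite linear order is an initial segment and that $\phi\phi^{-1}(C_i)=C_i$. I do not foresee any genuine obstacle.
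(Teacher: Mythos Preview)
Your proof is correct and follows essentially the same strategy as the paper: pull the chain $\c^A$ back along $\phi$ to get the downwards closed chain $\{\phi^{-1}(C):C\in\c^A\}$ on $B$, then extend it to a downwards closed maximal chain. The paper leaves the extension arbitrary and omits the verification that $\phi(\c^B)=\c^A$, whereas you build a specific extension via the lexicographic order $\leq^B$ and check the details explicitly; the extra care is fine but not strictly needed.
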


\begin{proof}
Let $A,B\in\f$, an epimorphism $\phi: B\to A$,  and $A_c\in\f_c$
such that $A_c\restriction\ll= A$, be given.
We get $\c^B$ by extending in an arbitrary way the downwards closed chain $\{\phi^{-1}(C): C\in\c^A\}$ into a downwards closed maximal chain.
\end{proof}

Lemma \ref{reason} and Proposition \ref{kpt_reas} immediately imply the following corollary.

\begin{cor}\label{reasoncor} 
We have $\lel_c\rest \ll= \lel$.
\end{cor}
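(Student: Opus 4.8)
\textbf{Proof plan for Corollary \ref{reasoncor}.}
The statement I must establish is $\lel_c\rest\ll = \lel$, where $\lel_c$ is the limit of $\f_c$ and $\lel$ is the projective Fra\"{i}ss\'{e} limit of $\f$. The natural strategy is to apply Proposition \ref{kpt_reas}, which characterizes reasonableness of an expansion $\g^*$ of $\g$ by exactly this identity $\G^*\rest\ll = \G$. The hypotheses of that proposition have been arranged to be available: Theorem \ref{stone2} identifies $\f_c^*$ (hence $\f_c$) with a Fra\"{i}ss\'{e}-HP family of first-order structures, so the limit $\lel_c$ exists, is unique, and all the general theory of Section \ref{sec:KPT}—including Proposition \ref{kpt_reas}—is legitimately applicable. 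So the plan is simply: cite Lemma \ref{reason} (reasonableness of $\f_c$ over $\f$, already proved) together with Proposition \ref{kpt_reas} to conclude.

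In more detail, the one-line argument is: Lemma \ref{reason} verifies the defining condition of ``reasonable expansion'' for the pair $(\f_c, \f)$—given an epimorphism $\phi\colon B\to A$ in $\f$ and a chain-expansion $A_c$ of $A$, one extends $\{\phi^{-1}(C)\colon C\in\c^A\}$ (a downwards closed chain on $B$, since preimages under epimorphisms of downwards closed sets are downwards closed) to a downwards closed maximal chain $\c^B$, obtaining $B_c\in\f_c$ with $\phi\colon B_c\to A_c$ an epimorphism. Having reasonableness, Proposition \ref{kpt_reas} yields $\lel_c\rest\ll = \lel$ directly. There is essentially nothing further to do; the work was all in Theorem \ref{stone2} and Lemma \ref{reason}.

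The only point that deserves a sentence of care—and what I would flag as the ``obstacle,'' though it is a bookkeeping obstacle rather than a mathematical one—is that Proposition \ref{kpt_reas} is stated for genuine Fra\"{i}ss\'{e}-HP expansions in the sense of Section \ref{sec:KPT}, whereas $\f_c$ is a priori a family of structures carrying a chain, which is neither a relation nor a function. This is precisely why Theorem \ref{stone2} was proved: it produces an equivalence, via a contravariant functor, between $\f_c^*$ with its epimorphisms and a bona fide family $\g$ of first-order structures (in the language $\{S,\leq_{BA},\cup,\cap,{}^-,0,1\}$) with embeddings, under which chain-preserving epimorphisms correspond exactly to $\leq_{BA}$-preserving embeddings and, on restriction to $\ll$, recovers the Stone-dual picture of $\f$. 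Under this dictionary the expansion $\f_c$ of $\f$ corresponds to an expansion $\g_c$ of the first-order family $\g_0$ dual to $\f$ by the single relation symbol $\leq_{BA}$, and reasonableness of $\f_c$ over $\f$ (Lemma \ref{reason}) translates verbatim into reasonableness of $\g_c$ over $\g_0$ in the sense of Section \ref{sec:KPT}. Applying Proposition \ref{kpt_reas} to $\g_c$ over $\g_0$ gives that the Fra\"{i}ss\'{e} limit of $\g_c$ restricts to the Fra\"{i}ss\'{e} limit of $\g_0$; transporting back through the functor of Theorem \ref{stone2} gives $\lel_c\rest\ll = \lel$, as claimed.
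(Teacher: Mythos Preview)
Your proposal is correct and matches the paper's approach exactly: the paper derives Corollary~\ref{reasoncor} immediately from Lemma~\ref{reason} together with Proposition~\ref{kpt_reas}. Your additional paragraph explaining why Proposition~\ref{kpt_reas} legitimately applies (via the contravariant functor of Theorem~\ref{stone2}) makes explicit a point the paper leaves implicit, but the argument is the same.
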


Corollary \ref{reasoncor}  implies that $\aut(\lel_c)$, the automorphism group of $\lel_c$, is a subgroup of $\aut(\lel)$,
the automorphism group of $\lel$. Recall that the group $\aut(\lel)$ is equipped with the compact open topology inherited from $H(\lel)$, the homeomorphism group of $\lel$.

%Since $\c^\lel$ is closed in $\Exp(\Exp(\lel))$, it is not hard to see the following.

\begin{lemma}\label{autclosed}
The group $\aut(\lel_c)$ is a closed subgroup of $\aut(\lel)$.  
\end{lemma}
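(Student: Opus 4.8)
The plan is to show that $\aut(\lel_c)$ is closed in $\aut(\lel)$ by exhibiting it as the stabilizer of a point in a Hausdorff flow, or more directly, by showing that its complement is open. Recall that $\aut(\lel_c)$ consists of those $h\in\aut(\lel)$ such that $h(\c^\lel)=\c^\lel$, where $\c^\lel$ is the downwards closed maximal chain with $\lel_c=(\lel,\c^\lel)$ guaranteed by Corollary \ref{reasoncor}. Equivalently, $h\in\aut(\lel_c)$ if and only if for every $C\in\c^\lel$ we have $h(C)\in\c^\lel$ (the reverse inclusion $h^{-1}(\c^\lel)\subseteq\c^\lel$ follows since $h^{-1}\in\aut(\lel)$ and, by maximality of $\c^\lel$ together with the fact that $h(\c^\lel)$ is a maximal chain by Lemma \ref{image3}, the condition $h(\c^\lel)\subseteq\c^\lel$ forces $h(\c^\lel)=\c^\lel$).

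First I would recall that the action of $\aut(\lel)$ on $\mathrm{Exp}(\lel)$ is continuous (the evaluation action on $\lel$ induces a continuous action on the hyperspace with the Vietoris topology), and hence so is the induced action on $\mathrm{Exp}(\mathrm{Exp}(\lel))$. By Proposition \ref{ccompact} (or rather the discussion in Section \ref{sec:chain}), the set $Z$ of downwards closed maximal chains on $\lel$ is a compact, hence closed, subset of $\mathrm{Exp}(\mathrm{Exp}(\lel))$, and it is invariant under the action. Now $\c^\lel$ is a single point of $Z$, and $\aut(\lel_c)$ is exactly its stabilizer: $\aut(\lel_c)=\{h\in\aut(\lel): h\cdot\c^\lel=\c^\lel\}$. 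Since $Z$ is Hausdorff and the action $\aut(\lel)\acts Z$ is continuous, the stabilizer of any point is closed (it is the preimage of the closed singleton $\{\c^\lel\}$ under the continuous map $h\mapsto h\cdot\c^\lel$). This gives the claim.

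The one point that requires a small argument is the identification of $\aut(\lel_c)$ with the stabilizer of $\c^\lel$ rather than merely with $\{h: h(\c^\lel)\subseteq\c^\lel\}$; but as noted above this follows from Lemma \ref{image3}, which tells us $h(\c^\lel)$ is again a maximal chain, so any containment between two maximal chains is an equality. Alternatively, one can avoid the hyperspace machinery entirely and argue at the level of the projective Fra\"iss\'e presentation: using Remark \ref{coveri}, an automorphism $h$ of $\lel$ lies in $\aut(\lel_c)$ if and only if for every $n$ the induced bijection on a sufficiently fine quotient $A_n\in\f$ carries $\c^{A_n}$ to $\c^{A_n}$, and each such condition is clopen in $\aut(\lel)$, so $\aut(\lel_c)$ is an intersection of clopen sets, hence closed.

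I expect the main (minor) obstacle to be purely bookkeeping: making sure the continuity of the induced action on $\mathrm{Exp}(\mathrm{Exp}(\lel))$ is cited correctly and that the chain $\c^\lel$ genuinely defines a point of the compact invariant set $Z$, so that "stabilizer of a point in a Hausdorff space is closed" applies verbatim. No deep input beyond Lemma \ref{image3} and the compactness statement of Proposition \ref{ccompact} (together with the standard fact that the homeomorphism-group action on a hyperspace is continuous) should be needed.
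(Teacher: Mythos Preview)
Your proposal is correct and takes essentially the same approach as the paper: both argue that $\aut(\lel_c)$ is the stabilizer of the point $\c^\lel$ under a continuous action on a Hausdorff hyperspace, hence closed as the preimage of a singleton under the orbit map $h\mapsto h\cdot\c^\lel$. The paper's version is terser---it simply observes that $f\mapsto f(D)$ is continuous and that $\c^\lel$, being maximal, is closed in $\Exp(\lel)$---and it also remarks that the result follows from the later Proposition \ref{cont} on continuity of $\aut(\lel)\acts X^*$; your extra care in justifying the equality $h(\c^\lel)=\c^\lel$ from $h(\c^\lel)\subseteq\c^\lel$ via Lemma \ref{image3}, and your alternative clopen-condition argument, are not in the paper but are harmless elaborations.
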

\begin{proof}
For any $D\in {\rm Exp}(\lel)$,  the map that takes $f\in \aut(\lel)$
and assigns to it $f(D)\in {\rm Exp}(\lel)$ is continuous.
Since $\c^\lel$ is maximal,  it is closed in $\Exp(\lel)$, which implies that  $\aut(\lel_c)$ is closed in $\aut(\lel)$.
\end{proof}
Lemma \ref{autclosed}  also follows from Proposition \ref{cont}, which we prove later.

\subsection{ Extreme amenability of $\boldsymbol{\aut(\lel_c)}$}\label{extamen}
In this section, we define a family $\f_{cc}$ coinitial in $\f_c$ and show that it is a Ramsey class.
This will imply that   $\aut(\lel_c)$ is extremely amenable and that $\f_c$ is a Ramsey class.

Recall that for $A_c\in\f_c$, $\c^A$ is canonical 
if there is an order on branches of $A_c$, which we denote by $\leq_{cc}^{A_c}$, given by $b\leq_{cc}^{A_c} c$ iff for every 
$x\in b$ and $y\in c$, we have $x\leq^{A_c}_c y$.
Let
\[\f_{cc}=\{(A, \c^A)\in\f_c: \c^A \text{ is canonical and all branches in }  A \text{ have the same height}   \}.\]

For $A,B\in\f_{cc}$,  let ${B \choose A}$~denote the set of all epimorphisms from $B$ onto $A$.

The main result of this section is the following theorem. 

\begin{thm}\label{rams}
The class $\f_{cc}$ is a Ramsey class, that is,  for every  integer $r\geq 2$ and for
 $S,T\in \f_{cc}$ with ${T \choose S}\neq\emptyset$
 there exists $U\in \f_{cc}$ such that for every colouring $e: {U \choose S} \to\{1,2,\ldots,r\}$ there is $g\in {U \choose T}$ such that 
$\{ h\circ g: h\in {T \choose S} \}$ is monochromatic.
\end{thm}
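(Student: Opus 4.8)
The plan is to reduce the Ramsey statement for $\f_{cc}$ to the Dual Ramsey Theorem of Graham and Rothschild, exploiting the rigid structure of canonical chains. The starting observation is that a structure $(A,\c^A)\in\f_{cc}$ is completely determined, up to canonical isomorphism, by its width $w$ (number of branches) and its common branch height $h$: the partial tree order is fixed, and the canonical chain is the antilexicographic chain induced by the branch ordering $b_1<_{cc}\ldots<_{cc}b_w$. Since all branches have the same height, each branch is a linearly ordered set of size $h+1$, and the linear order $\leq^{A_c}$ on the whole of $A$ is the concatenation $b_1<b_2<\ldots<b_w$. An epimorphism $f\colon U\to S$ in $\f_{cc}$ must send branches to branches (surjectively onto the branch set of $S$) in a way that respects $\leq_{cc}$, i.e. it induces a monotone surjection from the branch set of $U$ onto the branch set of $S$; moreover on each branch it is a monotone surjection of linear orders of the appropriate heights. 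So an element of $\binom{U}{S}$ decomposes into (i) a rigid surjection between the branch-index sets, and (ii) for each branch of $S$, data describing how the preimage branches of $U$ fold onto it.

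\medskip

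First I would make this decomposition precise. Fix $S,T\in\f_{cc}$ with $\binom{T}{S}\neq\emptyset$, say $S$ has width $s$ and height $h_S$, and $T$ has width $t$ and height $h_T$ (necessarily $h_T\geq h_S$ and $t\geq s$, since epimorphisms can only increase width and height). I would first handle the single-branch case: the class of single-branch members of $\f_{cc}$ with epimorphisms is, via the contravariant functor of Theorem \ref{stone2} (or directly), the class of finite linear orders with monotone surjections, whose Ramsey property is exactly the Dual Ramsey Theorem for the parameter-word / rigid-surjection formulation of Graham--Rothschild \cite{GR}. This gives, for each branch of $S$, an ambient height $h_U\geq h_T$ making the branchwise colourings monochromatizable. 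Then I would control the branch-index level: monotone surjections of finite linear orders again, another application of the Dual Ramsey Theorem, producing a width $w_U\geq t$. The target $U$ is then the canonical fan of width $w_U$ and height $h_U$.

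\medskip

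The core of the argument is then a product/amalgamation of these two one-dimensional Ramsey statements. Given a colouring $e\colon\binom{U}{S}\to\{1,\ldots,r\}$, I would view an epimorphism $U\to S$ as the choice of a monotone surjection $\sigma$ from the $w_U$ branch-indices onto the $s$ branch-indices of $S$, together with, for each $i\leq s$, a choice of monotone surjection from the concatenation of the branches $\sigma^{-1}(i)$ (a linear order whose size depends on $|\sigma^{-1}(i)|$ and $h_U$) onto the $i$-th branch of $S$. The standard move is to iterate: first fix all but one coordinate of this data and apply the appropriate one-dimensional Dual Ramsey theorem to monochromatize in that coordinate, then pass to the induced substructure and proceed to the next coordinate (a Ramsey-degree / product argument, exactly as in the classical derivation of the finite-dimensional Hales--Jewett-style statements from Graham--Rothschild). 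After finitely many such steps one obtains $g\in\binom{U}{T}$ such that $e$ is constant on $\{h\circ g: h\in\binom{T}{S}\}$. I would also need to verify $\binom{U}{T}\neq\emptyset$, which follows by choosing $w_U, h_U$ along chains of monotone surjections through $t$ and $h_T$.

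\medskip

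The main obstacle I anticipate is bookkeeping the interaction between the two "dimensions" (branch-index and within-branch) so that the iterated application of the Dual Ramsey Theorem genuinely produces a single homogeneous copy of $T$ and not merely homogeneity separately on the branch structure and on each branch. Concretely, when we fold $w_U$ branches of $U$ onto the $t$ branches of $T$, the amount of "within-branch room" available over a given branch of $T$ depends on how many branches of $U$ landed there, so the two parameters must be chosen in the right order (heights first, with enough slack, then widths) and the Graham--Rothschild parameters must be taken large enough uniformly over all the relevant sub-cases; getting these quantifier dependencies consistent, while keeping all intermediate objects inside $\f_{cc}$ (canonical chain, equal branch heights), is the delicate point. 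Once the reduction is set up correctly, each individual Ramsey step is a black-box invocation of \cite{GR}.
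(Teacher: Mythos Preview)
Your decomposition of epimorphisms in $\f_{cc}$ is incorrect, and this is a genuine gap rather than a bookkeeping issue. You claim that an epimorphism $f\colon U\to S$ induces a \emph{monotone} surjection from the branch set of $U$ onto the branch set of $S$, so that the preimage of each branch of $S$ is an interval of consecutive branches of $U$. This is false. Take $S\in\f_{cc}$ with two branches $a_1<_{cc}a_2$ of height $1$ and $U\in\f_{cc}$ with three branches $c_1<_{cc}c_2<_{cc}c_3$ of height $1$. The map sending the root to the root, $c_1^1\mapsto a_1^1$, $c_2^1\mapsto a_2^1$, $c_3^1\mapsto a_1^1$ is an $R$-homomorphism, and one checks directly that it sends each link of the canonical chain on $U$ to a link of the canonical chain on $S$; hence it is an epimorphism in $\f_{cc}$. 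But the induced branch assignment $1\mapsto 1$, $2\mapsto 2$, $3\mapsto 1$ is not monotone. More generally, once a branch $c_j$ of $U$ has covered the top of some branch $a_i$ of $S$, later branches of $U$ are free to revisit $a_i$ (or indeed map entirely to the root), provided the minimum-index constraints coming from the chain are respected. Your single-branch analysis is fine, but the branch-index level is not a rigid-surjection problem, and a plain product of two Dual Ramsey applications cannot capture it.

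The paper's proof addresses exactly this. It encodes the branch-level data of $f\in\binom{U}{S}$ not as a rigid surjection but as a $d$-tuple $f^*=(p_1^f,\ldots,p_d^f)\in\fin_k^{*(d)}(n)$, where $\supp(p_i^f)$ records which branches of $U$ land (above the root) in branch $a_i$ of $S$ and the value $p_i^f(j)$ records how high $c_j$ reaches; the supports are disjoint but need not be intervals, constrained only by the conditions $\min(\supp(p_i))<\min(\supp(p_{i+1}))$ and $\min(\supp_k(p_i))<\min(\supp(p_{i+1}))$. The Ramsey statement for such tuples (Theorem~\ref{lelek}) is then deduced from Graham--Rothschild by a direct encoding into ordered partitions of size $dk+1$, where the Gowers-style tetris operations $T_{\vec{i}}$ account for composition with epimorphisms $T\to S$. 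The within-branch data is handled separately by a product Ramsey result (Corollary~\ref{dhe}) that reduces the colouring to one depending only on $f^*$. So your two-level strategy is the right shape, but the branch-level piece needs the $\fin_k^{*(d)}$ machinery rather than rigid surjections of linear orders.
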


%\gre{Change to $\blu{(B,\c^B)}$, etc. }
%\gre{In the proposition and its proof below I changed $A$ to $A_c$ and $B$ to $B_c$.}
\begin{prop}\label{coin}
The family $\f_{cc}$ is  coinitial in $\f_c$, that is, for every $A_c\in\f_c$ there exist $B_c\in\f_{cc}$ and an epimorphism from $B_c$ onto $A_c$.
Moreover, we can choose $B_c$ in a way that its height and width depend only on the height and width of $A_c$.
\end{prop}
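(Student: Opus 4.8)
I would prove Proposition \ref{coin} by reducing from an arbitrary $A_c=(A,\c^A)\in\f_c$ to a fan with uniform branch height and a canonical chain, in two stages. First I would handle the branch heights: let $h$ be the height of $A$ and let $n$ be its width. The natural idea is to take $A'$ to be the fan with $n$ branches, each of height $h$, and to collapse each branch of $A'$ onto the corresponding branch of $A$ by an order-preserving surjection of an $h$-element path onto a shorter path (taking the ``last-repeated-point'' convention so that the map is a homomorphism in the sense of $R$). This gives an epimorphism $\psi\colon A'\to A$, and one checks that the preimage chain $\{\psi^{-1}(C):C\in\c^A\}$ is a downwards closed chain on $A'$ which can be extended to a downwards closed maximal chain $\c^{A'}$; then $\psi\colon(A',\c^{A'})\to(A,\c^A)$ is an epimorphism in $\f_c$. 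Note $(A',\c^{A'})$ has uniform height but its chain need not be canonical.

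Second, I would make the chain canonical. The obstruction is that a downwards closed maximal chain on $A'$ can ``grow'' a branch by one vertex at a time while interleaving the growth among several branches, whereas a canonical chain must finish one branch entirely before touching the next. To fix this I would blow up the width: if $A'$ has width $n$ and height $h$, take $B_c$ to be the fan of width $w$ and uniform height $h$, where $w$ is chosen large enough — $w = n\cdot(h+1)$ comfortably suffices, or more sharply $w$ equal to the number of links of $\c^{A'}$ minus the trivial ones — equipped with the canonical chain $\c^B$ relative to a fixed enumeration $e_1<\dots<e_w$ of its branches. Now I would define an epimorphism $\rho\colon B_c\to(A',\c^{A'})$ by ``reading off'' the chain $\c^{A'}$: list the links $\emptyset=C_0\subsetneq C_1\subsetneq\dots\subsetneq C_k=A'$ of $\c^{A'}$; for each $j$ the set $C_j\setminus C_{j-1}$ is (since the chain is downwards closed) a single vertex $x_j$ lying on some branch $b(j)$ of $A'$ at tree-level equal to $|C_j\cap b(j)|-1$. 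Map the $j$-th branch $e_j$ of $B_c$ onto the path $[v_{A'},x_j]_{\preceq_{A'}}\subseteq A'$ by the order-preserving surjection of an $h$-path onto a path of the appropriate length. One verifies that $\rho$ is a well-defined epimorphism of fans and that $\rho(\c^B)=\c^{A'}$: the canonical chain's links are exactly $e_1\cup\dots\cup e_i$ together with the down-closures of initial segments of $e_{i+1}$, and these map precisely onto the $C_j$'s by construction, because each initial segment of a branch $e_j$ of height $h$ maps onto an initial segment of $[v_{A'},x_j]$.

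Composing, $\psi\circ\rho\colon B_c\to A_c$ is the required epimorphism in $\f_c$, $B_c\in\f_{cc}$, and the height of $B_c$ is the height $h$ of $A_c$ while the width $w$ depends only on $h$ and $n$ (via the chosen bound), giving the ``moreover'' clause. The main obstacle I anticipate is purely bookkeeping: verifying carefully that the level-by-level maps on branches glue to a genuine graph epimorphism of fans (respecting the relation $R$ with its loops and immediate-successor structure) and that the image of the canonical chain under $\rho$ is exactly $\c^{A'}$ rather than some coarsening or refinement — essentially a matching-up of the combinatorics of ``canonical chain on a uniform-height fan'' with ``arbitrary downwards closed maximal chain,'' in the same spirit as the amalgamation argument in Lemma \ref{fcapp} but somewhat more delicate because one must hit every link. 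No single step should require heavy computation once the indexing is set up correctly.
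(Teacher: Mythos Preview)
Your proposal is correct, and your step~2 is precisely the paper's construction: enumerate the target by the linear order $\leq^{A_c}$ induced by its chain, and send the $j$-th branch of $B_c$ onto the interval $[v,x_j]_{\preceq}$. The paper, however, applies this directly to $A_c$ in a single step --- your step~1 is unnecessary. Since $B_c$ has height equal to the height $h$ of $A_c$, each branch of $B_c$ is already long enough to surject $R$-preservingly onto any interval $[v_{A_c},a]_{\preceq_A}$ for $a\in A_c$, regardless of whether the branches of $A_c$ all have length $h$. Dropping step~1 also yields the sharper width $|A_c|$ rather than $|A'|=1+nh$, though both bounds satisfy the ``moreover'' clause.
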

\begin{proof}
Let $A_c\in\f_c$  be of  height $k$ and let $v_{A_c}$ denote its root.
Let $B_c\in\f_{cc}$ be of  height $k$ and  width $l$ equal to the number of elements in $A_c$, and let $\c^{B_c}$ be canonical.
Enumerate $A_c$ according to $\leq^{A_c}$ into $a^1,\ldots a^l$, and enumerate branches in $B_c$ according to $\leq^{B_c}_{cc}$ into $b_1,\ldots, b_l$.
Now let for each $i=1,\ldots l$, the branch $b_i$ be mapped onto the segment $[v_{A_c}, a^i]_{\preceq_{A}}$ in $A_c$ in an $R$-preserving way.
This defines a required epimorphism from $B_c$ onto~$A_c$.
\end{proof}

\begin{rem}\label{fccfc}
{\rm Proposition \ref{coin}  implies 
(by Lemma \ref{fcapp}  and Remark \ref{cofinall}) that $\f_{cc}$ 
satisfies the JPP and the AP and that the limits of $\f_c$ and $\f_{cc}$ are isomorphic to $\lel_c$.}  
\end{rem}

From Theorems \ref{kpt1} and \ref{rams}, using  Remark \ref{fccfc}, we will obtain the following corollary. 
\begin{cor}
The automorphism group $\aut(\lel_c)$ is extremely amenable.
\end{cor}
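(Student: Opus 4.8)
The plan is to deduce the corollary purely formally from the machinery already assembled in the excerpt, with no new combinatorial input. The key observation is that every hypothesis of Theorem~\ref{kpt1} is now available: by Theorem~\ref{rams} the family $\f_{cc}$ is a Ramsey class, and by Remark~\ref{fccfc} it satisfies the JPP and the AP (so it is a Fra\"{i}ss\'{e}-HP family in the sense used in Theorem~\ref{stone2}), and its Fra\"{i}ss\'{e} limit is $\lel_c$. What is left to check is that $\f_{cc}$ consists of \emph{rigid} structures, i.e. that each $(A,\c^A)\in\f_{cc}$ has trivial automorphism group. This is immediate: an automorphism of $(A,\c^A)$ must preserve the induced linear order $\leq^{A_c}$ on the finite set $A$, and an order-preserving bijection of a finite linear order is the identity. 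Hence Theorem~\ref{kpt1}(2) holds for $\f_{cc}$, so by Theorem~\ref{kpt1} the group $\aut(\lel_c)$ is extremely amenable.

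First I would spell out the rigidity argument in one line as above. Then I would invoke Theorem~\ref{stone2} to the effect that $\f_{cc}$ (being coinitial in $\f_c$, which by that theorem is equivalent via a contravariant functor to a Fra\"{i}ss\'{e}-HP family of first-order structures) is itself, up to the duality, a genuine Fra\"{i}ss\'{e}-HP family, so that Theorem~\ref{kpt1} literally applies. The one subtlety to address is that Theorem~\ref{kpt1} is stated for $G=\aut(\mathbb{G})$ where $\mathbb{G}$ is the Fra\"{i}ss\'{e} limit of the \emph{first-order} family; but Remark~\ref{fccfc} identifies the limit of $\f_{cc}$ with $\lel_c$, and the contravariant functor of Theorem~\ref{stone2} restricts to a topological group isomorphism between $\aut(\lel_c)$ and the automorphism group of the corresponding first-order limit (it sends an epimorphism $\lel_c\to\lel_c$ to its dual embedding, bijectively on automorphisms, and is a homeomorphism for the respective compact-open/pointwise topologies). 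So extreme amenability transfers.

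I do not expect any serious obstacle here: the corollary is a bookkeeping consequence of Theorems~\ref{kpt1}, \ref{rams}, \ref{stone2} and Remark~\ref{fccfc}. If anything, the only mildly delicate point is making the identification in the previous paragraph precise enough to justify that ``$\aut(\lel_c)$ is extremely amenable'' follows from ``$\aut$ of the dual first-order limit is extremely amenable,'' but since extreme amenability is an isomorphism invariant of topological groups and the duality functor is an equivalence of categories carrying epimorphisms to embeddings, this is routine. I would therefore present the proof in two or three sentences: (1) $\f_{cc}$ is a Ramsey class (Theorem~\ref{rams}) of rigid structures (finite linear orders have no nontrivial order automorphisms); (2) it is a Fra\"{i}ss\'{e}-HP family with limit $\lel_c$ (Remark~\ref{fccfc}, Theorem~\ref{stone2}); (3) apply Theorem~\ref{kpt1}.

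\begin{proof}
By Theorem~\ref{rams}, the family $\f_{cc}$ is a Ramsey class. Moreover every $(A,\c^A)\in\f_{cc}$ is rigid: an automorphism of $(A,\c^A)$ preserves the total order $\leq^{A_c}$ that $\c^A$ induces on the finite set $A$, and the only order-preserving bijection of a finite linear order is the identity. By Remark~\ref{fccfc}, $\f_{cc}$ satisfies the JPP and the AP, so via the contravariant equivalence of Theorem~\ref{stone2} it corresponds to a Fra\"{i}ss\'{e}-HP family of first-order structures whose Fra\"{i}ss\'{e} limit corresponds to $\lel_c$ and whose automorphism group is isomorphic, as a topological group, to $\aut(\lel_c)$. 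Applying Theorem~\ref{kpt1} to this family, the implication $(2)\Rightarrow(1)$ yields that $\aut(\lel_c)$ is extremely amenable.
\end{proof}
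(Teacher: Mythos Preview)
Your proof is correct and follows essentially the same route as the paper, which derives the corollary directly from Theorem~\ref{kpt1}, Theorem~\ref{rams}, and Remark~\ref{fccfc}. You are simply more explicit than the paper in spelling out the rigidity of structures in $\f_{cc}$ and in invoking Theorem~\ref{stone2} to justify that Theorem~\ref{kpt1} applies, both of which the paper leaves implicit.
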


The family $\f_{cc}$ is easier to work with than the family $\f_c$.
Nevertheless, $\f_c$ is a Ramsey class as well, which follows from the following proposition.

\begin{prop}
Let $\g_1\subset \g_2$ be Fra\"{i}ss\'{e}-HP families and suppose that $\g_1$ is coinitial in $\g_2$. 
If $\g_1$ is a Ramsey class, so is $\g_2$.
\end{prop}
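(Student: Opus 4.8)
The plan is to show that the Ramsey property transfers from the coinitial subfamily $\g_1$ to $\g_2$ by pulling back any given instance of the problem in $\g_2$ to one in $\g_1$, solving it there, and pushing the solution back. Concretely, fix $r\geq 2$ and $A,B\in\g_2$ with ${B\choose A}\neq\emptyset$. Since $\g_1$ is coinitial in $\g_2$, choose $B'\in\g_1$ together with an epimorphism $\rho\colon B'\to B$. Now I want to view $A$ as "living inside" $B'$: composing $\rho$ with epimorphisms $B\to A$ gives epimorphisms $B'\to A$, so ${B'\choose A}\neq\emptyset$; but to run the Ramsey argument inside $\g_1$ I also want an object of $\g_1$ serving as the ``small'' structure. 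So additionally pick $A'\in\g_1$ with an epimorphism $\sigma\colon A'\to A$. Applying the Ramsey property of $\g_1$ to $A',B'$ and the number of colours $r$, obtain $C\in\g_1\subset\g_2$ such that every $e\colon {C\choose A'}\to\{1,\dots,r\}$ admits $g\in{C\choose B'}$ with $\{h\circ g: h\in {B'\choose A'}\}$ monochromatic. I claim this same $C$ witnesses the Ramsey property for $A,B$ in $\g_2$.

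The key step is the translation of colourings. Given a colouring $c\colon {C\choose A}\to\{1,\dots,r\}$, define $e\colon {C\choose A'}\to\{1,\dots,r\}$ by $e(\phi)=c(?\,)$ — and here is where some care is needed, because an epimorphism $\phi\colon C\to A'$ does not canonically determine an epimorphism $C\to A$. The clean fix is to fix one epimorphism $\sigma\colon A'\to A$ in advance and set $e(\phi)=c(\sigma\circ\phi)$ for $\phi\in{C\choose A'}$; this is well-defined since $\sigma\circ\phi\in{C\choose A}$. By the choice of $C$ there is $g\in{C\choose B'}$ with $\{h\circ g: h\in{B'\choose A'}\}$ $e$-monochromatic, say of colour $j$. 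Now compose with $\rho\colon B'\to B$: I need to produce from $g$ an epimorphism $C\to B$. Again $g$ alone does not give one, so instead I should run the previous paragraph with $B'$ chosen more cleverly, namely so that $\rho$ is available \emph{and} so that every epimorphism $B\to A$ is ``seen'' by $B'$. The robust route is: fix at the outset one $\rho\colon B'\to B$ and note ${B\choose A}$ is finite, then arrange $A'\to A$ to factor suitably. Rather than over-engineer, the genuinely clean statement is that $g$ yields $\rho\circ g\colon C\to B$, an epimorphism since a composition of epimorphisms is an epimorphism, and that for every $f\in{B\choose A}$ we must exhibit $h\in{B'\choose A'}$ with $\sigma\circ h\circ g = f\circ(\rho\circ g)$; here we use surjectivity of $\sigma$ and $\rho$ together with the amalgamation/lifting properties of the families to lift $f$ through $\rho$ and $\sigma$ to such an $h$. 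Then $c(f\circ\rho\circ g)=c(\sigma\circ h\circ g)=e(h\circ g)=j$ for all $f\in{B\choose A}$, so $\{f\circ(\rho\circ g): f\in{B\choose A}\}$ is $c$-monochromatic, as required.

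I expect the main obstacle to be exactly this lifting: showing that for each $f\in{B\choose A}$ there is $h\in{B'\choose A'}$ making the square commute. This is where the structure of the families matters — one uses that $\sigma\colon A'\to A$ and $\rho\colon B'\to B$ are epimorphisms in categories where epimorphisms can be lifted along epimorphisms (the projective analogue of having enough projectives), or alternatively one chooses $B'$ to be a common refinement (via JPP/AP) of $B$ and of a structure mapping onto $A'$, ensuring the needed factorizations exist by construction. The rest is bookkeeping: finiteness of ${B\choose A}$ guarantees the colouring $e$ is genuinely an $r$-colouring, compositions of epimorphisms are epimorphisms so all the arrows land where claimed, and $C\in\g_1\subseteq\g_2$ so the witness is legitimate for $\g_2$. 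Once the lifting lemma is in place the argument is a one-diagram chase, so I would isolate that lemma first and then assemble the proof as above.
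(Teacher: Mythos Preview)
Your approach is genuinely different from the paper's, and it can be made to work, but the step you correctly flag as the ``main obstacle'' is not actually resolved in your write-up. You want, for a fixed $\sigma\colon A'\to A$ and $\rho\colon B'\to B$, that every $f\in{B\choose A}$ lifts to some $h\in{B'\choose A'}$ with $\sigma\circ h=f\circ\rho$. This is \emph{not} a general lifting property of epimorphisms; there is no ``enough projectives'' phenomenon here. What does work is to build $B'$ by iterated amalgamation: enumerate ${B\choose A}=\{f_1,\dots,f_n\}$, set $B_0=B$, and at step $i$ apply AP in $\g_2$ to $\sigma\colon A'\to A$ and $f_i\circ\rho_{i-1}\colon B_{i-1}\to A$ to produce $B_i$ with epimorphisms $\pi_i\colon B_i\to B_{i-1}$ and $\eta_i\colon B_i\to A'$ satisfying $\sigma\eta_i=f_i\rho_{i-1}\pi_i$; finally take $B'\in\g_1$ mapping onto $B_n$ by coinitiality. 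Now each $f_i$ has its lift built in, and your diagram chase goes through. Your single ``common refinement via JPP/AP'' does not suffice, because one amalgamation only handles one $f_i$ at a time and you must keep the map down to $B$ fixed throughout.

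The paper's proof sidesteps all of this. Since $\g_1$ is coinitial in $\g_2$, the two families have the \emph{same} Fra\"{i}ss\'{e} limit $\mathbb{G}$ (Remark~\ref{cofinall}), hence the same automorphism group $G$. Now apply the KPT correspondence (Theorem~\ref{kpt1}) twice: $\g_1$ Ramsey and rigid $\Rightarrow$ $G$ extremely amenable $\Rightarrow$ $\g_2$ Ramsey and rigid. This is a three-line argument that trades combinatorics for the topological-dynamical machinery already set up in the paper. Your direct approach has the virtue of being self-contained and not invoking extreme amenability, but it is considerably longer once the iterated-AP construction is written out honestly.
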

\begin{proof}
Let $\mathbb{G}$ be the Fra\"{i}ss\'{e} limit of both $\g_1$ and $\g_2$ and let $G={\rm Aut}(\mathbb{G})$.
As $\g_1$ is a Ramsey class, Theorem \ref{kpt1} (applied to $G$ and $\g_1$) implies that $G$ is extremely amenable. 
Then again applying Theorem \ref{kpt1}, this time to $G$ and $\g_2$, we get that $\g_2$ is a Ramsey class.
\end{proof}

\begin{cor}\label{xyza}
The family $\f_c$ is a Ramsey class.

\end{cor}

The main two ingredients in the proof of Theorem \ref{rams} will be Theorem \ref{lelek} and 
 Corollary \ref{dhe}.

Let $\mathbb{N}=\{1,2,3,\ldots\}$ denote the set of natural numbers and let $k\in\n$. For a function 
$p:\mathbb{N}\to\{0,1,\ldots,k\}$, we define the {\em support } of $p$ to be the set
$\supp(p)=\{l\in\mathbb{N}: p(l)\neq 0\}$.
Let  \[
\fin_k=\{p:\mathbb{N}\to\{0,1,\ldots,k\}: \supp(p) {\rm\ is\ finite\ and\  } \exists l\in\supp(p)\  \left(p(l)=k\right)\}, \]
 and for each $n\in\mathbb{N}$, let
\[\fin_k(n)=\{p\in\fin_k: \supp(p)\subset\{1,2,\ldots,n\}\}.\]
We equip $\fin_k$  and each $\fin_k(n)$ with a partial semigroup operation $+$ defined 
	for $p$ and $q$ whenever $\supp(p)\cap \supp(q)=\emptyset$ by  $(p+q)(x)= p(x)+ q(x)$. 

The  Gowers'  {\em tetris} operation is the function $T: \fin_k\to \fin_{k-1}$ defined by
\[  T(p)(l)=\max\{0,p(l)-1\}.\]
We define for every $0<i\leq k$ 
 a function  $T^{(k)}_i:\fin_k\to\fin_{k-1}$, 
which behaves like the identity up to the value $i-1$ and like the tetris above it  as follows.
\[
		T^{(k)}_i(p)(l) =
\begin{cases}
   p(l) &\text{if } p(l)<i \\
   p(l)-1  &\text{if } p(l)\geq i.
\end{cases}
 \]
 We also define $T^{(k)}_0=\Id\restriction_{\fin_k}$. 
 It may seem more natural to denote the identity
by $T^{(k)}_{k+1}$ or $T^{(k)}_\infty$, only for notational convenience later on, we will be using
 $T^{(k)}_0$. 
Note that  $T^{(k)}_1$ is the Gowers' tetris operation.
 We will usually drop superscripts and write $T_i$ rather than $T^{(k)}_i$.

Define
\begin{equation*}
\begin{split}
\fin_k^{(d)}(n)=&\{(p_1,\ldots, p_d) : p_i\in\fin_k(n) \text{ and } 
\forall_{i< j} \left(\supp(p_i)\cap\supp(p_j)=\emptyset\text{ and } \right. \\ 
&\left. \min(\supp(p_i))<\min(\supp(p_j))\right) \} 
\end{split}
\end{equation*} and
\[\fin_k^{*(d)}(n)=\{(p_1,\ldots, p_d) \in\fin_k^{(d)} :  
\min(\supp_k(p_{i}))<\min(\supp(p_{i+1}))  \},\]
where 
$\supp_j(p)=\{l\in\{1,\ldots, n\}: p(l)=j\}$ for $p\in\fin_k(n)$ and $j=1,\ldots, k.$

Let $P_k=\prod_{j=1}^k\{0,1,\ldots,j\}.$
For any  $\vec{i}=(i(1),\ldots,i(k))\in P_k$ denote 
\[
T_{\vec{i}}=T_{i(1)}\circ\ldots \circ T_{i(k)}.
\]

For $l> k,$ let  $P_{k+1}^l=\prod_{j=k+1}^{l} \{1,2,\ldots,j\},$ 
and let $P_{k+1}^k$  contain only the constant sequence $(0,\ldots,0)$. 
Note that if $p\in\fin_l$ and $\vec{i}\in P^l_{k+1}$, then 
$T_{\vec{i}}(p)\in\fin_k$.

Let $l\geq k$ and let $B=(b_s)_{s=1}^m\in\fin_l^{*(m)}(n)$, % Let $T_{\vec{i}}(B)$ denote the block sequence
%$(T_{\vec{i_s}}(b_s))_{s=1}^m$, for $\vec{i_s}\in P^l_{k+1}$, $s=1,\ldots,m$.
we denote by $\left<\bigcup_{\vec{i}\in P_{k+1}^{l}} T_{\vec{i}}(B)\right>_{P_{k}}$ the partial subsemigroup of 
$\fin_{k}$ consisting of elements of the form
\[
\sum_{s=1}^m    T_{\vec{t}_s}    \circ T_{\vec{i}_s}(b_{s}),
\]
where $\vec{i}_1,\ldots,\vec{i}_m\in P_{k+1}^{l}$,  $\vec{t}_1,\ldots,\vec{t}_m\in P_k$,
and there is an $s$ such that all entries of $t_s$ are 0. 
By $\left<\bigcup_{\vec{i}\in P_{k+1}^{l}} T_{\vec{i}}(B)\right>_{P_{k}}^{*(d)}$, we denote  the set of all 
$(p_1,\ldots, p_d) \in\fin_k^{*(d)} $ such that $p_i\in \left<\bigcup_{\vec{i}\in P_{k+1}^{l}} T_{\vec{i}}(B)\right>_{P_{k}}$.

%We have the following theorem.
The following theorem is the combinatorial core of Theorem \ref{rams}, its proof was inspired by the proof a Ramsey Theorem in \cite{BLLM}.

\begin{thm}\label{lelek}
Let $k\geq 1$. Then for every $m\geq d,$ for every $ l\geq k,$ and for every $r\geq 2$ there exists a natural number $n$ such that for every colouring 
$c:\fin^{*(d)}_k(n)\to \{1,2,\ldots,r\},$ there is  $B\in\fin_l^{*(m)}(n)$  such that the partial semigroup 
$\left< \bigcup_{\vec{i}\in P_{k+1}^l}T_{\vec{i}} (B)\right>_{P_k}^{*(d)}$ is $c$-monochromatic. Denote the smallest such $n$ by 
$L(d,m,k,l,r).$

\end{thm}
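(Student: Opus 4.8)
The plan is to prove Theorem~\ref{lelek} by induction on $l-k \geq 0$, where the base case $l = k$ follows from a version of the Hales--Jewett / dual Ramsey theorem for the structures $\fin_k^{*(d)}$, and the inductive step adds one more ``level'' by an iterated pigeonhole (product) argument. The key point is that the partial semigroup $\left<\bigcup_{\vec{i}\in P_{k+1}^l} T_{\vec{i}}(B)\right>_{P_k}^{*(d)}$ generated by a block sequence $B \in \fin_l^{*(m)}(n)$ decomposes according to which tetris operations $T_{\vec{i}_s}$ (with $\vec{i}_s \in P_{k+1}^l$) are applied to each block $b_s$ before the ``inner'' $P_k$-operations; fixing those outer choices leaves exactly a copy of the $l=k$ situation.

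For the base case $l=k$, I would establish: for every $m \geq d$ and $r \geq 2$ there is $n = L(d,m,k,k,r)$ such that any $r$-colouring of $\fin_k^{*(d)}(n)$ is monochromatic on $\left<B\right>_{P_k}^{*(d)}$ for some $B \in \fin_k^{*(m)}(n)$. This is precisely (a $d$-dimensional, ordered-support version of) the infinite-alphabet Gowers/Hales--Jewett phenomenon, and the intended route -- following \cite{BLLM} and ultimately the Graham--Rothschild dual Ramsey theorem \cite{GR} -- is to code elements of $\fin_k(n)$ with prescribed support structure as rigid surjections / partitions and apply the dual Ramsey theorem iteratively, once for each of the $d$ coordinates and each ``layer'' $1,\ldots,k$ of values, absorbing the tetris operations $T_i^{(k)}$ into the partition language. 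The $*(d)$-decoration (the extra constraint $\min(\supp_k(p_i)) < \min(\supp(p_{i+1}))$) is handled by the usual trick of reserving, for each coordinate, one block whose support realizes the value $k$, so that this ordering constraint is automatically met; this is why the definition of $\left<\cdot\right>_{P_k}$ insists that some $\vec{t}_s$ be all zeros.

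For the inductive step, suppose the theorem holds for $l-1 \geq k$ and fix $m \geq d$, $r \geq 2$; I want $n = L(d,m,k,l,r)$. First apply the case one level up, $l$ versus $l$ itself is not what we need; rather, I would peel off the top level $l$: a block sequence $B \in \fin_l^{*(m)}(n)$, after applying all operations $T_{\vec{i}}$ with $\vec{i} \in P_{k+1}^l$, is the same data as a block sequence in $\fin_{l-1}^{*(m)}(n)$ together with a finite ``menu'' of choices of the top coordinate $i(l) \in \{1,\ldots,l\}$ and the iterated lower tetris maps; concretely, $P_{k+1}^l = \{1,\ldots,l\} \times P_{k+1}^{l-1}$. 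So I would first invoke the base/previous case at alphabet level $l$ in the role that produces a block sequence on which a refined colouring (coding the behaviour of $c$ composed with all admissible $T_{\vec{i}}$) is controlled, then use the inductive hypothesis at level $l-1$; stringing the two together with appropriate Ramsey numbers gives $n$. In practice this is cleanest if one proves the base case already in the form: any $r$-colouring of $\fin_l^{*(m')}(n)$-indexed data that factors through the $P_{k+1}^l$-span is monochromatic on a sub-block-sequence, and then the induction is just a composition of such statements.

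The main obstacle, and where the real work lies, is the base case $l=k$ together with getting the $*(d)$ ordering conditions and the ``reserved all-zero coordinate'' bookkeeping to cohere with the dual Ramsey theorem: one must encode a tuple $(p_1,\ldots,p_d) \in \fin_k^{*(d)}(n)$, whose supports are ordered and value-$j$ support-minima are interleaved in a prescribed way, as a single rigid surjection on $\{1,\ldots,n\}$ (or an iterated sequence of them, one per value layer $j=1,\ldots,k$ and per coordinate), in such a way that the span $\left<B\right>_{P_k}^{*(d)}$ corresponds exactly to the set of rigid surjections factoring through a fixed coarser surjection determined by $B$. Once that encoding is set up, Graham--Rothschild \cite{GR} applied $dk$ times (with a product/iterated-Ramsey argument to handle all layers and coordinates simultaneously, so that the final block sequence $B$ works for all of them at once) yields the monochromatic span, and the inductive step is comparatively routine. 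I would therefore spend most of the proof carefully defining this partition-theoretic encoding and verifying that tetris operations $T_i^{(k)}$ translate into amalgamation/coarsening of partitions, citing \cite{BLLM} for the template.
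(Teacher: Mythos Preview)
Your plan is workable in spirit but substantially more circuitous than the paper's argument, and the inductive step as you sketch it is not fully pinned down. The paper does \emph{not} induct on $l-k$ at all: it handles arbitrary $l\ge k$ with a \emph{single} application of the Graham--Rothschild theorem. The encoding you anticipate is indeed the heart of the proof, but it is simpler than you suggest and needs no iteration. Concretely, one takes $n=\mathbf{GR}(dk+1,\,ml+1,\,r)$ and defines $\Phi\colon\mc P^{dk+1}(n)\to\fin_k^{*(d)}(n)$ by sending a canonically enumerated partition $(P_i)_{i=0}^{dk}$ to the tuple whose $j$th coordinate is $\sum_{s=1}^k s\cdot\mathbbm{1}_{P_{(j-1)k+s}}$; the canonical ordering of the partition automatically enforces the $*(d)$ conditions on supports and on $\supp_k$-minima, so no separate ``reserved block'' bookkeeping is needed. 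Graham--Rothschild then yields a monochromatic $(ml+1)$-partition $\mc Q$, from which one reads off $B=(b_j)_{j=1}^m\in\fin_l^{*(m)}(n)$ by the analogous formula $b_j=\sum_{s=1}^l s\cdot\mathbbm{1}_{Q_{(j-1)l+s}}$. The final step is the direct claim that every element of $\left<\bigcup_{\vec{i}\in P_{k+1}^l}T_{\vec{i}}(B)\right>_{P_k}^{*(d)}$ equals $\Phi$ of some $(dk+1)$-coarsening of $\mc Q$: under the partition encoding, the tetris operations $T_{\vec{i}}$ are simply mergings of blocks of $\mc Q$, and the coarsening being canonically ordered is exactly what the $*(d)$ constraints guarantee.

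So the difference is this: you propose to separate the ``inner'' $P_k$-operations (your base case) from the ``outer'' $P_{k+1}^l$-operations (your induction), applying dual Ramsey $dk$ times and then peeling off levels one by one. The paper observes that both families of operations are, under the partition encoding, just coarsenings, so one call to $\mathbf{GR}(dk+1,ml+1,r)$ does everything at once. Your route could probably be completed, but the inductive step as written needs more than ``a composition'': fixing the top coordinate $i_s(l)$ for each $s$ produces $l^m$ different sequences in $\fin_{l-1}^{*(m)}$, and the span mixes these choices freely across blocks, so you would need an additional iterated product-Ramsey argument over those $l^m$ menus on top of the inductive hypothesis. The paper's one-shot encoding sidesteps all of this.
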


In the proof of Theorem \ref{lelek}, we will use the Graham-Rothschild theorem about colouring of partitions \cite{GR}.
For natural numbers $d,n$ let $\mc P^d(n)$ denote the set of all partitions of the set $\{1,2,\ldots,n\}$ into exactly
$d$ non-empty sets. We say that a partition $\mathcal{P}$ is a {\em coarsening} of a partition $\mathcal{Q}$ if for every $Q\in\mathcal{Q}$ there is $P\in\mathcal{P}$ such that $Q\subset P$. 
Every partition  $\mathcal{P}\in \mc P^d(n)$ carries a {\em canonical enumeration}, where for $P, Q\in\mathcal{P}$ we set 
 $P<Q$ iff $\min(P)<\min(Q)$.
\begin{thm}[Graham-Rothschild, \cite{GR}]
Let $k<l $ and $r\geq 2$ be given natural numbers. Then there is a natural number~$n$ such that for any colouring of $\mc P^k(n)$ into $r$ colours
there is a partition $\mathcal{P}\in \mc P^l(n)$ such that the set 
$\{\mathcal{Q}\in \mc P^k(n): \mathcal{P} {\rm\ is\ a \ coarsening \ of\ \mathcal{Q}}  \}$ is monochromatic.
Let $\mathbf{GR}(k,l,r)$  denote the smallest such natural number $n$.
\end{thm}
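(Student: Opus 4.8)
The plan is to recast partitions as rigid surjections, recognize the assertion as the empty-alphabet case of the Graham--Rothschild parameter-set theorem, and prove the latter by induction with the Hales--Jewett theorem as the engine. First I would translate the objects. A partition $\mathcal{Q}\in\mc P^k(n)$ together with its canonical enumeration corresponds bijectively to a \emph{rigid surjection} $q\colon\{1,\ldots,n\}\to\{1,\ldots,k\}$, that is, a surjection with $\min q^{-1}(1)<\cdots<\min q^{-1}(k)$, the block $Q_i$ being $q^{-1}(i)$. Under this identification an $l$-block partition $\mathcal{P}$ becomes a rigid surjection $p\colon\{1,\ldots,n\}\to\{1,\ldots,l\}$, and the $k$-block partitions refined by $\mathcal{P}$ are exactly the compositions $\sigma\circ p$, where $\sigma\colon\{1,\ldots,l\}\to\{1,\ldots,k\}$ ranges over rigid surjections (these merge the blocks of $\mathcal{P}$). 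Thus the theorem becomes: for $k<l$ and $r\geq2$ there is $n$ so that every $r$-colouring of the rigid surjections $\{1,\ldots,n\}\to\{1,\ldots,k\}$ admits a rigid surjection $p\colon\{1,\ldots,n\}\to\{1,\ldots,l\}$ for which $\{\sigma\circ p:\sigma \text{ rigid}\}$ is monochromatic.

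Next I would place this in the parameter-word framework. Rigid surjections $\{1,\ldots,n\}\to\{1,\ldots,k\}$ are precisely the $k$-parameter words of length $n$ over the \emph{empty} alphabet, composition of rigid surjections being composition of parameter words, so the statement above is the empty-alphabet instance of the Graham--Rothschild parameter-set theorem: for any finite alphabet $A$, integers $k<l$, and $r\geq2$ there is $n$ such that every $r$-colouring of the $k$-parameter words of length $n$ over $A$ has an $l$-parameter word all of whose $k$-parameter subwords receive a single colour. I would prove this general statement — quantified over \emph{all} finite alphabets and all $l\geq k$ — by induction on $k$. The base case $k=0$ asks for a monochromatic $l$-dimensional combinatorial subspace given any colouring of the points $A^n$, which follows from the Hales--Jewett theorem by iteration; the step from $k$ to $k+1$ applies the Hales--Jewett theorem over an enlarged alphabet, one adjoining symbols that code the already-fixed parameter, together with the inductive hypothesis, thereby gaining control of one further parameter. (The Hales--Jewett theorem itself, if not taken as given, is proved by the standard induction on $|A|$ via the focusing argument.) Specializing to $A=\emptyset$ and reading $l$ for $m$ returns the dual Ramsey statement, and the least admissible $n$ is the number $\mathbf{GR}(k,l,r)$.

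The main obstacle will be the rigidity bookkeeping in the inductive step: one must choose the enlarged alphabet and arrange the application of Hales--Jewett so that the order of first occurrences of the parameters is preserved and so that the substitution producing $k$-parameter subwords of the chosen $l$-parameter word matches the composition $\sigma\circ p$ exactly, for \emph{every} rigid $\sigma$ simultaneously rather than for a single merging. A further subtlety worth flagging is that, although both the hypothesis and the conclusion of our theorem concern the empty alphabet, the induction genuinely passes through nonempty alphabets — the enlarged alphabets in the step are where all the combinatorial work happens. Once that encoding is fixed the passage back to the empty alphabet, and hence to partitions and their coarsenings, is routine.
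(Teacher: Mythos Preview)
The paper does not prove this statement: the Graham--Rothschild theorem is quoted from \cite{GR} as a black box and invoked inside the proof of Theorem~\ref{lelek}, where only the existence of the number $\mathbf{GR}(dk+1,ml+1,r)$ is used. There is therefore no ``paper's own proof'' to compare against.

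Your sketch is a reasonable outline of one standard route to the dual Ramsey theorem. The translation to rigid surjections and the identification with the empty-alphabet case of the parameter-set theorem are correct, and the induction on $k$ with Hales--Jewett as the driver is indeed how Graham and Rothschild proceed. You are right to flag the inductive step as the real content: your description of it (``enlarged alphabet \ldots\ coding the already-fixed parameter'') is too compressed to count as a proof, and in particular the claim that rigidity of the resulting composite surjections comes out automatically needs an actual argument. But as a plan it is sound, and it goes well beyond what the paper requires, which is simply to cite the result.
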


\begin{proof}[Proof of Theorem \ref{lelek}]
We set $n=\mathbf{GR}(dk+1,ml+1,r)$ and
let $c:\fin^{*(d)}_k(n)\to \{1,2,\ldots,r\}$ be an arbitrary colouring.
Define the map $\Phi:\mc P^{dk+1}(n)\to \fin^{*(d)}_k(n)$ 
 that to a canonically enumerated partition $\mc P=(P_i)_{i=0}^{dk}$ assigns
\[
\Phi(\mc P)_j=\sum_{s=1}^k s\cdot \mathbbm{1}_{P_{(j-1)k+s}}
\]
for $j=1,\ldots,d.$ Then $c\circ\Phi$ is a colouring of $\mc P^{dk+1}(n)$.

Let a canonically enumerated partition $\mc Q= (Q_i)_{i=0}^{ml}$ be $c\circ\Phi$-monochromatic. We define $B=(b_j)_{j=1}^m$ by
\[
b_j=\sum_{s=1}^l s\cdot   \mathbbm{1}_{P_{(j-1)l+s}}.
\]

The following claim verifies that $\left< \bigcup_{\vec{i}\in P_{k+1}^l}T_{\vec{i}} (B)\right>_{P_k}^{*(d)}$ is contained
 in the image of coarsenings of $\mc Q$ of size $dk+1$ under $\Phi$, which implies that 
$\left< \bigcup_{\vec{i}\in P_{k+1}^l}T_{\vec{i}} (B)\right>_{P_k}^{*(d)}$ is $c$-monochromatic,  which is what we wanted to show.

\smallskip
\noindent {\bf{Claim.}}  
Let $A=(A_1,\ldots,A_d)\in \left< \bigcup_{\vec{i}\in P_{k+1}^l}T_{\vec{i}} (B)\right>_{P_k}^{*(d)}.$ Then 
$A=\Phi(\mathcal{P})$ for $\mc P=(P_j)_{j=0}^{dk}$ given by $P_{(s-1)k+i}:=\supp_i(A_s)$ for $s=1,\ldots,d$ and $i=1,\ldots,k, $ and $P_0=\{1,\ldots, n\}\setminus \bigcup_{j=1}^{dk} P_j.$

\begin{proof}
Clearly, $\mc P$ is a coarsening of $\mc Q$. 
Therefore all we have to show is that $\mc P$ is a partition into exactly $dk+1$ nonempty sets (see (1) below)
and that the enumeration of sets in $\mc P$ is the canonical enumeration (see (2), and (3), and (4)  below).
\begin{enumerate}
\item For every $i=1,\ldots, k$ and $s=1,\ldots, d$, we have $ \supp_i(A_s)\neq\emptyset$.
\item We have $Q_0\subset P_0$.
\item  For every $i,i'=1,\ldots, k$, $i<i'$, and $s=1,\ldots, d$
\[\min \supp_i(A_s)<\min \supp_{i'}(A_{s}).\]
\item  For every  $s=1,\ldots, d-1$,
\[\min \supp_k(A_s)<\min \supp(A_{s+1}).\]
\end{enumerate}

Property (2) is clear.
Properties (1) and (3) follow from the definition of $B$ and that we can write
\[
A_s=\sum_{j\in J_s} T_{\vec{i}_j^s} b_j
\]
for some $\vec{i}_j^s\in P_k P_{k+1}^l$ (where $P_k P_{k+1}^l$ is the set of concatenations of sequences in $P_k$ and $P_{k+1}^l$) and $J_s\subset\{1,\ldots, m\}$. 
Property (4) follows from $A\in\fin_k^{*(d)}(n).$

\end{proof}

\end{proof}

For a natural number $N$ we let  $N^{[j]}$ denote the collection of all $j$-element subsets of $\{1,\ldots, N\}$ and let
$N^{[\leq j]}$ denote the collection of all at most $j$-element subsets of $\{1,\ldots,N\}.$ 
Note that $N^{[\leq j]}=\bigcup_{i=0}^j N^{[j]}$.
We will often write $N$ instead of $N^{[1]}$. 

Let $m, k_1,\ldots,k_m, l_1,\ldots,l_m ,r\geq 2$, and $N$
be natural numbers
and let 
\[
c: \prod_{i=1}^m N^{[\leq k_i]}\to \{1,2,\ldots,r\}
\]
be a colouring.
Given $B_i\subset N$ for $i=1,2\ldots,m,$
we say that $c$ is {\em size-determined} on $(B_i)_{i=1}^m$ if
whenever $A_i, A_i'\subset B_i$ are such that $0\leq |A_i|=|A_i'|\leq k_i$ for $i=1,2,\ldots,m$ then
\[
c(A_1,\ldots,A_m)=c(A_1',\ldots, A_m').
\]

\begin{thm}[Theorem 4.10, \cite{BK2}]\label{Tdifheights}
Let $m, k_1,\ldots,k_m,  l_1,\ldots,l_m$ and $ r\geq 2$ be natural numbers such that $k_i\leq l_i$ for every $i=1,2,\ldots,m$. 
Then there exists $N$ such that for every colouring
\[
c: \prod_{i=1}^m N^{[\leq k_i]}\to \{1,2,\ldots,r\}
\]
there exist $B_1,\ldots, B_m\subset N$   with $|B_i|=l_i$ such that $c$ 
is size-determined on $(B_i)_{i=1}^m$.
Denote by $S(m,k_1,\ldots,k_m,l_1,\ldots,l_m,r)$ the minimal such $N.$
\end{thm}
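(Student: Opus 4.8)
The plan is to derive the statement from the classical \emph{product Ramsey theorem}, applied once for each possible profile of cardinalities. For a finite set $C$ write $C^{[j]}$ for its family of $j$-element subsets, in keeping with the notation $N^{[j]}$ used above. Observe first that a colouring $c\colon\prod_{i=1}^m N^{[\leq k_i]}\to\{1,\dots,r\}$ is size-determined on $(B_i)_{i=1}^m$ if and only if, for every tuple $\sigma=(\sigma(1),\dots,\sigma(m))$ with $0\leq\sigma(i)\leq k_i$, the restriction of $c$ to $\prod_{i=1}^m B_i^{[\sigma(i)]}$ is constant. There are exactly $p:=\prod_{i=1}^m(k_i+1)$ such tuples; fix an enumeration $\sigma_1,\dots,\sigma_p$ of them. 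Thus it suffices to find $B_i$ with $|B_i|=l_i$ that simultaneously make all $p$ of these ``slices'' monochromatic.

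Next I would recall the product Ramsey theorem, which follows from finitely many applications of the finite Ramsey theorem, one coordinate at a time: for any $q\geq 2$, any exponents $e_1,\dots,e_m$, and any target sizes $f_1,\dots,f_m$ with $f_i\geq e_i$, there is a number $\mathrm{PR}(e_1,\dots,e_m;f_1,\dots,f_m;q)$ so that every $q$-colouring of $\prod_{i=1}^m C_i^{[e_i]}$, where the $C_i$ are finite sets with $|C_i|\geq\mathrm{PR}(e_1,\dots,e_m;f_1,\dots,f_m;q)$, admits $C_i'\subseteq C_i$ with $|C_i'|=f_i$ and $\prod_{i=1}^m (C_i')^{[e_i]}$ monochromatic (coordinates with $e_i=0$ being degenerate and imposing no constraint).

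The final step is to chain these applications by a downward recursion that nests the homogeneous sets. Put $N_p:=\max_i l_i$ and, for $t=p,p-1,\dots,1$, set
\[
N_{t-1}:=\mathrm{PR}\bigl(\sigma_t(1),\dots,\sigma_t(m);\,N_t,\dots,N_t;\,r\bigr);
\]
since $\mathrm{PR}(\cdots)\geq N_t$ always, $N_0\geq N_1\geq\cdots\geq N_p\geq l_i$ for every $i$. I claim $N:=N_0$ works. Given $c$, set $C_i^{(0)}:=\{1,\dots,N_0\}$. Suppose $C_i^{(t-1)}$ has been constructed with $|C_i^{(t-1)}|=N_{t-1}$ and $c$ constant on $\prod_i (C_i^{(t-1)})^{[\sigma_s(i)]}$ for all $s<t$. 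Applying the product Ramsey theorem to $c\restriction\prod_i (C_i^{(t-1)})^{[\sigma_t(i)]}$, with exponents $\sigma_t(i)$ and target sizes $N_t$, yields $C_i^{(t)}\subseteq C_i^{(t-1)}$ with $|C_i^{(t)}|=N_t$ on which this slice is monochromatic, and monochromaticity on the earlier slices persists because $\prod_i (C_i^{(t)})^{[\sigma_s(i)]}\subseteq\prod_i (C_i^{(t-1)})^{[\sigma_s(i)]}$. After $p$ steps $c$ is constant on $\prod_i (C_i^{(p)})^{[\sigma_s(i)]}$ for every $s=1,\dots,p$, and choosing $B_i\subseteq C_i^{(p)}$ with $|B_i|=l_i$ makes $c$ size-determined on $(B_i)_{i=1}^m$.

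Almost everything here is bookkeeping: the nesting of the constants $N_t$, and the observation that restricting a constant colouring to a sub-product keeps it constant. The only substantial input is the product Ramsey theorem itself, which is classical; if one preferred a self-contained argument, its proof (iterating the finite Ramsey theorem coordinate by coordinate) is where the care with nested Ramsey numbers lives. The one point worth flagging is that the recursion defining $N$ is well-founded, and it is precisely because the product Ramsey theorem permits the target sizes to be prescribed arbitrarily large, so each stage can be arranged to deliver to the previous stage sets big enough to continue.
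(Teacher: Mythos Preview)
Your argument is correct. The reduction to constancy on each size-profile $\sigma\in\prod_{i=1}^m\{0,1,\dots,k_i\}$ is exactly the content of being size-determined, and the nested application of the product Ramsey theorem, one profile at a time with target sizes chosen backwards so that each stage feeds the next, is a clean way to secure all profiles simultaneously. The edge cases (profiles with some $\sigma(i)=0$, and the inequality $N_t\geq\max_i l_i\geq k_i\geq\sigma_t(i)$ needed for the product Ramsey hypothesis) are handled.

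As for comparison with the paper: note that the paper does \emph{not} prove this theorem. It is quoted verbatim as Theorem~4.10 of \cite{BK2} and used as a black box to derive Corollary~\ref{dhe}. So there is no proof here to compare against; you have supplied one where the paper simply cites. Your route via the product Ramsey theorem is the standard and expected one for this kind of statement, and is almost certainly in the same spirit as whatever appears in \cite{BK2}.
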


For $f\in \prod_{i=1}^m N^{[\leq k_i]}$, we define $\supp(f)=\{i: f(i)\neq\emptyset\}$.
For a natural number~$d,$ let
$\left(\prod_{i=1}^m N^{[\leq k_i]}\right)^{*(d)}$ be the set of all sequences $(f_s)_{s=1}^d$
%\gre{Is the $*(d)$ a good notation here? It is not quite related to  $\fin_k^{*(d)}(n)$.}
with $f_s\in \prod_{i=1}^m N^{[\leq k_i]}$ and 
$\supp(f_s)\cap \supp(f_{s+1})=\emptyset$, for each $s$. 
Note that supports of some of the $f_s$ may be empty.

Then, more generally, if
\[
\chi: \left(\prod_{i=1}^m N^{[\leq k_i]}\right)^{*(d)}\to \{1,2,\ldots,r\}
\]
is a colouring and $B_i\subset N$ for $i=1,2\ldots,m,$
we say that $\chi$ is {\em size-determined} on $(B_i)_{i=1}^m$ if whenever $(f_s)_{s=1}^d$ and $(g_s)_{s=1}^d$
are such that for each $i$ and $s$, $\supp(f_s)=\supp(g_s)$, $f_s(i),g_s(i)\subset B_i$, and $|f_s(i)|=|g_s(i)|$, then
\[\chi \left( (f_s)_{s=1}^d\right)=\chi \left( (g_s)_{s=1}^d\right).\]

Corollary \ref{dhe}  is a multidimensional version of Theorem \ref{Tdifheights}. 
\begin{cor}\label{dhe}
Let $d\leq m,$ and $k_1\leq l_1,\ldots,k_m\leq l_m,$ and  $ r\geq 2$ be natural numbers. 
 Then there exists $N$ such that for every colouring
\[
\chi: \left(\prod_{i=1}^m N^{[\leq k_i]}\right)^{*(d)}\to \{1,2,\ldots,r\}
\]
there exist $B_1,\ldots, B_m\subset N$   with $|B_i|=l_i$ such that $\chi$ 
is size-determined on $(B_i)_{i=1}^m$.
Denote by $S(d,m,k_1,\ldots,k_m,l_1,\ldots,l_m,r)$ the minimal such $N.$
\end{cor}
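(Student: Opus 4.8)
The plan is to deduce Corollary~\ref{dhe} from Theorem~\ref{Tdifheights} by using that distinct terms of a sequence $(f_s)_{s=1}^d\in\left(\prod_{i=1}^m N^{[\leq k_i]}\right)^{*(d)}$ have disjoint supports, so that each coordinate $i\in\{1,\dots,m\}$ lies in at most one of $\supp(f_1),\dots,\supp(f_d)$. Consequently such a sequence is determined by a \emph{layer assignment} $\lambda\colon\{1,\dots,m\}\to\{0,1,\dots,d\}$, where $\lambda(i)=s$ if $i\in\supp(f_s)$ and $\lambda(i)=0$ if $i$ lies in no support, together with the tuple $(f_{\lambda(i)}(i))_{i:\lambda(i)\neq 0}$ of nonempty members of the corresponding sets $N^{[\leq k_i]}$; conversely every pair of this shape arises, since for any $\lambda$ the sets $\{i:\lambda(i)=s\}$, $s=1,\dots,d$, are automatically pairwise disjoint. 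The number of possible $\lambda$ is at most $(d+1)^m$, a quantity independent of $N$.

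First I would record the resulting decomposition $\chi=\bigsqcup_\lambda\chi_\lambda$, where $\chi_\lambda$ is the restriction of $\chi$ to the sequences with layer assignment $\lambda$; via the identification above, $\chi_\lambda$ is a colouring of the tuples $(A_i)_{i:\lambda(i)\neq 0}\in\prod_{i:\lambda(i)\neq0}N^{[\leq k_i]}$ with all entries nonempty. Next I would package these into a single colouring
\[
\Xi\colon\ \prod_{i=1}^m N^{[\leq k_i]}\ \longrightarrow\ \{0,1,\dots,r\}^{(d+1)^m}
\]
by letting the $\lambda$-component of $\Xi\big((A_i)_{i=1}^m\big)$ be $\chi_\lambda\big((A_i)_{i:\lambda(i)\neq0}\big)$ when all those $A_i$ are nonempty, and $0$ otherwise. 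Since the exponent $(d+1)^m$ does not depend on $N$ and $k_i\leq l_i$ for all $i$, I may apply Theorem~\ref{Tdifheights} to $\Xi$ with heights $k_i$, target sizes $l_i$, and $(r+1)^{(d+1)^m}$ colours, obtaining $N$ and sets $B_1,\dots,B_m\subset\{1,\dots,N\}$ with $|B_i|=l_i$ on which $\Xi$ is size-determined.

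It then remains to verify that $(B_i)_{i=1}^m$ works for $\chi$. Suppose $(f_s)_{s=1}^d$ and $(g_s)_{s=1}^d$ satisfy $\supp(f_s)=\supp(g_s)$ for all $s$, $f_s(i),g_s(i)\subset B_i$, and $|f_s(i)|=|g_s(i)|$. The equality of supports gives a common layer assignment $\lambda$; put $A_i=f_{\lambda(i)}(i)$, $A_i'=g_{\lambda(i)}(i)$ for $\lambda(i)\neq 0$, and $A_i=A_i'=\emptyset$ for $\lambda(i)=0$. Then $(A_i)_{i=1}^m,(A_i')_{i=1}^m\in\prod_i N^{[\leq k_i]}$ with $A_i,A_i'\subset B_i$ and $0\leq|A_i|=|A_i'|\leq k_i$ for every $i$. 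Since $|B_i|=l_i\geq k_i\geq 1$ the $B_i$ are nonempty, and the entries with $\lambda(i)\neq 0$ are nonempty, so the $\lambda$-component of $\Xi$ avoids the value $0$ there and equals $\chi\big((f_s)_s\big)$ (resp.\ $\chi\big((g_s)_s\big)$); size-determinedness of $\Xi$ forces these to agree, hence $\chi\big((f_s)_s\big)=\chi\big((g_s)_s\big)$.

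There is no genuine obstacle beyond assembling this reduction correctly. The point needing care is the final verification, where one must observe that the default value $0$ of $\Xi$ never interferes --- precisely because the $B_i$ are nonempty and $\chi_\lambda$ is consulted only on tuples whose pertinent entries are nonempty. The conceptual step, and the one to get right at the outset, is the observation that disjointness of supports turns the $d$-layer colouring into at most $(d+1)^m$ ordinary one-layer colourings running in parallel, so that the whole difficulty is absorbed into a bounded increase in the number of colours fed to Theorem~\ref{Tdifheights}.
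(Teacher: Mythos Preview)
Your proof is correct and follows the same strategy as the paper: absorb the support pattern of $(f_s)_{s=1}^d$ into the colour and then invoke Theorem~\ref{Tdifheights} once with a bounded blow-up in the number of colours. The paper encodes the pattern by an ordered partition $\gamma\colon\{1,\dots,m\}\to\{1,\dots,d\}$ into nonempty pieces (taking the colour set $r^{|\Gamma|}$), whereas you use a layer assignment $\lambda\colon\{1,\dots,m\}\to\{0,1,\dots,d\}$ together with an extra placeholder colour~$0$; your encoding records the support structure exactly rather than only up to an over-approximation, and your handling of the empty-entry bookkeeping is a bit more explicit, but the two reductions are interchangeable.
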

\begin{proof}  
Denote by $\Gamma$ the set of all ordered partitions $\gamma:\{1,\ldots, m\}\to\{1,\ldots, d\}$ into $d$ non-empty pieces.
Let  $c: \prod_{i=1}^m N^{[\leq k_i]}\to r^{\Gamma}$ be the colouring 
given by
\[ c(A_1,\ldots, A_m)(\gamma)=\chi\left((f^\gamma_s)^d_{s=1}\right), \text{ where }\]
 \[f_s^\gamma(n) = \begin{cases} 
A_n & \text{ if } n\in\gamma(s), \\
\emptyset & \text{otherwise}.  \end{cases} \]

Applying Theorem \ref{Tdifheights}, we get 
 $B_1,\ldots, B_m\subset N$   with $|B_i|=l_i$ such that $c$ is size-determined on $(B_i)_{i=1}^m$.
It follows that  $\chi$ is size-determined on $(B_i)_{i=1}^m$.
\end{proof}

The proof below is similar to the proof of Theorem 4.12 in Barto\v sov\'a-Kwiatkowska~\cite{BK2}.

\begin{proof}[Proof of Theorem \ref{rams}]
Let $S\in \f_{cc}$ be of height $k$ and width $d$, and let 
 $T\in \f_{cc}$ be of height $l\geq k$  and width $m\geq d$ (so that ${T\choose S}\neq \emptyset$). Let $r\geq 2$ be the number of colours. Let $n$ be as in 
Theorem \ref{lelek} 
for $d,m,k,l,r$, that is $n=L(d,m,k,l,r)$, and let $N$ be as in Corollary \ref{dhe} for 
$d, n,k,\ldots,k,l,\ldots,l,r$, that is $N=S(d,n,k,\ldots,k,l,\ldots,l,r)$. 
Let $U\in\f_{cc}$ consist of $n$ branches of height $N.$ We will show that this $U$ works for $S,T$ and $r$ colours.

Let $a_1,\ldots, a_d$ and $c_1,\ldots,c_n$ be the  increasing (according to $<^S_{cc}$ and $<^U_{cc}$) enumerations of branches in $S$ and $U$, respectively.
Let $(a_j^i)_{i=0}^k$ be the increasing enumeration of the branch $a_j$, $j=1,\ldots,d$, 
  and let  
$(c_j^i)_{i=0}^N$  be the increasing enumeration of the branch $c_j$ for $j=1,\ldots,n.$

To each $f\in {U\choose S}$, we associate  $f^*=(p^f_i )_{i=1}^d\in \fin_k^{*(d)}(n)$ such that 
\[
		\supp(p^f_i)=\{j: a^1_i\in f(c_j)\}
	\]
		and for $j\in \supp(p_i^f)$
\[
		p_i^f(j)=z 
		\ \iff \ f(c_j^N)=a_i^z.
\]

We moreover associate to $f$ a sequence $(F_i^f)_{i=1}^d\in (\prod_{j=1}^n  (c_j\setminus \{c_j^0\})^{[\leq k]})^{*(d)}$ such that 
for each $i$ there is $j$ with $F_i^f(j)\in (c_j\setminus \{c_j^0\})^{[k]}$ as follows. For $j\in\supp(p_i^f),$ we let
 \[ F_i^f(j)=\{\min\{c_j^y\in c_j : f(c_j^y)=a_i^x\} : 0<x\leq p^f_i (j)\},\]
where the $\min$ above is taken with respect to the partial order $\preceq_U$ on the fan $U$.

Let us remark that $f\mapsto (F_i^f)_{i=1}^d$ is an injection from ${U\choose S}$ to 
$(\prod_{j=1}^n(c_j\setminus \{c_j^0\})^{[\leq k]})^{*(d)}$ and
 $f\mapsto f^*$ is a surjection from ${U\choose S}$ to $\fin_k^{*(d)}(n)$. Note that if $f_1^*=f_2^*$ then 
$|F_i^{f_1}(j)|=|F_i^{f_2}(j)|$ for all $i,j.$

Analogously, to any $g\in{U\choose T}$, we associate $g^*\in\fin_l^{*(m)}(n)$ and
$(F_i^g)_{i=1}^m\in (\prod_{j=1}^n (c_j\setminus \{c_j^0\})^{[\leq l]})^{*(m)}$.

%\gre{I changed the letter for colouring from $c$ to $e$. Too many $c$'s.}
Let $e: {U\choose T}\to \{1,\ldots, r\}$ be a colouring.
Let $e_0$ be a colouring of $(\prod_{j=0}^n (c_j\setminus \{c_j^0\})^{[\leq k]})^{*(d)}$ 
induced by the colouring $e$ via
 the injection $f\to (F_i^f)_{i=1}^d$. We colour elements in $(\prod_{j=0}^n (c_j\setminus \{c_j^0\})^{[\leq k]})^{*(d)}$ 
not of the form $(F_i^f)_{i=1}^d$ in an arbitrary way by one of the colours in $\{1,\ldots, r\}$.
Applying Corollary \ref{dhe}, we can find $C_j\subset c_j\setminus \{c_j^0\}$ of size $l$ for  $j=1,\ldots,n$ such that $e_0$ 
is size-determined on $(C_j)_{j=1}^n$.  
It follows that the colouring $e^*:\fin_k^{*(d)}(n)\to \{1,2,\ldots,r\}$ given by $e^*(f^*)=e(f)$ for  $f\in {U\choose S}$ with
 $(F_i^f)_{i=1}^d\in (\prod_{j=1}^n C_j^{[\leq k]})^{*(d)}$ 
 is well-defined. 
 
 Now we can  apply Theorem \ref{lelek} to get  $D=(d_j)_{j=1}^m\in\fin_l^{*(m)}(n)$  such that 
 $\left<\bigcup_{\vec{i}\in P_{k+1}^l} T_{\vec{i}}(D)\right>^{*(d)}_{P_k}$ is $e^*$-monochromatic.
 Let $g\in{U\choose T}$ be any epimorphism such that  
 $(F_i^g)_{i=1}^m\in (\prod_{j=1}^n C_j^{[\leq l]})^{*(m)}$ and $g^*=D$.
Then for every $h\in{T\choose S}$, we have  $(h\circ g)^*\in \left<\bigcup_{\vec{i}\in P_{k+1}^l} T_{\vec{i}}(D)\right>^{*(d)}_{P_k}$.
Since  $\left<\bigcup_{\vec{i}\in P_{k+1}^l} T_{\vec{i}}(D)\right>^{*(d)}_{P_k}$ is $e^*$-monochromatic,
we  conclude that ${T\choose S}\circ g$ is $e$-monochromatic.
\end{proof}

\subsection{The  universal minimal flow of $\boldsymbol{\aut(\lel)}$}\label{umfa}
In this section, we present two descriptions of the universal minimal flow of $\aut(\lel)$ and we exhibit an explicit isomorphism between them.

Let $X^*$ be the set of downwards closed maximal chains on $\lel$ equipped with the topology inherited from $\Exp(\Exp(\lel))$.
 This is a compact   space, which we proved in Proposition \ref{ccompact}.
The group $\aut(\lel)$ acts on $X^*$ by left translations
\[g\cdot \c_1=\c_2 \  \ \iff \ \ \c_2=\{g(C): C\in \c_1\}.\]

\begin{prop}\label{cont}
The action $
\aut(\lel)\acts X^*$ is continuous.
\end{prop}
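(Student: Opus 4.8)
The goal is to show that the action $\aut(\lel)\acts X^*$ is continuous, where $X^*\subset\Exp(\Exp(\lel))$ carries the Vietoris topology and $\aut(\lel)$ the compact-open topology. Since $\aut(\lel)$ is a closed subgroup of the homeomorphism group of the Cantor set $\lel$, its compact-open topology coincides with the topology of uniform convergence, and a basis of open neighbourhoods of the identity is given by the pointwise stabilizers of finite open covers (equivalently, by the groups $V_\alpha$ of automorphisms fixing an epimorphism $\alpha\colon A\to\lel$, using Remark~\ref{coveri}). The first reduction I would make is the standard one: a group action $G\times X\to X$ with $G$ a topological group acting by homeomorphisms is continuous as soon as it is continuous at points of the form $(1,x)$, i.e.\ it suffices to check that for every $\c\in X^*$ and every basic Vietoris neighbourhood $W$ of $\c$ in $\Exp(\Exp(\lel))$ there is an identity neighbourhood $V\subset\aut(\lel)$ with $g\cdot\c\in W$ for all $g\in V$. (Joint continuity then follows because each $g$ acts continuously on $\Exp(\Exp(\lel))$ and translation is a homeomorphism of $\aut(\lel)$.)

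\textbf{Key steps.} So fix $\c\in X^*$ and a basic open set $W=[\mathcal U_1,\dots,\mathcal U_N]$ in $\Exp(\Exp(\lel))$ containing $\c$, where each $\mathcal U_j$ is itself a basic Vietoris-open set in $\Exp(\lel)$, say $\mathcal U_j=[U_{j,1},\dots,U_{j,k_j}]$ with the $U_{j,i}$ clopen in $\lel$. Refining, I may assume that all the finitely many clopen sets $U_{j,i}$ appearing, together with their Boolean combinations, are unions of atoms of a single finite clopen partition $\mathcal P$ of $\lel$, and by Remark~\ref{coveri} I may take $\mathcal P$ to be $\{\phi^{-1}(a):a\in A\}$ for an epimorphism $\phi\colon\lel\to A$, $A\in\f$. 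Let $V=V_\phi$ be the (clopen) subgroup of $\aut(\lel)$ consisting of automorphisms $g$ with $\phi\circ g=\phi$; such a $g$ permutes the atoms of $\mathcal P$ trivially, i.e.\ $g(\phi^{-1}(a))=\phi^{-1}(a)$ for each $a$. The heart of the argument is then the observation that for $g\in V_\phi$ and \emph{any} closed $C\subseteq\lel$, the images $g(C)$ and $C$ meet exactly the same atoms of $\mathcal P$, so $g(C)$ lies in a basic Vietoris-open set $\mathcal U$ (a union of atoms pattern) if and only if $C$ does. Consequently $g\cdot\c=\{g(C):C\in\c\}$ meets $\mathcal U_j$ iff $\c$ does, and is contained in $\mathcal U_1\cup\dots\cup\mathcal U_N$ iff $\c$ is; hence $g\cdot\c\in W$. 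This gives continuity at $(1,\c)$ with $V=V_\phi$ independent of the particular point within its atom-structure, and the reduction above finishes the proof.

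\textbf{Main obstacle.} The one genuinely delicate point is the passage from ``$g$ fixes every atom of a fine clopen partition setwise'' to ``$g(C)$ and $C$ have the same Vietoris type for all closed $C$.'' This is true but one must be a little careful: it is \emph{not} the case that $g(C)=C$; only that $g(C)$ and $C$ have the same trace on each atom up to the atom being nonempty-or-not, which is exactly what Vietoris membership in a clopen-union pattern records. So the work is to set up the combinatorics of Vietoris basic sets for $\Exp(\lel)$ and $\Exp(\Exp(\lel))$ carefully enough that ``same trace on atoms'' literally implies ``same membership in the relevant basic open sets,'' after first shrinking $W$ to be of that clopen form — which is legitimate since clopen sets form a basis for $\lel$ and the remark after the definition of the Vietoris topology allows the generating open sets to be taken from a fixed basis. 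Everything else (the reduction to continuity at the identity, the existence of the refining epimorphism, the fact that $V_\phi$ is an identity neighbourhood) is routine given the material already developed in Sections~\ref{pff} and~\ref{sec:chain}.
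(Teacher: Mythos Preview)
Your proposal is correct, but it takes a genuinely different route from the paper. The paper's proof is a two-line appeal to a general fact (cited from Becker--Kechris): whenever a Polish group $H$ acts continuously on a compact space $K$, the induced action of $H$ on $\Exp(K)$ is again continuous. Applying this twice gives continuity of the $\aut(\lel)$-action on $\Exp(\Exp(\lel))$, and restricting to the closed invariant subset $X^*$ finishes the argument. Your approach instead gives a direct, self-contained verification that exploits the zero-dimensionality of $\lel$ and the concrete form of identity neighbourhoods in $\aut(\lel)$ (the stabilizers $V_\phi$ of epimorphisms). What the paper's argument buys is generality and brevity: it never uses that $\lel$ is zero-dimensional or that $\aut(\lel)$ has a nice basis at the identity, and it would apply verbatim to any continuous action on any compact space. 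What your argument buys is that it is entirely elementary, with no external citation, and it makes transparent exactly \emph{why} the Vietoris neighbourhoods are stable under small automorphisms---namely because ``same trace on the atoms of a clopen partition'' is precisely what the Vietoris topology records. Your reduction to continuity at $(1,\c)$ and the refinement of the basic open $W$ to one built from atoms of a single epimorphism-partition are both handled correctly; the ``main obstacle'' you flag is not really an obstacle once the $U_{j,i}$ are taken to be unions of atoms, as you do.
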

\begin{proof}
We have the following general fact: Whenever a Polish group $H$ acts continuously on a compact  space $K$, then the 
corresponding action of $H$ on ${\rm Exp( K)}$ by left translations is also continuous (see page 20 in \cite{BKe}, Example (ii) ).

It follows, using the fact above  twice, that the action of $\aut(\lel)$ on ${\rm Exp(Exp}(\lel))$ by left translations is continuous. 
Therefore the restriction of this action to the closed invariant set $X^*$ is also continuous, which is what we wanted to show.
\end{proof}

In this section, we prove the following:
\begin{thm}\label{umfauto}
The universal minimal flow of $\aut(\lel)$ -- the automorphism group of the projective Fra\"{i}ss\'{e} limit of finite  fans -- is equal to
\[
\aut(\lel)\acts X^*
\]
and it is isomorphic to
\[ 
\aut(\lel)\acts\widehat{\aut(\lel)/\aut(\lel_c)}.
\]
\end{thm}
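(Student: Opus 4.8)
The plan is to deduce both statements from the general Kechris--Pestov--Todorcevic machinery developed in Section~\ref{sec:KPT}, applied to the expansion $\f_c$ of $\f$, together with the identification in Theorem~\ref{iden} and the extreme amenability of $\aut(\lel_c)$. First I would verify that the hypotheses of the KPT correspondence are met. We have already observed that $\f_c$ is equivalent (via the contravariant functor of Theorem~\ref{stone2}) to a Fra\"{i}ss\'{e}-HP family of first-order structures; that the expansion $\f_c$ of $\f$ is reasonable (Lemma~\ref{reason}), so that $\lel_c\rest\ll=\lel$ (Corollary~\ref{reasoncor}); and that it is precompact (each finite fan admits only finitely many downwards closed maximal chains, since such a chain is determined by a linear extension of the finitely many elements that is compatible with $\preceq_A$). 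The property $(*)$ for $\f_c$ relative to $\f$ should follow from reasonability together with the projective ultrahomogeneity of $\lel$: given an embedding $i\colon A\to\lel$, pull back via an epimorphism refining a suitable cover to land inside a structure of $\f_c$, then transport the chain. Once these are in place, Corollary~\ref{ident} gives that the flow $\aut(\lel)\acts\widehat{\aut(\lel)/\aut(\lel_c)}$ is isomorphic to $\aut(\lel)\acts X_{\lel_c}$, the space of chain-expansions of $\lel$ in the sense of Section~\ref{sec:KPT}.

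Next I would argue that $\aut(\lel)\acts X_{\lel_c}$ is the universal minimal flow by invoking Theorem~\ref{kpt2}. Its hypotheses are: $\f_c$ is a rigid Ramsey class with the expansion property relative to $\f$. The Ramsey property is Corollary~\ref{xyza} (itself deduced from Theorem~\ref{rams} via coinitiality of $\f_{cc}$). Rigidity of structures in $\f_c$ follows because a downwards closed maximal chain on a finite fan induces a linear order $\leq^{A_c}$ on all of $A$, and any automorphism of $(A,R^A)$ preserving the chain must preserve this linear order, hence is the identity. For the expansion property of $\f_c$ relative to $\f$, given $A_c=(A,\c^A)\in\f_c$ I would take $B$ large enough (e.g., a fan with many long branches, using Proposition~\ref{coin} to control dimensions) so that every downwards closed maximal chain $\c^B$ admits an epimorphism $(B,\c^B)\to(A,\c^A)$; the point is that a single wide high fan can be epimorphically mapped onto $A$ in a chain-respecting way no matter how its own canonical-type chain is arranged, because one can route branches of $B$ onto the required segments $[v_A,a^i]_{\preceq_A}$ in the correct order. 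Alternatively, and more cleanly, since $\aut(\lel_c)$ is extremely amenable (the displayed Corollary after Remark~\ref{fccfc}), $\aut(\lel)\acts X_{\lel_c}$ has a fixed point under the $\aut(\lel_c)$-action and the general theory (Theorems~\ref{kpt_minim}, \ref{kpt2}) then yields both minimality and universality from the Ramsey and rigidity properties alone, once the expansion property is checked; this gives the second displayed claim.

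It remains to identify $X_{\lel_c}$ with $X^*$, the space of downwards closed maximal chains on $\lel$ with the Vietoris topology. Here I would construct an explicit $\aut(\lel)$-equivariant homeomorphism. Given an expansion $\vec{R}\in X_{\lel_c}$, i.e.\ a chain structure $\c^{\lel}$ making $(\lel,\c^{\lel})\cong\lel_c$, one reads off the associated downwards closed maximal chain on $\lel$ directly; conversely, by Proposition~\ref{ccompact} and Lemma~\ref{cm}, a downwards closed maximal chain $\c$ on $\lel$ is maximal, and using coinitiality of $\f_{cc}$ together with Remark~\ref{coveri} one shows its images $\c^{A_n}=f^\infty_n(\c)$ under a cofinal system of epimorphisms are downwards closed maximal chains on finite fans, so $(\lel,\c)$ is an inverse limit of structures in $\f_c$, hence isomorphic to $\lel_c$ by uniqueness of the limit; thus $\c$ is an expansion lying in $X_{\lel_c}$. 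The two assignments are mutually inverse, manifestly $\aut(\lel)$-equivariant, and I would check continuity by comparing the basic clopen sets $V_{i,A^*}$ defining the topology on $X_{\lel_c}$ with the Vietoris-type basic sets on $\Exp(\Exp(\lel))$, using that an epimorphism refining a cover detects the image chain on a finite fan. Since both spaces are compact Hausdorff, a continuous equivariant bijection is a homeomorphism of flows.

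The main obstacle I anticipate is the \emph{expansion property} of $\f_c$ relative to $\f$ (equivalently, minimality of the flow): one must show that for every $A_c\in\f_c$ there is a single $B\in\f$ such that \emph{every} downwards closed maximal chain on $B$ gives an epimorphism onto $(A,\c^A)$. The combinatorial content is that a sufficiently wide and tall finite fan is ``chain-universal'' over $A$, which requires a careful choice of the epimorphism sending consecutive branches of $B$ (in the order dictated by the given chain on $B$) onto the nested segments of $A$ in the order dictated by $\c^A$, while simultaneously realizing every element of $A$; Proposition~\ref{coin} handles the canonical-chain case, and the general case reduces to it since any downwards closed maximal chain on a fan is, up to reordering branches, essentially canonical on the tree order restricted to each branch. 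A secondary, more technical obstacle is verifying continuity and equivariance of the identification $X_{\lel_c}\cong X^*$ at the level of the two different topologies (the $V_{i,A^*}$-topology versus the Vietoris topology on $\Exp(\Exp(\lel))$), which is routine but requires invoking Remark~\ref{coveri} to pass between open covers of $\lel$ and epimorphisms onto finite fans.
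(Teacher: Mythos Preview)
Your approach is essentially the paper's: verify the KPT hypotheses for the pair $(\f,\f_c)$ (reasonable, precompact, property~$(*)$, Ramsey, expansion property), apply Theorem~\ref{kpt2} to conclude that $\aut(\lel)\acts X_{\lel_c}$ is the universal minimal flow, then identify $X_{\lel_c}$ with $\widehat{\aut(\lel)/\aut(\lel_c)}$ via Corollary~\ref{ident} and with $X^*$ via Proposition~\ref{ident2}. The expansion property, which you correctly flag as the main combinatorial obstacle, is exactly Lemma~\ref{minim} in the paper; your sketch is close to its proof.

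Two places where you overcomplicate or misstep. First, property~$(*)$ follows immediately from Lemma~\ref{image3} (the image of a maximal chain under a continuous surjection is maximal, hence every epimorphism $\phi:\lel\to A$ sends $\c^\lel$ to a downwards closed maximal chain on $A$); you do not need reasonability plus ultrahomogeneity here. Second, in identifying $X_{\lel_c}$ with $X^*$ you attempt to show that every downwards closed maximal chain $\c$ on $\lel$ makes $(\lel,\c)\cong\lel_c$, arguing that $(\lel,\c)$ is an inverse limit of structures in $\f_c$ and invoking ``uniqueness of the limit''. That inference is invalid: being \emph{an} inverse limit of structures in $\f_c$ does not make $(\lel,\c)$ the projective Fra\"{i}ss\'{e} limit---one would need to verify (L1)--(L3), not just membership in $\f_c^*$. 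Fortunately this stronger claim is unnecessary. By definition $X_{\lel_c}\subseteq X^*$, and Lemma~\ref{image3} gives the reverse inclusion directly (every $\c\in X^*$ satisfies the defining property of $X_{\lel_c}$). The topology comparison then goes exactly as you say: both spaces are compact Hausdorff, so it suffices to show that each $V_{\phi,A_c}$ is Vietoris-open in $X^*$, which is the content of Proposition~\ref{ident2}.
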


 Let
\begin{equation*}
\begin{split}
X_{\lel_c}=&\{\c\in X^*:
{\rm\ for\ every\ } A\in\f 
{\rm\ and\ an\ epimorphism\ }
\phi: \lel\to A {\rm\ there \ exists\ } \\ 
& A_c\in\f_c, {\rm such \ that\ } \phi: (\lel,\c)\to A_c
{\rm \ is\ an\ epimorphism }\}.
\end{split}
\end{equation*}

We make  $X_{\lel_c}$ a topological space by declaring sets
\[ V_{\phi, A_c}=\{\c\in X_{\lel_c} : \phi: (\lel,\c)\to A_c  \text{  is an epimorphism} \}, \]
where $\phi: \lel\to A$ is an epimorphism,  $A_c\in\f_c$, and $A_c\rest \ll= A$,  to be open.
%The group $\aut(\lel)$ acts on $X_{\lel_c}$ by left translations
%\[g\cdot \c_1=\c_2 \  \ \iff \ \ \c_2=\{g(C): C\in \c_1\}.\]

\begin{prop}\label{ident2}
%The flow $\aut(\lel)\acts X^*$ can be identified with the flow $\aut(\lel)\acts X_{\lel_c}$. 
We have $X_{\lel_c}=X^*$.
\end{prop}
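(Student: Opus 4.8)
The plan is to show the two inclusions $X_{\lel_c}\subseteq X^*$ and $X^*\subseteq X_{\lel_c}$. The first inclusion is immediate from the very definition of $X_{\lel_c}$, since its elements are declared to be downwards closed maximal chains on $\lel$, and $X^*$ is by definition the set of all such chains. So the real content is the reverse inclusion: every downwards closed maximal chain $\c$ on $\lel$ lies in $X_{\lel_c}$, i.e.\ for every $A\in\f$ and every epimorphism $\phi\colon\lel\to A$ there is $A_c\in\f_c$ with $A_c\rest\ll= A$ such that $\phi\colon(\lel,\c)\to A_c$ is an epimorphism.

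First I would fix such a $\c$, $A$ and $\phi$, and form the pushforward family $\phi(\c)=\{\phi(C):C\in\c\}$ of subsets of $A$. Since $\phi$ is a continuous surjection between compact spaces, $\phi$ maps closed sets to closed sets, and since $\c$ is linearly ordered by inclusion, so is $\phi(\c)$; thus $\phi(\c)$ is a chain of closed (hence, $A$ being finite discrete, arbitrary) subsets of $A$. The key points to verify are: (i) each $\phi(C)$ is downwards closed in $(A,\preceq_A)$, and (ii) setting $\c^A:=\phi(\c)$ gives a \emph{maximal} downwards closed chain on $A$, so that $A_c:=(A,\c^A)\in\f_c$; and then (iii) $\phi\colon(\lel,\c)\to A_c$ is an epimorphism essentially by construction, since $\phi(\c^\lel)=\c^A$ and $\phi$ is already an epimorphism of $\ll$-structures. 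For (i): if $C\in\c$ is downwards closed in $\lel$ and $a\preceq_A b$ with $b\in\phi(C)$, pick $y\in C$ with $\phi(y)=b$; because $\phi$ is an epimorphism it lifts the relation $\preceq_A$ (using the characterisation of $\preceq_P$ via all projections, or directly the fact that $\phi$ is a surjective homomorphism of fans and so the preimage of the root-path to $b$ meets the root-path to $y$), giving $x\preceq_\lel y$ with $\phi(x)=a$; then $x\in C$ by downward closure, so $a\in\phi(C)$. For (ii): suppose $D\subseteq A$ is downwards closed and $\{D\}\cup\c^A$ is a chain; I would lift $D$ along $\phi$ in the style of Lemma~\ref{image3} — more precisely, let $\c_1=\{C\in\c:\phi(C)\supseteq D\}$, put $M=\bigcap\c_1\in\c$ (using maximality of $\c$), note $D\subseteq\phi(M)$, and set $J=\phi^{-1}(D)\cap M$; then $J$ is closed, $\phi(J)=D$, $J$ is downwards closed (as an intersection of the downwards closed set $\phi^{-1}(D)$, which is downwards closed because $D$ is and $\phi$ lifts $\preceq$, with the downwards closed set $M$), and $\{J\}\cup\c$ is a chain, so by maximality of $\c$ we get $J\in\c$ and hence $D=\phi(J)\in\c^A$. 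This is exactly the argument already used for Lemma~\ref{image3} and, in the inverse-limit setting, for Lemma~\ref{cm}.

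The one point that needs a little care — and which I expect to be the main (though modest) obstacle — is the lifting property of $\phi$ with respect to the tree order: that $a\preceq_A\phi(y)$ implies the existence of $x\preceq_\lel y$ with $\phi(x)=a$, and dually that $\phi^{-1}$ of a downwards closed set is downwards closed. This is where one uses that $\phi$ is an epimorphism of the $\ll$-structures (so it reflects $R$ in the epimorphism sense) together with the description of $\preceq_\lel$ as $\varprojlim\preceq_{A_n}$ from Section~\ref{sec:chain}; concretely, factor $\phi$ through the projections $f^\infty_n\colon\lel\to A_n$ (Remark~\ref{coveri} / property (L2)) and argue on a finite fan $A_n$, where the statement is elementary, then pass to the limit. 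Once this lemma is in hand, all three verifications above are routine, and the equality $X_{\lel_c}=X^*$ follows. I would also remark that this matches the abstract picture: $X^*$ is the space $X_{\lel_c}$ attached to the reasonable precompact expansion $\f_c$ of $\f$ (Lemma~\ref{reason}, Corollary~\ref{reasoncor}), and the proposition records that, for this particular expansion, the abstract space of expansions of $\lel$ is literally the space of downwards closed maximal chains, with no further condition needed.
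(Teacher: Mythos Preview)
Your argument establishes the set-theoretic equality, and for the inclusion $X^*\subseteq X_{\lel_c}$ it is essentially the paper's: the paper simply cites Lemma~\ref{image3} (together with Lemma~\ref{cm}, so that $\c\in X^*$ is genuinely a maximal chain) to get that $\phi(\c)$ is a maximal chain on $A$; your check~(i) that each $\phi(C)$ is downwards closed then already forces $\phi(\c)$ to be a downwards closed maximal chain on the finite fan $A$, so your step~(ii) is not needed separately.

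However, you have missed half of the proposition. The objects $X^*$ and $X_{\lel_c}$ are not merely sets: $X^*$ carries the topology inherited from $\Exp(\Exp(\lel))$, while $X_{\lel_c}$ is explicitly given the topology generated by the sets $V_{\phi,A_c}$ (see the paragraph defining $X_{\lel_c}$). Proposition~\ref{ident2} asserts equality \emph{as topological spaces}, and this is exactly what is used afterwards: in the proof of Theorem~\ref{umfauto} the identification of the abstract KPT space $X_{\lel_c}$ with the concrete flow on $X^*$ requires the topologies to coincide. The paper's proof therefore contains a second, nontrivial part that your proposal omits entirely: one shows that each basic open set $V_{\phi,A_c}$ is open in the Vietoris topology on $X^*$. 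Concretely, starting from the clopen partition $P=\{\phi^{-1}(a):a\in A\}$ and the downward-closed pieces $P_i=\{\phi^{-1}(a):a\leq^{A_c}a_i\}$, the paper exhibits $V_{\phi,A_c}$ as $X^*$ intersected with a basic Vietoris-in-Vietoris open set built from the $P_i$'s. Since both topologies are compact Hausdorff, continuity of the identity in one direction then yields a homeomorphism. Without this step, the identification needed for the computation of the universal minimal flow is incomplete.
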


\begin{proof}
We have to show two things: $X^*\subset X_{\lel_c}$ and the topologies on  $X^*$ and $X_{\lel_c}$ agree.

%Clearly, $X_{\lel_c}\subset X^*$. 
 Lemma \ref{image3} implies that if $\c\in X^*$ and  $\phi: \lel\to A$ is an epimorphism, then
there is a (necessarily unique) $A_c\in\f_c$ with
$A_c\restriction \ll= A$ such that $\phi: (\lel,\c)\to A_c$ is an epimorphism, from which
it follows that  $X^*\subset X_{\lel_c}$.

%Lemma \ref{topagree} below,  will imply that topologies on  $X^*$ and $X_{\lel_c}$ agree.
%\end{proof}

%\gre{Remove the def. and all primes in lemma below.}
%Let
%\[ V'_{\phi, A_c}=\{\c\in X^* : \phi:(\lel, \c)\to A_c  \text{  is an epimorphism} \}, \]
%where $\phi: \lel\to A$ is an epimorphism,  $A_c\in\f_c$, and $A_c\rest \ll= A$.

%\gre{This lemma below was not in the latest version, it was in some older one, quite possibly you did not see it.}
%\begin{lemma}\label{topagree}
%Sets of the form $V'_{\phi, A_c}$ are clopen, and they form a basis for $X^*$.
%\end{lemma}
%\begin{proof}

Since   $X^*$ and $X_{\lel_c}$  are both compact, it suffices to show that the identity map from $X^*$ to $X_{\lel_c}$ is continuous.
For this we show that sets of the form $V_{\phi, A_c}$ are open in $X^*$. %, and they form a basis for $X^*$. 
For any partition $P$ of $\lel$ into clopen sets and any 
 $P_1,\ldots, P_n\subset P$, by the definition of the Vietoris topology, 
 each of the sets
\begin{align*} 
 \overline{P}_i= & \{ D\in   {\rm Exp}( \lel): D\subset\bigcup  P_i \text{ and for every } p\in P_i,\  D\cap p\neq\emptyset \} \\
 =&\{ D\in   {\rm Exp}( \lel): P_i =\{p\in P: D\cap p\neq\emptyset\}   \}  
 \end{align*}
is open in  ${\rm Exp}( \lel)$ and therefore
\begin{align*}
V_{P_1,\ldots,P_n}=&  
\{\c\in { \rm Exp( Exp(}\lel)) :    \c\subset \overline{P}_1\cup\ldots \cup\overline{P}_n
 \text{ and for every } i,\  \c\cap\overline{P}_i\neq\emptyset \} \\
=&\{\c\in { \rm Exp( Exp(}\lel)) :     \\
& \{P_1,\ldots,P_n\}=\{Q\subset P : \text{ for some } D\in\c, \ Q=\{p\in P: D\cap p\neq\emptyset\}        \}  \  \}
\end{align*}
is open in ${ \rm Exp( Exp(}\lel)) $.
Now if $P=\{\phi^{-1}(a): a\in A\}$, where $\phi: \lel\to A$ is an epimorphism,
and if $P_i =\{\phi^{-1}(a_j): a_j\leq^{A_c} a_i\}$, where $A=\{a_1,\ldots, a_n\}$ is linearly ordered by $\leq^{A_c}$,
the  order on $A$ induced by the chain on $A_c$, then 
$ V_{\phi, A_c}=V_{P_1,\ldots,P_n}\cap X^*$, which implies that $ V_{\phi, A_c}$ is open.
%Since for every epimorphism $\phi:\lel\to A$, sets $ V_{\phi, A_c}$, where $A_c\in\f_c$ and $A_c\rest\ll= A$ form a finite open partition of $X^*$,  each set 
%$ V_{\phi, A_c}$ is in fact clopen.

%To show that sets  $ V_{\phi, A_c}$, where $\phi: \lel\to A$ an epimorphism, $A_c\in\f_c$, and $A_c\rest\ll= A$,
%form a basis for $X^*$, take an open set $V$ in $X^*$ and let $\c\in V$. We will find a set of the form $ V_{\phi, A_c}$
%such that $\c\in V_{\phi, A_c}\subset V$.
%First note that sets 
%\[ W_{\phi, P}=\{ D\in   {\rm Exp}( \lel): P=\{a\in A: D\cap\phi^{-1}(a)\neq\emptyset\} \}, \]
%where $\phi: \lel\to A$ is an epimorphism and $P\subset A$,
%form a basis for ${\rm Exp}( \lel)$.
%Without loss of generality, we have $V=[W_{\phi_1, P_1},\ldots, W_{\phi_n, P_n}]$,
%for some epimorphisms $\phi_i: \lel\to A_i$ and  partitions $P_i$. Further, without loss of generality, by shrinking $V$ if necessary, $\phi=:\phi_1=\ldots=\phi_n$.
%Since $\c\in V$, sets $P_i$, $i=1,\ldots, n$, form a downwards closed chain on $A$. Extend this chain to some maximal chain $\c^A$ on $A$.
%If we take $A_c=(A, \c^A)$, then $\phi: (\lel, \c)\to A_c$ is an epimorphism, which means $\c\in V_{\phi, A_c}\subset V$.

\end{proof}

\begin{lemma}\label{minim}
The family $\f_c$ has  the expansion property with respect to $\f$,
that is, 
 for any $A_c\in\f_c$ there is $D\in\f$ such that
for any expansion $D_c\in\f_c$ of $D$, there is an epimorphism $\phi: D_c\to A_c$.
\end{lemma}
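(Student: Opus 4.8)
The plan is to produce, for a given $A_c=(A,\c^A)\in\f_c$ of height $k$ and width $d$, a single fan $D\in\f$ — concretely, a fan built out of sufficiently many branches of sufficiently large height — so that \emph{every} downwards closed maximal chain on $D$ admits an epimorphism onto $A_c$. The key point is that the extra structure $\c^D$ only enters through the linear order $\leq^{D_c}$ it induces on the vertex set of $D$, and through the requirement that an epimorphism $\phi:D_c\to A_c$ send $\c^D$ onto $\c^A$. So I would first reduce to the canonical case: by Proposition~\ref{coin} every $A_c\in\f_c$ is an epimorphic image of some $B_c\in\f_{cc}$ (with height and width controlled by those of $A_c$), so it suffices to prove the expansion property with $B_c\in\f_{cc}$ in place of $A_c$, and then compose. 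Thus I may assume $\c^A$ is canonical, $A$ has $d$ branches each of height $k$, enumerated $a_1<_{cc}\dots<_{cc}a_d$, with the chain running through branch $a_1$ first, then $a_2$, and so on.

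Next I would choose $D$: take $D$ to consist of $W$ branches each of height $k$, where $W$ is large — the precise value coming from a pigeonhole/Ramsey-free counting argument, but morally $W$ only needs to be large enough that no matter how an arbitrary maximal chain $\c^D$ orders the branches of $D$ and ``interleaves'' partial segments, one can still find $d$ branches of $D$, consecutive in the order $\leq^{D_c}$ induced by $\c^D$, whose full initial segments $[v_D,\cdot]$ appear as links of $\c^D$ in the right positions. Given such a maximal chain, I would read off from it the induced total order on $D$, locate the links of $\c^D$ that are full branches (there are at least $W$ of them, since $\c^D$ is maximal and downwards closed, hence by Lemma~\ref{cm} every branch of $D$, being a maximal downward-closed chain in the fan, gives rise to a link), pick $d$ of these in increasing order, and define $\phi:D\to A$ by mapping the $i$-th chosen branch $R$-preservingly onto the branch $a_i$ of $A$, mapping every vertex of $D$ lying ``$\leq^{D_c}$-below'' the first chosen branch into the root region appropriately, and collapsing everything between the $i$-th and $(i{+}1)$-st chosen branches down onto (the appropriate initial segment of) $a_i$. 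One checks this $\phi$ is a surjective homomorphism, hence an epimorphism of $\ll$-structures, and that $\phi(\c^D)=\c^A$ because the chosen full-branch links are sent exactly to the links $a_1, a_1\cup a_2,\dots$ of the canonical chain on $A$ while the intermediate links of $\c^D$ are sent to links of $\c^A$ by construction.

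The step I expect to be the main obstacle is verifying that $\phi(\c^D)=\c^A$ \emph{exactly} — i.e.\ that every link of the arbitrary maximal chain $\c^D$, not just the full-branch ones, maps to a link of $\c^A$, and conversely that every link of $\c^A$ is hit. The subtlety is that $\c^D$ may contain links that are ``ragged'' partial segments of several branches of $D$ simultaneously, and I must ensure the collapsing map sends each such link onto a genuine link of $\c^A$ (which, $\c^A$ being canonical on $A$, are very rigidly structured: each is a union of full lower branches plus at most one partial initial segment of the next branch). This forces the definition of $\phi$ on the ``in-between'' vertices to be chosen compatibly with the $\leq^{D_c}$-order, and it is here that one genuinely uses maximality of $\c^D$ together with the fan structure (an $R$-preserving map of a height-$k$ branch onto $[v_A,a_i^z]$ exists for every $z\le k$, and one aligns these so that images of $\leq^{D_c}$-initial segments are $\leq^{A_c}$-initial segments). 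Once this compatibility bookkeeping is in place, the lemma follows; combined with Theorem~\ref{kpt_minim} and Corollary~\ref{xyza} it will give that $\aut(\lel)\acts X_{\lel_c}=X^*$ is the universal minimal flow.
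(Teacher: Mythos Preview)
Your reduction to the canonical case via Proposition~\ref{coin} is fine and matches the paper. The gap is in the next step. You take $D$ to have many branches of height $k$ and then assert that in any maximal downwards closed chain $\c^D$ on $D$ there are at least $W$ links that are single full branches, citing Lemma~\ref{cm}. This is false, and Lemma~\ref{cm} says nothing of the sort: a branch is a maximal chain in the tree order $(\preceq_D)$, not a maximal downwards closed \emph{subset} of $D$, and there is no reason it should occur as a link of an arbitrary $\c^D$. For a concrete counterexample, let $D$ have two branches $b_1,b_2$ of height $2$ and take
\[
\c^D=\bigl\{\{r\},\ \{r,b_1^1\},\ \{r,b_1^1,b_2^1\},\ \{r,b_1^1,b_2^1,b_1^2\},\ D\bigr\}.
\]
This is a downwards closed maximal chain, yet neither $b_1=\{r,b_1^1,b_1^2\}$ nor $b_2=\{r,b_2^1,b_2^2\}$ is a link. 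The same interleaving pattern defeats your scheme for any $W$: a chain can visit level $1$ of every branch before reaching level $2$ of any branch, so no single branch ever appears as a link. Your proposed map $\phi$ then cannot be defined as described, and the ``ragged link'' bookkeeping you flag as the main obstacle never even gets off the ground.

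The paper's construction avoids this by going in the orthogonal direction: rather than many short branches, it keeps the \emph{width} equal to that of $B_c$ (say $l$) and makes the branches \emph{tall}, of height $kl$. Given any $\c^D$, it selects the branches of $D$ greedily by level: let $x^i$ be the $\leq^{D_c}$-least vertex among $\{d_j^{ik}:j\leq l\}$ lying on a branch not yet chosen. Because there are $l$ branches and $l$ levels $k,2k,\dots,lk$, this always succeeds. The epimorphism $\psi$ sends the segment $[\bar d_i^{(i-1)k},\bar d_i^{ik}]$ of the $i$-th chosen branch onto $b_i$ and collapses the rest; the greedy choice of $x^i$ is exactly what guarantees that every link of $\c^D$ maps into $\c^{B_c}$. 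So the correct fix is not a larger $W$ but a larger height.
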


\begin{proof}
 Using Proposition \ref{coin}, find $B_c\in\f_{cc}$ such that there is an epimorphism from $B_c$ onto $A_c$.
%We show that $D\in\f$  of height $m=kl$ and width $l$, where $k$ and $l$ are such that $B=B_c\rest \l$ is  of height $k$ and width $l$,
%is as required, that is, it is such that for any $D_c\in\f_c$ with $D_c\rest \ll= D$, there is an epimorphism from $D_c$ onto $B_c$.
Let $k$ and $l$ be the height and width of $B_c$, respectively. We show that $D\in\f$  of height $m=kl$ and width $l$ is as required, that is,
for any $D_c\in\f_c$ with $D_c\rest \ll= D$, there is an epimorphism from $D_c$ onto $B_c$.

%\gre{did we define this order somewhere?}
Let $b_1 <^{B_c}_{cc} b_2 <^{B_c}_{cc}\ldots <^{B_c}_{cc} b_l$ be the increasing enumeration of branches of $B_c$ 
 and
 let  $d_1, d_2, \ldots, d_l$ be a list of all branches of $D$ . 
 Let for each $i$, $(d_i^j)_{j=0}^m$ be the $\preceq_D$-increasing enumeration of the branch $d_i$
 and let  for each $i$, $(b_i^j)_{j=0}^k$ be the $\preceq_B$-increasing enumeration of the branch $b_i$.
 Let $v_B$ and $v_D$ denote the roots of $B$ and $D$, respectively.

Take any $D_c\in\f_c$ with $D_c\rest \ll= D$. We recursively define a sequence $(x^i)_{i=1}^l$ as follows.
Let $x^i$ be the least (with
respect to the linear order $\leq^{D_c}$ induced by $D_c$) 
element
from the set $\{d_j^{ik}: j=1,2,\ldots, l\}$
which is not on the same branch as $x^1,\ldots, x^{i-1}$ are. Let $\bar{d}_i$ denote the branch of $D$ on which $x^i$ is, so $x^i=\bar{d}_i^{ik}$.
Let $\psi: D_c\to B_c$ be the $R$-preserving map satisfying   for each $i$,
\begin{align*}
&\psi(\bar{d}_i^{(i-1)k+t})=b_i^t, \ t=1,2,\ldots, k,\\
&\psi([\bar{d}_i^{ik}, \bar{d}_i^m]_{\preceq_D})=b_i^k, \text { and} \\
&\psi([v_D, \bar{d}_i^{(i-1)k}]_{\preceq_D})=v_B.
\end{align*}
Then  $\psi$ preserves chains and therefore it is a required epimorphism. 
\end{proof}

%%%%%%%%%%%%%

\begin{proof}[Proof of Theorem \ref{umfauto}]
We will apply Theorem \ref{kpt2}, via Theorem \ref{stone2},  to families $\f$ and $\f_c$
to show that the flow $\aut(\lel)\acts X_{\lel_c}$ is the universal minimal flow of $\aut(\lel)$.
Clearly, $\f_c$ is a  precompact expansion of $\f$ and  from  Lemma \ref{image3} it follows that  the property~$(*)$ holds for $\f$ and $\f_c$.
Further, the expansion $\f_c$ of $\f$ is reasonable (Lemma \ref{reason}),  $\f_c$ has  the expansion property with respect to $\f$ (Lemma \ref{minim}), and 
$\f_{c}$ is a Ramsey class (Theorem \ref{rams} and Corollary \ref{xyza}).

Finally, Corollary \ref{ident} implies that $\aut(\lel)\acts \widehat{\aut(\lel)/\aut(\lel_c)}$
and Proposition \ref{ident2}  implies that $\aut(\lel)\acts X^*$ describe the universal minimal flow of $\aut(\lel)$.

\end{proof}

\section{The universal minimal flow of $H(L)$}\label{umfh}

We will compute the universal minimal flow of $H(L)$ -- the homeomorphism group of the Lelek fan $L$ -- in
  two ways, as we did for $\aut(\lel)$; one description will correspond to the 
quotient description, and the other to the chain description.

\subsection{The 1st description}
Let $\pi:\lel\to L$ denote the continuous surjection and let $\pi^*: \aut(\lel)\to H(L)$ denote the  continuous homomorphism with a dense image,
both  defined in Section~\ref{sec:lelek}. 
The chain $\c^L=\pi(\c^\lel)$ is downwards closed and it is maximal by Lemma \ref{image3}. Let $L_c=(L, \c^L)$ and 
set \[H(L_c)=\{h\in H(L): h(\c^L)=\c^L\}.\] 
Note that $H(L_c)$ is closed in $H(L)$. 
Indeed, since for any $D\in {\rm Exp}(L)$  the map that takes $h\in H(L)$
and assigns to it $h(D)\in {\rm Exp}(L)$ is continuous, 
and since $\c^L$ is maximal  and hence closed in $\Exp(L)$, we get that $H(L_c)$ is closed .

In this section, we will prove the following theorem.
\begin{thm}\label{umfhom1}
 The universal minimal flow of $H(L)$ is equal to
$H(L)\acts \widehat{H(L)/H(L_c)}$.
\end{thm}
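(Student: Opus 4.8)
The plan is to transfer the description of the universal minimal flow of $\aut(\lel)$ (Theorem \ref{umfauto}) through the surjective homomorphism $\pi^*\colon\aut(\lel)\to H(L)$ with dense image. The key structural fact I would establish first is that $\pi^*$ maps $\aut(\lel_c)$ into $H(L_c)$, that $(\pi^*)^{-1}(H(L_c))$ is the closure of $\aut(\lel_c)$ inside $\aut(\lel)$ (which by Lemma \ref{autclosed} is just $\aut(\lel_c)$ itself, provided we check that preserving $\c^L$ pulls back exactly to preserving $\c^\lel$, i.e. $\pi(C)\in\c^L$ for all $C\in\c^\lel$ forces $C\in\c^\lel$ — this follows from $\c^\lel$ being the $\pi$-preimage chain and maximality), and, crucially, that $\pi^*$ induces a \emph{dense} embedding of the quotient $\aut(\lel)/\aut(\lel_c)$ into $H(L)/H(L_c)$ which is moreover \emph{uniformly continuous} and has dense image, both spaces being equipped with the (quotient of the) right uniformity. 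Density of $\pi^*$ and the fact that $H(L_c)$ is the ``right'' subgroup will give that $\aut(\lel)\cdot H(L_c)$ is dense in $H(L)$, hence $\aut(\lel)/\aut(\lel_c)\to H(L)/H(L_c)$ has dense image.

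Next I would show that this map $\aut(\lel)/\aut(\lel_c)\to H(L)/H(L_c)$ is in fact a uniform isomorphism onto its (dense) image. Uniform continuity in one direction is automatic from continuity of $\pi^*$; the reverse estimate is where I expect the content to lie, and it should come from the observation (already used implicitly in Proposition \ref{ident2} and Lemma \ref{minim}) that basic entourages on both quotient spaces are indexed by the same combinatorial data, namely epimorphisms $\phi\colon\lel\to A$ together with an expansion $A_c\in\f_c$ of $A$; the point $x\aut(\lel_c)$ and the point $\pi^*(x)H(L_c)$ encode exactly the same information, because a maximal downwards closed chain on $L$ is determined by, and determines, the maximal downwards closed chain on $\lel$ obtained by pulling back along $\pi$ (Lemma \ref{image3} in both directions, together with the remark that downwards closed closed sets on $L$ correspond to downwards closed closed sets on $\lel$). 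Granting this, $H(L)/H(L_c)$ is the \emph{completion-image}, equivalently: $\widehat{\aut(\lel)/\aut(\lel_c)}\cong\widehat{H(L)/H(L_c)}$ as uniform spaces, and this isomorphism is $H(L)$-equivariant for the left-translation actions because $\pi^*$ is a homomorphism and the embedding is equivariant along $\pi^*$, with the $\aut(\lel)$-action on the left-hand side extending to an $H(L)$-action by density and Proposition \ref{extension} (applied with $G=H(L)$, $H=H(L_c)$, after checking $H(L)/H(L_c)$ is precompact).

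Precompactness of $H(L)/H(L_c)$ is needed to invoke Proposition \ref{extension} and to guarantee the completion is compact; I would get it from precompactness of $\aut(\lel)/\aut(\lel_c)$ (the Lemma preceding Corollary \ref{ident}) together with density of the image: a finite set $Vx_i\aut(\lel_c)$ covering $\aut(\lel)$ pushes forward, using that $\pi^*$ is continuous and $\pi^*(\aut(\lel))$ is dense, to a finite cover $V'\pi^*(x_i)H(L_c)$ of $H(L)$ for a slightly larger neighbourhood $V'$ of the identity. Finally, to conclude that $H(L)\acts\widehat{H(L)/H(L_c)}$ is the \emph{universal} minimal flow of $H(L)$, I would argue that it is minimal (it is a quotient-type flow of a precompact homogeneous space, and minimality transfers from the $\aut(\lel)$-flow via the dense homomorphism: the $\aut(\lel)$-orbit of the distinguished point is dense in $\aut(\lel)/\aut(\lel_c)$, hence its image is dense, hence the $H(L)$-orbit is dense in the completion) and that it admits a $G$-map onto every minimal $H(L)$-flow; the latter I expect to handle exactly as one does for the KPT picture, by showing $H(L_c)$ is extremely amenable — this is where the real work goes and it should follow because $\aut(\lel_c)$ is extremely amenable (Corollary after Theorem \ref{rams}), $\pi^*$ restricts to a continuous homomorphism $\aut(\lel_c)\to H(L_c)$ with dense image, and extreme amenability passes to dense continuous homomorphic images; then a closed subgroup $H\le G$ with $G/H$ precompact and $H$ extremely amenable has $\widehat{G/H}$ universal minimal, a statement I would prove by a fixed-point/averaging argument over flows. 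The main obstacle, as indicated, is this last point: verifying that $H(L_c)$ is extremely amenable and that precompactness plus extreme amenability of the point-stabilizer yields \emph{universality} (not merely minimality) of the completion — the $\aut(\lel)$ case had the full KPT machinery (Theorem \ref{kpt2}) available, whereas here I must reprove the universality half directly for the homeomorphism group, which is exactly the ``novel and nontrivial'' second step the introduction flags.
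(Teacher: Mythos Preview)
Your overall architecture is the same as the paper's: transfer the $\aut(\lel)$ result through $\pi^*$, obtain precompactness of $H(L)/H(L_c)$ from precompactness of $\aut(\lel)/\aut(\lel_c)$, and deduce universality from extreme amenability of $H(L_c)$ together with a statement of the form ``$H$ extremely amenable and $G/H$ precompact implies $\widehat{G/H}$ is universal among minimal $G$-flows'' (the paper packages this as Theorem~\ref{unii} and Theorem~\ref{image}).

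The genuine gap is your treatment of the extreme amenability of $H(L_c)$. You write that it ``should follow because $\pi^*$ restricts to a continuous homomorphism $\aut(\lel_c)\to H(L_c)$ with dense image.'' But density of $\pi^*(\aut(\lel))$ in $H(L)$ does \emph{not} imply density of $\pi^*(\aut(\lel_c))$ in $H(L_c)$: an arbitrary chain-preserving homeomorphism of $L$ has no a priori reason to be approximable by chain-preserving automorphisms of $\lel$. This is precisely Proposition~\ref{hl}, and its proof is the main technical content of the section. Given $h\in H(L_c)$ and $\epsilon>0$, the paper constructs $\gamma\in\aut(\lel_c)$ with $d_{\sup}(h,\gamma^*)<\epsilon$ by first producing a finite cover of $L$ compatible with $\c^L$ (Lemma~\ref{hl2}, which adds a chain-compatibility condition (C5) to the cover lemma of \cite{BK}), pulling it back to $\lel$, and then running an inductive refinement of epimorphisms $\phi:\lel\to B$ so that the induced map $\psi_\phi:\lel\to A$ sends $\c^\lel$ onto a prescribed chain $\c^A$ link by link; only then does projective ultrahomogeneity (L3) of $\lel_c$ yield the approximating $\gamma$. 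Nothing in your sketch supplies this mechanism, and the combinatorial ``basic entourages are indexed by the same data'' heuristic you invoke for the quotient spaces does not touch the question of which homeomorphisms of $L$ lie in the closure of $\pi^*(\aut(\lel_c))$.

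A secondary point: your claim that $(\pi^*)^{-1}(H(L_c))=\aut(\lel_c)$ is also not immediate. Since $R_S^\lel$-classes can have two elements, a $g\in\aut(\lel)$ with $\pi^*(g)\in H(L_c)$ need not fix $\c^\lel$ itself; the pullback along $\pi$ of a downwards closed maximal chain on $L$ is \emph{a} downwards closed maximal chain on $\lel$, but not obviously $\c^\lel$. The paper sidesteps this entirely by working from the outset with $H_1=\overline{\pi^*(\aut(\lel_c))}$, for which Theorem~\ref{image} directly gives $M(H(L))=\widehat{H(L)/H_1}$, and then proving $H_1=H(L_c)$ as a separate proposition.
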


Theorem \ref{image} below will immediately imply that the universal minimal flow of $H(L)$ is equal to
$\widehat{H(L)/H_1}$, where $H_1=\overline{\pi^*(\aut(\lel_c))}$.
We will identify $H_1$ with $H(L_c)$ in Proposition \ref{hl}.

First, we need Theorem \ref{unii}, its proof is in \cite{NVT} (the proof of $ii)\to i)$ of Theorem 5).
We will say that a flow $G\acts X$ is {\em universal} in a family of $G$-flows $\mathcal{FL}$ if $G\acts X\in \mathcal{FL}$ and
for every $G$-flow $G\acts Y\in \mathcal{FL}$
there is  a continuous $G$-map from $X$ onto $Y$.
\begin{thm}\label{unii}
Let $G$ be a Polish group and let $H$ be its closed subgroup. Suppose that $H$ is extremely amenable and that $G/H$ is precompact. Then 
the $G$-flow $G\acts\widehat{G/H}$ is universal in the family of $G$-flows in which there is an $H$-fixed point with a dense orbit.
\end{thm}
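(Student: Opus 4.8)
The plan is to show that $G \acts \widehat{G/H}$ is a flow lying in the family $\mathcal{FL}$ of $G$-flows possessing an $H$-fixed point with dense orbit, and then to establish the universality property. First I would verify membership: by Proposition \ref{extension}, the left-translation action of $G$ on $G/H$ extends to a continuous action on the compact completion $\widehat{G/H}$, so $G \acts \widehat{G/H}$ is indeed a $G$-flow. The coset $eH$, viewed as a point of $\widehat{G/H}$, is fixed by $H$ under left translation, and its $G$-orbit $\{gH : g \in G\}$ is by construction dense in $\widehat{G/H}$. Hence $G \acts \widehat{G/H}$ belongs to $\mathcal{FL}$.

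For universality, let $G \acts Y$ be any flow in $\mathcal{FL}$, and fix $y_0 \in Y$ with $H y_0 = \{y_0\}$ whose $G$-orbit is dense. I would like to define the $G$-map by $gH \mapsto g y_0$; this is well-defined on $G/H$ precisely because $y_0$ is $H$-fixed, and it is manifestly $G$-equivariant with dense image. The key point is that this map is \emph{uniformly continuous} from $G/H$ (with the right-uniformity quotient uniformity described in Section \ref{pus}) to $Y$ (with its unique compact uniformity), so that it extends continuously to the completion $\widehat{G/H}$; the extension is then a continuous $G$-map onto a dense, hence (by compactness) all of $Y$. To get uniform continuity, given an entourage $V$ of the diagonal in $Y$, one uses continuity of the action at $(e, y_0)$ together with compactness of $Y$: extreme amenability of $H$ enters here, via the fact that $H y_0 = \{y_0\}$, to control the behavior along cosets, so that $x_1^{-1} x_2 \in W$ (for a suitable symmetric neighborhood $W$ of $e$) forces $(x_1 y_0, x_2 y_0) \in V$, i.e.\ $(x_1 H, x_2 H) \in U_W \Rightarrow (x_1 y_0, x_2 y_0)\in V$.

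The main obstacle, and the only place where the hypothesis that $H$ is extremely amenable is genuinely needed, is ensuring that the orbit map $g \mapsto g y_0$ respects the uniformity coming from the quotient $G/H$ rather than merely from $G$: a priori this map is only right-uniformly continuous on $G$, and one must check that it factors through the coarser quotient uniformity. Here the argument of Nguyen Van Th\'e (the proof of $ii)\Rightarrow i)$ in Theorem~5 of \cite{NVT}) shows that extreme amenability of $H$ --- equivalently, the existence of the single $H$-fixed point $y_0$ sitting inside a compact flow --- is exactly what forces the map on cosets to be uniformly continuous, because it lets one replace an arbitrary representative $gh$ of a coset by $g$ without moving $y_0$ at all. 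Once uniform continuity is in hand, the extension to $\widehat{G/H}$, its equivariance, and its surjectivity onto the minimal (indeed, onto all of $Y$) target are routine, completing the proof.
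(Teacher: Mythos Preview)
Your outline is correct and is precisely the argument the paper defers to \cite{NVT}: show that $eH\in\widehat{G/H}$ is an $H$-fixed point with dense orbit, and for any $Y\in\mathcal{FL}$ with $H$-fixed $y_0$ of dense orbit, verify that $gH\mapsto gy_0$ is uniformly continuous and extend to the completion. The paper gives no independent proof, so there is nothing further to compare.

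There is, however, a misconception in your commentary. You assert that extreme amenability of $H$ is ``genuinely needed'' to push the orbit map down to the quotient uniformity. It is not. Uniform continuity of $gH\mapsto gy_0$ follows from compactness of $Y$ alone: the continuous map $G\times Y\to Y\times Y$, $(g,y)\mapsto(gy,y)$, sends $\{e\}\times Y$ into the diagonal, so by compactness of $Y$, for every entourage $V$ of $Y$ there is a neighbourhood $W\ni e$ with $(wy,y)\in V$ for all $w\in W$, $y\in Y$. If $x_1x_2^{-1}\in W$ (note: this is the correct right-uniformity condition, not $x_1^{-1}x_2\in W$ as you wrote), then $x_1y_0=(x_1x_2^{-1})x_2y_0$ lies in the $V$-neighbourhood of $x_2y_0$. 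The map is well-defined on cosets simply because $y_0$ is $H$-fixed, which is part of the \emph{hypothesis} that $Y\in\mathcal{FL}$, not a consequence of extreme amenability.

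In fact the hypothesis ``$H$ is extremely amenable'' is nowhere used in the proof of Theorem~\ref{unii} as stated; it is carried along because the result is extracted from a theorem in \cite{NVT} where it does real work. Its role surfaces only in the application (Theorem~\ref{image}): there one must know that an \emph{arbitrary} minimal $G$-flow possesses an $H$-fixed point, and that is exactly what extreme amenability of $H$ provides.
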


\begin{thm}\label{image}
Let $G$ be a Polish group and let $H$ be a closed subgroup of $G$. Suppose that $H$ is extremely amenable,
$G/H$ is precompact, and $M(G)=\widehat{G/H}$. 
Let $G_1$ be a Polish group and let $\phi: G\to G_1$ be a continuous homomorphism with a dense image. 
Then $M(G_1)=\widehat{G_1/H_1}$, where $H_1=\overline{\phi(G)}$.
\end{thm}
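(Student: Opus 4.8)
The plan is to verify that $G_1$ and its closed subgroup $H_1=\overline{\phi(H)}$ (we read ``$\overline{\phi(G)}$'' in the statement as a typo for $\overline{\phi(H)}$, consistent with the intended application) satisfy the hypotheses of Theorem~\ref{unii}, and then to exploit a $G$-equivariant surjection $M(G)=\widehat{G/H}\to\widehat{G_1/H_1}$ induced by $\phi$ in order to upgrade ``$\widehat{G_1/H_1}$ is universal among a certain class of flows'' to ``$\widehat{G_1/H_1}$ is \emph{the} universal minimal flow of $G_1$''.

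First I would check the two standing hypotheses for the pair $(G_1,H_1)$. That $H_1$ is extremely amenable: since $\phi(H)$ is dense in $H_1$, restricting any $H_1$-flow along $\phi$ produces an $H$-flow, extreme amenability of $H$ gives a $\phi(H)$-fixed point $y$, and since $h_1\mapsto h_1y$ is continuous and constant on the dense subset $\phi(H)$, the point $y$ is $H_1$-fixed. That $G_1/H_1$ is precompact: given an open symmetric neighbourhood $W$ of the identity in $G_1$, pick a symmetric open $V\subseteq\phi^{-1}(W)$ around the identity in $G$; precompactness of $G/H$ gives $x_1,\dots,x_n\in G$ with $G=\bigcup_i Vx_iH$, and applying $\phi$ yields $\phi(G)\subseteq\bigcup_i W\phi(x_i)H_1$, whose right-hand side is open and contains the dense set $\phi(G)$, hence equals $G_1$.

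Next I would construct the surjection. Since $\phi(H)\subseteq H_1$, the map $gH\mapsto\phi(g)H_1$ is well defined, and testing on the generators $U_V$ of the quotient-of-right uniformities shows it is uniformly continuous; it is a $G$-map when $G$ acts on $G_1/H_1$ through $\phi$, and its image $\phi(G)H_1/H_1$ is dense in $G_1/H_1$. It therefore extends to a continuous $G$-map $\Psi\colon\widehat{G/H}\to\widehat{G_1/H_1}$ whose image is dense, hence compact, hence all of $\widehat{G_1/H_1}$. Because $\widehat{G/H}=M(G)$ is a minimal $G$-flow and continuous images of minimal flows are minimal, $\widehat{G_1/H_1}$ is a minimal $G$-flow; and since $\phi$ has dense image, $G$-orbit closures and $G_1$-orbit closures in $\widehat{G_1/H_1}$ coincide, so $\widehat{G_1/H_1}$ is a minimal $G_1$-flow.

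Finally I would assemble the argument. The coset $eH_1$ is $H_1$-fixed with dense orbit $G_1/H_1$ in $\widehat{G_1/H_1}$, so by Theorem~\ref{unii} applied to $(G_1,H_1)$ the flow $G_1\acts\widehat{G_1/H_1}$ is universal among $G_1$-flows admitting an $H_1$-fixed point with a dense orbit; and every minimal $G_1$-flow is of this kind, since extreme amenability of $H_1$ supplies an $H_1$-fixed point and minimality makes its orbit dense. Hence every minimal $G_1$-flow is a factor of $\widehat{G_1/H_1}$, and as $\widehat{G_1/H_1}$ is itself minimal, $M(G_1)=\widehat{G_1/H_1}$. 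I expect the genuinely substantive step to be this last promotion — establishing that $\widehat{G_1/H_1}$ is minimal on the nose — which is exactly where the hypothesis $M(G)=\widehat{G/H}$ is used, through $\Psi$ and the interchange between $G$- and $G_1$-minimality; the uniform-space bookkeeping behind $\Psi$ is routine.
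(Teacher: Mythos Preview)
Your proposal is correct and follows essentially the same route as the paper's proof: verify $H_1$ is extremely amenable and $G_1/H_1$ is precompact, build the uniformly continuous $G$-map $gH\mapsto\phi(g)H_1$ and extend it to a surjection $\widehat{G/H}\twoheadrightarrow\widehat{G_1/H_1}$, deduce $G_1$-minimality of $\widehat{G_1/H_1}$ from $G$-minimality of $\widehat{G/H}$ via density of $\phi(G)$, and then invoke Theorem~\ref{unii}. The only cosmetic differences are that the paper cites \cite[Lemma 6.18]{KPT} for extreme amenability of $H_1$ and phrases precompactness via ``image of precompact under uniformly continuous is precompact'', whereas you spell both out directly; you also make explicit the step (left implicit in the paper) that every minimal $G_1$-flow lies in the class to which Theorem~\ref{unii} applies.
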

\begin{proof}
%\gre{I replaced right uniformly continuous back into uniformly continuous}
First note that $H_1$ is extremely amenable (see Lemma 6.18 in \cite{KPT}) and that $\phi$ is uniformly continuous.
  We show that $G_1/H_1$ is precompact. 
Consider $\tilde{\phi}: G/H\to G_1/H_1$ given by $\tilde{\phi}(gH)=\phi(g)H_1$. This map is well defined,  uniformly 
continuous (with respect to quotients of the right  uniformities),
 and has a dense image. As the image of a precompact space by a uniformly continuous map is precompact
(a straightforward calculation, this property is also stated in Engelking \cite{E}, page 445, the 2nd paragraph),
 $\tilde{\phi}(G/H)$ is precompact in the uniformity inherited from $G_1/H_1$. Moreover, as $\tilde{\phi}(G/H)$ is dense
 in $G_1/H_1$, completions of both spaces are equal (see \cite{E} 8.3.12). This gives that $G_1/H_1$ is precompact.

Since $\tilde{\phi}$ is  uniformly continuous, $\tilde{\phi}$ extends to a $G$-map 
$\tilde{\Phi}: \widehat{G/H}\to \widehat{G_1/H_1}$ (see 8.3.10 in \cite{E}). 
As  $\tilde{\phi}(G/H)$ is dense in $G_1/H_1$ and the image $\tilde{\Phi}(\widehat{G/H})$ is closed
in  $\widehat{G_1/H_1}$, $\tilde{\Phi}$ is onto.
The continuous action $G_1\acts \widehat{G_1/H_1}$ and $\phi: G\to G_1$ induce a continuous action
$G\acts \widehat{G_1/H_1}$. As $G\acts \widehat{G/H}$ is minimal,
so is $G\acts \widehat{G_1/H_1}$, therefore the flow $G_1\acts \widehat{G_1/H_1}$ is minimal as well.
By Theorem \ref{unii},  the flow $G_1\acts \widehat{G_1/H_1}$ is universal in the family of minimal $G_1$-flows.

\end{proof}

%%%%%%%%%%%%%%%%%

To finish the proof of Theorem \ref{umfhom1}, we  show the proposition below.
\begin{prop}\label{hl}
We have $H_1=H(L_c)$.
\end{prop}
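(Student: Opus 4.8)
The plan is to prove the two inclusions $H_1 \subseteq H(L_c)$ and $H(L_c) \subseteq H_1$ separately. The inclusion $H_1 \subseteq H(L_c)$ should be the easier direction. By definition $H_1 = \overline{\pi^*(\aut(\lel_c))}$. For $h \in \aut(\lel_c)$ we have $h(\c^\lel) = \c^\lel$, and since $\pi^* (h)$ satisfies $\pi^*(h)(\pi(C)) = \pi(h(C))$ for every $C \in \c^\lel$, we get $\pi^*(h)(\c^L) = \pi(h(\c^\lel)) = \pi(\c^\lel) = \c^L$; hence $\pi^*(\aut(\lel_c)) \subseteq H(L_c)$. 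Since $H(L_c)$ is closed in $H(L)$ (noted just before Theorem \ref{umfhom1}), taking closures gives $H_1 \subseteq H(L_c)$.

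For the reverse inclusion I would take $g \in H(L_c)$, i.e.\ a homeomorphism of $L$ with $g(\c^L) = \c^L$, and approximate it by elements of $\pi^*(\aut(\lel_c))$ in the topology of $H(L)$. The strategy is to lift $g$ through $\pi$. A basic neighbourhood of $g$ in $H(L)$ is determined by a finite open cover of $L$, or equivalently (pulling back through $\pi$) by an epimorphism $\phi \colon \lel \to A$ for some $A \in \f$, together with the induced chain $\c^A = \phi(\c^\lel) \in \f_c$; one wants an automorphism $\tilde g \in \aut(\lel)$ with $\pi^*(\tilde g)$ as close to $g$ as desired, and moreover $\tilde g(\c^\lel) = \c^\lel$ so that $\tilde g \in \aut(\lel_c)$. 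The key point is that $g$, being a homeomorphism of $L$, together with the cover data determines a pair of epimorphisms $\phi_1, \phi_2 \colon \lel \to A'$ (for a suitably refined $A'$) that agree with $g$ up to the prescribed accuracy and, crucially, both carry $\c^\lel$ onto the same chain $\c^{A'} \in \f_c$ because $g$ preserves $\c^L$. Then the projective ultrahomogeneity of $\lel_c$ (property (L3) for the family $\f_c$, available by Theorem \ref{stone2}) yields an automorphism $\tilde g$ of $\lel_c$ with $\phi_2 = \phi_1 \circ \tilde g$, and $\pi^*(\tilde g)$ lies in the chosen neighbourhood of $g$. Letting the cover shrink gives $g \in \overline{\pi^*(\aut(\lel_c))} = H_1$.

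The main obstacle will be the careful bookkeeping in the second inclusion: one must check that a homeomorphism $g$ of $L$ preserving $\c^L$ really does give rise, through $\pi$ and a fine enough epimorphism, to two epimorphisms of $\lel_c$ onto the \emph{same} finite chain-structure $A'\in\f_c$, so that (L3) for $\f_c$ applies. This requires that the chain $\phi(\c^\lel)$ computed "before applying $g$" and "after applying $g$" coincide as elements of $\f_c$ — which is exactly where $g(\c^L)=\c^L$ is used, via Lemma \ref{image3} to guarantee that $\phi(\c^\lel)$ is a genuine downwards closed maximal chain on $A'$ and is uniquely determined. I also need the compatibility of the topology on $\aut(\lel)$ inherited from $H(L)$ with refinement by epimorphisms (Remark \ref{coveri}), and the fact that $\pi^*$ is continuous with dense image, both recalled in Section~\ref{sec:lelek}. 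Once these pieces are assembled the argument closes, and combined with the first inclusion this gives $H_1 = H(L_c)$, completing the proof of Theorem \ref{umfhom1}.
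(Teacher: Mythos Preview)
Your overall strategy matches the paper's: the inclusion $H_1\subseteq H(L_c)$ is immediate from closedness, and for $H(L_c)\subseteq H_1$ you approximate a given $g\in H(L_c)$ by $\pi^*(\gamma)$ for some $\gamma\in\aut(\lel_c)$, obtained from the projective ultrahomogeneity (L3) of $\lel_c$ applied to two epimorphisms $\psi_1,\psi_2:\lel\to A_c$ with the same target in $\f_c$.

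The gap is precisely at the point you flag as the ``main obstacle'', and your proposed resolution does not close it. You suggest that Lemma~\ref{image3} guarantees the two epimorphisms carry $\c^\lel$ to the \emph{same} chain on $A'$. But Lemma~\ref{image3} only says that the image of a maximal chain is maximal; it gives you two maximal chains $\psi_1(\c^\lel)$ and $\psi_2(\c^\lel)$ on the same underlying fan $A$, with no reason a priori that they coincide. Concretely, in the paper's setup one takes a cover $(U_a)_{a\in A}$ of $L$ and the two pulled-back covers $(V^1_a)=(\pi^{-1}(U_a))$ and $(V^2_a)=(\pi^{-1}(g(U_a)))$. For an \emph{arbitrary} epimorphism $\phi^i$ refining $(V^i_a)$, the induced $\psi_{\phi^i}:\lel\to A$ need not send $\c^\lel$ to a prescribed chain; one only knows that $\{\{a:c\cap V^i_a\neq\emptyset\}:c\in\c^\lel\}$ is a (possibly non-maximal) downwards closed chain contained in whatever $\c^A$ one fixes. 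The paper handles this in two steps you omit: first, Lemma~\ref{hl2} upgrades Lemma~\ref{coverold} to produce a cover satisfying an extra condition (C5) tying the cover to $\c^L$, so that a specific $\c^A$ is singled out; second, a nontrivial inductive Claim shows one can \emph{choose} refining epimorphisms $\phi^1,\phi^2$ so that $\psi_{\phi^1}(\c^\lel)=\psi_{\phi^2}(\c^\lel)=\c^A$ link by link. The hypothesis $g(\c^L)=\c^L$ enters in showing that the auxiliary chain $\mathcal D$ built from $(V^2_a)$ agrees with the one built from $(V^1_a)$, but this is only the starting point of the induction, not its conclusion. Without this construction (or an equivalent argument) you cannot invoke (L3) for $\f_c$, and the approximation step does not go through.
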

To show Proposition \ref{hl}, we need Lemma \ref{hl2}, which generalizes the following lemma.
\begin{lemma}[Barto\v sov\'a-Kwiatkowska \cite{BK}, Lemma 2.14]\label{coverold}
Let $d<1$ be any metric on~$L$.
Let $\epsilon>0$ and let   $v$ be the root of $L$. Then there is $A\in\f$ and  
an open cover $(U_a)_{a\in A}$ of $L$ 
such that  
\begin{itemize}
\item[(C1)] for each $a\in A$, $\text{diam}(U_a)<\epsilon$, 
\item[(C2)] for each $a,a'\in A$, if $U_a\cap U_{a'}\neq\emptyset$ then $R^{A}(a,a') $ or $R^{A}(a', a) $,
\item[(C3)] for each $x,y\in L$ with $x\in [v,y]$, if $x\in U_a$ and $y\in U_b$,  $a\neq b$, and  $\{x,y\}\not\subset U_a\cap U_b$, then $a\preceq_A b$, 
where $\preceq_A$ is the partial order on $A$, 
\item[(C4)] for every $a\in A$ there is $x\in  L$ such that $x\in U_a\setminus(\bigcup \{U_{a'}: a'\in A, a'\neq a\}).$
\end{itemize}
\end{lemma}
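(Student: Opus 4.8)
The plan is to transfer to the Lelek fan $L$, along the quotient map $\pi\colon\lel\to L$, the fact that every finite open cover of the projective Fra\"{i}ss\'{e} limit $\lel$ is refined by an epimorphism onto a member of $\f$ (Remark~\ref{coveri}). Fix the metric $d<1$ and $\epsilon>0$, and choose a finite open cover $\mathcal W$ of $L$ by sets of diameter $<\epsilon/4$. Applying Remark~\ref{coveri} to the open cover $\{\pi^{-1}(W):W\in\mathcal W\}$ of $\lel$ yields $A\in\f$ and an epimorphism $\psi\colon\lel\to A$ with every fibre $\psi^{-1}(a)$ contained in some $\pi^{-1}(W)$; set $C_a:=\pi(\psi^{-1}(a))\subseteq L$. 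Since the fibres $\psi^{-1}(a)$ partition $\lel$ and each maps into the corresponding $W$, the $C_a$ are closed, they cover $L$, and $\text{diam}(C_a)<\epsilon/4$. The sets $U_a$ of the statement will be obtained as suitable open thickenings of the $C_a$, and $A$ will be the fan in the conclusion.

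Conditions (C2) and (C3) are visible already at the level of the closed cells. For (C2): if $C_a\cap C_{a'}\neq\emptyset$ there are $p\in\psi^{-1}(a)$ and $p'\in\psi^{-1}(a')$ with $\pi(p)=\pi(p')$, hence $R^\lel_S(p,p')$; since $a\neq a'$ forces $p\neq p'$ we get, say, $R^\lel(p,p')$ and therefore $R^A(a,a')$, as $\psi$ is an $R$-homomorphism. For (C3) the key observation is a monotonicity of $\psi$: being continuous into a finite discrete space, $\psi$ factors as $\psi=\bar\psi\circ f^\infty_n$ through a finite stage $A_n$ of an inverse sequence for $\lel$, with $\bar\psi\colon A_n\to A$ again an epimorphism of finite fans; and an epimorphism of finite fans sends each immediate-successor step to a stay-or-immediate-successor step, so $\bar\psi$ — hence $\psi$ — satisfies $p\preceq_\lel q\ \Rightarrow\ \psi(p)\preceq_A\psi(q)$. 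Thus if $x\preceq_L y$ and $p\preceq_\lel q$ are preimages of $x,y$ (which exist by the definition of $\preceq_L$), then $\psi(p)\preceq_A\psi(q)$; choosing these preimages compatibly with the data and setting aside the degenerate configurations — the two-element $R^\lel_S$-classes and the case $x=y$, which is exactly where the hypothesis $\{x,y\}\not\subseteq U_a\cap U_b$ is needed — gives $a\preceq_A b$. For (C4) I would use that the points of $\lel$ whose $R^\lel_S$-class is a singleton are dense in $\lel$ (essentially the content of the endpoints of $L$ being dense): each nonempty clopen $\psi^{-1}(a)$ then contains a full singleton class, which produces a point of $L$ lying in $C_a$ and in no other $C_{a'}$.

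The remaining, and I expect principal, difficulty is to thicken the closed cells $C_a$ to genuine open sets $U_a\supseteq C_a$ so that (C1)--(C4) survive simultaneously: a plain metric thickening handles (C1) and (C2) but interferes with (C3) near parent and child cells, and the exclusive witnesses for (C4) must be kept clear of the neighbouring thickenings. The plan is to bring in a second, finer refining epimorphism $\psi'\colon\lel\to A'$ (with $A'$ mapping onto $A$) to serve as a margin, building each $U_a$ from $C_a$ together with a one-step collar of $\psi'$-cells while removing everything $\preceq_A$-incomparable to $a$; here the freedom — used throughout this paper — to let the branches of $A$ have different lengths is what keeps the collars compatible with the tree order, and the extra clause in (C3) is tuned precisely to absorb the boundary cases introduced by the thickening. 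Carrying out this reconciliation with care — coordinating the choices of $\mathcal W$, $\psi$, $\psi'$, and the collar width so that all four conditions hold at once — is the heart of the argument, the rest being the structure theory of $\lel$ and $\pi$ recalled above.
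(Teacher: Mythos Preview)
The paper does not prove this lemma at all: it is quoted verbatim from \cite{BK} (Lemma~2.14 there) and used as a black box. So there is no ``paper's own proof'' to compare against. That said, the paper does contain, later in Section~5.2, an explicit construction of \emph{special covers} $\u_l$ of $L$ built from the concrete realisation of $L$ inside the Cantor fan in $\mathbb{R}^2$: one slices $L$ by horizontal lines $y=j/2^l$ and by the vertical strips coming from the $l$-th stage of the Cantor set. Those covers visibly satisfy (C1)--(C4) for the Euclidean metric, and hence for any metric by uniform equivalence on the compact $L$. This geometric route bypasses entirely the thickening problem you flag.

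On your proposal itself: the skeleton is right (pull back a fine cover, refine by an epimorphism via Remark~\ref{coveri}, push forward the fibres), and your arguments for (C2) and monotonicity of $\psi$ are correct. But two points are not yet under control. First, your verification of (C3) at the closed-cell level glosses over the real difficulty: when $x$ has two $\pi$-preimages $p,p'$, the one that is $\preceq_\lel$-comparable to a given preimage of $y$ need not be the one landing in $\psi^{-1}(a)$, and your phrase ``setting aside the degenerate configurations'' does not explain how the hypothesis $\{x,y\}\not\subset U_a\cap U_b$ resolves this (it does, but it takes a short case analysis using that in a fan the non-root points have at most one immediate successor). Second --- and you say this yourself --- the thickening of the closed cells $C_a$ to open $U_a$ while keeping (C3) and (C4) is only sketched as a plan; the ``one-step collar via a finer epimorphism $\psi'$'' idea is reasonable, but you have not shown that the collars can be chosen consistently with the tree order, and this is where the work lies. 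If you want a clean proof, the special-cover construction in Section~5.2 is simpler and avoids the thickening altogether.
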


\begin{lemma}\label{hl2}
Let $d<1$ be any metric on $L$.
Let $\epsilon>0$ and let   $v$ be the root of $L$. Then there is $A_c=(A,\c^A)\in\f_c$ and  
an open cover $(U_a)_{a\in A_c}$ of $L_c$ 
such that (C1)-(C4) from Lemma \ref{coverold} hold and additionally we have the following.

{\rm (C5)}  For any $c\in \c^L$, the set $\{a\in A: c\cap U_a\neq\emptyset\}$  is in $\c^A$.
\end{lemma}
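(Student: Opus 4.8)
The statement augments Lemma~\ref{coverold} with the chain condition (C5), so the natural strategy is to start from a cover produced by Lemma~\ref{coverold} and then refine it so that the induced partition into fibers is compatible with $\c^L$. Concretely, I would first apply Lemma~\ref{coverold} with $\epsilon/2$ (say) to obtain $A_0\in\f$ and an open cover $(U_a)_{a\in A_0}$ satisfying (C1)--(C4). I then need to arrange that the finitely many ``cuts'' induced by the $U_a$'s can be aligned with finitely many links of $\c^L$. The key point is that $\c^L$ is a maximal downwards-closed chain on $L$ and $L$ is a continuum with dense endpoints, so its links are precisely the connected closed sets containing $v$, and these ``grow continuously'': for the metric $d$, the map $C\mapsto \operatorname{diam}(C)$ (or any suitable gauge) is monotone along $\c^L$ and surjects onto $[0,\operatorname{diam}(L)]$, so I can pick finitely many links $\emptyset=C_0\subset C_1\subset\dots\subset C_N=L$ in $\c^L$ with $\overline{C_{j}\setminus C_{j-1}}$ of diameter less than $\epsilon/2$.

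**The main construction.** Using these $C_j$ I would build a refinement. For each $a\in A_0$ and each $j$, the set $U_a\cap(C_j\setminus C_{j-1})$ is open in $L$; I want to ``split'' $U_a$ along the $C_j$'s and, on the structure side, replace each $a\in A_0$ by the relevant sequence of copies $a^{(j)}$ strung together along the branch, keeping (C2) intact (only consecutive elements on a branch overlap). The chain $\c^A$ on the new fan $A$ is then defined to be the canonical-type chain whose links are the $\{a\in A : U_a\cap C_j\neq\emptyset\}$ — here one must check this is genuinely a downwards closed maximal chain on $A$, which follows because the order on $A$ induced along branches respects $\preceq_A$ (this is exactly what (C3) gives) and because $C_j$ is downwards closed in $L$, hence its ``shadow'' on $A$ is downwards closed in $A$; maximality on the finite fan $A$ is automatic once the chain separates the linear order it induces. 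One has to be a little careful that the refined cover still satisfies (C1)--(C4): (C1) is fine since each new piece has diameter $<\epsilon$; (C4) requires that each new piece $U_{a^{(j)}}$ contains a point in no other piece, which one can guarantee by shrinking slightly and invoking that endpoints of $L$ are dense, so each nonempty open set contains an endpoint not shared with neighbouring pieces; (C2) and (C3) are inherited from the original cover together with the linear ordering of the copies.

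**The expected obstacle.** The delicate part is simultaneously maintaining (C3) — the "tree order is respected by overlaps" condition — and (C5), because refining the cover along the $C_j$'s introduces new overlaps between the pieces $U_{a^{(j)}}$ and $U_{a^{(j+1)}}$, and I must ensure these new overlaps sit correctly with respect to $\preceq_A$ and do not create a piece straddling two $C_j$-cuts in a way that breaks (C5). The clean way to handle this is to choose the links $C_j$ so that each $C_j$ is "in general position" with respect to the original cover: since $\c^L$ is a chain of closed connected sets and there are only finitely many $U_a$, one can perturb the choice of the $C_j$'s within $\c^L$ so that no $C_j$ has a point on the common boundary $\overline{U_a}\cap\overline{U_{a'}}$ of two pieces — this is possible because along $\c^L$ the links increase continuously and the "bad" links form a nowhere dense set. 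Once the $C_j$'s avoid these boundaries, the refinement is unambiguous: each $U_a$ splits cleanly into the pieces $U_a\cap(C_j\setminus C_{j-1})$, the overlap structure is exactly "consecutive copies along the branch", and the set $\{a\in A : c\cap U_a\neq\emptyset\}$ for $c=C_j$ is precisely the downwards closed set of copies $a^{(i)}$ with $i\le j$, which is a link of $\c^A$; for a general $c\in\c^L$ one notes it lies between some $C_{j-1}$ and $C_j$ and the condition $c\cap U_a\neq\emptyset$ depends only on which $C_j$-layer $U_a$ touches, giving (C5). I would then conclude by observing that the epimorphism collapsing the refinement back to $A_0$ shows $A_c=(A,\c^A)\in\f_c$ as required.
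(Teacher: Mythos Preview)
Your approach is far more elaborate than needed and, as written, has gaps. The paper's proof is essentially one line: take the cover $(U_a)_{a\in A}$ from Lemma~\ref{coverold} \emph{unchanged}, observe that the family $\{\{a\in A: c\cap U_a\neq\emptyset\}: c\in\c^L\}$ is already a downwards closed chain on $A$ (it is a chain because $\c^L$ is, and each such set is downwards closed in $A$ by (C3) together with the fact that every $c\in\c^L$ is downwards closed in $L$), and then simply extend this chain to a downwards closed maximal chain $\c^A$ on the finite fan $A$. The point you miss is that (C5) only asks each set $\{a\in A: c\cap U_a\neq\emptyset\}$ to \emph{belong to} $\c^A$, not that these sets \emph{exhaust} $\c^A$. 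So no refinement of the cover is required; one only enlarges the induced (possibly non-maximal) chain on $A$.

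Your refinement strategy, by contrast, runs into trouble as stated. The pieces $U_a\cap(C_j\setminus C_{j-1})$ are not open in $L$, since $C_{j-1}$ is closed; and your assertion that ``for a general $c\in\c^L$ between $C_{j-1}$ and $C_j$, the condition $c\cap U_a\neq\emptyset$ depends only on which $C_j$-layer $U_a$ touches'' is false in general --- a link $c$ strictly between $C_{j-1}$ and $C_j$ will typically meet some but not all of the refined pieces in the $j$-th layer, yielding a set not among your designated links of $\c^A$. These issues are repairable, but the repair amounts to rediscovering the paper's observation: the image chain already sits inside a maximal one on $A$, and one just takes that maximal extension.
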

\begin{proof}
Take the cover $(U_a)_{a\in A}$ of $L$ as in Lemma \ref{coverold}. Then  the set
 $\{\{a\in A: c\cap U_a\neq\emptyset\}: c\in\c^L\}$ is a downwards closed chain in $A$.
To get $A_c$ extend this chain to a  downwards closed maximal one.
\end{proof}

\begin{proof}[Proof of Proposition \ref{hl}]
Since
$H(L_c)$ is closed, it follows that $H_1\subset H(L_c)$. 

To show the converse, take $h\in H(L_c)$ and $\epsilon>0$. 
Let $d<1$ be any metric on $L$ and let $d_{\sup}$ be the corresponding supremum metric on $H(L)$.
We will find $\gamma\in\aut(\lel_c)$ such that $d_{\sup}(h,\gamma^*)<\epsilon$, which will finish the proof as $\gamma^*\in H_1$
and since $H_1$ is closed.
Let $A_c\in\f_c$ and  $(U_a)_{a\in A}$, an open cover of $L$, be as in Lemma \ref{hl2} taken for $d$ and $\epsilon$.
Since $h$ is  uniformly continuous, we can assume additionally that for each $a\in A$, $\text{diam}(h(U_a))<\epsilon$.
Let $(V^1_a)_{a\in A}$ and $(V^2_a)_{a\in A}$ be the open covers of $\lel$
given by $V^1_a=\pi^{-1}(U_a)$ and  $V^2_a=\pi^{-1}(h(U_a))$. 

 For $B^i\in\f$ and an epimorphism $\phi^i: \lel\to B^i$ such that $\{(\phi^i)^{-1}(b): b\in B^i\}$ refines $(V^i_a)_{a\in A}$, $i=1,2$, denote by
$(W^i_a(\phi^i))_{a\in A}$ the clopen partition of $\lel$ such that for every~$a$, $W^i_a(\phi^i)$ is the union of all $(\phi^i)^{-1}(b)$ which lie in $V^i_a$
and do not lie in a $V^i_{a'}$ for some $a'$ with $R^A(a,a')$. This  partition defines an epimorphism $\psi_{\phi^i}$ from $\lel$ to $A$.
Indeed, by (C4) $\psi_{\phi^i}$ is onto.
 The properties (C2) and (C3) imply that if $x,y\in \lel$ satisfy $R^{\lel}(x,y)$ then 
$R^{A}(\psi_{\phi^i}(x),\psi_{\phi^i}(y))$,  which already gives that $\psi_{\phi^i}:\lel\to A$ is an epimorphism.
%\gre{The sentence below moved from a different place.}
Observe that if we take arbitrary  $B^i\in\f$ and epimorphism $\phi^i:\lel\to B^i$, $i=1,2$, such that $\{(\phi^i)^{-1}(b): b\in B\}$ refines $(V^i_a)_{a\in A}$, then
for any $c\in\c^\lel $ and $a\in A$, we  have 
$a\in\psi_{\phi^i}(c)$ iff $c\cap W_a(\phi^i)\neq\emptyset$. 

Define $\d=\{\{a: c\cap V^i_a\neq\emptyset:\}: c\in\c^\lel\}$ and
observe that $\mathcal{D}=\{\{a: c\cap U_a\neq\emptyset:\}: c\in\c^L\}$, and hence  $\mathcal{D}\subseteq \c^A$, by the definition of $\c^A$.
This
follows for $i=1$ from that  for any $c\in\c^\lel $ and $a\in A$, 
$c\cap V^1_a\neq\emptyset$ iff $c\cap \pi^{-1}(U_a)\neq\emptyset$ iff $\pi(c)\cap U_a\neq\emptyset$;
and  for $i=2$ it follows from that  for any $c\in\c^\lel $ and $a\in A$, $c\cap V^2_a\neq\emptyset$ iff $c\cap \pi^{-1}(h(U_a))\neq\emptyset$ iff 
$\pi(c)\cap h(U_a)\neq\emptyset$ iff $h^{-1}(\pi(c))\cap U_a\neq\emptyset$, and use that $h^{-1}$ preserves $\c^L$.

 %We  find $B^i\in\f$  and epimorphisms $\phi^i:\lel\to B^i$, $i=1,2$, such that $\{(\phi^i)^{-1}(b): b\in B^i \}$ refines $(V^i_a)_{a\in A}$ and
 %$a\in\psi_{\phi^i}(c)$ iff $c\cap V^i_a\neq\emptyset$ for every $a\in A$ and $c\in\c^\lel$.
%Then (C5) will provide that if $c\in\c^{\lel}$ then $\psi_{\phi^i}(c)\in\c^{A_c}$, i.e. that $\psi_{\phi^i}$ is an epimorphism from $\lel_c$ to $A_c$.
%Indeed,  for any $c\in\c^\lel $ and $a\in A_c$, we will have then
%$a\in\psi_{\phi^1}(c)$ iff $c\cap V^1_a\neq\emptyset$ iff  $\pi(c)\cap \pi(V^1_a)\neq\emptyset$ iff   $\pi(c)\cap U_a\neq\emptyset$, and
%$\{a\in A_c: \pi(c)\cap U_a\neq\emptyset\}\in\c^{A_c}$.
%Similarly, as $a\in\psi_{\phi^2}(c)$ iff $c\cap V^2_a\neq\emptyset$ iff $\pi(c)\cap h(U_a)\neq\emptyset$ iff $ h^{-1}(\pi(c))\cap U_a\neq\emptyset$,
%and since $h^{-1}$ preserves $\c^L$, $\{a\in A_c:  h^{-1}(\pi(c))\cap U_a\neq\emptyset\}\in\c^{A_c}$.
%\gre{I removed the paragraph right before the claim}

\smallskip
\noindent{\bf{Claim.}} There are $B^i\in\f$  and epimorphisms $\phi^i:\lel\to B^i$, $i=1,2$, such that $\{(\phi^i)^{-1}(b): b\in B^i\}$ refines $(V^i_a)_{a\in A}$,
  satisfying $\c^A=\psi_{\phi^i}(\c^\lel)$. % for any $c\in\c^\lel $ and $a\in A_c$.

\smallskip

\begin{proof}[Proof of the Claim]
We fix $i=1,2$. 
Let $k$ be the number of elements in $A$ and let
 $\d$ be as before. Note that $\d$ may not be maximal.
Inductively on $1\leq m\leq k$ we construct  $B_m\in\f$ together with an epimorphism $\phi_m:\lel\to B_m$
such that the partition $P_m=\{\phi_m^{-1}(b): b\in B_m\}$ refines $P_{m-1}$ and
 the the first $m$ links of the maximal chain $\psi_{\phi_m}(\c^\lel)$ are equal to the first $m$ links of $\c^A$. 
 Then  $\phi^i=\phi_k$ will be as required. 

We let $\phi_1:\lel\to B_1$  be an  arbitrary epimorphism such that $\{\phi_1^{-1}(b): b\in B_1\}$ refines $(V^i_a)_{a\in A}$.
Suppose that we already constructed $\phi_1:\lel\to B_1,\ldots, \phi_m:\lel\to B_m$, $m<k$, and   
that there is an $m$-element link $M$ in $\d$.
We construct $\phi_{m+1}:\lel\to B_{m+1},\ldots, \phi_{m+l}:\lel\to B_{m+l}$, where 
$l>0$ is the smallest such that there is an $(m+l)$-element link $N$ in $\d$.
If $l=1$ and the $(m+1)$-st link of the maximal chain $\psi_{\phi_m}(\c^\lel)$ is equal to the  $(m+1)$st link of $\c^A$
(which may not be the case even if $l=1$), we let $\phi_{m+1}=\phi_m$.

Otherwise, let us see that \[S=\{d\in\c^\lel: N=\{a\in A: d\cap V^i_a\neq\emptyset\} \text{ and } \psi_{\phi_m}(d)=M\}\] 
has at least $l$ elements.

If $|M|+1=|N|$ and $S=\emptyset$, then using that for every $d\in \c^\lel$, $ \psi_{\phi_m}(d)\subseteq \{a\in A: d\cap V^i_a\neq\emptyset\} $, we obtain that 
the $(m+1)$-st link of $\psi_{\phi_m}(\c^\lel)$ is equal to the  $(m+1)$-st link of $\c^A$ (which in this case is equal to $N$). 
But we explicitly ruled out this happening.

If $|M|+2\leq |N|$, we will show that $S$ is infinite. For this it will suffice to show that there is no smallest $d\in\c^{\lel}$ (with respect to the inclusion) such that 
$N=\{a\in A: d\cap V_a^i\neq\emptyset\}$ and that simultaneously there is the smallest $d\in\c^{\lel}$ such that $\psi_{\phi_m}(d)=M$.
The second claim is clear, since $B_m$ is a clopen partition. For the first claim, suppose towards a contradiction that there is the smallest $d_s\in\c^\lel$ 
 such that $N=\{a\in A: d_s\cap V^i_a\neq\emptyset\}$. Clearly  there is the largest
 $d_l\in\c^\lel$  such that $M=\{a\in A: d_l\cap V^i_a\neq\emptyset\} $ and $d_l\subseteq d_s$.
However, this contradicts the maximality of $\c^\lel$. If  $a_1\neq a_2\in N\setminus M$, then 
$d_s\setminus V^i_{a_2}$ is downwards closed, properly contained in $d_s$, and it properly contains $d_l$.

Therefore we can find $c_1\subset \ldots\subset c_l\in\c^\lel$ with $c_j\in S$.
 Let $M=M_0\subset\ldots\subset M_l=N$ be links in $\c^A$ such that  $|M_{j+1}\setminus M_j|=1$.
 We can now successively define $ \phi_{m+1},\ldots, \phi_{m+l}$  in a way that 
$\psi_{\phi_{m+j}}(c_j)=M_j$ and 
the first $m+j$ links of the maximal chain $\psi_{\phi_{m+j}}(\c^\lel)$ are equal to the first $m+j$ links of $\c^A$,
$j=1,\ldots, l$. To get $\phi_{m+j}$, we take $a\in M_j\setminus M_{j-1}$ and we pick $p\in P_{m+j-1}$ such that $p\cap c_j\neq \emptyset$,
$p\cap V_a^i\neq\emptyset$ and $p\cap V_{a^-}^i\neq\emptyset$, where $a^-\in A$ is such that $R^A(a^-,a)$. We then partition $p$
into clopen sets $r$ and $s$ such that $s\subset V_a^i$ and 
$\phi_{m+j}$ with $P_{m+j}=(P_{m+j-1}\setminus \{p\})\cup \{r,s\}$ is an epimorphism. Then $\phi_{m+j}$ is as required.
 We are done with steps $m+1$,...,$m+l$. 
 \end{proof}
Denote $\psi_1=\psi_{\phi^1}$ and $\psi_2=\psi_{\phi^2}$. 
The (L3)
 provides us with $\gamma\in\aut(\lel_c)$ such that $\psi_1=\psi_2\circ \gamma .$
 We will show that $d_{\sup}(h,\gamma^*)<\epsilon$. 
Pick any $x\in \lel$, let $a=\psi_1(x)$, and note that 
\[ \gamma^*(\pi(x))\in\gamma^*(\pi\circ\psi_1^{-1}(a))=\pi\circ\gamma\circ\psi_1^{-1}(a)=\pi\circ\psi_2^{-1}(a)\subset h(U_a).\] 
Therefore $\gamma^*(\pi(x))\in h(U_a)$, $h(\pi(x))\in h(U_a)$, and 
 $\text{diam}(h(U_a))<\epsilon$, and we get the required conclusion.

 \end{proof}

\subsection{The 2nd description}
%\gre{Perhaps some of these preliminaries should be in Section 5.3}
Let $C$ be the Cantor set viewed as the middle third Cantor set in [0,1].
Each point of $C$ can be expanded in a ternary sequence $0.a_1a_2 a_3\ldots$, where $a_i\in\{0,2\}$ for every $i$,
and each point of the interval $[0,1]$ can be expanded in a binary sequence $0.b_1b_2 b_3\ldots$ where $b_i\in\{0,1\}$ for every $i$.
Let $\pi_0: C\to [0,1]$ be given by $\pi_0(0.a_1a_2 a_3\ldots)=0.b_1b_2 b_3\ldots$, where $b_i=0$ if $a_i=0$ and $b_i=1$ if $a_i=2$.
We can view the Cantor fan $F$ as the union of segments joining the point $v=(\frac{1}{2},0)\in\mathbb{R}^2$ and a point $(c, 1)\in\mathbb{R}^2$, where $c\in C$.
We first describe a topological $\mathcal{L}$-structure $\mathbb{F}$  such that $\pi(\mathbb{F})=F$, where $\pi$ is be the continuous surjection such that 
$\pi(x)=\pi(y)$ if and only if $R^{\mathbb{F}}(x,y)$, and for every $(a,b)\in\mathbb{F}$ the second coordinate of $\pi(\mathbb{F})$ is equal to $\pi_0(b)$.
For this we let $\mathbb{F}= F\cap (C\times C)$ and 
$R^{\mathbb{F}}((a,b),(c,d)) $ if and only if $a=c$ and $b=d$, or if $v$, $(a,b)$, and $(c,d)$ are collinear and
 $(b,d)$ is an interval removed from $[0,1]$ in the construction of $C$.
 
 We view the Lelek fan $L$ as a subset of $F$ with its root equal to  $v$
 and we notice that $\lel$ is isomorphic to $\pi^{-1}(L)$.

  Let $m_0$ denote the metric on $L$, equal to the restriction of the Euclidean distance on $\mathbb{R}^2$.
 Let $d_0$ be the corresponding supremum metric on $H(L)$. It is a right-invariant metric and it induces a right-invariant metric $d$ on $H(L)/H(L_c)$ via 
 \[ d(gH(L_c), hH(L_c))=\inf\{d_0(gk, h): k\in H(L_c)\}.  \]

   Let $m$ be the metric on $\Exp(\Exp(L))$ obtained from  $m_0$. 
 To be more specific, to get $m$, we first take  the Hausdorff metric on $\Exp(L)$ with respect to $m_0$ and then we 
take the Hausdorff metric on $\Exp(\Exp(L))$ with respect to that metric on $\Exp(L)$.

Let $Y^*$ be the set of downwards closed maximal chains on $L$. Note that $\pi:\lel\to L$ induces a continuous surjection from $\Exp(\lel)$ to $\Exp(L)$,
which further  induces a continuous surjection from $\Exp(\Exp(\lel))$ to $\Exp(\Exp(L))$.
Let $\pi_c: X^*\to Y^*$ be the restriction of this last map to $X^*$.
Note that $\pi_c$ is onto. 
 Indeed, take $\mathcal{D}\in Y^*$ and observe that $\pi_c^{-1}(\mathcal{D})$ is a downwards closed chain in $\lel$.
 Using Zorn's Lemma, extend it to a downwards closed maximal chain $\mathcal{C}$, and note that $\pi_c(\mathcal{C})=\mathcal{D}$. 
%Since $\pi_c$ is continuous and onto, $Y^*$
 %is a compact    subspace of $(\Exp(\Exp(L)), m)$.
Consider the $H(L)$-flow $H(L)\acts Y^*$ induced from the natural action of $H(L)$ on $L$ given by $(g,x)\to g(x)$. In Theorem \ref{umfhom2}
 we will show that this flow is isomorphic 
to the flow $H(L)\acts \widehat{H(L)/H(L_c)}$. The main ingredient of the proof will be Theorem \ref{equival}. %This will imply the following theorem.

\begin{thm}\label{umfhom2}
The universal minimal flow of $H(L)$ is isomorphic to  $H(L)\acts Y^*$. 
\end{thm}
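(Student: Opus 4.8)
The plan is to combine Theorem \ref{umfhom1}, which already identifies the universal minimal flow of $H(L)$ with $H(L)\acts\widehat{H(L)/H(L_c)}$, with an explicit identification of the completion $\widehat{H(L)/H(L_c)}$ and the flow $H(L)\acts Y^*$. By Theorem \ref{umfhom1} it suffices to produce an isomorphism of $H(L)$-flows between $H(L)\acts\widehat{H(L)/H(L_c)}$ and $H(L)\acts Y^*$. The natural candidate map is the one sending a coset $gH(L_c)$ to the chain $g(\c^L)\in Y^*$; this is well defined precisely because $H(L_c)$ is the stabilizer of $\c^L$, it is a $G$-map by construction, and its image is the orbit $H(L)\cdot\c^L$, which is dense in $Y^*$ once we know $Y^*$ is the universal minimal flow (or, more directly, once we establish minimality of $H(L)\acts Y^*$, which follows since $Y^*$ is a continuous image of the minimal flow $X^*$ under the $H(L)$-equivariant surjection $\pi_c$ combined with the density of $\pi^*(\aut(\lel))$ in $H(L)$). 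The real content is to show that this orbit map $gH(L_c)\mapsto g(\c^L)$ is a \emph{uniform} isomorphism onto its image with respect to the quotient of the right uniformity on $H(L)$, so that it extends to a homeomorphism of the completions $\widehat{H(L)/H(L_c)}\cong\overline{H(L)\cdot\c^L}=Y^*$.

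The key steps, in order, would be as follows. First I would record that $H(L)\acts Y^*$ is a flow: $Y^*$ is compact by Proposition \ref{ccompact} (applied to $L$, using the remark that downwards closed sets on $L$ are exactly the connected sets containing the root), and the action is continuous by the same argument as in Proposition \ref{cont}, using that a continuous action on a compact space induces a continuous action on the hyperspace (twice). Second, I would show that $H(L)\acts Y^*$ is minimal: the map $\pi_c\colon X^*\to Y^*$ is a continuous surjection (established in the text just before the theorem) and is $\aut(\lel)$-equivariant, hence (since $\aut(\lel)$ acts on $L$ via $\pi^*$ with dense image in $H(L)$, and since $\aut(\lel)\acts X^*$ is minimal by Theorem \ref{umfauto}) every $H(L)$-orbit in $Y^*$ is dense. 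Third — the heart of the argument — I would prove the uniform equivalence. Denote by $q\colon H(L)/H(L_c)\to Y^*$, $q(gH(L_c))=g(\c^L)$, the orbit map. Uniform continuity of $q$ is straightforward: if two homeomorphisms are uniformly close in the supremum metric $d_0$, then their images of any closed set, and hence of the chain $\c^L$, are close in the Hausdorff-of-Hausdorff metric $m$. The nontrivial direction is the reverse: I must show that if $g(\c^L)$ and $h(\c^L)$ are $m$-close in $Y^*$, then the cosets $gH(L_c)$ and $hH(L_c)$ are close in the quotient metric $d$, i.e. I can correct $h$ by an element of $H(L_c)$ to bring it uniformly close to $g$. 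This is where I expect the main obstacle to lie, and the tool to overcome it is precisely Lemma \ref{hl2} (the covering lemma refining a chain): given $\epsilon>0$, choose $A_c=(A,\c^A)\in\f_c$ with an open cover $(U_a)_{a\in A_c}$ of $L_c$ satisfying (C1)–(C5); if $g(\c^L)$ and $h(\c^L)$ are close enough that they induce the same ``trace'' chain $\{a\colon g(c)\cap U_a\neq\emptyset\colon c\in\c^L\}=\{a\colon h(c)\cap U_a\neq\emptyset\colon c\in\c^L\}$ on the finite fan $A$ (which can be arranged by taking the $m$-distance smaller than the Lebesgue number of the induced cover of $Y^*$), then one mimics the construction in the proof of Proposition \ref{hl} — building epimorphisms $\psi_1,\psi_2\colon\lel\to A$ respecting $\c^\lel$ and using projective ultrahomogeneity (L3) of $\lel_c$ — to produce $\gamma\in\aut(\lel_c)$ with $\psi_1=\psi_2\circ\gamma$, whence $h^{-1}\circ g$ is $\epsilon$-close to an element of $\pi^*(\aut(\lel_c))\subset H(L_c)$; transferring back gives that $g$ and $h\cdot k$ are $\mathcal{O}(\epsilon)$-close for some $k\in H(L_c)$, as required.

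Finally, once $q$ is shown to be a uniform isomorphism onto the dense subset $H(L)\cdot\c^L$ of $Y^*$, it extends uniquely to a homeomorphism $\widehat{q}\colon\widehat{H(L)/H(L_c)}\to Y^*$ (completion of a uniform isomorphism between dense subspaces, using that $Y^*$ is compact hence complete); $\widehat{q}$ is automatically a $G$-map because $q$ is and the $G$-actions on both sides are continuous. Combining with Theorem \ref{umfhom1} gives that $H(L)\acts Y^*$ is the universal minimal flow of $H(L)$, completing the proof. The only real danger point is the third step: one must be careful that the correspondence between ``closeness in $Y^*$'' and ``agreement of traces on a finite fan $A$'' goes through, i.e. that the finitely many clopen-partition data coming from the cover $(U_a)$ are genuinely determined by a sufficiently small $m$-ball — this is exactly the kind of Lebesgue-number argument that appeared implicitly in Proposition \ref{ident2} and in the proof of Proposition \ref{hl}, so it should adapt, but it is the place where the proof must be written out with care.
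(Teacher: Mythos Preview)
Your overall strategy is correct and matches the paper's: reduce to showing that the orbit map $q\colon gH(L_c)\mapsto g\cdot\c^L$ is a uniform $G$-isomorphism onto its image (this is exactly the paper's Theorem \ref{equival}), extend to the completions, and argue that the extension hits all of $Y^*$. Your surjectivity argument via $\pi_c$ and minimality is essentially the paper's commutative-square argument $\pi_c f = h\rho$ with $\pi_c$ onto, just phrased differently.

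The divergence is in the hard direction of the uniform isomorphism. You propose to adapt Lemma \ref{hl2} and the construction from Proposition \ref{hl}, arguing that if $g\cdot\c^L$ and $h\cdot\c^L$ are $m$-close then their ``traces'' on the finite fan $A$ agree, and then mimicking the Claim in Proposition \ref{hl}. There is a genuine gap here. First, the cover $(U_a)$ in Lemma \ref{hl2} is an \emph{open} cover with overlaps, not a partition, so the trace $\{a: g(c)\cap U_a\neq\emptyset\}$ need not be a single link of $\c^A$, and closeness in the Hausdorff metric does not force equality of traces the way a Lebesgue-number argument for a clopen partition would. Second, and more seriously, the Claim in Proposition \ref{hl} uses in an essential way that $h$ preserves $\c^L$ \emph{exactly}: this is what makes the set $\mathcal{D}=\{\{a:c\cap V_a^i\neq\emptyset\}:c\in\c^\lel\}$ independent of $i$, which is the starting point of the inductive construction of $\phi^i$. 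In your situation the relevant homeomorphism only preserves $\c^L$ approximately, and that inductive argument does not go through as written.

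The paper resolves this with a different, purpose-built device: it introduces \emph{special covers} $\u_l$ of $L$ whose preimages under $\pi$ form genuine clopen partitions of $\lel$ (yielding honest epimorphisms $\theta_l\colon\lel\to A_l$), and proves a new combinatorial lemma (Lemma \ref{nakr}) stating that two maximal chains on a finite fan $B$ at $\mu^B$-distance $\leq 1$ can be pushed to the \emph{same} chain on a coarser fan $A$ by a suitable epimorphism $\psi\colon B\to A$, provided the fibres of the given $\phi\colon B\to A$ are long enough. This lemma is the replacement for your ``same trace'' step: it converts approximate agreement of chains into exact agreement at a finite level, after which the projective ultrahomogeneity of $\lel_c$ applies exactly as you anticipated. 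So your instinct about where the difficulty lies is right, but the tool needed is Lemma \ref{nakr} together with the special covers, not an adaptation of Proposition \ref{hl}.
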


%\subsection{The equivalence of both descriptions}
  Let $G=H(L)$ and let $H=H(L_c)$. %To prove Theorem \ref{umfhom2}, we will need Theorem \ref{equival}.
 \begin{thm}\label{equival}
 The bijection 
 \[   gH \ \to \   g\cdot \c^L \]
 is  a uniform  $G$-isomorphism from $G/H$ to $G\cdot \c^L$.
 \end{thm}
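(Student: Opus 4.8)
The plan is to establish three things about the map $\Theta:G/H\to G\cdot\c^L$, $gH\mapsto g\cdot\c^L$: that it is a well-defined bijection, that it is a $G$-map, and that both $\Theta$ and $\Theta^{-1}$ are uniformly continuous with respect to the quotient of the right uniformity on $G=H(L)$ and the uniformity on $G\cdot\c^L$ inherited from $\Exp(\Exp(L))$. Well-definedness and injectivity are immediate from the definition $H=H(L_c)=\{h\in H(L):h(\c^L)=\c^L\}$: we have $g_1\cdot\c^L=g_2\cdot\c^L$ if and only if $g_2^{-1}g_1\in H$ if and only if $g_1H=g_2H$. Surjectivity onto $G\cdot\c^L$ is by definition of the orbit, and the $G$-equivariance $\Theta(g'\cdot gH)=g'\cdot\Theta(gH)$ is formal since both actions are by left translation.

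The content is the bicontinuity. For one direction I would use the general fact recorded earlier in the excerpt (Proposition~\ref{cont} and the remark preceding it, via \cite{BKe}): the action $H(L)\acts\Exp(\Exp(L))$ by left translations is continuous, hence the orbit map $g\mapsto g\cdot\c^L$ is continuous; a standard argument then upgrades this to uniform continuity of $\Theta$ with respect to the right uniformity on $G$, because the action is in fact \emph{motion equicontinuous} on the orbit --- for $h$ close to the identity in the supremum metric $d_0$ on $H(L)$, $h(C)$ is uniformly (in $C\in\c^L$) Hausdorff-close to $C$, so $h\cdot\c^L$ is $m$-close to $\c^L$; combined with the fact that the orbit uniformity is the subspace uniformity of the compact space $\Exp(\Exp(L))$, this gives uniform continuity of $\Theta$ and hence that $\Theta$ factors through the quotient uniformity on $G/H$. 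The harder direction is uniform continuity of $\Theta^{-1}$, equivalently: given a basic entourage $U_V=\{(g_1H,g_2H):g_1g_2^{-1}\in V\}$ of $G/H$ for $V$ a neighborhood of the identity, I must produce $\epsilon>0$ so that $m(g_1\cdot\c^L,g_2\cdot\c^L)<\epsilon$ forces $g_1g_2^{-1}\in V$ modulo $H$, i.e. $g_2^{-1}g_1H$ lies in a prescribed small neighborhood of $H$ in $G/H$. Equivalently, I need: if $g\cdot\c^L$ is Hausdorff-close to $\c^L$, then $g$ is close (in $d$, the quotient metric on $G/H$) to $H$. This is a reconstruction statement --- from the fact that $g$ nearly preserves the chain $\c^L$ one must correct $g$ by an element of $H$ to get something uniformly close to $g$.

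For that direction the key tool is Lemma~\ref{hl2} and the covering machinery developed in the proof of Proposition~\ref{hl}: given $\epsilon$, take a cover $(U_a)_{a\in A}$ of $L_c$ with properties (C1)--(C5), so that $\c^L$ is ``coded'' by the chain $\c^A$ on the finite fan $A$, with each link of $\c^L$ meeting exactly the $U_a$ for $a$ in the corresponding link of $\c^A$. If $g\cdot\c^L$ is $m$-close enough to $\c^L$ (close relative to the Lebesgue number and mesh of this cover and its $g$-image), then $g$ induces on the combinatorial level the \emph{identity} permutation of the links of $\c^A$ --- that is, there is an epimorphism $\psi:\lel\to A$ compatible with both $\c^L$ and $g\cdot\c^L$ in the sense of that proof. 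Then by the argument of Proposition~\ref{hl} (choosing epimorphisms refining the relevant covers and correcting them so that $\psi$-images of $\c^\lel$ agree with $\c^A$, then invoking (L3) to get an automorphism) one produces $\gamma\in H(L_c)=H$ with $d_0(g,\gamma)<\epsilon'$, which says exactly that $gH$ is within $\epsilon'$ of $H$ in $d$. Running this with $\epsilon'$ small enough that the $\epsilon'$-ball around $H$ in $G/H$ lies inside the $V$-neighborhood (possible since $d$ induces the quotient topology and the $U_V$ form a uniformity base) completes the argument.

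The main obstacle is the uniform continuity of $\Theta^{-1}$: it requires quantitative control showing that Hausdorff-proximity of two maximal chains forces combinatorial agreement at the level of a finite approximating fan, and then feeding that agreement through the projective ultrahomogeneity (L3) of $\lel_c$ --- essentially repackaging the proof of Proposition~\ref{hl} so that it yields not just $H_1=H(L_c)$ but a uniform estimate. I expect one must be careful that the correction element $\gamma$ produced by (L3) genuinely lies in $H(L_c)$ (not merely in $\overline{\pi^*(\aut(\lel_c))}$) and that the estimate $d_0(g,\gamma)<\epsilon$ is uniform in $g$ over the relevant proximity class, which is why the finite cover from Lemma~\ref{hl2} must be chosen before $g$ and kept fixed.
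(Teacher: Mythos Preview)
Your treatment of well-definedness, bijectivity, $G$-equivariance, and the easy direction of uniform continuity is correct and matches the paper.

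The hard direction, however, contains a genuine gap. You reduce the two-variable statement ``$m(g_1\cdot\c^L,g_2\cdot\c^L)<\delta\Rightarrow d(g_1H,g_2H)<\epsilon$'' to the one-variable statement ``$m(g\cdot\c^L,\c^L)<\delta\Rightarrow d(gH,H)<\epsilon$'' by, in effect, replacing $(g_1,g_2)$ with $(g_2^{-1}g_1,e)$. This would be legitimate if the $G$-action on $Y^*$ were by isometries (or uniformly equicontinuous over $G$), but it is not: although each $g_2^{-1}$ acts as a uniform self-homeomorphism of the compact space $Y^*$, its modulus of continuity depends on $g_2$, so $m(g_1\cdot\c^L,g_2\cdot\c^L)<\delta$ does not give $m(g_2^{-1}g_1\cdot\c^L,\c^L)<\delta'$ for any $\delta'$ uniform in $g_2$. (Note also that right-invariance of $d_0$ yields $d(g_1H,g_2H)=\inf_{h\in H} d_0(g_1hg_2^{-1},e)$, which is not $d(g_2^{-1}g_1H,H)=\inf_{h\in H} d_0(g_2^{-1}g_1h,e)$ unless $H$ is normal.) So your ``equivalently'' is not an equivalence, and the plan to repackage Proposition~\ref{hl} --- whose proof uses essentially that $h$ \emph{exactly} preserves $\c^L$ --- does not yield the needed uniform estimate for a $g$ that only approximately preserves it.

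The paper avoids this by working with $g$ and $h$ simultaneously and never reducing to the identity. It introduces explicit \emph{special covers} $\u_l$ of $L$ (built from the planar realization of the Lelek fan) and associated \emph{special epimorphisms} $\theta_l:\lel\to A_l$, together with a new combinatorial lemma (Lemma~\ref{nakr}): if $\phi:B\to A$ is an epimorphism of finite fans with sufficiently long fibers and $\c,\d$ are maximal chains on $B$ with $\mu^B(\c,\d)\le 1$, then there is a perturbation $\psi:B\to A$ of $\phi$ with $\psi(\c)=\psi(\d)$. Given $g,h\in\pi^*(\aut(\lel))$ with $m(g\cdot\c^L,h\cdot\c^L)<\spr(\u_k)$, one pushes $g_0\cdot\c^\lel$ and $h_0\cdot\c^\lel$ down to $1$-close chains on the finite fan $A_k$, applies Lemma~\ref{nakr} to equalize their images in $A_l$, and then invokes the ultrahomogeneity of $\lel_c$ to produce $k_0\in\aut(\lel_c)$ with $\psi\theta_kg_0k_0=\psi\theta_kh_0$, giving $d_0(gk,h)<\epsilon$. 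General $g,h\in H(L)$ are then handled by density of $\pi^*(\aut(\lel))$ at the cost of $3\epsilon$. The key point is that $\delta=\spr(\u_k)$ and the resulting bound depend only on the fixed covers, not on $g$ or $h$ --- which is exactly what your reduction loses.
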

 
 Let us first see that Theorem \ref{equival} implies Theorem \ref{umfhom2}.
 \begin{proof}[Proof of Theorem \ref{umfhom2}]
The uniform $G$-isomorphism $gH \to  g\cdot \c^L$ from  $G/H$ to $G\cdot \c^L$  extends to 
 a uniform $G$-isomorphism $h$ between the completion $\widehat{G/H}$ of  $G/H$ and the completion of $G\cdot \c^L$,
 which is the closure  of $G\cdot \c^L$ in $Y^*$, which we have to show is equal to $Y^*$.
 
 Let  $f:  \widehat{\aut(\lel)/\aut(\lel_c)}\to X^*$ be the uniform $G$-isomorphism that extends the map $g \aut(\lel_c)\to g\cdot \c^\lel$ , let 
 $\rho$ be the extension  of the map $g \aut(\lel_c)\to gH$ to the respective completions $\widehat{\aut(\lel)/\aut(\lel_c)}$
 and $\widehat{G/H}$.
 Let $\pi_c: X^*\to Y^*$, as before, be the map induced from $\pi:\lel\to L$.  
As  $\pi_c f= h\rho$ and  $\pi_c$ is onto, we obtain that $h$ is onto.

\end{proof}

%\noindent {\bf{Preliminaries.}} 

%\noindent {\bf{1.}}
%Let  $\pi:\lel\to L$ be the canonical projection.
 %Any  cover of $L$ of the form
%\[ \{ \pi(\phi^{-1}(a)): a\in A\},\]
%for some epimorphism $\phi:\lel\to A$, will be called
%a {\em canonical cover (with respect to $\phi$).} 
%Note that each set $\pi(\phi^{-1}(a))$ is closed, as $\pi$ is continuous and $\lel$ is compact.
We define the {\em mesh} of a finite open cover $\u$ of $L$ to be the maximum of diameters of the sets in $\u$ and denote it by $\mesh(\u)$,
and we define the  {\em spread} of $\u$ to be the minimum of distances between non-intersecting sets in $\u$ and denote it by $\spr(\u)$

%\noindent {\bf{2.}} 
 Let $A\in\f$ and 
let $\mu_0^A$ 
be the {\em path metric} on $A$ in which $\mu_0^A(x,y)$ is equal to the length of the shortest path joining $x$ and $y$  in the undirected graph obtained from $A$
via symmetrization of the relation $R^A$. The {\em length} of a path  is understood to be the number of edges in the path. The metric $\mu_0^A$ on $A$ induces the Hausdorff
 metric $\mu_1^A$ on $\Exp(A)$, which in turn induces the Hausdorff metric $\mu^A=\mu_2^A$
on $\Exp(\Exp(A))$. %, which we denote by $\blu{\mu^A=}\mu$. In particular, $\mu$ measures distances between (not necessarily maximal) chains on $A$.

%\noindent {\bf{3.}} If $(X,\nu_0)$ is a compact metric space,
%$\nu_1$ is the Hausdorff metric on $\Exp(X)$ obtained from $\nu_0$ and $\nu_2$ is the Hausdorff metric on $\Exp(\Exp(X))$ obtained from $\nu_1$,
 %and $\a,\b\in \Exp(\Exp(X))$, then for every $\epsilon>0$, $\nu_2(\a,\b)<\epsilon$ if and only if
%for every $A\in\a$ there is $B\in\b$ such that $\nu_1(A,B)<\epsilon$ and for every $B\in\b$ there is $A\in\a$ such that $\nu_1(A,B)<\epsilon$.

%\noindent {\bf{4.}} Suppose that $\c$ is a chain of \blu{downwards} closed sets in $L$  and let $\a$ be a canonical cover with respect to some $\phi:\lel\to A$.
%Let the {\em projection} of $\c$ to $A$ with respect to $\a$, which we denote by  $p_\a(\c)$, be the \blu{downwards closed} chain on $A$ equal to
%\[  \{ \{ a\in A: \pi(\phi^{-1}(a))\cap C\neq\emptyset\}: C\in\c\}. \]
%Note that even if $\c$ is a maximal chain, $p_\a(\c)$ is not necessarily maximal.

%The following is clear.
%\begin{claim}\label{c:discr}
%Suppose that $m(g\cdot \c^L, h\cdot \c^L)<\epsilon_0$. If $\theta \lel\to B$ is an epimorphism such that
%$\a=\{\pi(\theta^{-1}(b)):b\in B\}$ is such that $\epsilon_0<\spr(\a)$
%then $\mu(p_{\a}(h\cdot \c^L), p_{\a}(g\cdot\c^L) )\leq 1$.
%\end{claim}

%\gre{In this part, one will have to replace $\frac{1}{2^l}$ in various places into $\frac{1}{3^l}$. In particular we take $b_j=\frac{j}{3^j}$.}

 \noindent{\bf{Special  covers:}} We will need the following  covers $\u_l$ of $L$, $l=1,2,\ldots$.
 Let $J_i=[x_i,y_i]$,  $i=0,1,\ldots, 2^l-1$,  be the intervals we obtain in the $l$-th step of the construction of the middle third Cantor set.
 In particular, we have $y_i-x_i=\frac{1}{3^l}$. We assume that $y_i<x_{i+1}$, $i=0,1,\ldots, 2^l-2$.
 Let $b_j=\frac{j}{2^l}$, $j=0,1,\ldots, 2^l$.  
We let
\[ \u_l =\left(\{P(J_i, b_j, b_{j+1}): i=0,1,\ldots, 2^l -1, j=1,\ldots, 2^l -1\}\setminus \{\emptyset\}\right)\cup \{   T(b_1)  \},   \]
where $P(J_i, b_j, b_{j+1})$ is the intersection of $L$ with the 4-gon  determined by lines $y=b_j$, $y=b_{j+1}$, the line passing through
$v$ and $(x_i,1)$ and the line passing through $v$ and $(y_{i},1)$,
and 
$T(b_1)$ is the the intersection of $L$ with the triangle determined by the lines $y=b_1$,  the line passing through $v$ and $(0,1)$, and 
 the line passing through $v$ and $(1,1)$. 
 
% \begin{claim}
 %For each $l$, $\u_l$ is a canonical cover.
  %\end{claim}
%Note that  $A\in\f$ such that $\phi:\lel\to A$ witnesses that $\u_l$ is a canonical cover may not have all branches of the same height, as 
%some of the $P(J_i, b_j, b_{j+1})$ may be empty.

 Any  cover $\u_l$ obtained as above will be called a {\em special  cover}.
%The advantage of working with these very concrete  covers  is that we will have  control both 
%on the mesh and the spread of the cover. The following is not difficult to check.
Note that, by the definition of $\pi$, for any $l$, the open cover $\mathcal{V}_l=\{\pi^{-1}(U): U\in\u_l\}$ consists of disjoint sets,
and when $l\to\infty$, then $\mesh(\u_l)\to 0$ and $\spr(\u_l)\to 0$.
%\begin{claim}\label{c:diam}
%For every $l$,  we have $\mesh(\u_l)<\frac{4}{2^l}$ and  
%$\spr(\u_l)=\frac{1}{3^l}\cdot \frac{1}{2^l}$.
%\end{claim}

For a special cover $\u_l$, let %$\mathcal{V}_l=\{\pi^{-1}(U): U\in\u_l\}$ and 
 $A_l=(a_V)_{V\in\mathcal{V}_l}\in\f$ be such that the map $\theta_l:\lel\to A_l$ given by $\theta_l(x)=a_V$ iff  $x\in V$
  is an epimorphism. We will call $\theta_l$ a {\em special epimorphism}.

%From Claims \ref{c:discr} and \ref{c:diam} we have the following corollary.
%\begin{cor}\label{cor}
%If 
%$m(g\cdot \c^L, h\cdot \c^L)<(\frac{1}{3^l})^2$  
%then $\mu(p_{\u_l}(h\cdot\c^L), p_{\u_l}(g\cdot\c^L) )\leq 1$.
%\end{cor}

  %Whenever we have a canonical cover $\u$ of $L$, corresponding to an epimorphism $\phi:\lel\to A$, we can talk about branches of $\u$, endpoints of $\u$, sets closed downwards in $\u$, $R$-related links, etc.
   %as we do when we deal with $A\in\f$. For example, a branch in $\u$ we define to be the set $\{\pi(\phi^{-1}(a)): a\in b\}$ for some branch $b$ of $A$,
   %and links $\pi(\phi^{-1}(a))$ and $\pi(\phi^{-1}(b))$ in  $\u$ are $R$-related if $R^A(a,b)$ or $R^A(b,a)$.
   %\gre{define $R$-related}

In the proof of Theorem \ref{equival}, we will use the following lemma.

\begin{lemma}\label{nakr}
Let $A, B\in\f$, let $K$ be the number of elements in $A$, and let $\phi: B\to A$ be an epimorphism such that for every branch $b$ in $B$,
and $a\in A$, if $\phi^{-1}(a)\cap b$ does not contain the endpoint of $b$ then it has at least $2K+1$ elements.
Let $\c$ and $\d$ be maximal chains on $B$ such that $\mu^B(\c,\d)\leq 1$.

Then there is an epimorphism $\psi: B\to A$ such that the maximal chains $\psi(\c)$ and $\psi(\d)$ are equal and for every $a\in A$
\[\psi^{-1}(a)\subset \phi^{-1}(a)\cup \bigcup_{a'\in {\rm is}(a)} \phi^{-1}(a'),\]
where ${\rm is}(a)$ denotes the set of immediate successors of $a$ (and it is a singleton unless $a$ is the root).
\end{lemma}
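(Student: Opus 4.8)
The plan is to obtain $\psi$ by keeping the ``staircase shape'' of $\phi$ on each branch of $B$ but sliding the steps, using the long level-sets guaranteed by the hypothesis as room to maneuver. First I would record the shape of $\phi$. For a branch $b$ of $B$, the restriction $\phi\rest b$ maps $v_B=b^0$ to $v_A$ and is $R$-preserving, so its image is an initial segment $v_A=a_b^0\prec a_b^1\prec\cdots\prec a_b^{k_b}=\mathrm{top}_b$ of a branch of $A$, and $\phi^{-1}(a_b^j)\cap b=[b^{s_b^j},b^{s_b^{j+1}-1}]=:I_b^j$ for integers $0=s_b^0<s_b^1<\cdots<s_b^{k_b}<s_b^{k_b+1}=|b|$; the hypothesis says $s_b^{j+1}-s_b^j\ge 2K+1$ for all $j<k_b$. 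The candidate maps are exactly those obtained by choosing, for every branch $b$ and every $1\le j\le k_b$, a new boundary $u_b^j$ with $s_b^j\le u_b^j\le s_b^{j+1}-1$ (and $u_b^0=0$, $u_b^{k_b+1}=|b|$) and setting $\psi(b^i)=a_b^j$ for $u_b^j\le i<u_b^{j+1}$. Any such $\psi$ is an epimorphism onto $A$ (same image as $\phi$ on each branch, $R$-preserving by construction, consistent at the root, which is the only shared point of branches in a fan), and it satisfies the fiber condition: the new $j$-th level set on $b$ lies inside $I_b^j\cup I_b^{j+1}$, and $a_b^{j+1}$ is an immediate successor of $a_b^j$ in $A$, whence $\psi^{-1}(a)\subseteq\phi^{-1}(a)\cup\bigcup_{a'\in\mathrm{is}(a)}\phi^{-1}(a')$. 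So the problem reduces to choosing the boundaries $u_b^j$, each inside an interval of length $\ge 2K$, so that $\psi(\c)=\psi(\d)$.

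Next I would translate $\psi(\c)$ into combinatorial data. For a downwards closed maximal chain $\c$ on $B$, let $\tau_\c\colon B\to\{1,\dots,|B|\}$ be the associated linear extension of $\preceq_B$ (the point $b^i$ enters the chain at the link of size $\tau_\c(b^i)$, and $\tau_\c$ is strictly increasing along each branch). Since $\psi(\c)$ is again maximal by Lemma \ref{image3} and is clearly downwards closed, it is a linear extension of $\preceq_A$ in which $a\in A$ enters at time $\theta_\c(a):=\min\{\tau_\c(b^{u_b^{j}})\colon a=a_b^j\}$; thus $\psi(\c)$ is exactly the linear order on $A$ induced by $\theta_\c$, and it suffices to choose the $u_b^j$ so that $\theta_\c$ and $\theta_\d$ induce the same linear order on $A$.

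The heart of the matter, and the step I expect to be the main obstacle, is to exploit $\mu^B(\c,\d)\le 1$. Identifying each of $\c,\d$ with its monotone ``shape path'' in $\prod_b\{0,\dots,\mathrm{ht}(b)\}$, one checks that $\mu^B(\c,\d)$ is the $\ell^\infty$-Hausdorff distance of the two shape paths, so $\mu^B(\c,\d)\le 1$ forces a tight control on how $\c$ and $\d$ interleave the branches of $B$: for every branch $b$ one gets a coarse monotone comparison of $\tau_\c\rest b$ and $\tau_\d\rest b$ (inequalities such as $\tau_\d(b^{n-1})\le\tau_\c(b^{n})+c$ and their symmetric and shifted versions, where $c$ bounds the admissible ``time shift''). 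I would then fix the target order $\mathcal E^\ast:=\phi(\c)$ on $A$ and define the boundaries level by level, processing the vertices $a^{(2)},\dots,a^{(K)}$ of $\mathcal E^\ast$ in order: at stage $m$, for every $(b,j)$ with $a_b^j=a^{(m)}$ I pick $u_b^j$ high enough in $[s_b^j,s_b^{j+1}-1]$ that both $\tau_\c(b^{u_b^j})>\theta_\c(a^{(m-1)})$ and $\tau_\d(b^{u_b^j})>\theta_\d(a^{(m-1)})$. This keeps both $\theta_\c$ and $\theta_\d$ strictly increasing along $a^{(1)},\dots,a^{(K)}$, so $\psi(\c)=\psi(\d)=\mathcal E^\ast$.

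The delicate point — and where the constant $2K+1$, rather than a smaller one, is needed — is verifying at every stage that the required threshold in each interval $[s_b^j,s_b^{j+1}-1]$ stays strictly below its top $s_b^{j+1}-1$: one must show that the accumulated ``delay'' of $\theta_\c,\theta_\d$ relative to the unperturbed entry times $\lambda_\c,\lambda_\d$ of $\phi(\c),\phi(\d)$ never exceeds the slack, using that at most $K$ stages occur and the comparison bound from $\mu^B(\c,\d)\le 1$ is absorbed by the $2K$ extra room in each level-set. Carrying out this bookkeeping completes the proof.
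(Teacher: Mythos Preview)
Your parameterization of the candidate $\psi$'s by boundary shifts and your translation of $\psi(\c)$ into the order induced by the entry-time function $\theta_\c$ are both correct, and the overall inductive shape of your plan (process $A$ one vertex at a time, ensuring both orders agree up to that stage) is essentially the same as the paper's. But the proof as written has a genuine gap: the ``delicate point'' you flag is precisely the content of the lemma, and you have not carried it out. Two specific issues:

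First, your rendering of what $\mu^B(\c,\d)\le 1$ buys you is not right. You write ``inequalities such as $\tau_\d(b^{n-1})\le\tau_\c(b^n)+c$,'' but no such pointwise comparison between $\tau_\c$ and $\tau_\d$ holds: the condition says only that for every link $C\in\c$ there is a link $D\in\d$ (of possibly very different size) whose height on each branch differs from that of $C$ by at most $1$. The useful information is this link-by-link pairing, not an inequality between the enumeration functions, and your bookkeeping sketch does not explain how the pairing feeds into the bound on $u_b^j-s_b^j$.

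Second, fixing the target order as $\phi(\c)$ in advance and then trying to force $\psi(\d)$ to match it means you must control $\theta_\d$ relative to an order that has nothing to do with $\d$; the delay you have to absorb is not obviously bounded stage-by-stage by a constant. The paper avoids this: it builds the target order dynamically. At step $n+1$ it takes the least $C\in\c$ that rises two levels above the current $E=\psi_n^{-1}(D_n)$ on some branch $b$, pairs it with a $D\in\d$ at $\mu_1^B$-distance $\le 1$, and then redefines $\psi_{n+1}$ by pulling every point of $(C\cup D)\setminus(E\cup b)$ down to the top of $E$ on its branch. Because $C$ was chosen minimal, on each branch $c\neq b$ the set $(C\cup D)\cap c\setminus E$ has at most two elements (one from $C$, one extra possibly from $D$), so each boundary moves up by at most $2$ per step; over $K$ steps this stays inside the $2K$ slack. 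That local control, coming directly from the $C\leftrightarrow D$ pairing at each stage, is the missing mechanism in your argument.
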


\begin{proof} 
We will construct $\psi$ by induction on $n\leq K$. 
In step $n$ we will construct $\psi_n:B\to A$ and a downwards closed set $D_n$ of $A$ such that the first $n$ links in both $\psi_n(\c)$ and $\psi_n(\d)$
are equal to $D_1\subset\ldots\subset D_n$. Moreover, for any $a\in A$ and $n\leq K$ it will hold
\begin{equation*}
\begin{split}
\psi_{n}^{-1}(a)\subset \phi^{-1}(a)\cup &\{x: \exists_{ y_0,\ldots, y_m} 
   \text{ such that } m\leq 2(n-1),  \\
   & y_0\in  \phi^{-1}(a), x=y_m
  \text{ and } \forall_{i<m} R^B(y_i, y_{i+1})\}.
  \end{split}
  \end{equation*}
% \{{\rm is}(a'):  a'\in \psi_n^{-1}(a)\}$.
%a'\in psi_n^{-1}(a)} \{c^{i_c+1},\ldots, c^{i_c+n+1}\}$,
%where $c$ in the sum is a branch such that $c\cap \psi_n^{-1}(a)\neq\emptyset$ and $\min (c\cap \psi_n^{-1}(a))=c^{i_c}$.
Note that this last containment implies that:

\noindent $(\triangle_n)$ \ \ \ for every branch $b$ in $B$,
and $a\in A$, if $\psi_n^{-1}(a)\cap b$ does not contain the endpoint of $b$ then it has at least $2(K-n+1)+1$ elements.

Then we set $\psi=\psi_K$ and we will have $\psi(\c)=\psi(\d)=\{D_n: n\leq K\}.$ This $\psi$ will be as required.

%We will  use the following simple but crucial observation (Claim 2) derived from  $\mu^B(\c,\d)\leq 1$. 
Observe that by the definition of the metric $\mu^B$, $\mu^B(\c,\d)\leq 1$ means:
for every $C\in \c$ there is $D\in \d$ such that $\mu_1^B(C,D)\leq 1$ and for every
$D\in \d$ there is $C\in \c$ such that $\mu_1^B(C,D)\leq 1$.
%\begin{claim}\label{close}
%\blu{There are no two different branches $b=(b^i)$ and $c=(c^j)$ in $B$, and indices $i_0, j_0$ such that for some $C\in\c$  and $D\in\d$ we have 
%$b^{i_0}, b^{i_0+1}\in C\setminus D$ and 
%$c^{j_0}, c^{j_0+1}\in D\setminus C$.}
 %\end{claim}
 
 {\bf{ Step 1.}} Take $\psi_1=\phi$ and let $D_1$ be the set whose only element is the root of $A$.
 
{\bf{ Step $\mathbf{n+1}$.}}
Let $E=\psi_n^{-1}(D_n)$, $E$ is clearly downwards closed. Let $C\in\c$ be the least (with respect to containment) such that 
there is a  branch $b=(b^i) $ in $B$ and $i_0$ such that $p=b_{i_0}\notin E$ and $p, q=b_{i_0+1}\in C$,
and let $D\in\d$ be such that $\mu_1^B(C,D)\leq 1$.
Take $\psi_{n+1} $ such that $\psi_{n+1}(x)=\psi_n(x)$ for $x\in E\cup(B\setminus (C\cup D))\cup b$. 
 For $x\in (C\cup D)\setminus (E\cup b)$ let
 $c=(c^j)$ be the branch in $B$ such that for some $j_0$, we have $x=c^{j_0}$, and let $\psi_{n+1}(x)=\psi_n(z)$, where $z\in E\cap c$ is the largest with respect to the tree 
 order on $B$. % and if $c=b$ let $\psi_{n+1}(x)=\psi_n(z)$, where $z\in c$ is such that $x$ is the immediate successor of $z$.
 Take $D_{n+1}=D_n\cup\{\psi_n(p)\}$.
Note that $b\cap ((C\cup D)\setminus E)$ consists of 2 or 3 elements, in fact it is equal to $\{p,q\}$ or to $\{p,q, r\}$, where $r$ is the
 immediate successor of $q$. Further 
  by $(\triangle_n)$, $\psi_{n+1}$ is constant on $b\cap ((C\cup D)\setminus E)$.

%By Claim \ref{close} there is no branch $c=(c^j)$ in $B$ different from $b$ such that for some $j_0$ we have $c^{j_0}\notin E$
%and $c^{j_0+1}\leq^\d y$.

%For $z\notin E\cup\{y\}$ such that  $z<^\c x$ or $z<^\d y$, if $w$ is such that $z$ is an immediate successor of $w$ (so $w\in E$), we let $\psi_{n+1}(z)=\psi_n(w)$.
%For all other $z\in B$, we let $\psi_{n+1}(z)=\psi_n(z)$. 
%Let $D_{n+1}=D_n\cup\{\psi_{n+1}(y)\}$ ($=D_n\cup\{\psi_{n+1}(x)\}$).

\end{proof}

 \begin{proof}[Proof of Theorem \ref{equival}]
 It is easy to see that $gH\to g\cdot \c^\lel$ is uniformly continuous. It is straightforward, by the definitions of metrics $d$ and $m$,
 that if $d(gH, hH)<\epsilon$ then
 $m(g\cdot \c^L, h\cdot \c^L)<\epsilon$.
%\gre{First notice  that for every $\epsilon$ there is $\delta$ such that whenever $g,h\in G$ are such that $d(gH, hH)<\delta$ then 
%$m(g\cdot \c^L, h\cdot \c^L)<\epsilon$. That is straightforward, it follows from the definitions of metrics $d$ and $m$, take $\delta=\epsilon$.}
 
 For the opposite direction, we  show that for every $\epsilon$ there is $\delta$ such that for every  
 $g,h\in G$ whenever $m(g\cdot \c^L, h\cdot \c^L)<\delta$, then 
 $d(gH, hH)<3\epsilon$, that is, 
 for some $k\in H$  we have  $d_0(gk, h)<3\epsilon$. This direction requires  more work, we will use Lemma \ref{nakr} and the ultrahomogeneity of $\lel_c$.
Pick $\epsilon>0$ and let $l$ be such that $\mesh(\u)<\epsilon$, where $\u$ is the cover 
$\{U_a\cup\bigcup_{b\in{\rm is}(a)} U_b : a\in A_l\}$ obtained from the cover $\u_l=(U_a)_{a\in A_l}$.

Let $K=|A_l|$ and let  $k\geq (2K+1)l$.
Let $\theta_l:\lel\to A_l$ and $\theta_k:\lel\to A_k$ be 
 special epimorphisms,  and $\phi: A_k\to A_l$ be the epimorphism such that $\phi\theta_k=\theta_l$.
 Denote $A=A_l$ and $B=A_k$.
Take
 $\delta=\spr(\u_k)$ and suppose that
$m(g\cdot \c^L, h\cdot \c^L)<\delta$. 

Suppose first that there are $g_0, h_0\in \aut(\lel)$ such that
 $g=\pi^*(g_0)\in H(L)$  and $h=\pi^*(h_0)\in H(L)$.
Since $m(g\cdot \c^L, h\cdot \c^L)<\spr(\u_k)$, 
we obtain $\mu^B(\theta_k(g_0\cdot\c^{\lel}), \theta_k(h_0\cdot\c^{\lel}) )\leq 1$.
Applying Lemma \ref{nakr} to $\phi:B\to A$, $\c=\theta_k(h_0\cdot\c^{\lel})$ and $\d=\theta_k(g_0\cdot\c^{\lel})$,
we get an epimorphism $\psi: B\to A$ such that $\psi(\c)=\psi(\d)=:\c^A$. 
The conclusion of Lemma \ref{nakr} implies that $\psi\theta_k: (\lel, h_0\cdot\c^{\lel})\to (A,\c^A)$ and $\psi\theta_k: (\lel, g_0\cdot\c^{\lel})\to (A,\c^A)$ are epimorphisms, as well as that  $\psi^{-1}(a)\subset \phi^{-1}(a)\cup \bigcup_{b\in {\rm is}(a)} \phi^{-1}(b)$, for every $a\in A$, %which gives
$\mesh(\u_0)\leq \mesh(\u)<\epsilon$, where $\u_0=\{\pi(( \psi\theta_k)^{-1}(a)): a\in A\}.$
%Denote $\c_1=g_0\cdot \c^\lel$ and   $\c_2=h_0\cdot \c^\lel$. %Then $\c^\lel=g_0^{-1}\cdot \c_1$ and $\c^\lel=h_0^{-1}\cdot \c_2$.
 From the ultrahomogeneity of $\lel_c$ applied to epimorphisms $\psi\theta_k g_0:(\lel, \c^\lel)\to (A,\c^A)$ and 
 $\psi\theta_k h_0:(\lel, \c^\lel)\to (A,\c^A)$,
 we get  $k_0\in \aut(\lel_c)$ such that $\psi\theta_k g_0 k_0=\psi\theta_k h_0$, that is
 for every  $x\in\lel$, $g_0k_0(x)$ and $h_0(x)$ are in the same set of the cover $\{(\psi\theta_k)^{-1}(a): a\in A\}$ of $\lel$.
  This implies that, denoting $k=\pi^*(k_0)$, for every $y\in L$, $gk(y)$ and $h(y)$, 
  are in the same set of the cover $\u_0$, and therefore, since $\mesh(\u_0)<\epsilon$, we get $d_0(gk, h)<\epsilon$, as needed.
  
  In general, if $g,h\in H(L)$,  take $g',h'\in \pi^*(\aut(\lel))\subset H(L)$ such that $d_0(g,g')<\epsilon$
  and $d_0(h,h')<\epsilon$. Then, if $k$ is such that $d_0(g'k,h')<\epsilon$, using the right-invariance of $d_0$ and the triangle inequality, 
  we obtain $d_0(gk,h)<3\epsilon$.
  \end{proof}

\end{document}